\documentclass[12pt]{amsart}
\usepackage{amssymb,pb-diagram}
\usepackage{amsmath,amscd}
\usepackage{graphicx}
\usepackage{enumerate}

\usepackage{chngcntr}
\counterwithin{figure}{section}

\newtheorem{thm}{Theorem}[section]

\newtheorem{lem}[thm]{Lemma}
\newtheorem{cor}[thm]{Corollary}
\newtheorem{prop}[thm]{Proposition}

\theoremstyle{definition}

\newtheorem{defi}[thm]{Definition}

\theoremstyle{remark}

\newtheorem{rem}[thm]{Remark}

\newtheorem*{remnn}{Remark}

\headheight=14pt	    \topmargin=34pt
\textheight=584pt   \textwidth=452pt
\oddsidemargin=18pt \evensidemargin=18pt

\newcommand{\JK}{\operatorname{JK}}
\newcommand{\FR}{\operatorname{FR}}
\newcommand{\Wh}{\operatorname{Wh}}
\newcommand{\LM}{\operatorname{LM}}
\newcommand{\im}{\operatorname{im}}

\newcommand{\id}{\operatorname{id}}

\newcommand{\Z}{\mathbb{Z}}
\newcommand{\N}{\mathbb{N}}
\newcommand{\R}{\mathbb{R}}
\newcommand{\Q}{\mathbb{Q}}

\newcommand{\e}{\epsilon}
\newcommand{\hra}{\hookrightarrow}
\newcommand{\imra}{\looparrowright}
\newcommand{\sra}{\twoheadrightarrow}
\newcommand{\ira}{\rightarrowtail}
\newcommand{\ra}{\longrightarrow}

\title{The group of disjoint 2-spheres in 4-space}
\author{Rob Schneiderman and Peter Teichner}
\address{Lehman College, City University of New York}
\email{robert.schneiderman@lehman.cuny.edu}
\address{University of California, Berkeley and Max Planck Institute for Mathematics, Bonn }
\email{teichner@mac.com}
\thanks{}

\begin{document}


\begin{abstract}
We compute the group $\LM_{2,2}^4$ of link homotopy classes of link maps of two 2-spheres into 4-space. It turns out to be free abelian, generated by geometric constructions applied to the Fenn--Rolfsen link map and detected by two self-intersection invariants introduced by Kirk in this setting. 
As a corollary, we show that any link map with one topologically embedded component is link homotopic to the unlink. 

Our proof introduces a new basic link homotopy, which we call a {\em Whitney homotopy}, that shrinks an embedded {\em Whitney sphere} constructed from four copies of a Whitney disk. Freedman's disk embedding theorem is applied to get the necessary embedded Whitney disks, after constructing sufficiently many {\em accessory spheres} as algebraic duals for immersed Whitney disks. To construct these accessory spheres and immersed Whitney disks we use the algebra of metabolic forms over the group ring $\Z[\Z]$, and introduce a number of new 4-dimensional constructions, including maneuvers involving the boundary arcs of Whitney disks.

\end{abstract}

\maketitle

\section{Introduction and Statements of Results}

A {\em link map}  is a continuous map that sends connected components of the source {\em disjointly} into the target. A {\em link homotopy} is a homotopy through link maps. 

So even if the components start off as disjoint embeddings, they are allowed to self-intersect (but not intersect each other) during a link homotopy. 
John Milnor  \cite{M1} had initiated the study of link homotopy for classical links in $3$-space as a way to measure ``linking modulo knotting''. His invariants still play a central role in trying to understand some important open problems for topological 4-manifolds. 
In a certain sense, they also describe our current $4$-dimensional link homotopy computation, see Corollary~\ref{cor:JK}. 

The fundamental sets $\LM_{p,q}^n$ of spherical link maps $S^p\amalg S^q\to S^n$ up to link homotopy are computed in many high-dimensional cases \cite{HK,Ko2,Sc}, for example the linking number gives $\LM_{1,n-2}^{n}\cong\Z$ for all $n\geq 3$. 
However, due to the usual difficulties of extracting geometric information from algebraic invariants of surfaces in dimension four, there has been no significant progress towards understanding $\LM_{2,2}^4$ since 1988 when Paul Kirk \cite{Ki} constructed a surjection $\LM_{2,2}^4\sra\bigoplus_\N \Z$.
Our main Theorem~\ref{thm:main} below states that Kirk's invariant is in fact injective which leads to
the following more concise characterization:
\begin{thm}\label{thm:free}
Consider the commutative ring $R:=\Z[z_1,z_2] / (z_1z_2)$. Then $\LM_{2,2}^4$ is a free $R$-module of rank one, freely generated by the Fenn--Rolfsen link map $\FR$. In particular, the abelian group $\LM_{2,2}^4$ is free with basis $\{\FR, z_1^n\cdot \FR, z_2^n\cdot \FR \ | \ n\in\N\}$.
\end{thm}

Here the addition in $\LM_{2,2}^4$ is given by connected sum and the inverse link map arises from reflection of $S^4$ (which is part of our theorem). The action of $z_1$ on $(f_1,f_2)$ comes from doubling $f_1$ and then tubing the two copies together along an arc that represents the meridian to $f_2$; the action of $z_2$ is defined similarly by doubling $f_2$. See Section~\ref{sec:thm-free-proof} for details and Figure~\ref{fig:whitehead-null-htpy} for a picture of the Fenn--Rolfsen link map as the Jin--Kirk construction on the Whitehead link: $\FR=\JK(\Wh)$. 

It is extremely rare that a 4-dimensional problem has such a simple answer and that very explicit operations lead to all possible examples. Even more is true in this setting, one actually just needs the following self-intersection invariant to distinguish all 2-component spherical link maps: 
First turn any link map $(f_1,f_2):S^2\amalg S^2\to S^4$ into a {\em generic immersion} and then consider the Hurewicz maps
$$
\pi_1(S^4 \smallsetminus \im(f_i))\sra H_1(S^4 \smallsetminus \im(f_i)) \cong H^2(\im(f_i)) \cong\Z.
$$
Even though the fundamental group of the complement changes during a homotopy of $f_i$, the first homology stays constant by Alexander duality and the fact that $H^2$ is infinite cyclic for the image of any generic immersion.
One can thus consider Wall's intersection invariant 
$\lambda$ with values in the group ring $\Z[\Z]$ to obtain
two well-defined link homotopy invariants
$\lambda(f_i,f_i)\in \Z[\Z]$. This was first observed by Kirk in \cite{Ki}, hence we shall refer to these two Laurent polynomials as the {\em Kirk invariants} 
\[
\sigma_i(f_1,f_2) := \lambda(f_i,f_i)\in \Z[\Z].
\]

One might expect the existence of higher-order invariants for spheres in such a 4-dimensional setting but our main result says that the Kirk invariants classify 
$\LM_{2,2}^4$:
\begin{thm}\label{thm:main}
If $(f_1,f_2): S^2 \amalg S^2 \to S^4$ is a link map with vanishing Kirk invariants $\sigma_1(f_1,f_2)=0=\sigma_2(f_1,f_2)$, then $(f_1,f_2)$ is link homotopically trivial. 
\end{thm}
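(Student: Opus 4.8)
The plan is to reduce the vanishing of the Kirk invariants to a statement about a single link map that can be untangled by a sequence of explicit link homotopies, culminating in the new ``Whitney homotopy'' advertised in the abstract. First I would make $(f_1, f_2)$ a generic immersion and normalize the self-intersections: since $\sigma_1 = \sigma_2 = 0$, the signed self-intersection points of each $f_i$ (weighted by the group-ring element recording how the double-point loop sits in $\Z \cong H_1$ of the complement of the other component) cancel in algebraic pairs. The first goal is to geometrically realize these algebraic cancellations, i.e.\ to find Whitney disks pairing up the self-intersections of $f_1$, and likewise for $f_2$. Such Whitney disks exist immersed and generically, but they will intersect the spheres and each other; the entire difficulty of the proof is to upgrade them to \emph{embedded, framed, disjointly embedded} Whitney disks whose interiors are disjoint from $f_1 \cup f_2$. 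Once one has such ideal Whitney disks, the classical Whitney move removes the self-intersections of each component without creating new intersections with the other component, turning $(f_1,f_2)$ into an embedding with vanishing Kirk invariants, which is link homotopically trivial by Corollary~\ref{cor:main} (or by \cite{BT} directly).

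The heart of the argument — and the step I expect to be the main obstacle — is the construction of those embedded Whitney disks in a 4-manifold with (generally) nontrivial $\pi_1$ of the relevant complements. Here I would invoke Freedman's disk embedding theorem, but that theorem requires \emph{algebraic duals}: for each immersed Whitney disk one needs an embedded sphere (or immersed sphere with appropriate framing) meeting it algebraically once and meeting the other relevant surfaces trivially. These are the ``accessory spheres'' mentioned in the abstract. I would construct them using the algebra of metabolic forms over $\Z[\Z]$: the intersection data among the candidate Whitney disks and the spheres $f_i$ assembles into a form over the group ring which, because the Kirk invariants vanish, admits a metabolizer; a basis change realizing the metabolizer geometrically (via finger moves, tubing, and the boundary-arc maneuvers on Whitney disks promised in the abstract) produces the desired dual accessory spheres. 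This is essentially the content of the ``Metabolic Unlinking Theorem'' Theorem~\ref{thm:I2-metabolic} and of Proposition~\ref{prop:I-squared}; the bookkeeping needed to keep all intersection numbers under control while performing these maneuvers is where the real work lies.

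Finally, even after obtaining embedded Whitney disks one must handle the \emph{framing} obstruction: a Whitney disk whose normal framing does not match the one induced by the two sheets it pairs cannot be used directly for a Whitney move. I would absorb this obstruction into the self-intersections of the Whitney sphere built from four copies of the Whitney disk, and then apply the Whitney homotopy to shrink that embedded Whitney sphere; this is the mechanism that converts the remaining algebraic framing/intersection data (controlled by the vanishing Kirk invariants) into an actual link homotopy. Putting the pieces together: vanishing Kirk invariants $\Rightarrow$ algebraic cancellation of self-intersections $\Rightarrow$ (via metabolic forms and accessory spheres $+$ Freedman) embedded Whitney disks $\Rightarrow$ (via Whitney moves and Whitney homotopies) an embedded link $\Rightarrow$ (via Corollary~\ref{cor:main}) the unlink. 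The delicate parts are all concentrated in the middle step, and in particular in verifying that the metabolizer for the $\Z[\Z]$-form can be chosen compatibly with the geometric moves available.
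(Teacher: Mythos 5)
Your outline correctly names the paper's main tools (standard position, metabolic forms over $\Z[\Z]$, accessory spheres as duals for Freedman's disk embedding theorem, Whitney spheres/homotopies), but the reduction you build around them has a genuine gap at its center. You propose to find Whitney disks pairing the self-intersections of $f_1$ (and of $f_2$) that are embedded, framed, disjoint, \emph{and with interiors disjoint from $f_1\cup f_2$}, and then to do Whitney moves to obtain an embedded link. For the component $f_1$ this is exactly condition (ii) of the Standard Unlinking Theorem~\ref{thm:I2}, i.e.\ $\lambda(W_i,f_2)=0$, which is \emph{equivalent} to link-homotopic triviality; so this step presupposes something as strong as the theorem itself. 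Vanishing of the Kirk invariants does not supply that algebraic input: what it yields -- and only after the long eight-step construction of Proposition~\ref{prop:I-squared} -- is the much weaker condition $\lambda(W_i,f_2)\in z\cdot\Lambda=I^2$ for a metabolic collection. The sentence ``the form admits a metabolizer because the Kirk invariants vanish, and realizing it geometrically produces the dual accessory spheres'' is precisely where the whole difficulty lives, and as stated it does not bridge the gap between $I^2$-intersections and zero intersections with $f_2$. Relatedly, your application of Freedman's theorem would have to take place in the complement of $f_1\cup f_2$ (to keep disk interiors off both components), where $\pi_1$ is neither $\Z$ nor known to be good and where the needed dual spheres have no reason to exist; the paper applies Freedman only in $M=S^4\smallsetminus f_1$, where $\pi_1\cong\Z$ by standard position and the accessory spheres of Lemma~\ref{lem:algtop} furnish the duals.

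The actual mechanism is different and asymmetric: the paper never embeds the link and never pairs up the self-intersections of $f_2$ at all. Instead, $f_2$ is treated as a class in $\pi_2M$; the isometry realization Lemma~\ref{lem:isometry} (using $\Phi\equiv\id\bmod z$) produces new immersed Whitney disks $V_i=W_i\# T_i$ whose Whitney spheres span $[f_2]$, Freedman makes the $V_i$ disjointly embedded and framed, and then $f_2$ is removed one summand $\alpha_i\cdot S_{V_i}$ at a time by Whitney homotopies (Lemma~\ref{lem:useless}): the Whitney move on $V_i$ is immediately undone by the inverse finger move, and it is $f_2$, not $f_1$, that gets homotoped. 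Your endgame also has problems: quoting \cite{BT} for the resulting embedded link fails in category, since Freedman's output is only a locally flat topological embedding while \cite{BT} needs smoothness (the paper remarks on exactly this), and quoting Corollary~\ref{cor:main} is not circular but is backwards -- that corollary is itself derived from the Metabolic Unlinking Theorem~\ref{thm:I2-metabolic}, so it cannot substitute for the missing middle step. Finally, framing is not ``absorbed into self-intersections of the Whitney sphere'': the Whitney homotopy requires a framed embedded Whitney disk with interior disjoint from $f$, and framing of the metabolic/embedded disks is arranged beforehand (via $\lambda(W_i,W_i)=0$ with the Whitney section and Freedman's framed output), not fixed afterwards by shrinking.
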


Fix a (multiplicative) generator $x$ of $\Z$ so that $\Z[\Z] = \Z[x^{\pm 1}]$ is the ring of Laurent polynomials. 
Here and throughout the paper we'll often need the Laurent polynomial 
\[
z := (1-x)\cdot (1-x^{-1}) = (1-x) + (1-x^{-1}) \in \Z[x^{\pm 1}].
\]
In Section~\ref{sec:proof-thm-image}, we'll derive from Theorem~\ref{thm:main} our main computation:
\begin{thm}\label{thm:image}
The Kirk invariants $(\sigma_1, \sigma_2)$ give a short exact sequence of abelian groups
\[
0 \ra \LM^4_{2,2} \ra  z\cdot \Z[z]\oplus z\cdot \Z[z] \ra \Z \ra 0
\]
On the right we use the addition map to $\Z = z\cdot \Z[z]/z^2\cdot \Z[z]$. 
\end{thm}

With slightly different notation, Kirk computed the cokernel of his invariants in \cite{Ki}, our contribution is the injectivity of his invariants
and the relation to Theorem~\ref{thm:free}. In Section~\ref{sec:thm-free-proof} we will embed $\Z[z]$ into the ring $R$, from Theorem~\ref{thm:free}, by sending $z$ to $z_1+z_2$ which will translate between these results.


\subsection{Seeing 2--spheres in 4--space via classical links}\label{sec:JK}
Let $\mathcal L$ denote the set of links in $\R^3$ with two components, both of which are unknotted and have linking number~$0$. There is an additive map (under connected sum)
\[
\JK:\mathcal L \ra \LM_{2,2}^4
\]
which seems to go back to Fenn, Rolfsen, Levine, Jin and Kirk. We shall refer to the map $\JK$ as the {\em Jin--Kirk construction} and it can be described as follows: For $L=(l_1,l_2)\in\mathcal L$ observe that $S^3\smallsetminus l_i$ are homotopy equivalent to $S^1$ because the knots $l_i$ are trivial, so each component is null-homotopic in complement of the other by triviality of the linking number. Thus one can take the track of a null-homotopy of $l_2$ in the complement of $l_1$ and then a spanning disk for $l_1$ to describe a link map $D^2 \amalg D^2 \to D^4$ bounded by $L$, see Figure~\ref{fig:whitehead-null-htpy}. Performing the same construction with the roles of $l_1$ and $l_2$ switched and then taking the union with the previous construction along $L$ yields a link map $\JK(L):S^2\amalg S^2 \to S^4$. Note that this construction involves the choices of two null-homotopies but it gives a well defined link homotopy class because $\pi_2(S^3\smallsetminus l_i)\cong\pi_2(S^1)=0$.

\begin{figure}[ht!]
         \centerline{\includegraphics[scale=.275]{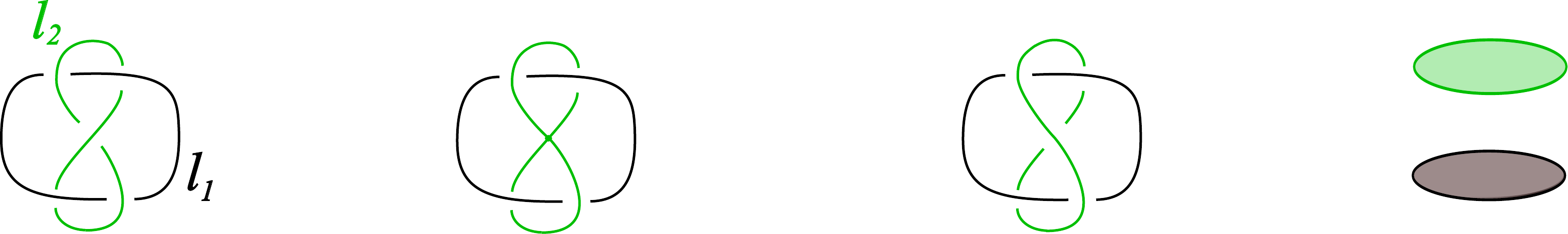}}
         \caption{One half of the \emph{Fenn-Rolfsen link map} $\FR=\JK(\Wh)$, the other half uses the symmetry of the Whitehead link $\Wh$.}
         \label{fig:whitehead-null-htpy}
\end{figure}
Figure~\ref{fig:whitehead-null-htpy} shows a \emph{movie} of one half $D^2 \amalg D^2 \to D^4$ of the Fenn--Rolfsen link map $\FR$ from Theorem~\ref{thm:free} defined as the Jin--Kirk construction on the Whitehead link $L=\Wh$: The left-most picture shows $\Wh$ in the equatorial $S^3$ of $S^4$; then, moving to the right into $S^3\times I\subset D^4$, the track of a null-homotopy of $\Wh$ is described by a crossing change on $l_2$, an isotopy and finally spanning disks for the resulting unlink.
The other half of $\FR$ is constructed using the symmetry of the Whitehead link to first swap the link components by an isotopy and then apply the crossing-change to $l_1$ while moving into the other hemisphere of $S^4$.

The crossing change on $l_2$ in Figure~\ref{fig:whitehead-null-htpy} corresponds to a negative self-intersection in the second component $f_2$ of $\FR$ with a double point loop representing a meridian to $f_1$.
The Kirk invariant is computed by counting intersections between $f_2$ and a parallel push-off $f'_2$:
\[
\sigma_2(\FR) := \lambda(f_2,f'_2) = \mu(f_2) +\iota\mu(f_2) + e(\nu(f))\cdot 1 = -x -x^{-1} +2 = z,
\]
where $\iota$ is the involution on $\Z[\Z]$ given by $x\mapsto x^{-1} $.
This is Wall's formula \cite{Wa} relating self-intersection and intersection invariants quite generally, see Section~\ref{sec:standard-position}.

The orientation switch in the construction of the other half of $\FR$ has the effect of switching the signs of the self-intersections in $f_1$, and we get $\sigma_1(\FR)=x+x^{-1}-2=-z$.

The additivity of $\JK$ means that {\em any} ambient connected sum of classical $2$-component links is sent to the sum of the corresponding elements in the group $\LM_{2,2}^4$. 
It was shown by Jin in his thesis \cite{J} that the Kirk invariants of $\JK(L)$ are Cochran's $\beta^i$-invariants \cite{Co} of $L$. More precisely, in the above notation, we can write
\[
\sigma_2(\JK(L)) = \sum_{i=1}^N \beta_2^i(L) \cdot z^i \in z\cdot \Z[z]\quad\mbox{ and }\quad -\sigma_1(\JK(L)=\sum_{i=1}^N \beta_1^i(L) \cdot z^i \in z\cdot \Z[z]
\]
Cochran had shown that his invariants $\beta_1^i(L)$ are integer lifts of Milnor's invariants $\mu_{1\dots_{2i}\dots 122}(L)$ for links with trivial linking numbers, and similarly for $\beta_2^i(L)$. In particular, the Sato--Levine invariant $\beta_1^1(L) = \mu_{1122}(L)=\beta_2^1(L) $ is symmetric in the components 1 and 2, and changes sign under an orientation change of $S^3$, explaining again the cokernel of the Kirk invariants in Theorem~\ref{thm:image}. 

It follows from Theorem~\ref{thm:main} that $\JK(L) = \JK(L')$ if and only if $L$ and $L'$ share the same Cochran $\beta$-invariants. Moreover, the vanishing of Cochran's invariants is equivalent the vanishing of the corresponding Milnor invariants (which are a priori not well defined as integers), implying:
\begin{cor}\label{cor:JK}
The map $\JK:\mathcal L \sra \LM_{2,2}^4$ is onto. Moreover,  $\JK(L) = 0$ if and only if the following Milnor invariants vanish:
\[
\mu_{1\dots_{2i}\dots 122}(L) = 0 = \mu_{2\dots_{2i}\dots 211}(L) \quad \forall \, i \geq 2.
\]
\end{cor}
This gives a very satisfying 4-dimensional characterization of the vanishing of these Milnor invariants. The surjectivity of $\JK$ can be proven by writing down sufficiently many links $L \in \mathcal L$ to realize the image of the Kirk invariants, see Section~\ref{sec:proof-thm-image}. Alternatively, one can lift the $R$-action from Theorem~\ref{thm:free} to $\mathcal L$ and get away with only knowing the Whitehead link!

\subsection{Pulling away an embedded component}\label{subsec:intro-embedded-component}
Another consequence of  Theorem~\ref{thm:main} is the non-existence of a ``4D Hopf link'', even allowing one wild embedding and the other component to self-intersect arbitrarily in the complement:

\begin{cor} \label{cor:main}
Let $(f_1,f_2): S^2 \amalg S^2 \to S^4$ be a link map such that $f_1$ is an embedding (a homeomorphism onto its image). Then $(f_1,f_2)$ is link homotopic to the trivial link.
\end{cor}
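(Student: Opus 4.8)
The plan is to derive Corollary~\ref{cor:main} directly from the computation of $LM_{2,2}^4$ in Theorem~\ref{thm:main}. That theorem identifies $LM_{2,2}^4$ with a free abelian group on which the link homotopy class of a link map is recorded faithfully by its two basic self-intersection invariants, say $\mu_1$ and $\mu_2$, where $\mu_i$ is the equivariant self-intersection of the $i$-th component measured in the infinite cyclic cover of the complement of the other component, with values in $\Z[t^{\pm 1}]$ modulo the usual indeterminacies (the subgroup generated by $1$ and by the elements $t^k-t^{-k}$, $k\ge 1$). Since the trivial link has both invariants zero, it suffices to prove $\mu_1(f_1,f_2)=0$ and $\mu_2(f_1,f_2)=0$. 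The first is immediate: an embedding has no double points. So the whole task reduces to showing $\mu_2(f_1,f_2)=0$. (For this I will assume $f_1$ is locally flat, so that its complement is the interior of a compact $4$--manifold; the general topological case is commented on below.)

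For $\mu_2(f_1,f_2)=0$ I would argue as follows. Let $X:=S^4\smallsetminus\nu f_1(S^2)$. Since $f_1(S^2)$ is null-homologous with trivial normal bundle, $\partial X\cong S^2\times S^1$, and Alexander duality gives $H_1(X)\cong\Z$ and $H_2(X)=0$; thus $X$ is a homology circle, $f_2$ factors through $X$, and, being defined on the simply connected $S^2$, lifts to the infinite cyclic cover $p\colon\widetilde X\to X$, representing a class $\alpha\in H_2(\widetilde X;\Z)$. Write $\Lambda:=\Z[t^{\pm 1}]$. The key point is that the $\Lambda$--valued equivariant intersection form $\lambda$ on $H_2(\widetilde X;\Z)$ vanishes identically: the Alexander module $H_2(\widetilde X;\Z)$ of the $2$--knot $f_1(S^2)$ is classically a finitely generated $\Lambda$--torsion module, so if $q(t)\,\alpha=0$ with $q\ne 0$ then $q(t)\,\lambda(\alpha,\beta)=\lambda(q(t)\alpha,\beta)=0$ in the integral domain $\Lambda$, which forces $\lambda(\alpha,\beta)=0$. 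Now, in the target group of $\mu_2$ conjugation is the identity (since $t^k\equiv t^{-k}$) and $1$ is killed (a cusp homotopy is a link homotopy and so cannot change $\mu_2$); hence Wall's relation between self-intersection and intersection of an immersed $2$--sphere---whose normal Euler correction term is an integer multiple of $1$ and therefore also dies---reads there as $2\,\mu_2(f_1,f_2)=\mu_2(f_1,f_2)+\overline{\mu_2(f_1,f_2)}=\lambda(\alpha,\alpha)=0$. Since this target is torsion-free, $\mu_2(f_1,f_2)=0$, and together with $\mu_1(f_1,f_2)=0$ Theorem~\ref{thm:main} gives that $(f_1,f_2)$ is link homotopic to the trivial link.

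The step I expect to require the most care is the verification that the invariant $\mu_2$ furnished by Theorem~\ref{thm:main}---defined concretely as a signed sum $\sum_p\pm t^{n(p)}$ over the double points $p$ of a generic representative of $f_2$, where $n(p)$ is the linking number with $f_1(S^2)$ of the double-point loop at $p$---is indeed the Wall self-intersection of $f_2$ inside $X$, and in particular that its indeterminacy subgroup is exactly the one that both kills $1$ and identifies $t^k$ with $t^{-k}$; granting this identification, the remaining argument is purely algebraic. A separate and more technical issue is the passage from a general topological embedding $f_1$ to a locally flat one, which is what makes $H_2(\widetilde X;\Z)$ a finitely generated $\Lambda$--torsion module; I would dispose of this point first (or, adopting the conventions standard in topological $4$--manifold theory, read ``embedding'' as ``locally flat embedding'') before running the argument above.
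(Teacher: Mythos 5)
Your argument genuinely proves the statement only for locally flat embeddings, and that restriction is not a side issue here: the corollary's hypothesis is merely that $f_1$ is a homeomorphism onto its image, and the wild case is precisely the advertised content (cf.\ the remark in the introduction that no other theorems about wild $2$--knots in $S^4$ seem to be known). Local flatness enters your proposal in two essential places. First, the claim that $\sigma_1=0$ ``because an embedding has no double points'' presupposes that $f_1$ is already a generic immersion; for a wild $f_1$ one must first replace it by a generic approximation, which may acquire double points, and nothing in your argument controls the resulting $\lambda(f_1',f_1')\in\Z[H_1(S^4\smallsetminus f_2)]$ — yet your route through Theorem~\ref{thm:main} needs both Kirk invariants to vanish. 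Second, the torsionness of $H_2(\widetilde X;\Z)$ rests on the exterior being a compact manifold with $\partial X\cong S^2\times S^1$ (so that Milnor's finiteness theorem for infinite cyclic covers applies); for a wild embedding the complement is an open manifold, the classical Alexander-module package is unavailable, and $H_2$ of the cover need not even be finitely generated over $\Lambda$. Reading ``embedding'' as ``locally flat embedding'' would prove a strictly weaker statement, so as a proof of Corollary~\ref{cor:main} as stated there is a genuine gap.

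For comparison, the paper does not go through Theorem~\ref{thm:main} at all (so it never needs $\sigma_1=0$, nor the hard Proposition~\ref{prop:I-squared}); it verifies the hypotheses of the easier Metabolic Unlinking Theorem~\ref{thm:I2-metabolic}. \v{C}ech--Alexander duality gives $H_2(S^4\smallsetminus f_1)=0$ for an \emph{arbitrary} embedding, so the Bockstein/Wang sequence makes $(1-t)$ surjective on $H_2$ of the infinite cyclic cover of the open complement, whence $[f_2]=(1-t)^N a_N$ for every $N$; a compactness argument with the neighborhoods $U_N$ then transports this equation into $\pi_2$ of the complement of a generic standard-position approximation $f$ of $f_1$, yielding $\lambda(W_i,f_2)\in z\cdot\Lambda$ (take $N=2$) and $\lambda(f_2,f_2)=0$ (let $N\to\infty$). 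Incidentally, this divisibility trick would also repair your $\sigma_2$ step in the wild case with no torsion or finiteness hypotheses, since $\lambda(\alpha,\alpha)$ is a single Laurent polynomial divisible by $z^N$ for all $N$; what it cannot repair is the missing vanishing of $\sigma_1$ for wild $f_1$, which is exactly what the paper's use of Theorem~\ref{thm:I2-metabolic} avoids having to prove. In the locally flat case your argument is correct and is a clean alternative (torsion Alexander module forces the equivariant form to vanish), but it buys this at the cost of invoking the full Theorem~\ref{thm:main}, whereas the paper stresses that the corollary needs only the easier metabolic machinery.
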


The main difficulty that arises in any proof of this result is that $\pi_2(S^4 \smallsetminus \im(f_1))$ can be nontrivial even if $f_1$ is a smooth embedding, as in the Andrews--Curtis example described in the next section.  One is therefore forced to introduce self-intersections into $f_1$ during the link homotopy and it's difficult to retain the information that $f_1$ started off as an embedding.

Kirk had noticed that {\em both} his
invariants vanish even if only one of the components is smoothly embedded and he asked whether the link map is trivial in this case. We will show in Section~\ref{sec:embedded} that the Kirk invariants also both vanish if $f_1$ is a wild embedding, so Corollary~\ref{cor:main} follows directly from Theorem~\ref{thm:main}.

In the same section, we will show that it already follows from an intermediate result, the Standard Unlinking Theorem~\ref{thm:I2}, which doesn't use our Proposition~\ref{prop:I-squared} (whose proof takes up the last 40 pages of this paper). 

We are not aware of other theorems about general 2-knots in $S^4$, but in \cite{BHV} infinitely many wild $2$--knots were constructed as limit sets of (geometrically finite) Kleinian groups.

\subsection{Some history of embedded 2--spheres in 4--space}\label{sec:history}

In 1925, Emil Artin \cite{A} constructed the first knotted 2-spheres in 4-space by spinning a knotted arc $K$, with ends on a  2-plane in $\R^3$, by 360 degrees. During this rotation of $\R^4$ which keeps the plane fixed, the rotated arcs sweep out a 2-sphere. Artin showed that the fundamental group of the complement of this {\em spun 2-knot} $S_K\subset \R^4$ is that of the original arc and hence that 2-knot theory is at least as complicated as classical knot theory. 
\begin{figure}[ht!]
		\centerline{\includegraphics[scale=.275]{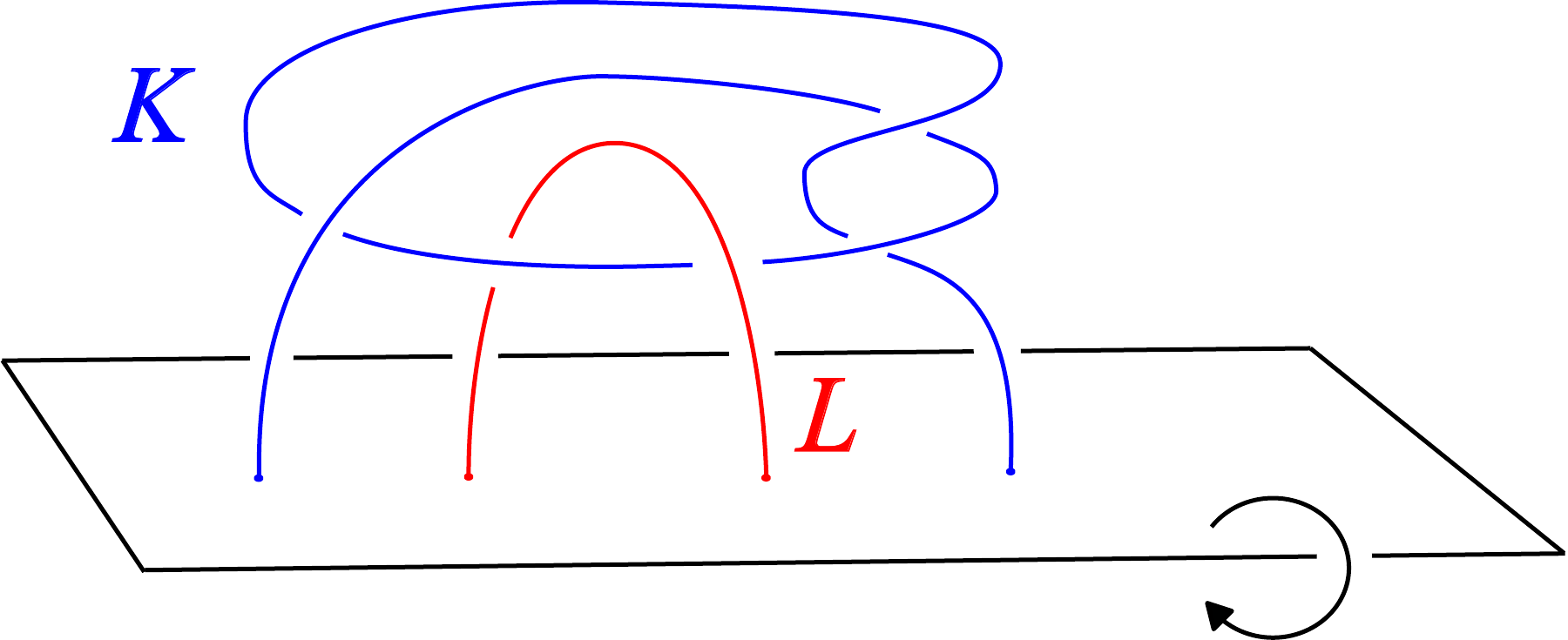}}
         \caption{Artin's spun 2-knot $S_K$ with an Andrews--Curtis 2-sphere $S_L$.}
         \label{fig:spun-2-knots}
\end{figure}

Andrews--Curtis showed in \cite{AC} that the second homotopy group $\pi_2(\R^4 \smallsetminus S_K)$ can be non-trivial. 
In fact, they proved that spinning the arc $L$ in Figure~\ref{fig:spun-2-knots} gives an embedded 2-sphere $S_L$ in the complement of $S_K$ which is not null homotopic. 

Note that since the arc $L$ is unknotted, the spun 2-knot $S_L$ is unknotted as well and hence $\pi_2(\R^4 \smallsetminus S_L)=0$. In particular, $S_K$ shrinks in the complement of $S_L$. By symmetry, this null homotopy can be excluded by introducing a trefoil knot $T$ into $L$ (changing $L$ by a homotopy in the complement of $K$), leading to a new two-component 2-link $(S_K, S_T): S^2 \amalg S^2 \hra \R^4$ such that neither component is null homotopic in the complement of the other.  

However, by Corollary~\ref{cor:main}, this 2-link is homotopically trivial. This can also be seen directly from the construction as follows:
There is a homotopy of $S_T$ in the complement of $S_K$ (leading from the trefoil arc $T$ back to the arc $L$ in Figure~\ref{fig:spun-2-knots}) and then a null homotopy of $S_K$ in the complement of $S_L$. 

The question whether an {\em embedding} of two 2-spheres in $S^4$ is link homotopically trivial
was first posed by Massey and Rolfsen in 1982, see the problem list \cite[p.258]{F}. 
Arthur Bartels and the second author answered this question by proving in  \cite{BT} that an embedded link of
2-spheres in $S^4$ {\em with arbitrarily many components} is link homotopically trivial. 

In 1986, Fenn and Rolfsen \cite{FR} gave the first example of a link map $S^2 \amalg S^2\to S^4$, necessarily with both components non-embedded, which is not link homotopically trivial, Figure~\ref{fig:whitehead-null-htpy}. It is particularly satisfying that 30 years later we find out that this is the free generator $\FR$ in the sense of Theorem~\ref{thm:free}.

\subsection{Related work} The current paper replaces a manuscript that was circulated by the second author in the mid 1990's but was never published. That manuscript contained a gap in the proof of an immersed version of the current Whitney Homotopy in Section~\ref{sec:the-whitney-htpy} which is now filled by an application of Freedman's disk embedding theorem. It remains an open problem whether Freedman's infinite constructions can be replaced by a specific immersed version of our Whitney homotopy. 
 
 All key ideas from the prior manuscript remain central to our current argument.
However, this paper greatly extends the results and most significantly, Theorem~\ref{thm:main} was unknown when the previous manuscript circulated, which contained only Corollary~\ref{cor:main}. 
The new ingredients for proving Theorem~\ref{thm:main} are the manipulations of Whitney arcs and Whitney disks in Section~\ref{sec:thm-main-proof}. 
 We only found these new constructions after developing an obstruction theory for {\em Whitney towers} over the last decade, see e.g.~\cite{CST0,ST1,ST2}.

Completing the history of the topic, we recall that in the paper \cite{L1}, a $\Z/2$-valued invariant to detect nontrivial link maps in the kernel of the Kirk invariants was defined. However, Pilz \cite{Pilz} pointed out a computational error in the examples given in  \cite{L1}. It was shown recently by Lightfoot \cite{Lightfoot} that this invariant vanishes identically on the kernel of the Kirk invariants. In fact, Lightfoot shows that the invariant is defined if $\sigma_1(f_1,f_2)=0$ but is completely determined by $\sigma_2(f_1,f_2)$.


\subsection{Main ideas}\label{subsec:intro-main-ideas}

Our method for proving Theorem~\ref{thm:main} is based on a $4$-dimensional version of an elementary link homotopy in $3$-dimensions
(Figure~\ref{fig:1dim-whitney-homotopy}) as will be explained in Section~\ref{sec:w-spheres-and-homotopy}. The central idea is to consider a {\em Whitney sphere}
corresponding to a (framed embedded) Whitney disk $W$ for a pair of self-intersections of an immersion $f:S^2 \imra X^4$. The Whitney sphere $S_W$ is embedded in the complement  of the image of $f$ and is made out of four copies of the Whitney disk (see \cite[\S 3.1, Ex.(2)]{FQ}, where these spheres were introduced as dual spheres to accessory disks). 

A \emph{Whitney homotopy} is given by shrinking $S_W$ to a point in the complement of $f$ after doing the Whitney move. Then reversing the Whitney move, we obtain a link homotopy
\[
(f,S_W) \simeq (f, *)
\]
in $X$, which we'll refer to as a {\em Whitney homotopy}.

In the case of a link map $(f_1,f_2): S^2 \amalg S^2 \to S^4$, we first arrange for $f_1$ to be in {\em standard position}, see Section~\ref{sec:standard-unlinking}. This means that $f_1$ is obtained from the trivial sphere in $S^4$ by finitely many local finger moves. In particular, the fundamental group of the 4-manifold $S^4 \smallsetminus f_1$ is the (good) group $\Z$, a fact which makes the algebraic calculations tractable, as well as Freedman's disk embedding theorem hold. 

In order to shrink $f_2$ in the complement of $f_1$ using our Whitney homotopy finitely many times, it suffices to write $[f_2]\in \pi_2(S^4 \smallsetminus f_1)$ as a $\Z[\Z]$-linear combination of disjoint Whitney spheres $S_{W_i}$. In Section~\ref{sec:unlinking-theorems}
we'll use Kirby calculus  to get a complete picture of the 4-manifold $S^4 \smallsetminus f_1$, showing in particular that its intersection form is metabolic. 

Our Metabolic Unlinking Theorem~\ref{thm:I2-metabolic} gives necessary and sufficient conditions for $(f_1,f_2)$ to be link homotopically trivial in terms of a {\em metabolic collection} of immersed disks in $S^4 \smallsetminus f_1$, including a ``Lagrangian'' consisting of immersed Whitney disks.

To prove this Unlinking Theorem with our link homotopies shrinking $S_{W_i}$, we need to find sufficiently many disjoint Whitney disks $W_i$ that are framed and embedded. Freedman's disk embedding theorem \cite[\S 5]{FQ} requires a collection of algebraic dual spheres in addition to immersed Whitney disks. Our terminology is set up in a way that this means that one needs a metabolic collection of (Whitney and accessory) disks in $S^4 \smallsetminus f_1$.

In fact, each dual sphere will be constructed out of four copies of an accessory disk as in \cite[\S 3.1]{FQ} - we shall refer to such spheres as {\em accessory spheres}. 
If the Kirk invariants of $(f_1,f_2)$ vanish, the new geometric constructions described in the proof of Proposition~\ref{prop:I-squared} locate such a metabolic collection of disks in $S^4 \smallsetminus f_1$, completing our proof.


\hspace{2mm}

\noindent
{\em Acknowledgements:}  It is a pleasure to thank Tim Cochran and Mike Freedman for valuable discussions on the subject. The second author thanks Colin Rourke for pointing out a gap in his original manuscript. The first author was supported by a Simons Foundation \emph{Collaboration Grant for Mathematicians}, and both authors thank the Max Planck Institute for Mathematics in Bonn, where most of this work was carried out. We also thank the referee for asking about the geometric meaning of the $\Z[z]$-module structure on $\LM_{2,2}^4$, implicit in Theorem~\ref{thm:image}, which lead us to our ultimate formulation in Theorem~\ref{thm:free}.

\tableofcontents


\section{Whitney spheres and Whitney homotopies} \label{sec:w-spheres-and-homotopy}
This section introduces the notion of a \emph{Whitney homotopy}, which will play a key role in our ability to derive geometric conclusions from algebraic data. 
A Whitney homotopy is supported near a framed embedded Whitney disk $W$ on a generic immersion $f:S^2\imra X^4$, and involves shrinking an embedded \emph{Whitney sphere} $S_W$ to a point after performing a Whitney move on $W$, with $S_W$ at all times remaining disjoint from $f$. Since the Whitney disks for our Whitney homotopies will be constructed via Freedman's disk embedding machinery, we work in the (locally flat) topological category throughout this section, invoking the notions of 4-dimensional topological tranversality from \cite[chap.9]{FQ}.

 \begin{figure}[ht!]
         \centerline{\includegraphics[scale=.275]{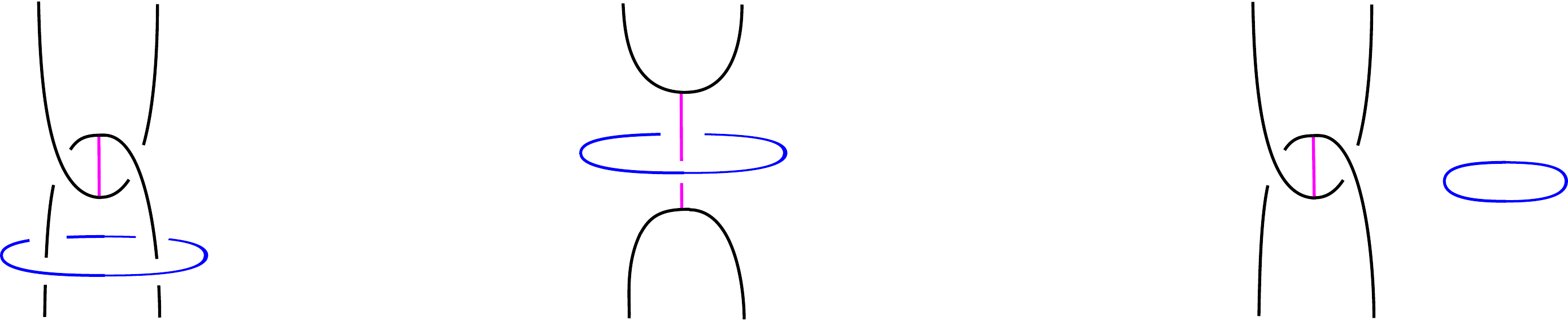}}
         \caption{A $3$-dimensional analogue of the Whitney homotopy.}
         \label{fig:1dim-whitney-homotopy}
\end{figure}

\subsection{A $3$-dimensional elementary homotopy}\label{sec:whitney-htpy-motivation}
As motivation towards a description of the Whitney homotopy of 2-spheres in dimension four, start by considering the analogous homotopy of $1$--manifolds in $3$--space shown in Figure~\ref{fig:1dim-whitney-homotopy}.
The sequence of pictures in Figure~\ref{fig:1dim-whitney-homotopy} describes a $3$-dimensional link homotopy in which the black $1$--manifold comes back to
its original position while the blue circle shrinks to a small unknot. This homotopy keeps black and blue disjoint throughout, and has exactly two singularities, namely the black-black intersections from the un-clasping and re-clasping (guided by the purple arcs).
 \begin{figure}[ht!]
         \centerline{\includegraphics[scale=.275]{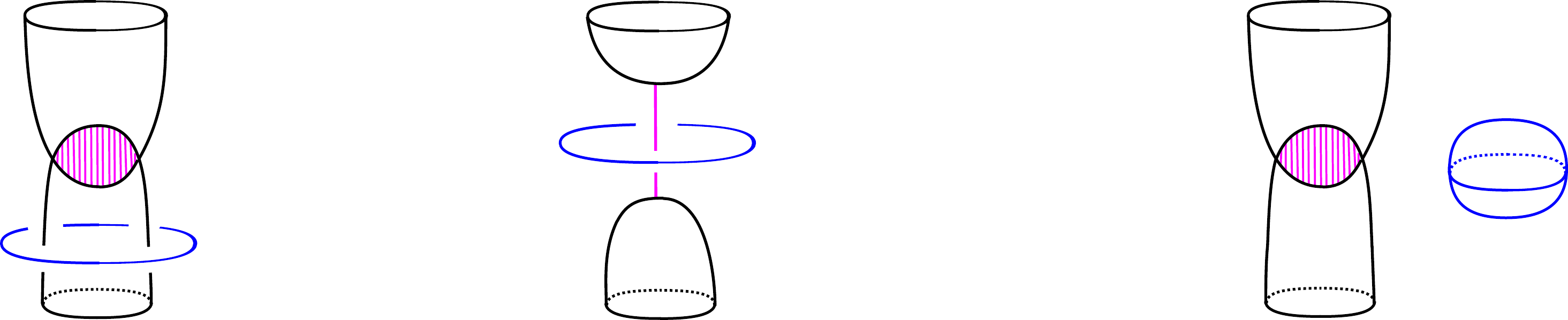}}
         \caption{A schematic illustration of the $4$-dimensional Whitney homotopy.}
         \label{fig:2dim-whitney-homotopy}
\end{figure}

The $4$-dimensional analogue of the link homotopy in Figure~\ref{fig:1dim-whitney-homotopy} is the composition of a Whitney move (``un-clasping'') and its reverse finger move (``re-clasping''). The Whitney move is guided by its Whitney disk $W$ whereas the finger move is again
guided by an arc. Thus the analogue of the blue circle in Figure~\ref{fig:1dim-whitney-homotopy} is an embedded 2--sphere $S_W$ which is the boundary of a 3--ball, normal to
the arc guiding the finger move. We call $S_W$ the {\em Whitney sphere}. Before starting a detailed description of this $4$-dimensional Whitney homotopy, it may be helpful to consider the schematic description shown in Figure~\ref{fig:2dim-whitney-homotopy}.

\subsection{Whitney moves and Whitney bubbles}\label{sec:embedded-whitney-move}
\begin{figure}[ht!]
         \centerline{\includegraphics[scale=.4]{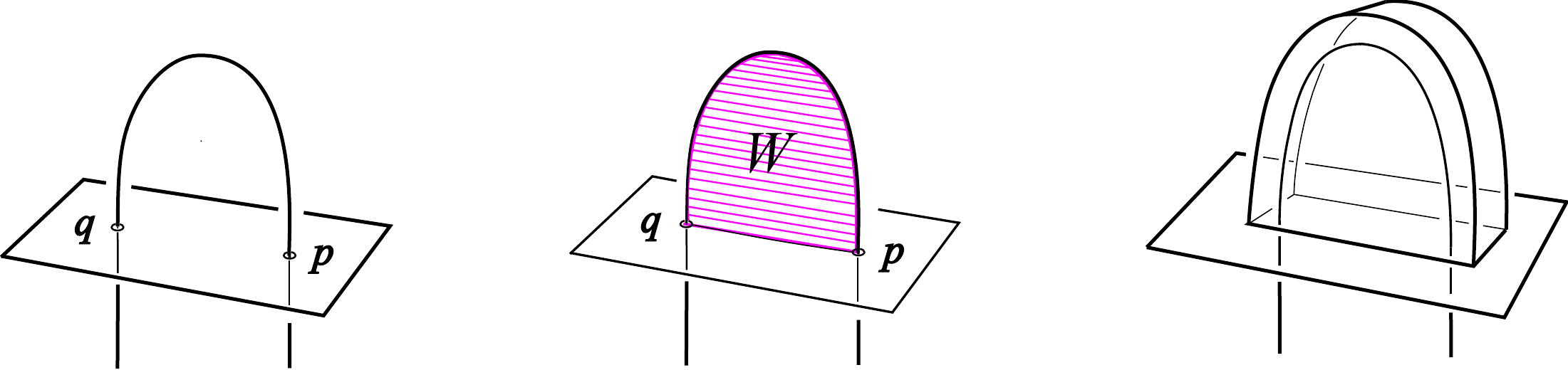}}
         \caption{The pair of intersections $p,q$ (left) admits a purple Whitney disk $W$ (center) which guides a Whitney move eliminating $p,q$ by adding a \emph{Whitney bubble} to the horizontal sheet (right).}
         \label{fig:W-disk-and-W-move-color}
\end{figure}

Figure~\ref{fig:W-disk-and-W-move-color} describes a model \emph{Whitney move} in a $4$--ball neighborhood of a \emph{Whitney disk} $W$: A pair of oppositely-signed transverse intersections $p$ and $q$ between two surface sheets is eliminated by a homotopy which isotopes one of the sheets across $W$. By convention, we write our 4-ball $D^4=D^3 \times D^1$ and think of the $D^1$-coordinate as {\it time}. Each of the three pictures in Figure~\ref{fig:W-disk-and-W-move-color} completely describes certain surfaces in this $D^4$: The horizontal sheet is completely contained in the present $D^3 \times 0$, while the curved arc extends into past $D^3 \times [-1,0)$ and future $D^3\times (0,1]$ (in the center picture, the Whitney disk $W$ is also in the present). In Figure~\ref{fig:W-disk-and-W-move-color} these extensions into past and future are understood but not shown, however 
in many cases our figures
will explicitly show several pictures of varying time-coordinates to describe surfaces via a ``movie'' (e.g. Figure~\ref{fig:whitehead-null-htpy} in the introduction and
Figures~\ref{fig:Whitney-sphere-for-elementary-homotopy}~and~\ref{fig:Whitney-sphere-and-ball-null-homotopy} just below).

The Whitney move has an additional homotopy parameter $s\in I$ which satisfies $s=0$ in the center picture of Figure~\ref{fig:W-disk-and-W-move-color} and $s=1$ in the right picture. For $0<s<1$, part of the horizontal sheet near the Whitney disk boundary moves upwards along $W$ until it reaches the ``bubble position'' on the right where it is disjoint from the other sheet. 
This \emph{Whitney bubble} has the same boundary as the original horizontal sheet and consists of two parallel copies of $W$ joined by a narrow curved rectangle shown in the right picture of Figure~\ref{fig:W-disk-and-W-move-color}.

In general, a Whitney move is described by any embedding of this model into a $4$--manifold which preserves the product structures and transversality of the sheets and $W$.
All Whitney disks in this section are embedded, and the reader may notice that they are also \emph{framed} which corresponds to the disjointness of parallel copies, as used in the Whitney bubble. 
Details on Whitney disks and Whitney moves are given in Section~\ref{sec:appendix}.

\subsection{Whitney spheres}\label{sec:embedded-w-spheres}
A \emph{Whitney sphere} $S_W$ is formed from a Whitney disk $W$ by joining together the boundaries of two parallel copies of Whitney bubbles by an annulus as illustrated in Figure~\ref{fig:Whitney-sphere-for-elementary-homotopy}. 
This annulus consists of an interval's worth of normal parallel copies of the boundary circle of a disk neighborhood around an arc of $\partial W$ that would be exchanged for a Whitney bubble under a Whitney move along $W$ (in Figure~\ref{fig:Whitney-sphere-for-elementary-homotopy} these circles are rectangles).
By construction $S_W$ is contained in an arbitrarily small $4$--ball neighborhood of $W$, and $S_W$ contains four parallel copies of $W$ (two from each Whitney bubble).  
Note that in our movie convention, $S_W$ extends into past (left) and future (right) in Figure~\ref{fig:Whitney-sphere-for-elementary-homotopy}, while $W$ and the horizontal sheet are contained in the present (center).        

\begin{figure}[ht!]
         \centerline{\includegraphics[scale=.35]{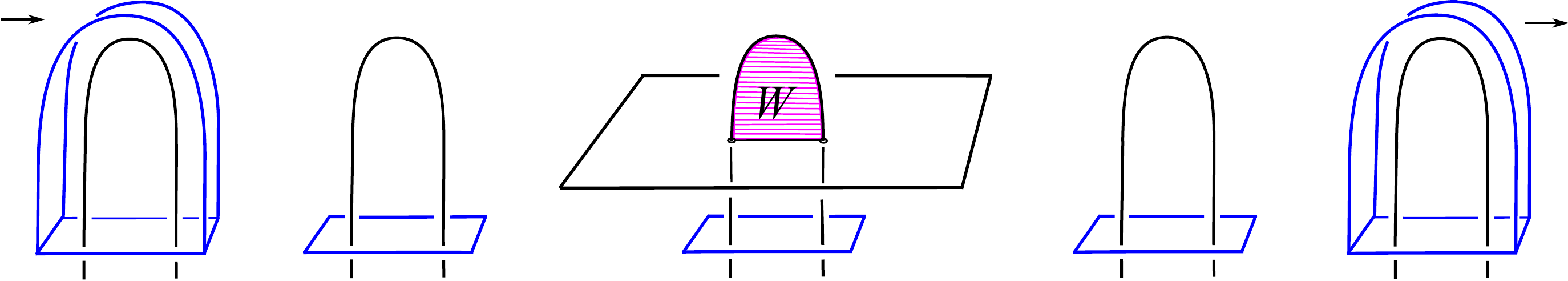}}
         \caption{A movie describes the blue \emph{Whitney sphere} $S_W$ associated to $W$.}
         \label{fig:Whitney-sphere-for-elementary-homotopy}
\end{figure}
         
Note that the $4$--ball described by the entire movie of Figure~\ref{fig:Whitney-sphere-for-elementary-homotopy} is the accurate description of just the left-most schematic picture in Figure~\ref{fig:2dim-whitney-homotopy}.

\subsection{The Whitney homotopy}\label{sec:the-whitney-htpy}

Let $W$ be a framed embedded Whitney disk on a generic map $f:F^2\imra X^4$, such that the interior of $W$ is disjoint from the image of $f$. Then the Whitney sphere $S_W$ from the previous section is contained in a regular neighborhood of $W$ and is embedded
in $X^4 \smallsetminus \im(f)$. Moreover, the link map $(f,S_W)$ is link homotopic to $(f,*)$ by a link homotopy which is supported in any  neighborhood of $W$ as described in Figure~\ref{fig:Whitney-sphere-and-ball-null-homotopy}. It shows that after a $W$-move on $f$ there is a $3$--ball in the complement of $f$ which is bounded by $S_W$. In our coordinates $D^3 \times D^1$, this 3--ball is simply the Whitney bubble cross time $D^1$. 

We remark that, in direct analogy with the $3$-dimensional link homotopy of Figure~\ref{fig:1dim-whitney-homotopy},
after ``unclasping'' $f$ by the Whitney move and shrinking $S_W$ to a point in $X^4\setminus f$ one could ``reclasp'' $f$ back to its original position via a finger move guided by the purple arc in the middle picture of Figure~\ref{fig:Whitney-sphere-and-ball-null-homotopy}. 

\begin{figure}[ht!]
         \centerline{\includegraphics[scale=.35]{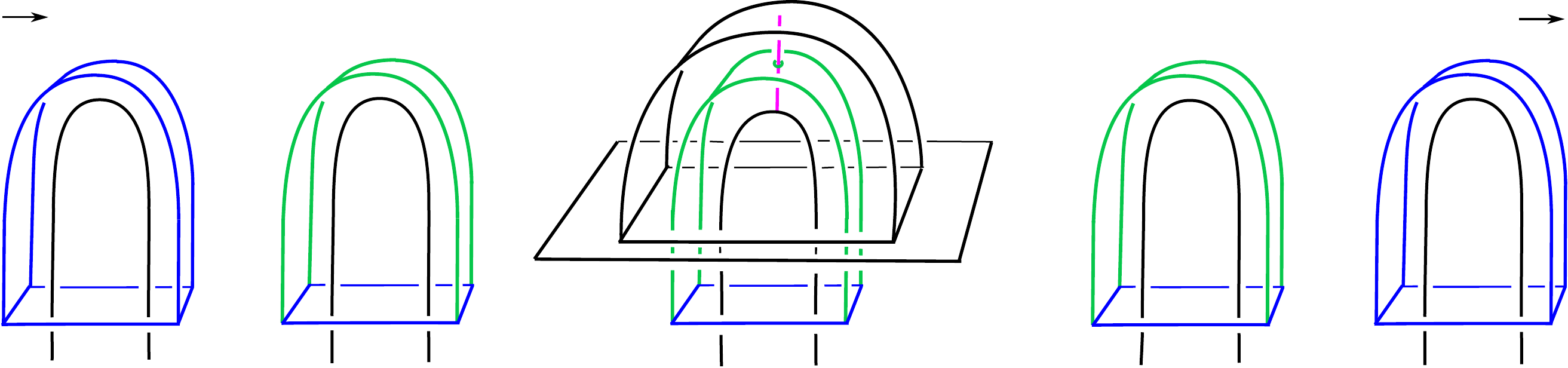}}
         \caption{After the Whitney move, the Whitney sphere $S_W$ from Figure~\ref{fig:Whitney-sphere-for-elementary-homotopy} bounds the green $3$--ball given by the Whitney bubble $ \times\ D^1$.}
         \label{fig:Whitney-sphere-and-ball-null-homotopy}
\end{figure}

If ${\#}S_W$ denotes a connected sum of parallel copies of $S_W$ along arbitrary tubes, then $(f,{\#}S_W)$ is also link homotopic to $(f,*)$: There are $k$ parallels of the green $3$--balls in the complement of $f$ that can be connected by arbitrary arcs, thickened by $D^2$ normal subbundles. The boundary of the resulting 3--ball is by construction the tube sum ${\#}S_W$.

More generally, if $W_i$ are disjointly embedded framed Whitney disks then any $\Z\pi_1X$-linear combination $C:=\sum_i \lambda_i S_{W_i}$, i.e. iterated tubing of copies of the Whitney spheres, leads to a link homotopy of $(f,C)$ with $(f,*)$. The disjointness of $W_i$ is needed since we're doing Whitney moves on $f$ as part of the Whitney homotopy and these have to be disjoint from the Whitney spheres in a link homotopy.

\section{Link maps in standard position}\label{sec:standard-position}
Let $(f_1,f_2): S^2 \amalg S^2 \to S^4$ be a link map. After a small perturbation (which is a link homotopy) we may assume that both $f_i$ are immersions with transverse double points. Furthermore, we may perform local cusp homotopies (again keeping
$f_i $ disjoint) to get the signed sum of the self-intersection points of each $f_i$ to be zero.

{\em From now on, we will always assume that our maps $f:S^2\imra X^4$ are immersions with only transverse self-intersections whose signed sum is zero. For such  maps, regular homotopy agrees with homotopy because cusps can be cancelled in pairs}.

It follows that Wall's self-intersection invariant $\mu(f) \in \Z[\pi_1X]/ \langle g - g^{-1} \rangle$ always satisfies $\e(\mu(f))=0$ where $\e:\Z[\pi_1X]\to\Z$ is the augmentation map which sends all group elements $g$ to $1$ (and hence $\e(\mu(f))$ counts all double points with sign, ignoring group elements). This normalization makes $\mu$ into a homotopy (rather than regular homotopy) invariant which comes about as follows: In general position, a regular homotopy between two maps as above, is the composition of finitely many finger moves and then finitely many Whitney moves (up to isotopy). 

\begin{rem}\label{rem:mu}
If $X=S^4$ then $\pi_2(S^4)=0$ implies that we may perform finger moves on $f_1$ (disjoint from $f_2$ by general position) to get $f$ for
which some Whitney moves lead to the standard unknotted embedding $f_0:S^2\subset S^4$. Conversely, $f$ is obtained from the unknot $f_0$ by finitely many finger moves. 

Moreover, Wall's formula mentioned in the introduction reduces to
\[
\lambda(f_1,f_1) = \mu(f_1) +\iota\mu(f_1)
\]
for $f_1:S^2\imra S^4\smallsetminus f_2$ because $\pi_2(S^4)=0$ also implies $\e(\lambda(f_1,f_1))=0$ and hence the term involving the normal Euler number vanishes. In fact, this Euler number equals the signed sum of double points for generic maps in $S^4$.
\end{rem}

\begin{defi}\label{def:standard position}
A map $f:S^2\imra S^4$ which is obtained from the {\em unknot} $f_0$ by finitely many finger moves is said to be in {\em standard
position}. Here $f_0$ is an unknotted embedding $f_0:S^2\subset \R^3\subset S^4$. Moreover, any collection of
disjointly embedded Whitney disks for $f$ which lead back to $f_0$ is called a {\em standard collection} of Whitney disks.
\end{defi}
The above discussion proves the following:
\begin{lem}\label{lem:standard}
Any link map $(f_1,f_2): S^2 \amalg S^2 \to S^4$ is link homotopic to one in which both components are in standard position. In particular, the complements $S^4 \smallsetminus f_i$ have all algebraic properties listed in Lemma~\ref{lem:algtop} below.
\end{lem}

\subsection{Some algebraic topology}
We'll next discuss the algebraic topology of the complement of a map $f:S^2\imra S^4$ in standard position.
Along with the Whitney spheres from Section~\ref{sec:w-spheres-and-homotopy}, the following lemma involves \emph{accessory spheres},
which are introduced in the proof and described in further detail in Section~\ref{sec:accessory-sphere-revisit}. Just as a Whitney sphere is constructed from four copies of a Whitney disk, an accessory sphere is constructed from four copies of an \emph{accessory disk} bounded by a sheet-changing loop in $f$ which passes through a self-intersection point.

\begin{lem} \label{lem:algtop}
Let $W_1,\dots, W_n$ be a standard collection of Whitney disks for $f:S^2\imra S^4$ in standard position. Then
the compact $4$--manifold
$$
M:= S^4 \smallsetminus \text {tubular neighborhood of $f$}
$$
has the following algebraic topological properties:
\begin{enumerate}
\item\label{item:pi1-Z} 
$\pi_1M$ is infinite cyclic, generated by a meridian $x$. Set $\Lambda:=\Z[x^{\pm 1}]$.
\item\label{item:pi2-free-rank-2n}  
$\pi_2M$ is a free $\Lambda$-module of rank $2n$ with a basis of the form
$$
\{S_{W_1},\dots,S_{W_n},S_{A_1},\dots, S_{A_n}\}
$$
where $S_{W_i}$ are the Whitney spheres corresponding to the Whitney disks $W_i$ and the $S_{A_i}$ are {\em accessory spheres} corresponding to positive {\em accessory disks} $A_i$. 
\item\label{item:pi2-metabolic}  
Wall's intersection form
$\lambda:\pi_2M \times\pi_2M \to \Lambda$
is {\em metabolic} in the sense that
\[
\lambda(S_{W_i},S_{W_j})=0  \quad \text{ and } \quad
\lambda(S_{A_i},S_{A_j})=z\cdot \delta_{ij}=\lambda(S_{W_i},S_{A_j}). 
\]
\item\label{item:W-SES}  
Relative intersection numbers with the Whitney disks $W_i$ induce a short exact sequence of free
$\Lambda$-modules
$$
\begin{diagram}
\node{0} \arrow{e} \node{\langle S_{W_i} \rangle}
 \arrow{e} \node{\pi_2M} \arrow{e,t}{ \lambda(W_i,-)} \node{\Lambda^n}
\arrow{e} \node{0}
\end{diagram}
$$
\end{enumerate}
\end{lem}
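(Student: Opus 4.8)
The plan is to prove Lemma~\ref{lem:algtop} by an explicit Kirby-calculus analysis of $M$. First I would produce a handle decomposition of $M = S^4\smallsetminus\nu(f)$ directly from the finger-move description of $f$. Since $f$ is in standard position, it is obtained from the standard unknotted $f_0$ by $n$ finger moves; dually, $f_0$ is recovered by Whitney moves along the standard collection $W_1,\dots,W_n$. Each finger move replaces a trivial arc by a clasp, and in the complement this has the effect of adding a dual pair of handles. Concretely, $S^4\smallsetminus\nu(f_0) \cong S^1\times D^3$, which has one $0$-handle and one $1$-handle; I expect the complement of each finger-moved $f$ to have a Kirby diagram consisting of a single dotted $1$-handle circle (carrying the meridian generator $x$) together with $n$ pairs of $2$- and $3$-handles arising from the $n$ finger moves — one $2$-handle linking the dotted circle in the pattern dictated by the clasp (this is where the factor $(1-x)$ enters) and a cancelling $3$-handle. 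This immediately gives item~\eqref{item:pi1-Z}: $\pi_1 M = \Z\langle x\rangle$ and $\Lambda = \Z[x^{\pm1}]$.

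For item~\eqref{item:pi2-free-rank-2n}, I would compute $\pi_2 M$ as the second homology of the universal cover via the equivariant (Morse/cellular) chain complex read off from the Kirby diagram. With $n$ two-handles and $n$ cancelling three-handles over a $\Lambda$ with no higher $K$-theory obstructions, $\pi_2 M \cong H_2(\widetilde M) $ should come out a free $\Lambda$-module of rank $2n$. The key geometric input is then to identify an explicit basis: I claim the $2n$ classes $\{S_{W_i}, S_{A_i}\}$ — Whitney spheres attached to the standard Whitney disks and accessory spheres attached to the positive accessory disks at the $n$ double points created by the finger moves — are precisely such a basis. One checks they generate (each $2$-handle core, capped using the $3$-handle, is visibly a Whitney or accessory sphere up to the action of $\Z$) and that the change-of-basis matrix to the handle basis is invertible over $\Lambda$; alternatively one verifies generation and then notes that $2n$ elements generating a free rank-$2n$ module over $\Lambda=\Z[\Z]$ (a ring over which stably free implies free, and where Hopfian-type arguments apply) must be a basis.

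Item~\eqref{item:pi2-metabolic} is then a local computation. The intersection number $\lambda(S_{W_i},S_{W_j})$ vanishes because the Whitney spheres are supported in disjoint $4$-ball neighborhoods of the disjoint Whitney disks $W_i$, and within one such ball $S_{W_i}$ is null-homotopic after the $W_i$-move (Lemma~\ref{lem:useless}), so it has trivial self-intersection too; this forces $\lambda(S_{W_i},S_{W_j})=0$ for all $i,j$. For $\lambda(S_{A_i},S_{A_j})$ and $\lambda(S_{W_i},S_{A_j})$ with $i\neq j$, disjoint supports again give $0$. The diagonal values $\lambda(S_{A_i},S_{A_i}) = z$ and $\lambda(S_{W_i},S_{A_i}) = z$ are extracted from the standard local model of a single finger move: the accessory disk is bounded by a sheet-changing loop through the double point $p_i$, and pushing off four parallel copies produces exactly two geometric intersection points with a parallel accessory/Whitney sphere, carrying group elements $1, x, x^{-1}$ with the appropriate signs, whose sum is $(1-x)+(1-x^{-1})= z$. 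Hermitian symmetry of $\lambda$ and $\iota$-invariance of $z$ make this consistent. Since the form has matrix $\left(\begin{smallmatrix}0 & zI\\ zI & zI\end{smallmatrix}\right)$ in this basis, which after the change of basis $S_{A_i}\mapsto S_{A_i}-S_{W_i}$ becomes $\left(\begin{smallmatrix}0 & zI\\ zI & 0\end{smallmatrix}\right)$, the form is visibly metabolic with metabolizer $\langle S_{W_i}\rangle$.

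Finally, item~\eqref{item:W-SES}: the map $\lambda(W_i,-)\colon \pi_2 M\to \Lambda^n$ sends $S_{W_j}\mapsto 0$ (the $W_i$ are disjoint from the $S_{W_j}$, which live in balls where they die) and $S_{A_j}\mapsto z\,\delta_{ij}$ — wait, that would only have image $z\Lambda^n$, so instead the correct normalization is that relative intersection with the embedded Whitney disk $W_i$ (not its boundary-doubled sphere) hits the accessory sphere $S_{A_i}$ once with coefficient a unit, giving surjectivity onto $\Lambda^n$. Granting that computation, exactness on the left is the statement that $\langle S_{W_i}\rangle$ is exactly the kernel, which follows since $\{S_{W_i},S_{A_i}\}$ is a $\Lambda$-basis and $\lambda(W_i,-)$ is an isomorphism on the $\langle S_{A_i}\rangle$ summand. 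I expect the \textbf{main obstacle} to be the bookkeeping in the Kirby diagram — correctly tracking framings, the dotted-circle linking patterns produced by iterated finger moves, and verifying that the $3$-handles cancel the right $2$-handles so that the chain complex gives free rank exactly $2n$ — together with pinning down the precise local models for the accessory disks so that the intersection numbers come out to exactly $z$ (and the relative numbers $\lambda(W_i,-)$ to exactly a unit) rather than some associate. The homotopy-theoretic uniqueness of $S_W$ needed to make ``the'' Whitney sphere well-defined is deferred to Lemma~\ref{lem:W-sphere-homotopy-def}, so I would cite that rather than reprove it here.
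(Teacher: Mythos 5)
Your overall strategy (Kirby diagram for $M$ built from the finger-move description, then explicit spheres as a basis and local intersection computations) is the same as the paper's, but the handle decomposition you posit is wrong, and this breaks the central claim of item~(2). Each finger move adds a \emph{pair of $2$-handles} to the complement and no $3$-handles at all: the correct diagram is one dotted circle plus $2n$ zero-framed $2$-handles whose attaching circles are null-homotopic in the complement of the dotted circle, whence $M\simeq S^1\vee\bigvee^{2n}S^2$ and $\pi_2M$ is free of rank $2n$. A quick sanity check: $\chi(M)=2n$ (the image of $f$ has $\chi=2-2n$), whereas your proposed decomposition with $n$ two-handles and $n$ three-handles gives $\chi(M)=0$. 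Worse, with only $n$ generators in dimension $2$ the equivariant chain complex of $\widetilde M$ can never produce $H_2(\widetilde M)$ free of rank $2n$ (its rank is at most $n$), and if the $3$-handles really ``cancel'' the $2$-handles you would get $\pi_2M=0$; so the sentence deducing rank $2n$ from your complex is internally inconsistent. Since the identification of $\{S_{W_i},S_{A_i}\}$ as a basis, the metabolic form, and the exact sequence in item~(4) all rest on this rank computation, the gap is essential rather than cosmetic. Relatedly, your expectation that the clasp produces a ``$(1-x)$ linking pattern'' of the $2$-handles with the dotted circle points in the wrong direction: the attaching circles are null-homotopic in $S^1\times D^3$ (that is exactly why $\pi_2M$ is free of full rank), and the factors $z=(1-x)(1-x^{-1})$ enter only later, through the group elements at the intersections and self-intersections of the accessory and Whitney spheres.

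Two smaller points. In the correct picture the basis spheres are not ``$2$-handle cores capped by $3$-handles'' (there are no $3$-handles); they are cores of the $2$-handles union explicit disks/null-homotopies of their attaching circles, with the $S_{W_i}$ obtained after a handle slide pairing up the green circles. And your justification that $\lambda(S_{W_i},S_{W_i})=0$ via null-homotopy ``after the $W_i$-move'' is not quite right as stated, since that null-homotopy takes place in the complement of the \emph{modified} $f$, not in $M$; the clean reason is that the $S_{W_i}$ are disjointly embedded with trivial normal bundle (the normal Euler number vanishes because $H_2(S^4)=0$), so both the off-diagonal and diagonal terms vanish. Your treatment of item~(4), including the self-correction that $\lambda(W_i,S_{A_j})$ should be the unit $\delta_{ij}$ rather than $z\delta_{ij}$, does agree with the paper once the basis statement is in place.
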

Here $z:=2-x-x^{-1}$, as in the introduction.
\begin{proof}
By assumption, $f$ is obtained from $f_0$ by $n$ finger moves which are the inverses of the Whitney moves along $W_i$.
The isotopy class of any finger move only depends on the isotopy class of its
guiding arc which is unchanged by adding a meridian to the sphere.  As a consequence, all fingers moves on the unknot $f_0$ are isotopic. Using standard Kirby calculus techniques (e.g.~Chapter 6 of \cite{GS}, in particular \cite[Fig. 6.27]{GS}) a handle diagram for $M$ can be derived as in the left-hand side of 
Figure~\ref{standard-sphere-Kirby-diagram-3}. The dotted circle represents a 1-handle, and together with a 0-handle it forms the 4-manifold $S^1 \times D^3$ which is the complement of $f_0$. Each finger move then adds a pair of (green) 2-handles, so $M$ contains no 3- or 4-handles. 
\begin{figure}[ht!]
         \centerline{\includegraphics[scale=.25]{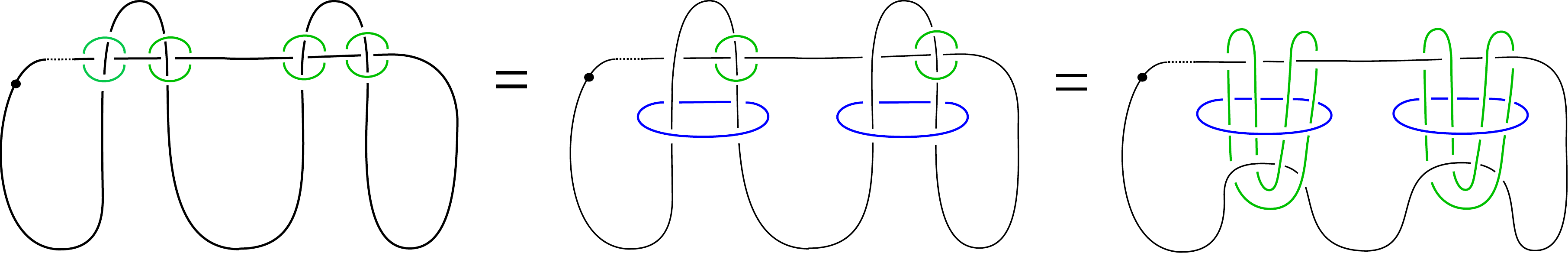}}
         \caption{Kirby diagrams for  $M$: The dotted black unknot represents a $1$-handle, and $0$-framed $2$-handles are attached to green and blue circles.}
         \label{standard-sphere-Kirby-diagram-3}
\end{figure}

Since the $2$-handles' attaching circles are null homotopic, it follows that
\[
M \simeq S^1 \vee \bigvee^{2n} S^2.
\]
So $\pi_1M = \langle x \rangle \cong \Z$, where $x$ is a meridian to the 1-handle, and $\pi_2M$ is a free $\Lambda$-module of rank $2n$.
Sliding one of each pair of green 2-handles in the left-hand diagram of Figure~\ref{standard-sphere-Kirby-diagram-3} over the other yields the blue 2-handle attaching circles in the center diagram. After an isotopy to get the right-hand diagram, the unions of cores of the $2$-handles attached to the blue circles with the evident disks bounded by these attaching circles form $n$ generators $S_{W_i}$, which will be shown in Section~\ref{sec:compare-w-spheres} to be Whitney spheres as constructed in Section~\ref{sec:embedded-w-spheres}.

Figure~\ref{standard-sphere-Kirby-diagram-4} shows an isotopy of Kirby diagrams, starting from the left side of Figure~\ref{standard-sphere-Kirby-diagram-3}. The diagram on the right gives the construction of positive and negative \emph{accessory spheres} $S_{A^\pm_i}$: They are formed from cores of the $2$-handles together with null-homotopies of their green attaching circles. The evident $\mp$-crossing change in each green circle corresponds to a $\mp$-self-intersection of $S_{A_i^\pm}$ whose associated group element is the generator $x$.
For Lemma~\ref{lem:algtop} we only use the $n$ positive accessory spheres $S_{A_i}=S_{A^+_i}$.
\begin{figure}[ht!]
         \centerline{\includegraphics[scale=.25]{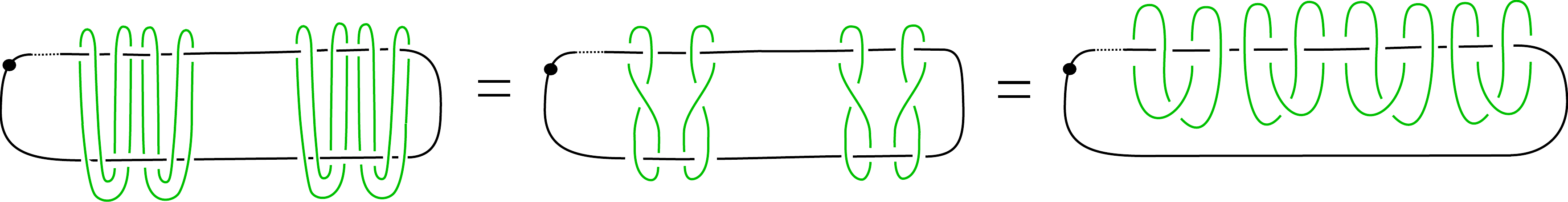}}
         \caption{Kirby diagrams isotopic to the left picture of Figure~\ref{standard-sphere-Kirby-diagram-3}, with the accessory spheres $S_{A_i^\pm}$ visible on the right.}
         \label{standard-sphere-Kirby-diagram-4}
\end{figure}

By construction it is clear that the $S_{W_i}$ are disjointly embedded in $M$, and that their intersection numbers with $S_{A_j}$ are
as claimed: These spheres are disjoint for $i\neq j$, and for $i=j$ they intersect in precisely four points (two oppositely-signed pairs) as seen in the right-most picture of Figure~\ref{standard-sphere-Kirby-diagram-3}.
Choosing orientations and basings appropriately, one can check from Figure~\ref{standard-sphere-Kirby-diagram-3} that $\lambda(S_{W_i},S_{A_i})=2-x-x^{-1}=z$. 	

Again by construction, the spheres $S_{A_i}$ are disjoint from each other and each have one negatively-signed self-intersection with a meridional double point loop. By our convention of making the signed sum of double points zero, we add another positively-signed self-intersection with trivial group element. The self-intersection invariant $\lambda(S_{A_i},S_{A_i})$ is computed from intersections between $S_{A_i}$ and a pushoff, and it
follows that it equals to $2-x-x^{-1}=z$.

To prove statement~\ref{item:pi2-metabolic} note that the Whitney disks $W_i$ are disjoint from
$S_{W_j}$ and have geometric intersections $\delta_{ij}$ with $S_{A_j}$. This can be read off from the center picture of Figure~\ref{standard-sphere-Kirby-diagram-3} (see also Section~\ref{sec:accessory-sphere-revisit}), and implies statement~\ref{item:W-SES}.
\end{proof}

For future reference, we note that the right-most picture of Figure~\ref{standard-sphere-Kirby-diagram-4}
implies the following intersection information about accessory
spheres:
\begin{lem}\label{lem:accessory-sphere-basis}
The set of positive and negative accessory spheres $S_{A^\pm_i}$ forms a basis of disjointly immersed 2-spheres for $\pi_2M$ with
$\lambda(S_{A^+_i},S_{A^+_j})= z\cdot \delta_{ij}=-\lambda(S_{A^-_i},S_{A^-_j})$. \hfill$\square$
\end{lem}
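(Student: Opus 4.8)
\emph{Proof proposal.} The plan is to bootstrap from Lemma~\ref{lem:algtop}, which already presents $\pi_2 M$ as a free $\Lambda$-module of rank $2n$ with basis $\{S_{W_1},\dots,S_{W_n},S_{A^+_1},\dots,S_{A^+_n}\}$. Since the family $\{S_{A^\pm_i}\}_{i=1}^n$ also has $2n$ members, it suffices to verify three things: that these $2n$ spheres are represented by pairwise disjoint generic immersions; that they span $\pi_2 M$ (equivalently, that the transition matrix from the Lemma~\ref{lem:algtop} basis is invertible over $\Lambda$); and that the diagonal self-intersection numbers are $z$ and $-z$ as claimed, the off-diagonal terms $\lambda(S_{A^\pm_i},S_{A^\pm_j})$ with $i\neq j$ then vanishing automatically from disjointness.

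First I would extract the disjointness and immersedness from the right-hand picture of Figure~\ref{standard-sphere-Kirby-diagram-4}. Each $S_{A^\pm_i}$ is built from the core of one of the $2n$ green $2$-handles (two per finger move) together with a singular null-homotopy of that handle's attaching circle, the null-homotopy being the trace of a single crossing change. Distinct green $2$-handles have disjoint cores, and the crossing changes can be arranged in disjoint $3$-balls, so the $2n$ spheres have pairwise disjoint images. Our normal-bundle convention then adds to each $S_{A^\pm_i}$ a cancelling self-intersection with trivial group element, exactly as in the proof of Lemma~\ref{lem:algtop}, so each $S_{A^\pm_i}$ is a generic immersion with precisely two self-intersection points.

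Next I would identify the classes $[S_{A^\pm_i}]\in\pi_2 M$. Since $M\simeq S^1\vee\bigvee^{2n}S^2$ with the wedge summands carried by the $2n$ green $2$-handle cores (capped by the evident Seifert disks of their attaching circles), and since a crossing-change null-homotopy disk agrees rel boundary with such a Seifert disk up to a correction lying in the span of the Lemma~\ref{lem:algtop} basis, the transition matrix from $\{S_{W_i},S_{A^+_i}\}$ to $\{S_{A^+_i},S_{A^-_i}\}$ splits into $2\times 2$ blocks, one for each finger move, each of which is read off Figures~\ref{standard-sphere-Kirby-diagram-3} and~\ref{standard-sphere-Kirby-diagram-4} to be unimodular (concretely, one checks that $S_{W_i}=S_{A^+_i}-S_{A^-_i}$ in $\pi_2 M$ up to sign and a power of $x$). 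Hence $\{S_{A^\pm_i}\}$ is a basis. The intersection numbers then come for free: $\lambda(S_{A^+_i},S_{A^+_j})=z\cdot\delta_{ij}$ is literally the computation in the proof of Lemma~\ref{lem:algtop}, and $\lambda(S_{A^-_i},S_{A^-_j})=-z\cdot\delta_{ij}$ follows from the same computation performed with the crossing change of opposite sign: the essential self-intersection of $S_{A^-_i}$ carries the opposite sign, so $\mu(S_{A^-_i})=x-1$ rather than $1-x$, and the constraint that the coefficient sum equal the (vanishing) homological self-intersection number of a $2$-sphere in $S^4$ pins the value down to $-(2-x-x^{-1})=-z$.

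The step I expect to be the main obstacle is the verification that the transition matrix is unimodular, i.e.\ making rigorous the relation between the crossing-change traces used to cap off the $S_{A^\pm_i}$ and the Seifert disks implicit in the wedge-of-spheres model of $M$; equivalently, confirming $S_{W_i}=\pm x^{k_i}(S_{A^+_i}-S_{A^-_i})$ in $\pi_2 M$ directly from the handle slides of Figure~\ref{standard-sphere-Kirby-diagram-3}. All of the relevant geometry is supported in a standard finite Kirby diagram, so I expect this to be careful bookkeeping — tracing through Figures~\ref{standard-sphere-Kirby-diagram-3} and~\ref{standard-sphere-Kirby-diagram-4} together with the discussion in Section~\ref{subsubsec:accessory-sphere-revisit} — rather than a genuine difficulty.
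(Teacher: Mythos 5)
Your proposal is correct, but it is organized differently from what the paper does, and the difference is worth noting. The paper offers no written proof at all: it treats Lemma~\ref{lem:accessory-sphere-basis} as immediate from the right-most picture of Figure~\ref{standard-sphere-Kirby-diagram-4}, because that diagram (isotopic to the left of Figure~\ref{standard-sphere-Kirby-diagram-3}, i.e.\ before any handle slides) exhibits all $2n$ green $2$-handles, and the $S_{A^\pm_i}$ are exactly ``core of a $2$-handle capped by a null-homotopy of its attaching circle''. Since the attaching circles are null-homotopic in the complement $S^1\times D^3$ of $f_0$, which has trivial $\pi_2$, the resulting class is independent of the chosen null-homotopy, and one such sphere per $2$-handle is automatically a free $\Lambda$-basis by the wedge decomposition $M\simeq S^1\vee\bigvee^{2n}S^2$ established in the proof of Lemma~\ref{lem:algtop}; the intersection numbers are the computation already carried out there, with the opposite-sign crossing change giving $\lambda(S_{A^-_i},S_{A^-_i})=-z$, exactly as you compute. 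Your route instead bootstraps from the \emph{slid} basis $\{S_{W_i},S_{A^+_i}\}$ of Lemma~\ref{lem:algtop} and verifies invertibility of a transition matrix via the relation $S_{W_i}=S_{A^+_i}-u_i\,S_{A^-_i}$ (unit $u_i$), which is consistent with the intersection data and does follow from the handle slide; but this makes your self-identified ``main obstacle'' do work that the direct identification renders unnecessary. Note also that the clean justification for your ``agrees rel boundary up to a correction'' step is precisely the $\pi_2(S^1\times D^3)=0$ observation: two null-homotopies of a green attaching circle taken inside the $1$-handlebody part differ by a trivial element, so no correction terms in the span of the old basis ever need to be tracked. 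What your route buys is the explicit change-of-basis formula relating Whitney spheres to accessory spheres, which is mildly informative; what the paper's route buys is that the lemma requires no argument beyond the diagram.
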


\subsection{Kirk's invariant in standard position}\label{subsec:Kirk-invariant-in-standard-position}
Our descriptions of the complement of a standard position sphere provide the following convenient computation of Kirk's invariants. 

Suppose $(f_1,f_2): S^2 \amalg S^2 \to S^4$ is a link map with $f_1$ in standard position. Then  $f_2$ is homotopic in $S^4\smallsetminus f_1$ to $\sum_i \alpha_i^+\cdot S_{A_i^+}+\alpha_i^-\cdot S_{A_i^-}$ for accessory spheres $S_{A_i^\pm}$ as in Lemma~\ref{lem:accessory-sphere-basis} and
$\alpha_i^+,\alpha_i^-\in\Z[x^{\pm 1}]$.
It follows that 
\[
\sigma_2(f_1,f_2)=\lambda(f_2,f_2)=\sum_i(\alpha^+_i\cdot \iota \alpha^+_i-\alpha^-_i\cdot \iota \alpha^-_i)\cdot z,
\]
 for $\iota$ the  involution on the group ring determined by $\iota(x)=x^{-1}$. 
 
 To compute $\sigma_1(f_1,f_2)$ we first show that Wall's self-intersection invariant satisfies
 \[
 \mu(f_1) = \sum_i x^{\epsilon(\alpha^+_i)} - x^{\epsilon(\alpha^-_i)}.
 \]
This sum contains one contribution for each self-intersection of $f_1$ with the correct sign and it remains to show that the linking number of the corresponding double point loop with $f_2$ in $S^4$  is given by $\epsilon(\alpha^\pm_i)$ for $\epsilon:\Z[x^{\pm 1}]\to\Z$ the augmentation map:
Examination of the Kirby diagram in the left picture of Figure~\ref{standard-sphere-Kirby-diagram-3} shows that the $i$th $\pm$-crossing of the black dotted circle corresponds to the $i$th $\pm$-self-intersection of $f_1$, and this self-intersection has a double point loop that links (in $S^3$) the green attaching circle of its $\pm$-accessory sphere once, and has zero linking with all other green circles. 
Moreover, this linking in $S^3$ equals the linking of the double point loop with the accessory spheres in $S^4$ by construction. To compute linking with $f_2$ we need to sum over the algebraic count of parallel copies of $S_{A_i^\pm}$ in $f_2$ which is given by $\epsilon(\alpha^\pm_i)$. 

As a consequence of Wall's formula we finally get
\[
\sigma_1(f_1,f_2)=\lambda(f_1,f_1)=\sum_i (x^{\epsilon(\alpha^+_i)}+ x^{-\epsilon(\alpha^+_i)})-(x^{\epsilon(\alpha^-_i)}+ x^{-\epsilon(\alpha^-_i)}).
\]

\begin{rem}

The Fenn--Rolfsen link map $\FR$ is the case where $f_1$ is the trivial sphere with one self-fingermove and $f_2=S_{A^+}$.
To see this, just compare Figure~\ref{fig:whitehead-null-htpy} with the center diagram of Figure~\ref{standard-sphere-Kirby-diagram-4}.
As already computed in the introduction, we get $\sigma_2(\FR)=z=-\sigma_1(\FR)$. 
\end{rem}

\subsection{The boundary of the standard position complement}\label{subsec:preview-uniqueness-of-SW}

It is also instructive to look at the homotopy exact sequence of pairs
\[
\pi_2(\partial M) \ra \pi_2(M) \ra \pi_2(M,\partial M) \ra \pi_1(\partial M) \sra \Z.
\]
If $f$ is an embedding then $\partial M\cong S^1 \times S^2$ has non-trivial $\pi_2$. Otherwise, $\partial M$ is prime, because $\pi_1M$ is infinite and not a free product, and hence $\pi_2(\partial M)$ vanishes. (A framed link description of $\partial M$ can be gotten by replacing the dots on circles by $0$-framings in any of the Kirby diagrams of Figures~\ref{standard-sphere-Kirby-diagram-3} and~\ref{standard-sphere-Kirby-diagram-4}.)

In the presence of a Whitney disk $W$ for $f$ we are in this latter case and hence the homotopy class of a Whitney sphere
$S_W$ is determined by its homotopy class in the relative group $\pi_2(M,\partial M)$. But this group contains as
an element $[W]$ represented by the Whitney disk (with a small collar removed so that its boundary lies in $\partial M$).
In this case we get a monomorphism $\pi_2(M) \ira \pi_2(M,\partial M)$ which sends $[S_{W_i}]$ to $z\cdot [W_i]$, as can be seen in the right picture of Figure~\ref{standard-sphere-Kirby-diagram-3} and 
will be proven in detail in Lemma~\ref{lem:W-sphere-homotopy-def} (even for Whitney spheres formed from immersed Whitney disks as defined in Section~\ref{sec:cleanly-immersed-w-moves}). It explains the repeated occurrence of the factor $z$ throughout our paper.

\begin{rem}\label{rem:pi2-standard-boundary-trivial}
If $X^4$ is an orientable $4$-manifold (without boundary) and $g:S^2\imra X^4$ is any generic immersion with $n>0$ cancelling pairs of self-intersections, then
one can similarly conclude that the boundary of the complement of a tubular neighborhood of $g$ has trivial $\pi_2$. In fact, a framed link description of this boundary 
only differs from the description 
of $\partial M$ as above in that the dotted circle will get a framing equal to the normal Euler number of $g$ in $X$.
\end{rem}

\section{Computations in $\LM^4_{2,2}$}

In this section, we'll derive Theorems~\ref{thm:free} and~\ref{thm:image} 
from our main result, Theorem~\ref{thm:main}, using elementary algebraic arguments along with the algebraic topological properties of standard position link maps described in Section~\ref{sec:standard-position}. 

\subsection{The Kirk invariants give a monomorphism}\label{sec:mono}
Assuming that the Kirk invariants detect the unlink, the following result follows directly from the additivity of the Kirk invariants under connected sum and the fact that it changes sign under reflection of $S^4$. 

Alternatively, one could use Koschorke \cite[Prop.2.3]{Ko}, who showed that the ambient connected sum operation is well defined and makes $\LM_{p,q}^n$ into an abelian monoid for $p,q\leq n-2$.
Moreover, the usual rotation argument and the fact that ``link concordance implies link homotopy'' shows that inverses exist in all these cases. The latter was  proved by Bartels and the second author for $n=4$ in \cite{BT}.

\begin{cor}\label{cor:group}
For a link map $(f_1,f_2): S^2 \amalg S^2 \to S^4$, let $-(f_1,f_2)$ denote its composition with a reflection of $S^4$. Then the link map $(f_1,f_2) \# -(f_1,f_2)$ is homotopically trivial and $\LM_{2,2}^4$ becomes an abelian group under connected sum. Moreover, the Kirk invariant
$(\sigma_1, \sigma_2): \LM^4_{2,2} \hra \Z[\Z]\oplus \Z[\Z]$ is a group monomorphism.$\hfill\square$
\end{cor}

\subsection{The image of the Kirk invariants: Proof of Theorem~\ref{thm:image}}\label{sec:proof-thm-image}

To describe the image of the Kirk invariants, fix a (multiplicative) generator $x$ of $\Z$ so that $\Z[\Z] = \Z[x^{\pm 1}]$ is the ring of Laurent polynomials. 
Recall that $z = (1-x)\cdot (1-x^{-1}) = (1-x) + (1-x^{-1}).$
\begin{lem} \label{lem:image} 
For $I$ the augmentation ideal in $\Z[\Z]$ and $\iota$ the usual involution on the group ring determined by $\iota(x)=x^{-1}$, we have
\[
I^k\cap \Z[\Z]^\iota = z^k \cdot \Z[z] \quad \forall \, k\in \N.
\]
\end{lem}
The left hand side of the equation above consists of certain Laurent polynomials that are invariant under $\iota$. Since Wall's intersection invariant is hermitian, it follows that $\lambda(f,f)\in \Z[\pi]^\iota$ for any fundamental group $\pi$. Moreover, the augmentation ideal $I$ is the kernel of the augmentation map $\Z[\pi]\to\Z$ and $\lambda(f,f)$ is carried to the self-intersection of homology classes. Since $H_2(S^4)=0$, it follows that $\lambda(f_i,f_i)\in I\cap \Z[\Z]^\iota = z \cdot \Z[z]$.

\begin{proof}[Proof of Lemma~\ref{lem:image}]
Since $\iota(z) = z \in I$, one inclusion is clear. For the other, start with the case $k=0$.  If $P \in \Z[x^{\pm 1}]$ is invariant under $\iota$ then it is symmetric around $x^0$ and hence we can subtract an integer multiple of $z^n = (-1)^n(x^n + \dots + x^{-n})$ to reduce the degree of $P$. By induction on $n$ we see that $P\in \Z[z]$.

Now take $P \in I^k = (1-x)^k\cdot \Z[x^{\pm 1}]$ and assume $ P=\iota P$. Then there is a $Q\in \Z[x^{\pm 1}]$ such that
\[
(1-x)^k \cdot Q = P = \iota P = (1-x^{-1} )^k \cdot \iota Q.
\]
Since the group ring is a unique factorization domain, it follows that $P$ is divisible by $z^k = (1-x)^k\cdot (1-x^{-1} )^k$ and that the quotient is again invariant under $\iota$. The case $k=0$ now shows that $P \in z^k \cdot \Z[z]$.
\end{proof}
\begin{lem}\label{lem:symmetry}
For any link map $(f_1,f_2): S^2 \amalg S^2 \to S^4$, the Kirk invariants satisfy the antisymmetry relation $\sigma_1(f_1,f_2) + \sigma_2(f_1,f_2) \in z^2\cdot \Z[z]$. 
\end{lem}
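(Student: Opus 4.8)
The plan is to first isolate the ``$z$--linear'' part of each Kirk invariant as a weighted count of self--intersections, and then to recognize that count as a quantity symmetric in the two components, exactly parallel to the classical description of the Sato--Levine invariant via intersecting Seifert surfaces. Since the normal bundles of the $f_i$ have been made trivial (Section~\ref{sec:standard-unlinking}), Wall's self--intersection formula has no Euler--number correction term and reads
\[
\sigma_i \;=\; \lambda(f_i,f_i)\;=\;\sum_{p}\,\e_p\,\bigl(x^{n_p}+x^{-n_p}\bigr)\;\in\;\Z[\Z],
\]
the sum over the (unordered) transverse self--intersections $p$ of $f_i$, with $\e_p=\pm1$ the sign of $p$ and $n_p:=\lk(\gamma_p,f_j)\in H_1(S^4\smallsetminus f_j)\cong\Z$ the linking number of a double--point loop $\gamma_p$ with the other component $f_j$ (well defined up to sign, so $n_p^2$ is well defined). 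Because $z=2-x-x^{-1}$ vanishes to second order at $x=1$ one has $x^n+x^{-n}=2-n^2z+O(z^2)$ inside $\Z[z]=\Z[\Z]^\iota$, and since $\sum_p\e_p=0$ (trivial normal bundle) this gives $\sigma_i\equiv-\bigl(\sum_p\e_p n_p^2\bigr)\cdot z \pmod{z^2\cdot\Z[z]}$. Identifying $z\cdot\Z[z]/z^2\cdot\Z[z]\cong\Z$ via $z\mapsto1$, the lemma becomes the integral identity
\[
\sum_{p}\e_p\,\lk(\gamma_p,f_2)^2\;=\;\sum_{q}\e_q\,\lk(\gamma_q,f_1)^2,
\]
with the left sum over self--intersections of $f_1$ and the right over those of $f_2$.

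Next I would reinterpret these linking numbers via singular Seifert hypersurfaces. Each $f_i\colon S^2\to S^4$ is null--bordant ($\Omega_2^{SO}(S^4)=0$), so there is a compact oriented $3$--manifold $V_i$ with a generic map $g_i\colon V_i\to S^4$ such that $g_i|_{\partial V_i}=f_i$; put $V_1,V_2,f_1,f_2$ and all double--point loops in general position. Since $H_3(S^4)=0$, for any loop $\gamma$ disjoint from $f_j$ the algebraic intersection number $\gamma\cdot g_j(V_j)\in\Z$ is independent of the choice of $V_j$, and it equals $1$ for a meridian; hence $\lk(\gamma_p,f_j)=\gamma_p\cdot g_j(V_j)$. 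Pulling back to the source $2$--sphere of $f_i$, this linking number is the jump, between the two preimages of $p$, of the locally constant ``winding function'' $\phi_j\colon S^2\smallsetminus f_i^{-1}(g_j(V_j))\to\Z$.

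The crux is then to show that both sides of the displayed identity equal one and the same quantity attached to the \emph{unordered} pair $\{V_1,V_2\}$: concretely, a signed count of triple points among $g_1(V_1)$, $g_2(V_2)$ and a normal pushoff of one of them, equivalently a framed bordism invariant of the surface $g_1(V_1)\cap g_2(V_2)\subset S^4$ relative to its boundary $\bigl(f_1\cap g_2(V_2)\bigr)\sqcup\bigl(f_2\cap g_1(V_1)\bigr)$. This is the $4$--dimensional analogue of the fact that the Sato--Levine invariant $\mu_{1122}(L)$ of a classical link equals the framed bordism class of $\Sigma_1\cap\Sigma_2$ for Seifert surfaces $\Sigma_i$ of the two components, which is visibly symmetric in $1$ and $2$. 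Carrying the matching out --- relating each sign $\e_p$ and jump $n_p$ of $f_1$ to intersection data of $V_1$, $V_2$ and the sheets of $f_1$, and symmetrically for $f_2$, while bookkeeping orientations, the normal framings of the $V_i$, and the contributions of the self--intersections of the $g_i$ themselves --- is the delicate part, and the step where I expect essentially all the real work to lie.

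As a consistency check and alternative route: by Jin's theorem $\sigma_i(\JK(L))=\sum_k\beta^k(L)\,z^k$, so the $z$--coefficients of $\sigma_1(\JK(L))$ and $\sigma_2(\JK(L))$ are Cochran's $\beta^1$ for the two orderings, i.e.\ $\mu_{1122}(L)$ and $\mu_{2211}(L)$, which coincide by Milnor's reversal relation $\mu_{i_1\cdots i_r}=(-1)^r\mu_{i_r\cdots i_1}$; once one knows that every link map is link homotopic to some $\JK(L)$ --- which follows, together with the surjectivity of $\JK$, from ``singular link concordance is link homotopic to a link homotopy'' \cite{BT} --- this reproves the lemma. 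The geometric argument above has the advantage of being self--contained and of not presupposing that reduction.
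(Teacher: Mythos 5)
Your reduction of the lemma to an integral identity is correct: with trivial normal bundles one has $\sigma_i=\sum_p\e_p(x^{n_p}+x^{-n_p})$ with $\sum_p\e_p=0$, and since $2-x^n-x^{-n}=z\,P_n\bar P_n$ with $\epsilon(P_n)=n$, Lemma~\ref{lem:image} gives $x^n+x^{-n}\equiv 2-n^2z \pmod{z^2\Z[z]}$, so the statement is equivalent to $\sum_p\e_p\,\lk(\gamma_p,f_2)^2=\sum_q\e_q\,\lk(\gamma_q,f_1)^2$. But this identity is the entire content of the lemma at the level of the $z$--coefficient, and your proposal does not prove it: the claim that both sides equal a single quantity attached to the unordered pair $\{V_1,V_2\}$ (a signed triple--point count, or a framed bordism invariant of $g_1(V_1)\cap g_2(V_2)$) is only asserted by analogy with the Sato--Levine invariant, and you yourself say that carrying out the matching of signs, jumps, framings and the corrections coming from the self--intersections of the singular Seifert $3$--manifolds ``is the step where essentially all the real work lies.'' As written this is a genuine gap, not a proof. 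The fallback via Jin's theorem also cannot stand in for it here: it needs that every link map is link homotopic to some $\JK(L)$, which in this paper is only available downstream of Theorem~\ref{thm:main} (or of the external ``singular link concordance is link homotopic to a link homotopy'' machinery), so at best it is the consistency check you call it.

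For comparison, the paper's proof avoids the symmetric geometric reinterpretation entirely and is short because it leans on structure already established: put $f_1$ in standard position (Lemma~\ref{lem:standard}), expand $f_2$ in the basis of disjointly immersed positive and negative accessory spheres (Lemma~\ref{lem:accessory-sphere-basis}), and reduce by additivity to $f_2=\alpha\cdot S_A$. For such an $f_2$ both invariants are computed explicitly, $\sigma_1=2-x^{\epsilon(\alpha)}-x^{-\epsilon(\alpha)}=z\,P_k\bar P_k$ and $\sigma_2=\alpha\bar\alpha\,z$ with $k=\epsilon(\alpha)$, and their difference lies in $z^2\Z[z]$ by Lemma~\ref{lem:image} since $P_k\bar P_k-\alpha\bar\alpha$ is $\iota$--invariant and augments to zero. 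If you want to salvage your approach without developing the triple--point/Seifert bordism argument, the cleanest fix is exactly this: after your (correct) reduction, prove the integer identity by the same accessory--sphere expansion, where both sides visibly equal $\sum_i \pm\,\epsilon(\alpha_i)^2$.
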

\begin{proof}
As in Lemma~\ref{lem:standard}, we may assume that $(f_1,f_2)$ is in general position and that $f_1$ is standard. By additivity of the Kirk invariants, it suffices to consider the case where $f_1$ is obtained from the unknot by one finger move and that $f_2 = \alpha\cdot S_A$ is a multiple of a single accessory sphere $S_A$, for some element $\alpha\in \Z[x^{\pm 1}]$ as in Lemma~\ref{lem:accessory-sphere-basis}.
 Assuming that $S_A$ is a positive accessory sphere, from Section~\ref{subsec:Kirk-invariant-in-standard-position} we have: 
\[
\sigma_1(f_1,f_2) = \lambda(f_1, f_1) = x_2^{\epsilon(\alpha)} + x_2^{-\epsilon(\alpha)}-2   \quad \text{ and } \quad \sigma_2(f_1,f_2) = \lambda(f_2, f_2) =\alpha \cdot \iota \alpha \cdot z,
\] 
with $\epsilon:\Z[x^{\pm 1}]\to\Z$ the augmentation map.
We want to show that after setting $x_2=x$ the sum of these two expressions lies in $z^2\cdot \Z[z]$. Let $k:=\epsilon(\alpha)$ and
$P_k := 1 + x + x^2 +\dots + x^{k-1}$.
Then 
\[
x^k + x^{-k} - 2 = -(1-x^k)\cdot (1-x^{-k}) = -(1-x) \cdot P_k \cdot  (1-x^{-1}) \cdot\iota P_k = -z\cdot P_k \cdot\iota P_k.
\]
Thus it suffices to show that for all $\alpha\in \Z[x^{\pm 1}]$, we have $\alpha \cdot \iota \alpha-P_k \cdot\iota P_k \in z\cdot \Z[z]$. Since this element is invariant under the involution $\iota$, it suffices to show by Lemma~\ref{lem:image} that it lies in the augmentation ideal $I$. But this is clearly true since $\epsilon(\alpha) = k = \epsilon(P_k)$.
\end{proof}
Recall the statement of Theorem~\ref{thm:image}, that the Kirk invariants $(\sigma_1, \sigma_2)$ fit into a short exact sequence of groups
\[
0 \ra \LM^4_{2,2} \ra  z\cdot \Z[z]\oplus z\cdot \Z[z] \ra \Z \ra 0,
\]
where we use the addition map to $\Z = z\cdot \Z[z]/z^2\cdot \Z[z]$.  
So
to compute the cokernel of the Kirk invariants, we are left with proving that every element in $z\cdot \Z[z]\oplus z\cdot \Z[z]$ satisfying the antisymmetry relation of Lemma~\ref{lem:symmetry} is realized as the Kirk invariant of a link map. 
\begin{proof} 
[Proof of Theorem~\ref{thm:image}]
Using linear combinations of the above example from the proof of Lemma~\ref{lem:symmetry} with 
$\sigma_1(f_1,f_2) = x_2^{\epsilon(\alpha)} + x_2^{-\epsilon(\alpha)}-2$, we see that the first component can be any element in $z\cdot \Z[z]$. Now we add link maps $(g_1,g_2)$ with $\sigma_1(g_1,g_2)=0$ to correct the second component. For $g_2 = \alpha\cdot S_A$, as in the above example, this just means that $\epsilon(\alpha)=0$, i.e.\ $\alpha = (1-x) \cdot \beta$. Taking sums again, we need to realize any element in $z^2\cdot \Z[z]$ by linear combinations of elements $\alpha\cdot \iota \alpha\cdot z = \beta\cdot \iota \beta \cdot z^2$.

This is possible by an induction on degree, since we can use summands with $\beta=1$ and $\beta=(1-x^k)$ for arbitrary $k$. 
\end{proof}

\begin{rem}
We leave it as a fun exercise for the reader to draw examples of classical links $L\in\mathcal L$ that realize these Kirk invariants in their $\JK$-construction, proving surjectivity in Corollary~\ref{cor:JK} again. As a hint, one can start with the Whitehead link and apply the algebraic ideas from the next section diretly in the 3-dimensional setting. 
\end{rem}

\subsection{The Fenn--Rolfsen link map generates freely: Proof of Theorem~\ref{thm:free}}\label{sec:thm-free-proof}
In the exact sequence of Theorem~\ref{thm:image}, the kernel $K$ of $z\cdot \Z[z]\oplus z\cdot \Z[z] \ra \Z$ is easily seen to be a free $\Z[z]$-module on the two generators $(-z,z)$ and $(-z^2,0)$. 

We embed $\Z[z]$ as a subring of $R=\Z[z_1,z_2]/(z_1z_2)$ from Theorem~\ref{thm:free} by sending $z$ to $z_1+z_2$. Then $R$ also becomes a $\Z[z]$-module and it's easy to check that $1, z_1$ are free generators. 
It follows that there is a $\Z[z]$-linear isomorphism $\rho: K \overset{\cong}{\ra} R$ sending $(-z,z)$ to $1$ and $(z^2,0)$ to  $-z_1$.
Composing the Kirk invariant $\sigma=(\sigma_1, \sigma_2)$ with this isomorphism leads to an isomorphism
\[
\rho\circ\sigma:\LM^4_{2,2} \overset{\cong}{\ra} K \overset{\cong}{\ra} R.
\]
By construction, the Fenn--Rolfsen link $\FR$ is sent to $\rho(\sigma(\FR))= \rho(-z,z)=1\in R$. We are left with constructing a geometric $R$-module structure on $\LM^4_{2,2}$ which makes this map $R$-linear. Note that evaluating this on the action of $z_1+z_2$ also gives a $\Z[z]$-module structure on $\LM^4_{2,2}$. 

The action of $z_1$ on a link map $(f_1,f_2): S^2 \amalg S^2 \imra S^4$ in standard position is defined as follows: Push off an oppositely oriented parallel copy $f'_1$ of $f_1$ and then tube the two copies together along an arc that follows the meridian to $f_2$. 
We claim that this gives a well defined element $z_1\cdot [f_1,f_2]$ in $\LM^4_{2,2}$: 
\begin{itemize}
\item The choice of push-off $f_1'$ is unique up to homotopy (in the complement of $f_2$).
\item Two arcs (with the same boundary) that follow a meridian to $f_2$ are homotopic in the complement of $f_2$ since $\pi_1(S^4 \smallsetminus f_2)\cong\Z$, generated by that meridian.
\item Adding a finger move to $f_1$ in the complement of $f_2$ keeps $[f_1,f_2]$ constant in $\LM^4_{2,2}$ but the $z_1$-action commutes with that operation (except that we need two parallel finger moves if we act by $z_1$ first). 
\item By general position, a finger move of $f_2$ in the complement of $f_1$ can still be realized after applying the $z_1$-action.
\end{itemize}
Recall that homotopic link maps can be stabilized by finger moves into a position where they are isotopic (both components in standard position). As a consequence, the above cases imply that $z_1\cdot [f_1,f_2]\in\LM^4_{2,2}$ is well defined. By sliding tubes over tubes, it is straight-forward to check that 
\[
z_1\cdot ([f_1,f_2] +[g_1,g_2])= z_1\cdot [f_1,f_2] + z_1 \cdot [g_1,g_2]
\]
which can also be derived from the computation of the Kirk invariants below.

We define the $z_2$-action symmetrically (by pushing off an oppositely oriented copy of $f_2$ and tubing the two copies together along a tube following the meridian to $f_1$). As the last step in getting an $R$-action, we need to verify that $z_1z_2$ acts trivially on $\LM^4_{2,2}$. Without a geometric argument in place, we'll use the injectivity of the Kirk invariants and check that they vanish after applying $z_1z_2$ to any link map: $\sigma(z_1z_2\cdot[f_1,f_2]) = 0$. This follows by symmetry from the relation $\sigma_2(z_1\cdot[f_1,f_2]) = 0$: Every double point loop of $f_2$ links the new first component of $z_1\cdot[f_1,f_2]$ algebraically zero times. 

Moreover, abbreviating by $x$ the meridian to $f_2$, we compute
\[
\sigma_1(z_1\cdot[f_1,f_2]) = \lambda((1-x)\cdot f_1 , (1-x)\cdot f_1) = (1-x)\cdot\lambda(f_1,f_1) \cdot(1-x^{-1}) = z \cdot \sigma_1(f_1,f_2).
\]
As a consequence, the Kirk invariant becomes $R$-linear if we define the $R$-action on $K$ by
\[
z_1 \cdot (\sigma_1, \sigma_2) := (z\cdot \sigma_1, 0) \text{ and } z_2 \cdot (\sigma_1, \sigma_2) := (0, z\cdot \sigma_2).
\]
Restricted to the subring $\Z[z]\subset R$ generated by $z\mapsto z_1+z_2$ we get the original action: 
\[
(z_1+z_2)\cdot (\sigma_1,\sigma_2) = z_1\cdot (\sigma_1,\sigma_2) + z_2\cdot (\sigma_1,\sigma_2)= (z\cdot \sigma_1,0)+(0,z\cdot\sigma_2)=(z\cdot \sigma_1,z\cdot\sigma_2).
\]
We are left with checking that our isomorphism $\rho:K\ra R$ is $R$-linear. By construction, $\rho$ is $\Z[z]$-linear, so we only need to check linearity under multiplication with $z_1\in R$:
\[
\rho(z_1\cdot(-z,z)) = \rho(-z^2, 0)=z_1 \cdot 1=z_1 \cdot \rho(-z,z) \text{ and}
\]
\[
\rho(z_1 \cdot (-z^2, 0)) = \rho(-z^3, 0)=z_1\cdot z_1 =z_1\cdot \rho(-z^2,0).
\]
Since the $\Z[z]$-module $K$ is free on the generators $(-z,z)$ and $(-z^2,0)$, these equations complete our proof. 
 $\hfill\square$

We record the identity
$(\sigma_1, \sigma_2)((z_1+z_2)\cdot[f_1,f_2]) = z \cdot (\sigma_1, \sigma_2)[f_1,f_2]$ which can be translated to a geometric $\Z[z]$-action on $\LM^4_{2,2}$. The generator $z$ acts by tubing 3 copies (algebraically one copy) of $f_1$ and 3 copies (algebraically one copy) of $f_2$ together in the way specified by our formulas.



 \section{Unlinking Theorems}\label{sec:unlinking-theorems}
 
Returning to the path towards our goal of proving Theorem~\ref{thm:main}, this section gives criteria for a link map to be trivial.
As a first step, the Standard Unlinking Theorem~\ref{thm:I2} is formulated and proven except for one crucial implication. Then we generalize our notions of Whitney spheres and accessory spheres to provide the input into the Metabolic Unlinking Theorem~\ref{thm:I2-metabolic}. It will be proven using Freedman's disk embedding theorem and it implies the Standard Unlinking Theorem.  In addition, it is strong enough to yield Theorem~\ref{thm:main} in combination with Proposition~\ref{prop:I-squared} which will be proved in Section~\ref{sec:thm-main-proof}.
 
\subsection{The Standard Unlinking Theorem}\label{sec:standard-unlinking} 

There is a simple sufficient condition for a link map $(f_1,f_2): S^2 \amalg S^2 \to S^4$ to be link homotopically trivial:
Assume that $f_1$ is in a standard position such that $f_2$ is disjoint from a standard collection of Whitney disks
$W_i$ for $f_1$. Then one can do the Whitney moves on $W_i$ in the complement of $f_2$ to get a link homotopy from
$(f_1,f_2)$ to $(f_0,f_2)$. But this link map is trivial since
$$
\pi_2(S^4 \smallsetminus f_0 ) \cong \pi_2(S^1 \times D^3)=0.
$$
The goal of this section is to give algebraic conditions that model the disjointness of $f_2$ and the $W_i$, and as a first step we have the following result:
\begin{thm}[Standard Unlinking Theorem]\label{thm:I2}
The following statements are equivalent:
\begin{enumerate}
\item[(i)] The link map $(f_1,f_2)$ is link homotopically trivial.
\item[(ii)] There are finger moves on $f_1$ (disjoint from $f_2$) which bring $f_1$
into a standard position $f$ with a standard collection of Whitney disks $W_i$ for $f$ such that 
\[
\lambda(W_i,f_2)=0.
\]
 \item[(iii)] There are finger moves on $f_1$ (disjoint from $f_2$) which bring $f_1$
into a standard position $f$ with a standard collection of Whitney disks $W_i$ for $f$ such that
 $$
 \lambda(f_2,f_2)=0  \quad \text{ and } \quad\lambda(W_i,f_2)\in z\cdot \Z[x^{\pm 1}] .
 $$
 \end{enumerate}
As usual, $z=(1-x)(1-x^{-1})$ where $x$ is a generator of $\pi_1(S^4 \smallsetminus f )\cong \Z$. The intersection
 numbers $\lambda(-,f_2)$ are measured in the group ring $\Z[\pi_1(S^4 \smallsetminus
f )]=\Z[x^{\pm 1}]$.
 \end{thm}

\begin{proof}[Proof that $(i)\Rightarrow (ii)$ in Theorem~\ref{thm:I2}]
By general position we may
arrange that the link homotopy from $(f_1,f_2)$ to the unlink is of the following form: First finger moves on $f_1$
bring it into standard position $f$, then finger moves and Whitney moves  on $f_2$ happen (in the complement of
$f$). Finally, Whitney moves on a standard collection of Whitney disks $W_i$ for $f$ (in the complement of
$f_2'=f_0$) lead to the unlink. Working in the complement of $f$, we clearly have $\lambda(f_2',W_i)=0$ since $f_2'$ is disjoint from the $W_i$. However, $f_2'$ differs from $f_2$ by a regular homotopy in the complement of $f$ which leaves these
intersection numbers unchanged, even though $W_i$ and $f_2$ may intersect. Hence statement $(ii)$ follows.
\end{proof}

\begin{proof}[Proof that $(ii)\Rightarrow (iii)\& (i)$ in Theorem~\ref{thm:I2}]
Statement 4 of Lemma~\ref{lem:algtop} implies that any sphere $[f_2]\in\pi_2M$ which satisfies $\lambda(W_i,f_2)=0$ is homotopic (in the complement of $f_1$)
to a $\Lambda$-linear combination of Whitney spheres $S_{W_i}$. Statement 3 then implies $\lambda(f_2,f_2)=0$.
Moreover, by applying our Whitney homotopy from Section~\ref{sec:the-whitney-htpy} several times to the Whitney spheres $S_{W_i}$, we see that $(f_1,f_2)$ is link homotopically trivial.
\end{proof}
The harder work we have to do is
to relax the strong conditions $\lambda(W_i,f_2)=0$ to the much weaker condition
$\lambda(W_i,f_2)\in z\cdot \Z[x^{\pm 1}]$ of statement $(iii)$ in Theorem~\ref{thm:I2}. This missing step that $(iii)\Rightarrow (i)$ will eventually be accomplished by the Metabolic Unlinking Theorem~\ref{thm:I2-metabolic}, whose proof uses Freedman's disk embedding theorem.


\subsection{Pulling away an embedded sphere: Proof of Corollary~\ref{cor:main}} \label{sec:embedded}
Before continuing the journey towards Theorem~\ref{thm:I2-metabolic} we pause here to derive Corollary~\ref{cor:main} directly from Theorem~\ref{thm:I2}. For logical purposes (via Theorem~\ref{thm:main}) it would be sufficient to only show that the Kirk invariants vanish for link maps with one embedded component. We will do this below as well, but it's good to also know that the long arguments for Proposition~\ref{prop:I-squared} are not needed for the proof of Corollary~\ref{cor:main}. 

We will show that for any embedding $f_1:S^2 \hookrightarrow S^4$, the link map $(f_1,f_2)$ is link homotopic to $(f,f_2)$ where $f$ is in standard position with a standard collection $\{W_i\}$ of Whitney disks such that with $z=(1-x)(1-x^{-1}) \in \Lambda:= \Z[x^{\pm 1}]$ we have
\[
\lambda(W_i,f_2) \in z\cdot  \Lambda=(1-x)^2\cdot \Lambda  \quad \text{ and } \quad \lambda(f_2,f_2) =0.
\]
Thus part (iii) of Theorem~\ref{thm:I2} will apply to show that $(f_1,f_2)$ is link homotopically trivial.
Alexander duality holds for any embedding, so we know that 
\[
H_1(S^4 \smallsetminus f_1) \cong H^2(S^2) \cong \Z  \quad \text{ and } \quad H_2(S^4 \smallsetminus f_1) \cong H^1(S^2)=0.
\]
In particular, we have an infinite cyclic cover Y of this $2$--knot complement and denote by $t \in \pi_1(S^4 \smallsetminus  f_1)$ any embedded loop that maps to a generator in $H_1$. Using the coefficient exact sequence
$$
\begin{diagram}
\node{0} \arrow{e} \node{\Lambda}
 \arrow{e,t}{\cdot(1-t)} \node{\Lambda} \arrow{e,t}{ \epsilon} \node{\Z}
\arrow{e} \node{0}
\end{diagram}
$$
we get the corresponding Bockstein exact sequence
$$
\begin{diagram}
 \node{\cdots}\arrow{e} \node{H_2(Y)} \arrow{e,t}{ \cdot(1-t)} \node{H_2(Y)}
\arrow{e} \node{0}
\end{diagram}
$$
The map $f_2 : S^2 \to S^4 \smallsetminus f_1$ lifts to $Y$ and hence we get a representative $[f_2]\in H_2(Y)$. For any $N\in\N$, the above exact sequence says that there is an $a_N \in H_2(Y)$ such that $(1-t)^N\cdot a_N = [f_2]$. The equality is given by a (compact) 3-chain $c$ and hence we can find an open neighborhood $U_N$ of $f_1(S^2)$ in $S^4$ such that $S^4 \smallsetminus U_N$ contains the compact sets $f_2(S^2)$ and $t$, and the induced infinite cyclic covering $Y_N$ contains the compact sets $a_N$ and  $c$. 

Now pick a generic smoothly immersed approximation $f_N$ of $f_1$ inside $U_N$. We have a pair of inclusions
\[
S^4 \smallsetminus f_1 \supset S^4 \smallsetminus U_N \subset S^4 \smallsetminus f_N
\]
and by construction of $U_N$, the objects $t,f_2,a_N,c$ do exist in $S^4 \smallsetminus f_N$. By Alexander duality, $H_1(S^4 \smallsetminus f_N) \cong \Z$ and so $t$ is homologous to some power $x^k$ of the meridian $x$ to $f_N$. If $Y'$ is the infinite cyclic covering of $S^4 \smallsetminus f_N$, then we arrive at the equation
\[
[f_2] = (1-x^k)^N a_N = (1-x)^N a_N' \in H_2(Y'), \text{ for some }  a_N' \in H_2(Y').
\]
There are embedded arcs $b_1,\dots, b_n$ in $S^4 \smallsetminus f_2$ which have exactly their endpoints on $f_N$ and such that doing finger moves on $f_N$ guided by $b_i$ leads to an immersed $2$-sphere $f$ in standard position, see Definition~\ref{def:standard position}. Let $B_i$ denote the Whitney disks for $f$ which are opposite to the arcs $b_i$, i.e.\  the corresponding Whitney moves lead from $f$ back to $f_N$. 

One can get from $f_N \cup b_i$ to $f_N$ by cutting the arcs $b_i$. By slightly thickening these arcs this implies that one can get from $S^4 \smallsetminus (f_N \cup \nu(b_i))$ to $S^4 \smallsetminus f_N$ by attaching $3$-cells with boundaries $S_{B_i}$, the Whitney spheres for the Whitney disks $B_i$ (which are dual to the arcs $b_i$). Thus there is an exact sequence
$$
\dots \ra \langle S_{B_i} \rangle \ra H_2(S^4 \smallsetminus (f_N \cup b_i)) \ra H_2(S^4 \smallsetminus f_N) \ra 0
$$
In fact, this sequence is also exact if we use $\Z[x^{\pm 1}]=\Lambda$-coefficients which is the same as the homology of the infinite cyclic coverings. Considering $[f_2]\in H_2(Y')$ we had concluded that
$$
[f_2]=(x-1)^N\cdot a_N', \text{ for some } a_N'\in H_2(Y') = H_2(S^4 \smallsetminus f_N;\Lambda).
$$
But we may also consider $[f_2]$ as an element in $H_2(S^4 \smallsetminus (f_N \cup b_i);\Lambda)$. From the exactness of the above sequence we conclude that
$$
[f_2]+\sum_i \lambda_i\cdot S_{B_i}=(x-1)^N\cdot A_N, \text{ for some }  A_N\in H_2(S^4 \smallsetminus (f_N\cup b_i);\Lambda), \ \lambda_i\in \Lambda.
$$
After a finite number of our elementary link homotopies which add copies of the Whitney spheres $S_{B_i}$ to $f_2$, we may assume that indeed
$$
[f_2]=(x-1)^N\cdot A_N, \text{ for some } A_N\in H_2(S^4 \smallsetminus (f_N\cup b_i);\Lambda).
$$
The last step in the argument is to observe that after thickening $b_i$ and $B_i$ there is an ambient diffeomorphism 
$$
S^4 \smallsetminus \nu(f_1\cup b_i) \cong S^4 \smallsetminus \nu(f \cup B_i)
$$
Applying the inclusion $S^4 \smallsetminus \nu(f \cup B_i) \hra S^4 \smallsetminus f$ we see
that
$$
[f_2]=(x-1)^N\cdot \alpha_N, \text{ for some } \alpha_N \in H_2(S^4 \smallsetminus f;\Lambda)\cong \pi_2(S^4 \smallsetminus f).
$$
It follows with $N=2$ that $\lambda(f_2,W) = (1-x)^2 \lambda(\alpha_N,W)\in (x-1)^2\cdot \Lambda$
for any Whitney disk on $f$. In particular, this applies to a standard collection of Whitney disks $W_1,\dots,W_n$ for $f$, where Whitney moves on $W_i$ lead from $f$ to the unknotted $2$-sphere $f_0$.

Moreover, the polynomial $(1-x)^N$ divides $\lambda(f_2,f_2) = (1-x)^N \lambda(\alpha_N,(1-x)^N \alpha_N)$.
This implies that $\lambda(f_2,f_2) =0$: This is a fixed Laurent polynomial since it can be computed from the original link map $(f_1,f_2)$. Moreover, it is divisible by $(1-x)^N$ so picking $N$ larger than the degree of this  Laurent polynomial, we see that it must vanish.

We have thus checked the conditions (iii) of our Standard Unlinking Theorem~\ref{thm:I2}.  \hfill $\qed$



\subsection{Metabolic Forms, Whitney and accessory disks} \label{sec:metabolic}

To prove our main result Theorem~\ref{thm:main}
we will need a strengthening of the (as yet unproved) key implication $(iii)\Rightarrow (i)$ in the Standard Unlinking Theorem~\ref{thm:I2}.
This will involve using Whitney moves on \emph{immersed} Whitney disks to construct more general versions of Whitney spheres and accessory spheres
in a way that preserves the essential metabolic properties from the standard setting of Lemma~\ref{lem:algtop}.
So with the goal of presenting this Metabolic Unlinking Theorem~\ref{thm:I2-metabolic} in Section~\ref{sec:metabolic-unlinking-thm}, we start by
re-examining the
complement of a standard position $f:S^2\imra S^4$, and giving alternate constructions of Whitney spheres and accessory spheres using Whitney moves in local coordinates.   
\begin{figure}[ht!]
         \centerline{\includegraphics[scale=.4]{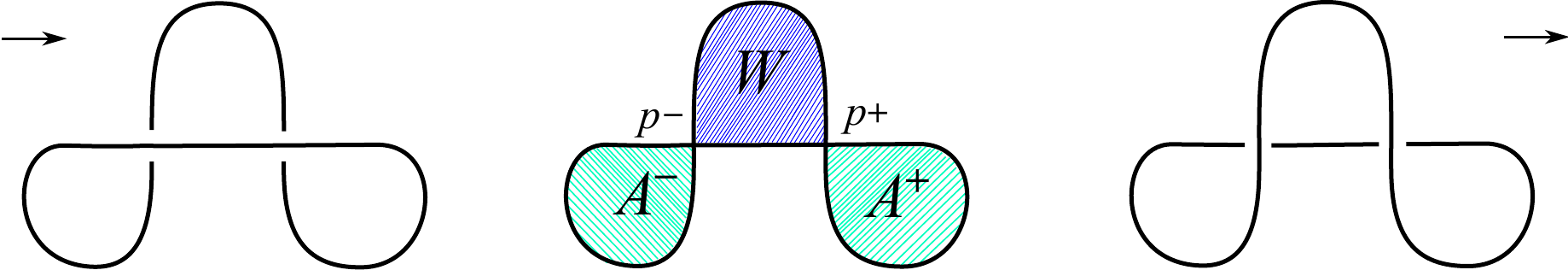}}
         \caption{A standard Whitney disk $W$ on $f$ with standard accessory disks $A^\pm$. }
         \label{fig:Whitney-and-accessory-disks}
\end{figure}

\subsubsection{Standard Whitney disk-accessory disk triples}\label{sec:w-disk-acc-disk-triples}

Observe that for any $f:S^2\imra S^4$ in standard position, it can be arranged that each standard Whitney disk $W$ pairing self-intersections $p^\pm$ is contained in a $4$--ball with $D^3 \times D^1$ coordinates as in Figure~\ref{fig:Whitney-and-accessory-disks}.
E.g.~choose the deleted slice disk representing the $1$-handle in the left Kirby diagram of Figure~\ref{standard-sphere-Kirby-diagram-3}
to have a minimum for each $\pm$-pair of green $2$-handles.

The local model of Figure~\ref{fig:Whitney-and-accessory-disks} defines what we call \emph{standard accessory disks} $A^\pm$ bounded by sheet-changing circles in $f$ and having interiors disjoint from $f$. 
A standard $W$ together with accessory disks $A^\pm$ as in Figure~\ref{fig:Whitney-and-accessory-disks} is called a \emph{standard Whitney disk-accessory disk triple}. Changing the positive crossing corresponds to the positive self-intersection $p^+$ of $f$, and changing the negative crossing corresponds to the negative self-intersection $p^-$ of $f$. The circle of $f$ on the right shrinks to a point at future times and disappears with no further singularities.

Using standard $W$ and $A$, we will provide alternative constructions of the Whitney sphere $S_W$ and accessory sphere 
$S_A$ described in the proof of Lemma~\ref{lem:algtop}. Here we start with local 2--spheres that intersect $f$ near $\partial W$, and then use Whitney moves on parallels of $W$ and $A$ to remove the intersections with $f$. This approach has the advantage of generalizing the construction to the cases where $W$ and $A$ are not necessarily standard (Section~\ref{sec:cleanly-immersed-w-moves}).

\subsubsection{Whitney spheres revisited}\label{sec:w-sphere-revisit}

Figure~\ref{fig:Whitney-sphere-blue-movie} shows an embedded \emph{pre-Whitney sphere} for a standard Whitney disk $W$. It is the union of the left-most upper and lower blue disks together with the annulus traced out by the blue loop moving clockwise from left to right along the top row, then right to left along the bottom row. The two pairs of intersections between this pre-Whitney sphere and $f$ (black) are paired by Whitney disks $W'$ and $W''$ (purple) which are Whitney parallel to $W$ (light blue). 

\begin{figure}[ht!]
         \centerline{\includegraphics[scale=.36]{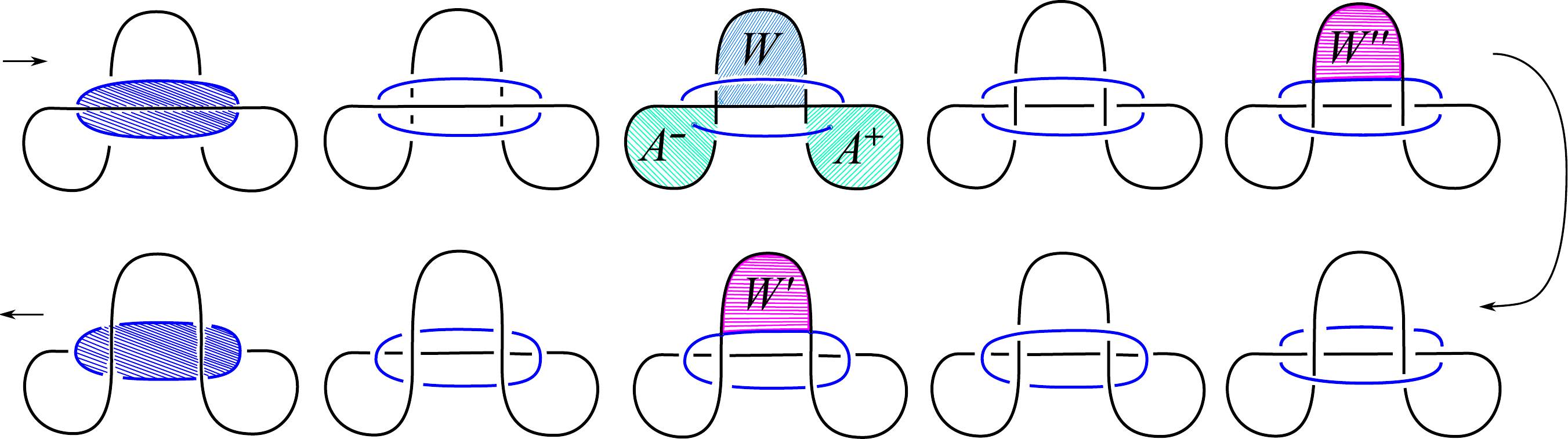}}
         \caption{A blue \emph{pre-Whitney sphere} for a standard Whitney disk $W$.}
         \label{fig:Whitney-sphere-blue-movie}
\end{figure}

Applying the $W'$- and $W''$-Whitney moves to the pre-Whitney sphere yields the \emph{Whitney sphere} $S_W$ in the complement of $f$. Note that $S_W$ is geometrically dual to each accessory disk, via an intersection inherited from the pre-Whitney sphere in the top middle picture (cf.~Figure~\ref{fig:Whitney-and-accessory-disks})

The two Whitney bubbles in the resulting $S_W$ are the same as those in the first Whitney sphere description of Figure~\ref{fig:Whitney-sphere-for-elementary-homotopy}. It is an artifact of the different choices of local coordinates that in Figure~\ref{fig:Whitney-sphere-for-elementary-homotopy} the Whitney bubbles appear to be ``on different sides'' of $W$ in the 3-dimensional present, while in Figure~\ref{fig:Whitney-sphere-blue-movie} they appear to be ``on the same side'' of $W$ (the vertical arcs descending from $W'$ and $W''$ are both over-crossings): In both cases the bubbles ``differ by meridians to $f$'', meaning that any loop formed by a path between the bubbles in $S_W$ followed by a path back through parallels of the interior of $W$ will link $f$. It will follow from the homotopy-theoretic characterization given in Lemma~\ref{lem:W-sphere-homotopy-def} that, up to link homotopy, the description here yields the same Whitney spheres as in Section~\ref{sec:w-spheres-and-homotopy} as well as in the proof of Lemma~\ref{lem:algtop}.

\begin{figure}[ht!]
         \centerline{\includegraphics[scale=.37]{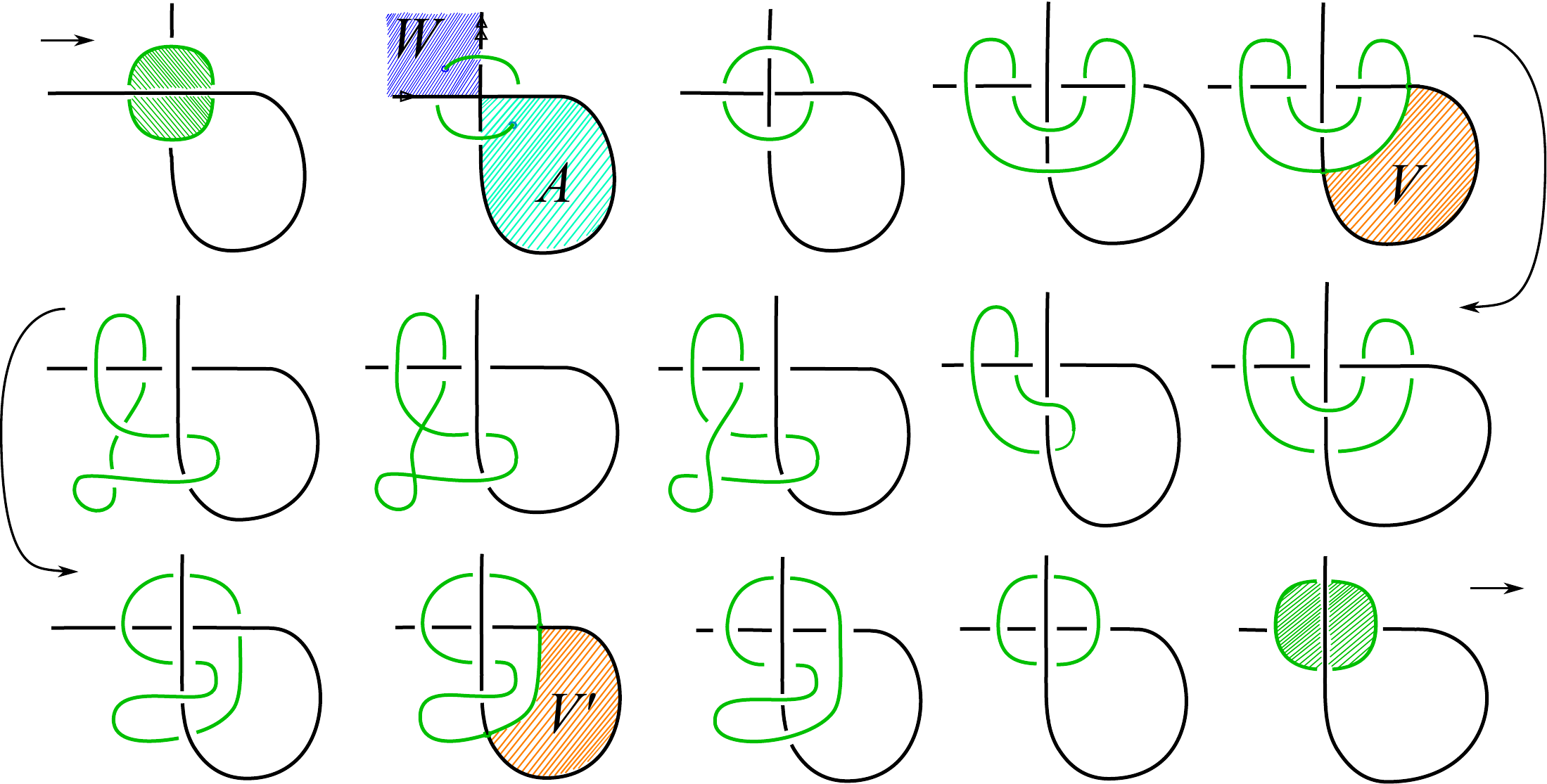}}
         \caption{A green immersed \emph{pre-accessory sphere} associated to a standard accessory disk $A$.}
         \label{fig:Accessory-sphere-movie}
\end{figure}

\subsubsection{Accessory spheres revisited}\label{sec:accessory-sphere-revisit}
Figure~\ref{fig:Accessory-sphere-movie} shows an immersed positive \emph{pre-accessory sphere} (green) supported near the corresponding self-intersection of $f$ (black). It  is the union of the two green disks (top left and bottom right) together with the homotopy of the green loop snaking from left to right along the top row, then right to left along the middle row, then left to right along the bottom row. The pre-accessory sphere is geometrically dual to each of $W$ and $A$  (top second-from-left picture).

The pair of green-black crossing changes in the top-right picture in the top row of the figure are paired by a Whitney disk $V$ formed from a parallel copy of $A$. The pair of green-black crossing changes in the bottom second-from-left picture are paired by a Whitney disk $V'$ formed from another parallel copy of $A$.
Applying the $V'$- and $V''$-Whitney moves to the pre-accessory sphere yields the positive \emph{accessory sphere} $S_A$ in the complement of $f$.  See Section~\ref{sec:w-move-on-acc-disks} for details on using parallel copies of accessory disks to guide Whitney moves.

 A pair of green-green self-crossing changes are visible in the second-from-left picture in the middle row of Figure~\ref{fig:Accessory-sphere-movie}. These self-crossing changes correspond to oppositely-signed self-intersections of $S_A$, and from the figure it can be computed that $\lambda(S_A,S_A)=2-x-x^{-1}$ as in the proof of Lemma~\ref{lem:algtop}.

From this description it is clear that $S_A$ contains four parallel copies of $A$ (two oppositely-oriented copies from each Whitney bubble), and that $S_A$ is geometrically dual to each of $W$ and $A$, via intersections inherited from the pre-Whitney sphere.

\begin{figure}[ht!]
         \centerline{\includegraphics[scale=.37]{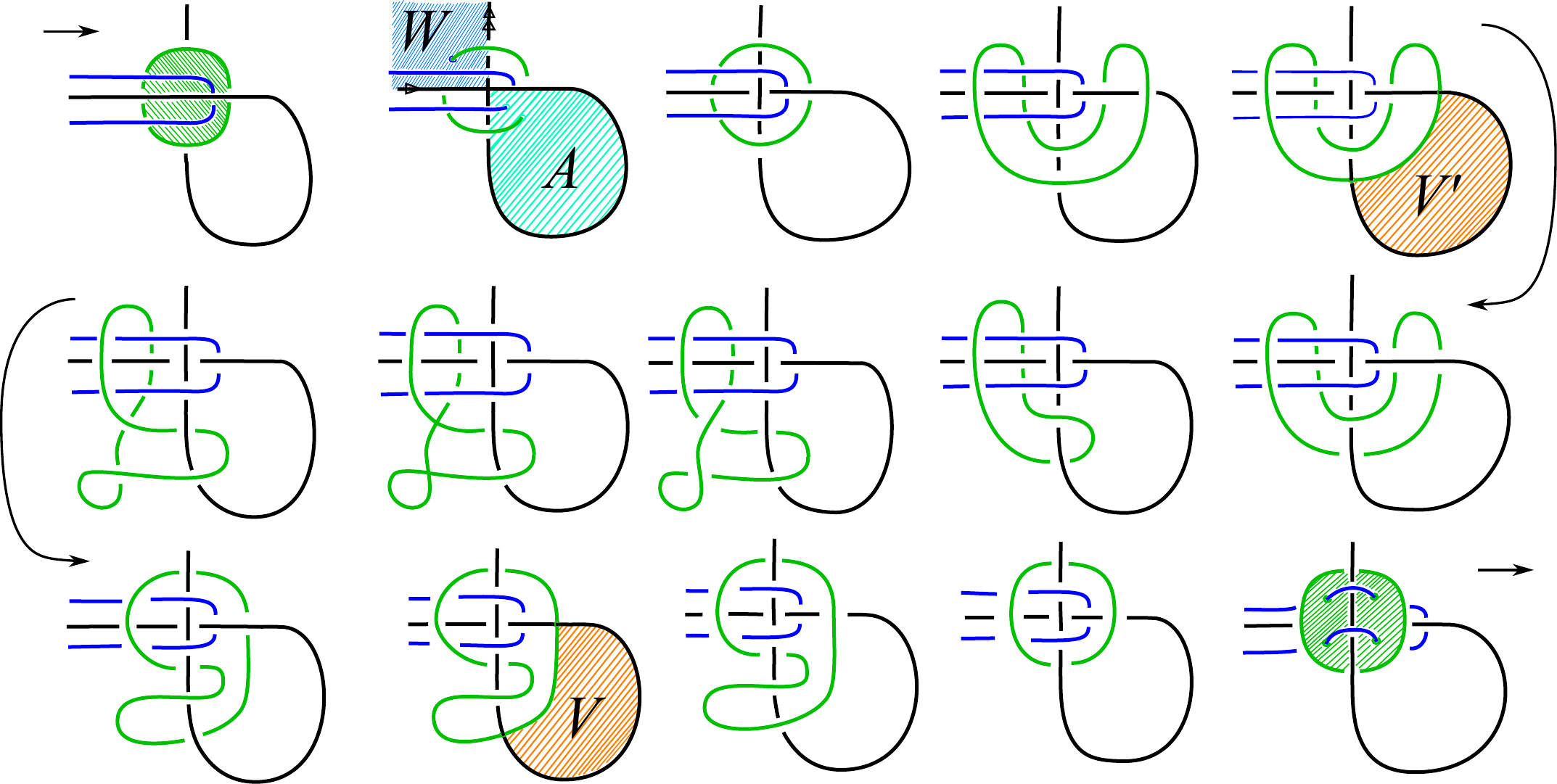}}
         \caption{The four intersection points between a positive pre-accessory sphere $S_{A}$ (green) and its pre-Whitney sphere $S_W$ (blue) are contained entirely in the bottom right picture.}
         \label{fig:Accessory-AND-Whitney-sphere-movie}
\end{figure}

\subsubsection{Whitney sphere-accessory sphere intersections}\label{sec:w-accessory-sphere-ints}
Since the standard $W$ and $A$ (and parallel copies) have disjoint interiors, the intersections between $S_W$ and $S_A$ are the same as those between the pre-Whitney and pre-accessory spheres. 
Figure~\ref{fig:Accessory-AND-Whitney-sphere-movie} shows that the only intersections between $S_A$ and $S_W$ occur near the self-intersection point of $f$ in $\partial A$; see the bottom right picture of Figure~\ref{fig:Accessory-AND-Whitney-sphere-movie}. 
It can be computed from the figure that $\lambda(S_{A},S_W)=2-x-x^{-1}$, which is consistent with Lemma~\ref{lem:algtop}.


 \subsubsection{Cleanly immersed Whitney disks and accessory disks}\label{sec:cleanly-immersed-w-moves}
An \emph{immersed Whitney disk} $W$ differs from a Whitney disk as described in Section~\ref{sec:embedded-whitney-move} in that the interior of $W$ is allowed to have finitely many transverse self-intersections, and $W$ is said to be \emph{twisted} if the Whitney section over $\partial W$ does not extend to a nowhere-vanishing normal section over $W$.  In general, if the Whitney section over $\partial W$ has normal Euler number $n\in\Z$ relative to the disk framing of $W$, then $W$ is said to be \emph{$n$-twisted}; with the case $n=0$ corresponding to $W$ being \emph{framed}. 
See Section~\ref{sec:framed-w-disks} for details.

As in the framed embedded setting, an immersed Whitney bubble can be formed from two parallel oppositely-oriented copies of $W$, 
and a \emph{Whitney move} guided by $W$ is defined by the analogous cut-and-paste construction which replaces a neighborhood of an arc of $\partial W$ in one sheet by this immersed Whitney bubble. 

Each self-intersection of $W$ gives rise to a self-intersection in any parallel copy of $W$ and a pair of intersections between the two parallel copies of $W$ which are part of the Whitney bubble. Moreover, if $W$ is $n$-twisted, then there will be $n$ additional intersections between the two parallel copies of $W$.

\begin{defi}\label{def:cleanly-immersed}
An immersed (possibly twisted) Whitney disk or accessory disk on $f$ whose interior is disjoint from $f$ is said to be \emph{cleanly immersed}.
\end{defi}
This leads to natural generalizations of Whitney spheres and accessory spheres:
In sections~\ref{sec:w-sphere-revisit} and~\ref{sec:accessory-sphere-revisit} the pre-Whitney and pre-accessory spheres are supported near $f$, so the constructions of Whitney spheres and accessory spheres described there can be carried out using cleanly immersed Whitney disks and accessory disks which are not necessarily standard. 
\textbf{From now on we take this more general definition of \emph{Whitney spheres} and \emph{accessory spheres}.}  

\begin{remnn}
See \cite{COP} for alternative constructions of Whitney spheres and accessory spheres in the complements of immersed disks in the $4$--ball, with applications to classical knot invariants.  
\end{remnn}

\subsubsection{Metabolic Whitney disks and accessory disks}\label{sec:metabolic-disks}

The following definition gives criteria for cleanly immersed disks to be ``sufficiently standard'' so as to be useful in the upcoming Metabolic Unlinking Theorem~\ref{thm:I2-metabolic}: 
\begin{defi}\label{def:metabolic-collection}
A {\em metabolic collection} of disks for a standard position $f:S^2\imra S^4$ with $2n$ self-intersections consists of cleanly immersed Whitney disks $W_1, \dots, W_n$ and cleanly immersed positive accessory disks $A_1, \dots, A_n$ satisfying the following conditions:
\begin{itemize}

\item The boundaries of all $W_i$ and $A_j$ are disjointly embedded, except that $\partial W_i$ intersects $\partial A_i$ in the $i$th positive self-intersection $p_i^+$ of $f$.
We also require that {\em in a collar neighborhood} of their boundaries, $W_i$ and $A_i$ are as in Figure~\ref{fig:Whitney-sphere-blue-movie}, in particular, that there are no intersection points in these collars other than $p_i^+=\partial W_i\cap\partial A_i$.

\item The $\Lambda$-valued intersection invariants, computed in the complement of $f$, satisfy
\[
\lambda(W_i, W_j) = 0  = \lambda(W_i, A_j) ,
\]
where $\lambda(W_i, W_i)$ is computed using a Whitney section. We note that, as a consequence, each $W_i$ is framed. 

\end{itemize}
A Whitney or accessory disk for $f$ is called {\em metabolic} if it is part of a metabolic collection.
\end{defi}

Applying the constructions described in sections~\ref{sec:w-sphere-revisit} and~\ref{sec:accessory-sphere-revisit} to a metabolic collection $\{W_i,A_i\}$ yields Whitney spheres $S_{W_i}$ and accessory spheres $S_{A_i}$ which have the same $\Lambda$-valued intersections as the Whitney and accessory spheres in Lemma~\ref{lem:algtop} \emph{except} that the intersections and self-intersections $\lambda(S_{A_i}, S_{A_j})$ of the accessory spheres are not controlled.

Lemma~\ref{lem:algtop} shows that  a metabolic collection always exists and the following result strengthens this statement:

\begin{lem} \label{lem:free}
Let $f:S^2\imra S^4$ be in standard position, with $M$ the complement of a regular neighborhood of $f$ as in Lemma~\ref{lem:algtop}. 
\begin{enumerate}
\item Any collection of the self-intersections of $f$ in oppositely-signed pairs is induced from a
standard collection of Whitney disks for $f$. Together with standard positive accessory disks, they form a metabolic collection.
\item In any metabolic collection $\{W_i, A_i\}$ for $f$, the corresponding Whitney and accessory spheres form a basis for the 
$\Lambda$-module $\pi_2(M)$. Moreover, the same sequence as in part (4) of Lemma~\ref{lem:algtop} is exact.
\end{enumerate}
\end{lem}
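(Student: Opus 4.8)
The plan is to prove the two parts by quite different means. Part~(1) is a \emph{realization} statement — every oppositely-signed matching of the double points of a standard $f$ is carried by \emph{some} standard collection — and I would prove it by geometric manipulation inside the standard model. Part~(2) is a \emph{rigidity} statement and will follow formally from the module structure of $\pi_2 M$ recorded in Lemma~\ref{lem:algtop} together with the geometric dualities that are built into the local constructions of Section~\ref{subsec:w-sphere-and-a-spheres-local-coordinates}.

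For part~(1), I would start from the standard collection $\{W_i^0\}$ obtained by reversing the finger moves that define $f$ (Definition~\ref{def:standard position}), and call $\mathcal P_0$ the matching it induces. Given an arbitrary oppositely-signed matching $\mathcal P$, write it as the result of applying to $\mathcal P_0$ a permutation of the negative double points and factor that permutation into transpositions, so that it suffices to realize a single transposition. Given two members $W_a^0, W_b^0$ of a standard collection pairing $(p_a^+,p_a^-)$ and $(p_b^+,p_b^-)$, I would produce new disjointly embedded framed Whitney disks $\widetilde W_a$, $\widetilde W_b$ pairing $(p_a^+,p_b^-)$ and $(p_b^+,p_a^-)$, with interiors disjoint from $f$, by an explicit reroute of the boundary arcs performed in a chart $B^3\times I$ containing an embedded arc of $f$ joining $p_a^-$ to $p_b^-$ (one of the boundary-arc maneuvers on which the paper is built; cf.\ the appendix Section~\ref{sec:appendix}). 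The crucial point is that this reroute changes neither the embeddedness nor the disjointness of the disks, nor the combinatorial sequence of Whitney moves that cancels all double points, so the modified collection still leads back to $f_0$ and is therefore again standard; replacing $W_a^0,W_b^0$ by $\widetilde W_a,\widetilde W_b$ and iterating realizes $\mathcal P$. Finally, to see that such a standard collection together with the standard positive accessory disks $A_i$ of the local model of Figure~\ref{fig:Whitney-and-accessory-disks} is a metabolic collection, one checks the conditions of Definition~\ref{def:metabolic-collection} directly: after an isotopy of boundaries on $f$ the collars are as in Figure~\ref{fig:Whitney-sphere-blue-movie}, $\partial W_i\cap\partial A_i=p_i^+$ with all other boundaries disjoint; the $W_i$ are disjointly embedded and framed, so $\lambda(W_i,W_j)=0$; and $\lambda(W_i,A_j)=0$ because the interiors of $W_i$ and $A_j$ are disjoint (for $i\neq j$ by general position, for $i=j$ by the local model).

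For part~(2), let $\{W_i,A_i\}$ be any metabolic collection and $S_{W_i}$, $S_{A_i}$ the resulting Whitney and accessory spheres. By part~\ref{item:pi2-free-rank-2n} of Lemma~\ref{lem:algtop}, $\pi_2 M$ is free of rank $2n$ over $\Lambda=\Z[x^{\pm1}]$. I would record the following intersection numbers, computed in $M$: from the geometric dualities visible in Figures~\ref{fig:Whitney-sphere-blue-movie} and~\ref{fig:Accessory-sphere-movie} together with the defining relations $\lambda(W_i,W_j)=0=\lambda(W_i,A_j)$ of Definition~\ref{def:metabolic-collection} (and the signed-pair cancellation of the four parallel copies of a Whitney disk inside its Whitney sphere), after choosing orientations and basepoints suitably,
\[
\lambda(W_i,S_{W_j})=0,\qquad \lambda(W_i,S_{A_j})=\delta_{ij},\qquad \lambda(A_i,S_{W_j})=\delta_{ij}.
\]
The self-pairings $\lambda(S_{A_i},S_{A_j})$ play no role here. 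Consider the $\Lambda$-linear maps $\beta:\Lambda^{2n}\to\pi_2 M$ sending the standard basis to $(S_{W_1},\dots,S_{W_n},S_{A_1},\dots,S_{A_n})$ and $\gamma:\pi_2 M\to\Lambda^{2n}$, $\gamma=(\lambda(W_1,-),\dots,\lambda(W_n,-),\lambda(A_1,-),\dots,\lambda(A_n,-))$. The displayed identities say that $\gamma\circ\beta$ is represented by the block matrix $\left(\begin{smallmatrix}0&I_n\\ I_n&\ast\end{smallmatrix}\right)$, where $\ast=\big(\lambda(A_i,S_{A_j})\big)$; this matrix is invertible over $\Lambda$, with inverse $\left(\begin{smallmatrix}-\ast&I_n\\ I_n&0\end{smallmatrix}\right)$. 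Hence $\beta$ is split injective, so its image $\langle S_{W_i},S_{A_i}\rangle$ is a free rank-$2n$ direct summand of the free rank-$2n$ module $\pi_2 M$; the complementary summand is a torsion-free $\Lambda$-module of rank $0$, hence zero, and $\{S_{W_i},S_{A_i}\}$ is a $\Lambda$-basis of $\pi_2 M$. Exactness of
\[
0\to\langle S_{W_i}\rangle\to\pi_2 M\xrightarrow{\ \lambda(W_i,-)\ }\Lambda^n\to 0
\]
is then immediate: writing $v=\sum a_iS_{W_i}+\sum b_jS_{A_j}$ in this basis, the displayed identities give $\lambda(W_k,v)=b_k$, so $\lambda(W_\bullet,-)$ is onto and its kernel is exactly $\langle S_{W_i}\rangle$.

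I expect the main obstacle to be the geometric heart of part~(1): making precise the boundary-arc reroute that realizes a transposition of the matching, and verifying that it keeps the disks embedded, disjoint, with interiors off $f$, and — crucially — preserves the property of leading back to $f_0$. This last point genuinely uses the standard position hypothesis (in particular $\pi_1 M\cong\Z$ and that the guiding arcs can be taken unknotted), since an embedded sphere in $S^4$ obtained by cancelling all double points by arbitrary embedded framed Whitney disks need not be unknotted. An alternative route for part~(1) would be to reverse-engineer, for the given matching $\mathcal P$, a presentation of $f$ as a sequence of finger moves on $f_0$ whose reversing Whitney disks induce exactly $\mathcal P$; either way, the bookkeeping in part~(1) is where the real work lies, since part~(2) is then essentially linear algebra.
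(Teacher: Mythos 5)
Part~(1) of your proposal has a genuine gap, and you flag it yourself. The whole content of that statement is that the re-paired Whitney disks form a \emph{standard} collection in the sense of Definition~\ref{def:standard position}, i.e.\ that doing the Whitney moves along them returns $f$ to the unknotted $f_0$. Your argument only asserts this (``the combinatorial sequence of Whitney moves \dots is unchanged'') and then concedes in your closing paragraph that an embedded sphere obtained by cancelling all double points along arbitrary disjointly embedded framed Whitney disks need not be unknotted. Rerouting boundary arcs, or half-tubing the standard accessory disks $A_a^+$ and $A_b^-$ together as in the appendix, does produce disjointly embedded framed Whitney disks realizing the transposed pairing with interiors off $f$, but nothing in your sketch certifies that surgering along them recovers $f_0$ -- and that is precisely the point that needs proof. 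The paper closes it by Kirby calculus: in the diagram of Figure~\ref{standard-sphere-Kirby-diagram-4} each self-intersection of $f$ corresponds to a green accessory $2$-handle, and sliding these handles over one another converts the picture back into the standard diagram of Figure~\ref{standard-sphere-Kirby-diagram-3} \emph{for any choice of pairing}; the standard Whitney disks inducing the new pairing are then visible in the slid diagram, and metabolicity follows from Lemma~\ref{lem:algtop}. Some argument of this kind (handle slides, or your alternative of re-presenting $f$ as a sequence of finger moves on $f_0$ adapted to the given pairing, carried out in detail) is needed to complete part~(1).

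Part~(2) is correct and takes a genuinely different route from the paper. The paper divides the sphere--sphere intersection form by $z$, uses that a metabolic collection gives a unimodular Gram matrix, and runs a duality diagram plus a rationalization step over the principal ideal domain $\Q[x^{\pm 1}]$ to conclude that $\varphi$ is an isomorphism. You instead pair the candidate basis against the relative classes $[W_i],[A_i]$, so that the composite $\gamma\circ\beta$ is the invertible matrix $\left(\begin{smallmatrix}0&I_n\\ I_n&\ast\end{smallmatrix}\right)$ over $\Lambda$; split injectivity plus a rank count then gives the basis statement and the exact sequence simultaneously, with no appeal to unimodularity of $\lambda/z$ or to a PID. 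The price is the verification of the identities $\lambda(W_i,S_{W_j})=0$, $\lambda(W_i,S_{A_j})=\delta_{ij}$, $\lambda(A_i,S_{W_j})=\delta_{ij}$ for a general metabolic collection; these do hold, using the collar condition of Definition~\ref{def:metabolic-collection} (so the near-$f$ parts of the pre-spheres meet the other disks only in their standard collars) together with $\lambda(W_i,W_j)=0=\lambda(W_i,A_j)$ for the contributions of the four parallel copies, and this is the same verification the paper makes when it asserts that a metabolic collection yields spheres with the intersections of Lemma~\ref{lem:algtop} except for $\lambda(S_{A_i},S_{A_j})$. So part~(2) stands as a slightly more direct argument than the paper's.
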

\begin{proof}
Claim (1) follows from the right-most Kirby diagram in Figure~\ref{standard-sphere-Kirby-diagram-4}: Each green accessory handle attaching circle corresponds to a self-intersections of $f$, and these circles can be slid past each other to get the left-most diagram of Figure~\ref{standard-sphere-Kirby-diagram-3} for any choice of pairing. 
The metabolicity follows from Lemma~\ref{lem:algtop}.

For (2), consider the $\Lambda$-homomorphism $\varphi:\Lambda^{2n} \to \pi_2(M)$ that sends the free generators to the Whitney spheres and accessory spheres constructed from the metabolic collection. Then there is a commutative diagram
\[
\begin{diagram}
\node{\Lambda^{2n}} \arrow{e,t}{\varphi} \arrow{s,l}{\cong} \node{\pi_2(M)}
\arrow{s,lr}{\cong}{\lambda/z}  \\
\node{(\Lambda^{2n})^*} \node{\pi_2(M)^*} \arrow{w,t}{\varphi^*}
\end{diagram}
\]
where $\lambda:\pi_2(M)\to \pi_2(M)^*$ is the intersection form and $z:= 2-x-x^{-1}$ is the factor arising in part (3) of Lemma~\ref{lem:algtop}. This lemma also implies that $\lambda$ is divisible by $z$ and after dividing it, becomes unimodular. The left vertical arrow is obtained by going around the diagram and hence is given by the intersection numbers (divided by $z$) of the Whitney and accessory spheres. 

By the definition of a metabolic collection, this pairing is also unimodular. The diagram hence implies that $\varphi^*$ is onto which stays true when tensoring it with $\Q$. Since $\pi_2(M)^*$ is a free $\Lambda$-module and $\Lambda\otimes \Q=\Q[x^{\pm 1}]$ is a principal ideal domain, it follows that $\varphi^*$ is also injective, hence an isomorphism.

Again the commutativity of the diagram implies that $\varphi$ is an isomorphism which proves both parts of our statement (2).
\end{proof}

\subsubsection{Metabolic isometries}\label{sec:metabolic-isometry}
The following realization result for metabolic isometries will provide the starting point for the proof of the subsequent Metabolic Unlinking Theorem:

\begin{lem}\label{lem:isometry}
Let $(f,f_2)$ be a link map with $f$ in standard position, and with $\lambda(f_2,f_2)=0$, such that
$\lambda(W_i,f_2)\in z\cdot\Lambda$ for some metabolic collection $\{W_i, A_i\}$ for $f$.
Denote by $M_n$ the complement of a regular neighborhood of $f$, where $n$ is the number of pairs of self-intersections.
 After two more finger moves on $f$ (in the complement of $f_2$) there is an
isometry $\Phi$ of the intersection form $(\pi_2M_{n+2},\lambda)$ such that:
\begin{enumerate}
\item $\Phi \equiv \id \mod z$, i.e.\  for all $a\in\pi_2M_{n+2}$ there is an element $b\in \pi_2M_{n+2}$ such that
$\Phi(a)-a=z\cdot b$, and
\item $\Phi^{-1}(f_2)\in \langle S_{W_i} \rangle$.
\end{enumerate}
\end{lem}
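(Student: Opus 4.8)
The starting point is Lemma~\ref{lem:free}(2): for the given metabolic collection $\{W_i,A_i\}$ (on $f$ with $n$ pairs of self-intersections) the Whitney spheres $S_{W_i}$ and accessory spheres $S_{A_i}$ form a $\Lambda$-basis of $\pi_2 M_n$. However, $\lambda(S_{A_i},S_{A_j})$ is \emph{not} controlled for a general metabolic collection, so we cannot yet write $\lambda$ in a convenient hyperbolic normal form. The first step is therefore to perform two more finger moves on $f$ (disjoint from $f_2$), introducing two new pairs of self-intersections, with corresponding new standard Whitney disks $W_{n+1}, W_{n+2}$ and standard positive accessory disks $A_{n+1}, A_{n+2}$; by Lemma~\ref{lem:free}(1) these are metabolic, and by Lemma~\ref{lem:algtop}(3) their spheres satisfy the clean relations $\lambda(S_{W_{n+k}},S_{W_{n+l}})=0$, $\lambda(S_{A_{n+k}},S_{A_{n+l}})=z\,\delta_{kl}=\lambda(S_{W_{n+k}},S_{A_{n+l}})$ and are orthogonal to the old spheres. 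In this enlarged module $\pi_2 M_{n+2}$, $f_2$ lies (by Lemma~\ref{lem:free}(2)) in the span of the old spheres, and by hypothesis $\lambda(f_2,f_2)=0$ and $\lambda(W_i,f_2)\in z\cdot\Lambda$ for $i\le n$.

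\textbf{The algebraic core.} Write $f_2 = \sum_{i\le n} a_i S_{W_i} + \sum_{i\le n} b_i S_{A_i}$ in $\pi_2 M_{n+2}$. The hypothesis $\lambda(W_i,f_2)\in z\cdot\Lambda$, together with the short exact sequence of Lemma~\ref{lem:algtop}(4)/Lemma~\ref{lem:free}(2) (relative intersections with the $W_i$), forces the ``accessory coefficients'' $b_i$ to be divisible by $z$, say $b_i = z\,c_i$; one should verify this by noting that $\lambda(W_i,-)$ detects exactly the $S_{A_j}$-components up to the factor $z$. Then the idea is to use the two \emph{new} hyperbolic pairs $(S_{W_{n+1}},S_{A_{n+1}})$ and $(S_{W_{n+2}},S_{A_{n+2}})$ as ``room'' to build an isometry $\Phi$ that moves $f_2$ into $\langle S_{W_i}\rangle$. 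Concretely, define $\Phi$ to fix all old $S_{W_i}$, send $S_{A_i}\mapsto S_{A_i} + z\cdot(\text{correction in the new hyperbolic planes and old }S_{W_j}\text{'s})$ so as to absorb the $z\,c_i$-coefficients, and make compensating adjustments on $S_{W_{n+1}},S_{A_{n+1}},S_{W_{n+2}},S_{A_{n+2}}$ to keep $\Phi$ an isometry and $\equiv\id \bmod z$. The standard trick here is the ``half-lives--half-dies'' / hyperbolic transvection: in a hyperbolic $\Lambda$-plane $H(e,h)$ with $\lambda(e,e)=0=\lambda(h,h)$, $\lambda(e,h)=z$, for any $v$ orthogonal to $H$ with $\lambda(v,v)=z\cdot w$ one can define an isometry fixing $v^\perp\cap e^\perp$-data appropriately and sending $e\mapsto e$, $h\mapsto h + v + (\text{quadratic correction})\,e$, which is $\equiv\id\bmod z$ precisely because $v$ itself will be arranged divisible, or because $\lambda(v,v)\in z\Lambda$. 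Using $\lambda(f_2,f_2)=0$ guarantees the requisite quadratic term is itself divisible by $z$ (or even $z^2$), which is what makes the correction term, hence $\Phi-\id$, land in $z\cdot\pi_2 M_{n+2}$.

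\textbf{The expected main obstacle.} The genuinely delicate point is simultaneously achieving \emph{both} that $\Phi$ is an honest isometry of the (non-unimodular, only $z$-divisibly-unimodular) form $(\pi_2 M_{n+2},\lambda)$ \emph{and} that $\Phi\equiv\id\bmod z$, while also landing $\Phi^{-1}(f_2)$ exactly in $\langle S_{W_i}\rangle$ — not merely in a Lagrangian complement. The condition $\Phi\equiv\id\bmod z$ is essential for the downstream geometry (it is what lets one realize $\Phi$ by a sequence of Whitney-disk manipulations that only change things ``to order $z$''), so one cannot use an arbitrary isometry. I expect the proof to proceed by an explicit inductive construction over the index $i$, at each stage using one of the two spare hyperbolic pairs (or reusing them, since after clearing one coefficient the pair is again ``fresh'' modulo lower-order terms) to zero out $c_i$; the bookkeeping that the accumulated errors remain in $z\Lambda$ and that two spare pairs genuinely suffice is where the two finger moves get used, and is the part requiring care. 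One should also double-check that $\Phi^{-1}(f_2)$ then lies in $\langle S_{W_1},\dots,S_{W_{n+2}}\rangle$ — the new $S_{W_{n+k}}$ components, if any are created, are harmless since they too are Whitney spheres.
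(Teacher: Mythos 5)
Your first step (the divisibility of the accessory coefficients, i.e.\ writing $f_2=g+z\sum c_i S_{A_i}$ with $g\in\langle S_{W_i}\rangle$, using Lemma~\ref{lem:free}(2)) agrees with the paper, but beyond that point the proposal has a genuine gap: the isometry itself is never constructed, and the transvection mechanism you sketch does not obviously reach $f_2$ from $g$. In your setup both stabilizing finger moves are trivial, so the new spheres are orthogonal to everything old, in particular to $f_2$ and to $g$. An Eichler/transvection based at a new isotropic vector $e$ (say $e=S_{W_{n+k}}$) moves any class $x$ with $\lambda(e,x)=0$ only inside $x+\Lambda\cdot e$ (plus multiples of the auxiliary vector $v$ when $\lambda(e,x)\neq 0$, which does not occur here); so products of such moves cannot push $g$ in the $S_{A_i}$-directions needed to hit $f_2=g+z\sum c_iS_{A_i}$. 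Your alternative of ``fixing the $S_{W_i}$ and correcting the $S_{A_i}$ by $z$-divisible terms'' is not a construction either: for a metabolic collection the pairings $\lambda(S_{A_i},S_{A_j})$ are completely uncontrolled, so preserving them is an uncontrolled system of constraints, and this is exactly the content of the lemma rather than ``bookkeeping''. (A smaller inaccuracy: the new summands are not hyperbolic; $\lambda(S_{A_{n+k}},S_{A_{n+k}})=z$, so with respect to $\lambda'=\lambda/z$ each new pair carries the odd unimodular form $\left(\begin{smallmatrix}0&1\\1&1\end{smallmatrix}\right)$, not $\left(\begin{smallmatrix}0&1\\1&0\end{smallmatrix}\right)$.)

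The paper's proof uses the two finger moves for two different purposes, neither of which appears in your sketch. The first finger move is \emph{not} trivial: it is taken along an arc linking $f_2$ exactly once, so it changes $f_2$ to $f_2+S_{W_{n+1}}$ and produces an accessory sphere $S_A$ with $\lambda'(f_2,S_A)=1=\lambda'(g,S_A)$ after replacing $g$ by $g+S_{W_{n+1}}$ (note $g$ stays in $\langle S_{W_i}\rangle$). This makes $\langle g,S_A\rangle$ and $\langle f_2,S_A\rangle$ unimodular copies of $\left(\begin{smallmatrix}0&1\\1&1\end{smallmatrix}\right)$ for $\lambda'$, so their orthogonal complements $C_1,C_2$ split off and are free. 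The isometry is then obtained not by transvections but by a change-of-basis trick: the matrix $J$ of the \emph{identity} map between the bases $\{g,S_A,\text{basis of }C_1\}$ and $\{f_2,S_A,\text{basis of }C_2\}$ is an isometry of the abstract forms $\left(\begin{smallmatrix}0&1\\1&1\end{smallmatrix}\right)\perp C_i$, and since $g\equiv f_2\bmod z$ one can choose the bases of $C_1$ and $C_2$ to agree mod $z$, giving $J\equiv\id\bmod z$. The second (trivial) finger move then supplies an extra $\left(\begin{smallmatrix}0&1\\1&1\end{smallmatrix}\right)$ summand so that $J$ can be reinterpreted as an isometry from $D_1=\langle S_{W_{n+2}},S_{A_{n+2}}\rangle\perp C_1$ to $D_2$, the actual complements in $\pi_2M_{n+2}$, and $\Phi$ is defined by $g\mapsto f_2$, $S_A\mapsto S_A$, and $J$ on $D_1$. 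Without the linking finger move (to create a common dual for $f_2$ and $g$) and the stabilization-absorption step (to realize the abstract complement isometry), the proposed argument does not go through as written.
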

\begin{proof}
First observe that since $\Lambda$ has no zero-divisors an isometry of $(\pi_2M_n,\lambda)$ is the same
as an isometry of the form $(\pi_2M_n,\lambda')$ where
$$
\lambda'(a,b):=\frac{1}{z}\cdot \lambda(a,b) \quad\forall a,b\in\pi_2M_n.
$$
This hermitian form $\lambda'$ is well-defined and unimodular by part~(3) of Lemma~\ref{lem:algtop} and Definition~\ref{def:metabolic-collection}. We prefer to work with $\lambda'$ in place of $\lambda$ throughout this proof.

By our assumption $\lambda(W_i,f_2)\equiv 0\mod z$ and part~(4) of Lemma~\ref{lem:algtop} (via Lemma~\ref{lem:free}) there is a sphere $g$ with
$$
[g]\in \langle S_{W_i} \rangle \leq\pi_2M_n
$$
 such that $[g]\equiv [f_2]\mod z$. It also follows that $\lambda(g,g)=0$.
 We do one finger move on $f$ along an arc which links $f_2$ precisely once.
This link homotopy changes $f_2$ to $f_2 + S_{W_{n+1}}$ in $\pi_2M_{n+1}$, where $W_{n+1}$ is a local framed embedded Whitney disk inverse to the finger move. By changing $g$ to $g+ S_{W_{n+1}}$ we may
assume that in $\pi_2M_{n+1}$ we still have $[g]\equiv[f_2]\mod z$ and in addition
$$
\lambda'(g,g)=0=\lambda'(f_2,f_2) \text{ and } \lambda'(g,S_{A})=1=\lambda'(f_2,S_{A}).
$$
Here $S_A$ is the accessory sphere constructed from the standard (positive) accessory disk $A=A_{n+1}$ corresponding to the finger move. Let $C_1$ be the orthogonal complement of the $\Lambda$-span $\langle g, S_{A} \rangle$ in $(\pi_2M_{n+1},\lambda')$. Similarly, let $C_2$ be the orthogonal
complement of  $\langle f_2, S_{A} \rangle$ in $(\pi_2M_{n+1},\lambda')$. Since the forms on $\pi_2M_{n+1}$, $\,
\langle g, S_{A} \rangle$ and $ \langle f_2, S_{A} \rangle$ are unimodular the $\Lambda$-modules $C_i$ are stably
free and hence free \cite{Bass}. We define two bases $\mathfrak{B}_i$ of $\pi_2M_{n+1}$ by
$$
\mathfrak{B}_1:=\{ g, S_{A}, \text{ some basis for } C_1\} \quad\text {and }\quad \mathfrak{B}_2:=\{ f_2, S_{A},
\text{ some basis for } C_2 \}.
$$
Let $J$ be the $(2n+2,2n+2)$-matrix with entries in $\Lambda$ which represents the identity map on $\pi_2M_{n+1}$
in terms of the basis $\mathfrak{B}_1$ and $\mathfrak{B}_2$. Then $J$ is the matrix for an isometry between the {\em
abstract} hermitian forms
$$
\left(\begin{matrix}
0 & 1 \\
1 & 1
\end{matrix} \right)
\perp (C_1,\lambda') \text{ and }
\left(\begin{matrix}
0 & 1 \\
1 & 1
\end{matrix} \right)
\perp (C_2,\lambda').
$$
Moreover, since $[g]\equiv[f_2]\mod z$ we know that $C_1$ {\em equals} $C_2\,\mod z$, and by choosing the
above bases for $C_i$ to be equal in $\pi_2M_{n+1}/z$ we may assume that
$$
J_{ij}\equiv\delta_{ij}\mod z.
$$
 Now we introduce one further trivial finger move to $f$, leaving $f_2$ and $g$ unchanged in
$\pi_2M_{n+1}\subset\pi_2M_{n+2}$. The new orthogonal complements $D_i$ to $ \langle g, S_A \rangle$ and $ \langle
f_2, S_A \rangle$ in $(\pi_2M_{n+2},\lambda')$ are then given by
$$
D_i= \langle S_{W_{n+2}}, S_{A_{n+2}} \rangle \perp C_i.
$$
Since $\lambda'$ restricted to the first summand is just the form
$\left(\begin{smallmatrix}
0 & 1 \\
1 & 1
\end{smallmatrix} \right)$
we conclude that the matrix $J$ induces an isometry from $D_1$ to $D_2$ which is the identity modulo $z$. But
this isometry can be extended to the desired isometry of $(\pi_2M_{n+2},\lambda)$ by sending $g$ to $f_2$ and leaving
$S_A$ fixed.
\end{proof}

\subsection{The Metabolic Unlinking Theorem}\label{sec:metabolic-unlinking-thm}
The following result gives the promised strengthening of the implication $(iii)\Rightarrow (i)$ in Theorem~\ref{thm:I2}:

\begin{thm}[Metabolic Unlinking] \label{thm:I2-metabolic}

A link map $(f_1,f_2)$ is link homotopically trivial if
there are finger moves on $f_1$ (disjoint from $f_2$) which bring $f_1$
into a standard position $f$ such that $\lambda(f_2,f_2)=0$ and
 there is a metabolic collection $\{W_i,A_i\}$ for $f$ with
 $$
 \lambda(W_i,f_2)\in z\cdot\Lambda.
 $$
\end{thm}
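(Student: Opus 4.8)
The plan is to reduce to the situation already handled by the implication $(ii) \Rightarrow (i)$ of Theorem~\ref{thm:I2}, by converting the weak hypothesis $\lambda(W_i, f_2) \in z\cdot\Lambda$ into the strong one $\lambda(W_i, f_2) = 0$ for a suitable (new) metabolic collection, at the cost of some further finger moves on $f_1$. The starting point is Lemma~\ref{lem:isometry}: after two more finger moves bringing us to $M_{n+2}$, there is an isometry $\Phi$ of $(\pi_2 M_{n+2}, \lambda)$ with $\Phi \equiv \id \bmod z$ and $\Phi^{-1}([f_2]) \in \langle S_{W_i}\rangle$. The idea is that $\Phi$ should be \emph{geometrically realized}: one wants a new metabolic collection $\{W_i', A_i'\}$ for $f$ (after the finger moves) whose associated Whitney and accessory spheres are the images under $\Phi$ of the old ones $S_{W_i}, S_{A_i}$. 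Granting such a realization, the relative intersection numbers transform accordingly — $\lambda(W_i', f_2)$ is computed from $\lambda(\Phi^{-1}(f_2), -)$ against the old Whitney disks — and since $\Phi^{-1}([f_2])$ lies in $\langle S_{W_i}\rangle$, part~(4) of Lemma~\ref{lem:algtop} (via Lemma~\ref{lem:free}) forces $\lambda(W_i', f_2) = 0$. Then the already-proved implication $(ii) \Rightarrow (i)$, or directly the argument there (homotope $f_2$ off the $W_i'$ using statement~4, then apply the Whitney homotopy Lemma~\ref{lem:useless} repeatedly to the spheres $S_{W_i'}$), finishes the proof.

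The technical heart is therefore the geometric realization of the abstract isometry $\Phi$ by an actual modification of the collection of Whitney and accessory disks. Here is where I expect the main work and the main obstacle to lie. One cannot hope to realize an \emph{arbitrary} isometry geometrically, but $\Phi \equiv \id \bmod z$ is exactly the special kind that should be realizable: being congruent to the identity mod $z$ means $\Phi$ differs from $\id$ by "meridional" data, and the ambient manifold $M_{n+2}$ has good fundamental group $\Z$, so Freedman's disk embedding theorem is available. Concretely, I would proceed in two stages. First, realize $\Phi$ at the level of \emph{homotopy classes}: since $\{S_{W_i}, S_{A_i}\}$ is a basis of $\pi_2 M_{n+2}$ (Lemma~\ref{lem:free}), the classes $\Phi(S_{W_i})$ and $\Phi(S_{A_i})$ are represented by some immersed spheres, and one arranges them to be cleanly immersed Whitney and accessory spheres for the correct new set of self-intersections using the local-coordinate constructions of Section~\ref{subsec:w-sphere-and-a-spheres-local-coordinates} together with boundary-arc maneuvers on the Whitney disks. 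Second, upgrade the new Whitney disks $W_i'$ from cleanly immersed to \emph{embedded}: this is where the algebraic dual \emph{accessory spheres} come in. Because $\Phi$ is an isometry and $\Phi \equiv \id \bmod z$, the images $\Phi(S_{A_i})$ still pair with $\Phi(S_{W_i})$ by $z\cdot\delta_{ij}$ up to the controlled error, furnishing algebraic duals; one applies Freedman's embedding theorem in $M_{n+2} \smallsetminus (\text{stuff})$ to make the $W_i'$ disjointly embedded and framed (framing is automatic by the last sentence of Definition~\ref{def:metabolic-collection}), at which point $\{W_i', A_i'\}$ is a genuine metabolic collection.

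The main obstacle, concretely, is bookkeeping the \emph{self-intersections of $f_2$ and their group elements} through this process and checking that the condition $\lambda(f_2, f_2) = 0$ is preserved and that $f_2$ remains disjoint from the new embedded Whitney disks after the finger moves and homotopies — the finger moves in Lemma~\ref{lem:isometry} were chosen to be disjoint from $f_2$ or to alter $f_2$ only by adding a local $S_{W_{n+1}}$, and one must verify that the subsequent realization of $\Phi$ can be done \emph{rel $f_2$}, i.e. supported in the complement of $f_2$, so that $[f_2] \in \pi_2 M_{n+2}$ and its self-intersection invariant are genuinely unchanged. A secondary subtlety is that realizing $\Phi$ may require the new accessory disks $A_i'$ to be only cleanly immersed (not standard), but this is allowed by the generalized Definition~\ref{def:cleanly-immersed} and Definition~\ref{def:metabolic-collection}, and crucially the theorem's conclusion does \emph{not} require controlling $\lambda(S_{A_i'}, S_{A_j'})$. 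Once embeddedness of the $W_i'$ is in hand, the endgame — running Whitney homotopies along the $S_{W_i'}$ to shrink $f_2$ — is immediate from Lemma~\ref{lem:useless}.
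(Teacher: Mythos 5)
Your high-level architecture matches the paper's exactly: start from Lemma~\ref{lem:isometry}, realize the isometry $\Phi$ geometrically by a new collection of (immersed) Whitney disks, upgrade them to disjointly embedded disks via Freedman's disk embedding theorem with $\Phi(S_{A_i})$ as algebraic duals, and finish with Whitney homotopies along the resulting embedded Whitney spheres. You also correctly observe that $\lambda(S_{A_i'},S_{A_j'})$ need not be controlled, and that the good group $\Z$ makes Freedman available.

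However, the step you flag as ``the main work and the main obstacle'' --- realizing $\Phi$ geometrically --- is left as a wish rather than executed, and the mechanism you gesture at (``local-coordinate constructions together with boundary-arc maneuvers on the Whitney disks'') is not what the paper does and is not obviously tractable. The paper's construction is a one-liner that sidesteps all boundary maneuvers. Set
\[
T_i \;:=\; \tfrac{1}{z}\,\bigl(\Phi(S_{W_i})-S_{W_i}\bigr)\;\in\pi_2 M,
\]
which is well defined because $\Phi\equiv\id\bmod z$ and $\Lambda$ has no zero-divisors, and let
\[
V_i \;:=\; W_i\,\#\,T_i
\]
be an \emph{interior} connected sum, so $\partial V_i=\partial W_i$ and no new self-intersections of $f$ are introduced. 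The reason this works is the homotopy-theoretic characterization of Whitney spheres proved in Lemma~\ref{lem:W-sphere-homotopy-def}: under $\pi_2(M)\rightarrowtail\pi_2(M,\partial M)$, the class $[S_W]$ equals $(1+gh^{-1}-g-h^{-1})\cdot[W]$, i.e.\ essentially $z\cdot[W]$. Hence
\[
S_{V_i}\;=\; z\cdot[V_i]\;=\; z\cdot[W_i]+z\cdot T_i \;=\; S_{W_i}+\bigl(\Phi(S_{W_i})-S_{W_i}\bigr)\;=\;\Phi(S_{W_i}),
\]
and then $\lambda(V_i,V_j)=0$ and $\lambda(V_i,\Phi(S_{A_j}))=\delta_{ij}$ follow directly because $\Phi$ is an isometry. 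That is the missing ingredient: without the identity $[S_W]=z\cdot[W]$ in the relative homotopy group you have no way to convert a mod-$z$ change in the homotopy class of the Whitney \emph{sphere} into a change in the Whitney \emph{disk}, which is the only object you can then feed into Freedman's theorem.

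One more small point: you worry about whether the realization must be done ``rel $f_2$'' and whether $\lambda(f_2,f_2)=0$ is preserved. This worry dissolves in the paper's approach: $f_2$ is not moved during the construction of the $V_i$ at all (only the finger moves in Lemma~\ref{lem:isometry} touch it, and those are already accounted for there). The $V_i$ may well intersect $f_2$; after Freedman makes them disjointly embedded, one homotopes $f_2$ \emph{inside $M=S^4\setminus f$} to the geometric sum $\sum\alpha_i S_{V_i}$ --- a legitimate link homotopy --- and then peels off the summands one at a time by Whitney homotopies. So the ``rel $f_2$'' requirement you anticipated is not actually needed.

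Also minor: you propose to construct new accessory \emph{disks} $A_i'$; the paper does not need them. It only uses the classes $V_j^\dagger:=\Phi(S_{A_j})$ as algebraic transverse \emph{spheres} for the Freedman input, which is weaker and avoids an extra realization step.
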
	

\begin{proof}
By Lemma~\ref{lem:isometry} we may assume that we have a link map $(f,f_2)$, with $f$ in standard position, such that the
$4$--manifold
$
M=S^4 \smallsetminus \text{ tubular neighborhood of $f $}
$
 allows an isometry
$\Phi$ of the intersection form $(\pi_2M,\lambda)$ such that
\begin{enumerate}
\item $\Phi \equiv \id \mod z$ and
\item $\Phi^{-1}(f_2)\in \langle S_{W_i} \rangle$
\end{enumerate}
for some metabolic collection $\{W_i,A_i\}$ for $f$. 
We want to use our Whitney homotopy Section~\ref{sec:the-whitney-htpy} several times to conclude that $(f,f_2)$ is link homotopically trivial.
Therefore, we need to express $f_2$ as a linear combination of Whitney spheres formed from disjoint framed embedded Whitney disks. Define spheres
$$
T_i:=\frac{1}{z}\cdot (\Phi(S_{W_i})-S_{W_i})\;\in\pi_2M.
$$
This definition makes sense since $\Phi\equiv \id\mod z$ and $\Lambda$ has no zero divisors. Then define Whitney disks
$$
V_i := W_i \# T_i
$$
which are the interior connected sum of the metabolic Whitney disks $W_i$ with the spheres $T_i$. We assert that the $V_i$ are cleanly immersed Whitney disks for $f$ with $S_{V_i}=\Phi(S_{W_i})$, satisfying $\lambda(V_i,V_j)=0$, and with algebraically transverse spheres $V_i^\dagger := \Phi(S_{A_i})$ satisfying $\lambda(V_i,V_j^\dagger)=\delta_{ij}$.

To prove this assertion, first note that by our convention we represent the spheres $T_i$ by immersions with trivial
normal bundle. In particular, adding them to $W_i$ does not change the relative Euler number of the Whitney disks. By
Lemma~\ref{lem:W-sphere-homotopy-def} in the subsequent Section~\ref{sec:homotopy Whitney sphere}, the class in $\pi_2M$ of any Whitney sphere $S_W$ is equal to $z\cdot [W]$ in the relative homotopy group $\pi_2(M,\partial M)$, so 
we have
$$
S_{V_i}=z\cdot [V_i]= z\cdot [W_i] + z\cdot T_i= S_{W_i} +
\Phi(S_{W_i})-S_{W_i}=\Phi(S_{W_i}).
$$
It follows that $\lambda(V_i,V_j)=0$ because $\Phi$ is an isometry, and thus
$$
0=\lambda(S_{W_i},S_{W_j})=\lambda(S_{V_i},S_{V_j})=z\cdot z\cdot \lambda(V_i,V_j).
$$
A similar comparison shows that $V_j^\dagger$ are algebraically transverse spheres for $V_i$:
$$
\lambda(V_i,V_j^\dagger)=\lambda(W_i,S_{A_j})=\delta_{ij}.
$$
Since $\pi_1M$ is cyclic and thus a good group, we can  apply Freedman's embedding theorem
\cite[Cor.5.1B]{FQ}
to obtain disjoint topologically framed embedded Whitney disks, regularly homotopic to $V_i$ and with algebraically transverse spheres. We continue to call the resulting collection of Whitney disks $V_i$.

The corresponding Whitney spheres $S_{V_i}$ are disjointly embedded 
and the condition $\Phi^{-1}(f_2)\in \langle S_{W_i} \rangle$ from Lemma~\ref{lem:isometry} translates into
$$
[f_2] \in \langle S_{V_i} \rangle \leq\pi_2M\;\text{ i.e.\  } \; [f_2]=\sum_{i=1}^n\alpha_i\cdot S_{V_i} \text{ with }
\alpha_i\in\Z[x^{\pm 1}].
$$
We will next apply the Whitney Homotopy from Section~\ref{sec:the-whitney-htpy} to $(f,f_2)\ n$ times. The first application is the link homotopy which does the Whitney move on $V_1$, then shrinks  
$\alpha_1\cdot S_{V_1}$, and finally does the finger move that reverses the Whitney move and returns $f$ to its original position. We get the link map
$$
(f,f_2-\alpha_1\cdot S_{V_1})=(f,\sum_{i=2}^n\alpha_i\cdot S_{V_i}).
$$
Then we apply the same moves using the disjointly embedded Whitney disks $V_2, V_3, \dots V_n$ to get a link homotopy from $(f,f_2)$ to $(f,*)$. But now $f$ shrinks in $S^4 \smallsetminus *=\R^4$ and Theorem~\ref{thm:I2-metabolic} is proven, modulo the proof of Lemma~\ref{lem:W-sphere-homotopy-def} below.
\end{proof}

We note that in the above argument, we brought $f_2$ into a position where it's the geometric sum of embedded Whitney spheres $S_{V_i}$, and is in particular disjoint from the embedded Whitney disks $V_i$.  So we could do Whitney moves on $f$ along all the $V_i$, in the complement of $f_2$, to arrive at an embedded link map $(f',f_2)$ with possibly knotted components. If this embedding was smooth one could conclude that $(f',f_2)$ is trivial from the main result of \cite{BT}, which uses singular handles and stratified Morse theory. 

Our proof here works in the topological category and is more explicit: We see step by step how $f_2$ becomes geometrically the sum of fewer and fewer Whitney spheres since each $S_{V_i}$ shrinks after doing the Whitney move on $V_i$ by a Whitney homotopy, until $f_2$ eventually becomes trivial.

\subsection{Homotopy uniqueness of the Whitney sphere} \label{sec:homotopy Whitney sphere}

The next lemma completes the proof of the Metabolic Unlinking Theorem~\ref{thm:I2-metabolic}
and gives a purely homotopy-theoretic definition of a Whitney sphere $S_W$ in terms of $W$.

Let $X^4$ be an oriented $4$-manifold (without boundary),
with $f:S^2\imra X^4$ generic and 
$M$ the complement of a tubular neighborhood of $f$.
If $f$ is not an embedding then $\pi_2(\partial M)=0$ by Remark~\ref{rem:pi2-standard-boundary-trivial}, and hence we get a monomorphism $\pi_2(M) \ira \pi_2(M,\partial M)$.
\begin{lem} \label{lem:W-sphere-homotopy-def}
For $W$ a cleanly immersed framed Whitney disk on $f:S^2\imra X^4$, the monomorphism $\pi_2(M) \ira \pi_2(M,\partial M)$ sends
$[S_W]$ to $(1+gh^{-1}-g-h^{-1})\cdot [W]$, where $g,h\in  \pi_1(\partial M)$ are both positively oriented meridians to $f$.
In particular, for $f:S^2\imra S^4$ in standard position
$[S_W]$ maps to $z\cdot [W]$. 
\end{lem}

\begin{figure}[ht!]
         \centerline{\includegraphics[scale=.35]{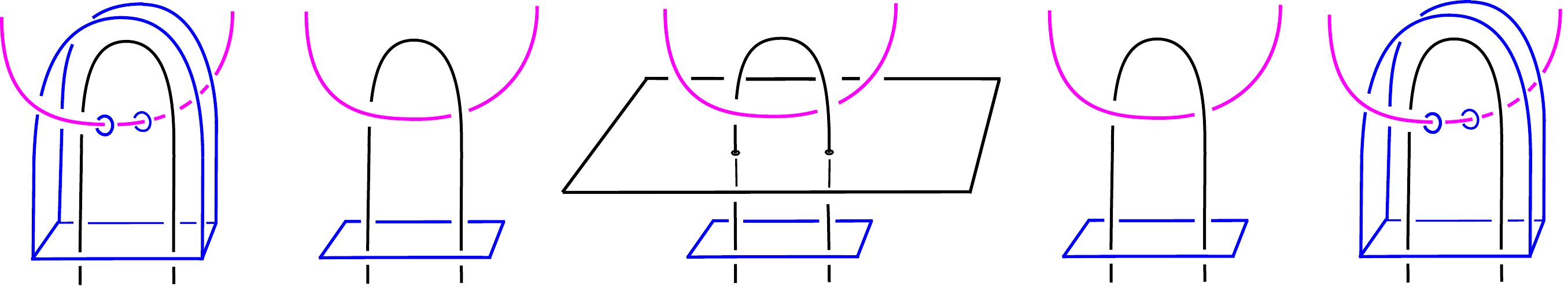}}
         \caption{In the domain of a Whitney disk $W$ the preimage of the sphere-minus-four-disks $S^0_W$ is shown in blue. The purple arc traces out a disk whose image is dual to $W$.  Compare Figure~\ref{fig:Whitney-sphere-for-elementary-homotopy}.}
         \label{fig:4-punctured-Whitney-sphere}
\end{figure}
\begin{figure}[ht!]
         \centerline{\includegraphics[scale=.25]{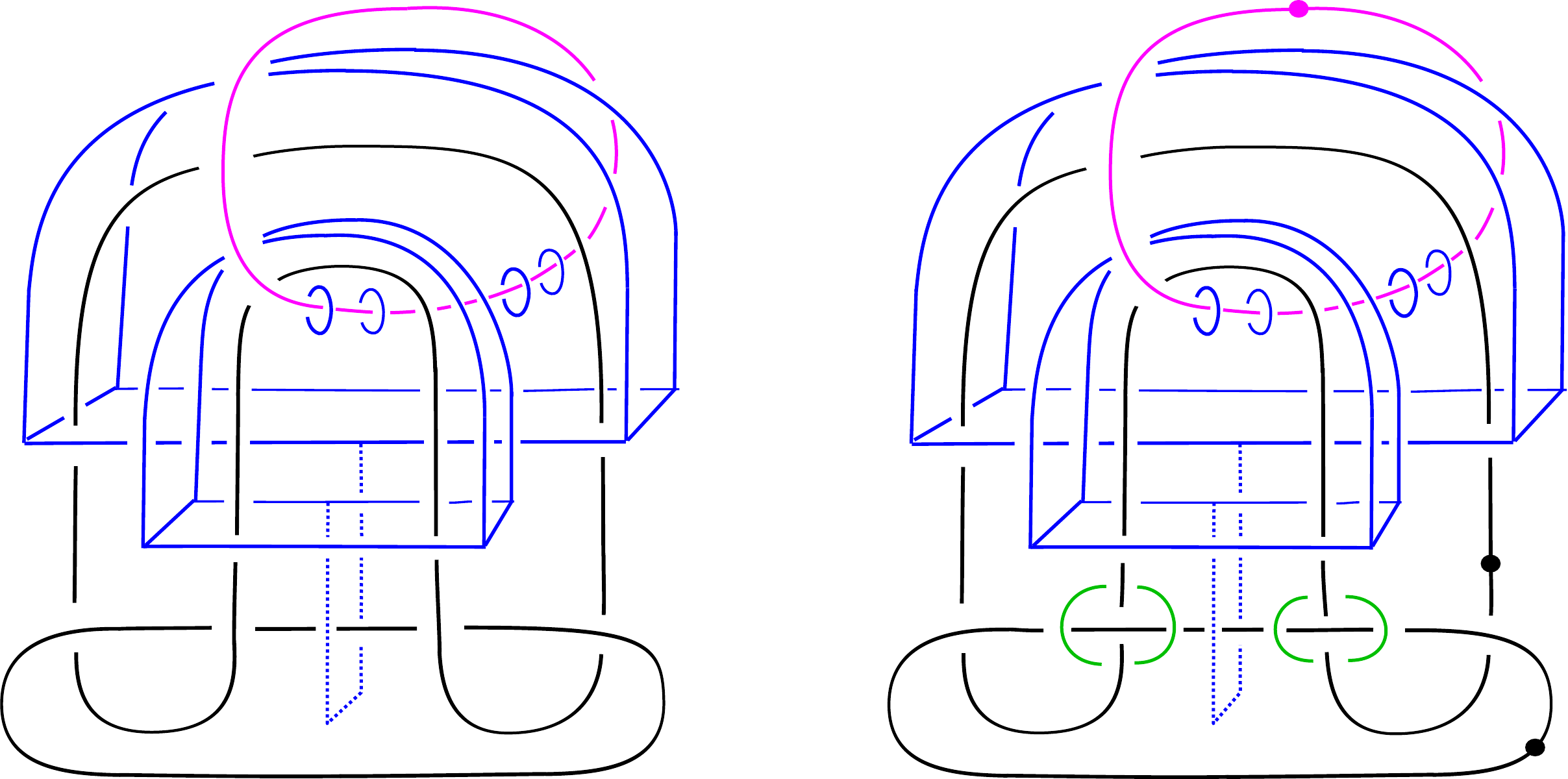}}
         \caption{Left: The black and purple sheets form a Borromean rings in the boundary $3$-sphere of Figure~\ref{fig:4-punctured-Whitney-sphere}. The two twice-punctured Whitney bubbles in $S^0_W$ are also shown, but suppressed from view (apart from the dotted PL arc) is the embedded annulus in $S^0_W$ connecting the Whitney bubbles' boundaries (cf.~Figure~\ref{fig:4-punctured-Whitney-sphere}).
         Right: A Kirby diagram of $M'$, the complement of a neighborhood of $f$ in a $S^1\times B^3$ neighborhood of $\partial W\subset M$. 
         The green circles are $0$-framed $2$-handles which realize the black-black crossing changes in the boundary.  Part of the annulus (again suppressed from view) in $S^0_W$ passes over the green $2$-handles, as shown in the left picture of Figure~\ref{fig:Whitney-sphere-annulus-and-group-elements-in-complement-of-sheets-A}.}
         \label{fig:complement-of-sheets-and-with-w-sphere}
\end{figure}
\begin{figure}[ht!]
         \centerline{\includegraphics[scale=.275]{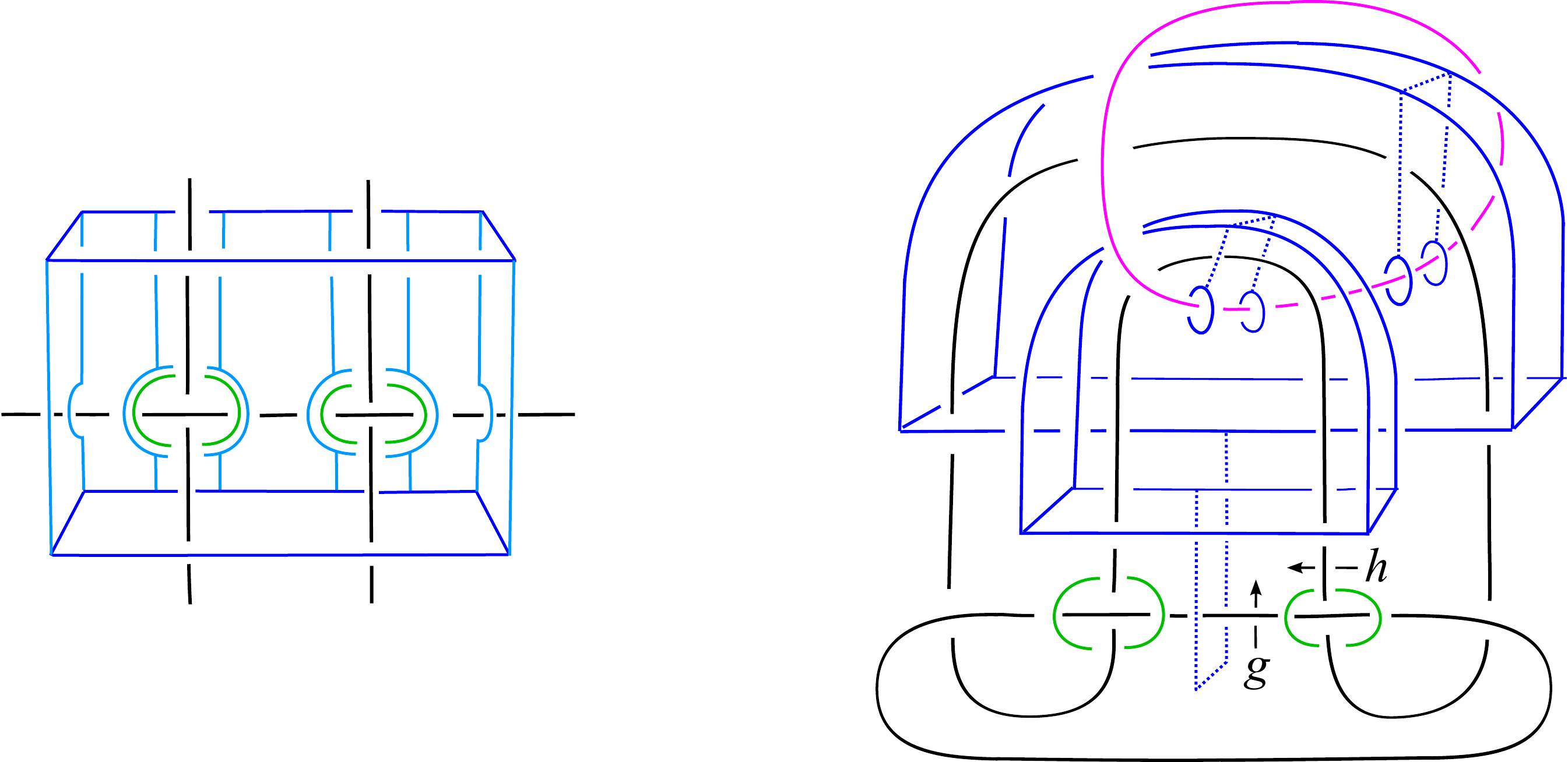}}
         \caption{Left: The part of the embedded annulus in $S^0_W$ connecting the Whitney bubbles' boundaries near the green $2$-handles.
         Right: The $3$--manifold $\partial M'\subset\partial M$, described by replacing the dotted circles in Figure~\ref{fig:complement-of-sheets-and-with-w-sphere} by $0$-framed $2$-handles.}
         \label{fig:Whitney-sphere-annulus-and-group-elements-in-complement-of-sheets-A}
\end{figure}

\begin{proof}
Deleting from $S_W$ the interiors of the four parallel copies of $W$ yields a sphere-minus-four-disks $S^0_W$ embedded in $\partial M$, so in the relative homotopy group $S_W$ maps to four copies of $W$. 
By Seifert--Van Kampen, it suffices to carry out the computation of the group elements in the intersection $M'$ of $M$ with a $S^1\times B^3$ neighborhood of $\partial W\subset X^4$. Starting with the description of $S_W$ in Figure~\ref{fig:Whitney-sphere-for-elementary-homotopy}, Figure~\ref{fig:4-punctured-Whitney-sphere} explains the Kirby diagram for $M'$ shown in Figure~\ref{fig:complement-of-sheets-and-with-w-sphere}, with $S^0_W\subset M'$. 

The right-hand side of Figure~\ref{fig:Whitney-sphere-annulus-and-group-elements-in-complement-of-sheets-A} shows two elements $g,h\in\pi_1\partial M'$ which map to a meridian of $f$. The group elements for the image of $S_W$ in $\pi_2(M,\partial M)$ are represented by loops through the punctures of $S^0_W$ that run along $S^0_W$ and return along the purple circle dual to $W$. Taking the basepoint at the second puncture from the left, yields the elements $h^{-1}$, $1$, $g$ and $gh^{-1}$, respectively, for the four punctures (from left to right). The signs of these elements alternate from left to right, so orienting $S_W$ appropriately yields $(1+gh^{-1}-g-h^{-1})\cdot [W]$ as its image in $\pi_2(M,\partial M)$. That $g$ and $h$ both map to the same meridian in $\pi_1M$ can be seen in Figure~\ref{standard-sphere-Kirby-diagram-3} or
Figure~\ref{fig:Whitney-and-accessory-disks}.
\end{proof}

\subsection{Comparing Whitney sphere descriptions}\label{sec:compare-w-spheres}
Here we check that $S_W\mapsto z\cdot W$ for the three geometric constructions of Whitney spheres given earlier in the paper.

For the descriptions of $S_W$ given in local coordinates (Figure~\ref{fig:Whitney-sphere-for-elementary-homotopy} and Figure~\ref{fig:Whitney-sphere-blue-movie}) it suffices to check that each bubble contains a pair of oppositely-oriented copies of $W$ which differ by a meridian to one sheet of $f$, and that the bubbles differ from each other by a meridian to the other sheet, as in Figure~\ref{fig:Whitney-sphere-annulus-and-group-elements-in-complement-of-sheets-A}. The same goes for the generalization of the Figure~\ref{fig:Whitney-sphere-blue-movie} construction using cleanly immersed $W$.

For the Kirby diagram description of $S_W$ in the right-most picture of Figure~\ref{standard-sphere-Kirby-diagram-3}, the embedded disk in $S_W$ bounded by the unknotted blue attaching circle contains four normal disks to a green circle, each of which is a parallel of $W$ as can be seen from the left-most picture of Figure~\ref{standard-sphere-Kirby-diagram-3}. The group elements and orientations can be computed from the figure.



\section{Injectivity of the Kirk invariant: Proof of Theorem~\ref{thm:main}}\label{sec:thm-main-proof}

This section completes the proof of Theorem~\ref{thm:main} via the following proposition which shows that a link map with vanishing Kirk invariants satisfies the algebraic intersection condition of the Metabolic Unlinking Theorem~\ref{thm:I2-metabolic}:

\begin{prop}\label{prop:I-squared}
If $(f,f_2)$ is a link map with vanishing Kirk invariants, then after a link homotopy it can be arranged that $f$ is in standard position with metabolic Whitney disks $W_i$ satisfying $\lambda(W_i,f_2)\in z\cdot\Lambda$ for all $i$. 
\end{prop}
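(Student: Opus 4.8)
The goal is to upgrade the vanishing of the Kirk invariant $\sigma_1(f,f_2)=0=\sigma_2(f,f_2)$ into the algebraic input of Theorem~\ref{thm:I2-metabolic}: after a link homotopy, $f$ is standard with a \emph{metabolic} collection $\{W_i,A_i\}$ (so in particular $\lambda(W_i,W_j)=0=\lambda(W_i,A_j)$ in the complement $M$) and $\lambda(W_i,f_2)\in z\cdot\Lambda$ for all $i$. The first step is routine: by Lemma~\ref{lem:standard} we may assume $f$ is in standard position, and by Lemma~\ref{lem:free}(1) we start with \emph{some} standard metabolic collection $\{W_i,A_i\}$. In this basis $\{S_{W_i},S_{A_i}\}$ of $\pi_2 M$ we have the class $[f_2]\in\pi_2 M$ (note $H_2$ is invariant under homotopy of $f$ by Alexander duality), and we expand $[f_2]=\sum_i\alpha_i S_{W_i}+\sum_i\beta_i S_{A_i}$ with $\alpha_i,\beta_i\in\Lambda$. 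The hypothesis $\sigma_2(f,f_2)=\lambda(f_2,f_2)=0$ together with the metabolic intersection form of Lemma~\ref{lem:algtop}(3) gives $\sum_i(\alpha_i\bar\beta_i+\beta_i\bar\alpha_i+\beta_i\bar\beta_i)\,z=0$, i.e.\ $\sum_i(\alpha_i\bar\beta_i+\bar\alpha_i\beta_i+\beta_i\bar\beta_i)=0$ in $\Lambda$; and $\lambda(W_i,f_2)=\lambda(W_i,\beta_i S_{A_i})=\beta_i z\in z\Lambda$ already — but only because we chose $W_i$ standard. The point of the proposition is that the metabolicity condition $\lambda(W_i,A_j)=0$ will be destroyed when we \emph{modify} the Whitney disks to arrange things, so the real content is bookkeeping the interaction between ``making $\lambda(W_i,f_2)\in z\Lambda$'' and ``keeping the collection metabolic.''

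\textbf{The core maneuver.} Since with the \emph{standard} collection we automatically have $\lambda(W_i,f_2)=\beta_i z$, the task reduces to using the vanishing of the Kirk invariant to kill the $\beta_i$ \emph{modulo the right amount}, or rather to replace $f_2$ within its link homotopy class so that its accessory-sphere coordinates $\beta_i$ can be absorbed. The strategy I would follow is: (i) use $\sigma_1(f,f_2)=\lambda(f,f)=0$ — which, since $f$ is standard and embeddable, says the complement data is as trivial as possible — to constrain the coordinates further; (ii) perform finger moves on $f$ along arcs linking $f_2$ to trade accessory-sphere content of $f_2$ for Whitney-sphere content, exactly as in the proof of Lemma~\ref{lem:isometry} where a single finger move changes $f_2$ by $S_{W_{n+1}}$ and introduces a dual accessory sphere $S_{A_{n+1}}$ with $\lambda(f_2,S_{A_{n+1}})=1$; (iii) after finitely many such moves, change $f_2$ by Whitney homotopies (Lemma~\ref{lem:useless}) along the new Whitney spheres to eliminate the $\alpha_i$-part, leaving only the controlled $\beta_i z$ intersections. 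The delicate point is that modifying $W_i$ by interior connected sum with spheres $T_i$ (as in the proof of Theorem~\ref{thm:I2-metabolic}) changes $\lambda(W_i,A_j)$, so one must simultaneously modify the $A_j$ (tubing into accessory spheres, or using the boundary-arc maneuvers the abstract promises) to restore $\lambda(W_i,A_j)=0$ without disturbing $\lambda(W_i,f_2)\bmod z$.

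\textbf{The algebraic heart.} I expect the proof to factor through a purely algebraic lemma over $\Lambda=\Z[x^{\pm1}]$: given the metabolic form of Lemma~\ref{lem:algtop}(3) on $\Lambda^{2n}$ and a vector $[f_2]=(\alpha_i;\beta_i)$ with $\lambda(f_2,f_2)=0$, there is a stabilization (adding hyperbolic-type summands $\langle 0\,1;1\,1\rangle$, realized geometrically by trivial finger moves) and an isometry $\equiv\id\bmod z$ carrying $[f_2]$ into the span of the (new) $S_{W_i}$ — essentially the statement already proved as Lemma~\ref{lem:isometry}, but now we must also verify that the \emph{new} Whitney disks underlying that isometry can be chosen cleanly immersed \emph{and metabolic}, i.e.\ with $\lambda(V_i,V_j)=0=\lambda(V_i,A'_j)$. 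Here the vanishing of $\sigma_1$ (not just $\sigma_2$) should be what guarantees the $\alpha_i$-coordinates are themselves constrained enough — via Lemma~\ref{lem:symmetry}, $\sigma_1-\sigma_2\in z^2\Z[z]$, so vanishing of one forces the other into $z^2\Z[z]$ — and this extra factor of $z$ is precisely what is needed to run the divisibility argument that produces metabolic (rather than merely framed) disks.

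\textbf{Main obstacle.} The hard part is \textbf{not} the algebra but the geometry of keeping the collection metabolic while arranging $\lambda(W_i,f_2)\in z\Lambda$: the Whitney-disk modifications that control intersections with $f_2$ generically create new intersections \emph{among} the Whitney and accessory disks, and one cannot simply appeal to Freedman's theorem here because Theorem~\ref{thm:I2-metabolic} needs the disks metabolic \emph{before} embedding them. I anticipate this is where the boundary-arc maneuvers for Whitney disks advertised in the abstract and Section~\ref{sec:thm-main-proof} do the work — sliding $\partial W_i$ and $\partial A_j$ to cancel the unwanted intersection numbers by ``trading'' them against the controlled $z$-divisible quantities — and that verifying these maneuvers preserve cleanliness (interiors disjoint from $f$) and the framing/Euler-number conditions is the longest and most technical portion of the argument.
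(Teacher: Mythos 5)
Your plan collapses at its central step, which rests on a miscalculation. You claim that for the standard collection the expansion $[f_2]=\sum_i\alpha_i S_{W_i}+\sum_i\beta_i S_{A_i}$ gives $\lambda(W_i,f_2)=\beta_i z\in z\Lambda$ ``already''. But the duality between the \emph{disks} $W_i$ and the accessory spheres is unimodular, not $z$-valued: in the proof of Lemma~\ref{lem:algtop} the disk $W_i$ meets $S_{A_j}$ in $\delta_{ij}$ geometric points (this is exactly what makes the sequence in Lemma~\ref{lem:algtop}(4) surject onto $\Lambda^n$), so $\lambda(W_i,f_2)=\beta_i$, which in general does \emph{not} lie in $z\Lambda$. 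The factor $z$ only appears in the sphere-level pairing $\lambda(S_{W_i},S_{A_j})=z\,\delta_{ij}$, because $S_{W_i}$ contains four parallel copies of $W_i$; you have conflated the two. If your claim were correct, Proposition~\ref{prop:I-squared} would hold for any link map with $\sigma_2=0$ irrespective of $\sigma_1$, and Theorem~\ref{thm:I2-metabolic} would then unlink such maps — contradicting Theorem~\ref{thm:image}, which realizes pairs such as $(\sigma_1,\sigma_2)=(z^2,0)$ by nontrivial link maps. So the quantity that actually has to be controlled is $\beta_i$ modulo $I^2$ (the primary and secondary multiplicities of $W_i$ in the paper's terminology), and your outline leaves it untouched: eliminating the $\alpha_i$-part by finger moves and Whitney homotopies, or invoking Lemma~\ref{lem:isometry}-style isometries, does nothing to $\lambda(W_i,f_2)\bmod I^2$.

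Because of this, the remainder of your proposal is a description of the difficulty rather than an argument. The paper's proof is an explicit eight-step geometric construction: it first arranges the decomposition of $f_2$ so that the $(1-x)$-contributions to $\lambda(W_i,f_2)$ are carried by intersection pairs admitting controlled Whitney disks (Lemma~\ref{lem:delta-w-disks}); it trades these for new self-intersections of $f$ equipped with new Whitney/accessory disk pairs whose accessory disks have primary multiplicity $0$ or $1$ (transfer moves, and tubing $V_0$, $V_1$ and $f$ into local accessory and Whitney spheres on $f_2$); it kills secondary multiplicities against unit primary multiplicities by double boundary-twisting (Lemma~\ref{lem:double-boundary-twist}); and it pairs off the remaining unit secondary multiplicities by a double transfer move, where $\lambda(f_2,f_2)=0$ enters only through a parity claim, while $\lambda(f,f)=0$ is used at the outset to equalize the primary multiplicities of $A_i^+$ and $A_i^-$ so that the $W_i$ have primary multiplicity zero. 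None of these mechanisms, nor substitutes for them, appear in your sketch; the appeal to ``boundary-arc maneuvers'' is an anticipation of where work must happen, not a proof. The gap is therefore genuine, both algebraically (the wrong duality formula trivializes the statement) and geometrically (no construction is given that changes $\lambda(W_i,f_2)$ modulo $I^2$ while preserving metabolicity).
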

Recall that $\Lambda=\Z[x^{\pm 1}]$ for $\pi_1(S^4\setminus f)=\langle x \rangle\cong\Z$, and $z=2-x-x^{-1}$, so $z\cdot\Lambda=I^2$ for $I$ the augmentation ideal of $\Lambda$. 
An outline of the proof of Proposition~\ref{prop:I-squared} will be given in Section~\ref{sec:proof-outline-preliminary} after introducing some terminology to facilitate the discussion:

\noindent {\bf Primary and secondary multiplicities:}
Let $(f,f_2)$ be an oriented link map with $\pi_1(S^4\setminus f)=\langle x \rangle\cong\Z$, for $x$ a positive meridian to $f$, and let $D\looparrowright S^4$ be an oriented immersed disk with $\partial D\subset f$ such that the interior of $D$ is disjoint from $f$, e.g.~$D$ is a cleanly immersed Whitney disk or accessory disk on $f$.

Then we can write 
$$
\lambda(D,f_2)=m+n(1-x)+P\in\Z[x^{\pm 1}],
$$
for $m\in\Z$, $n\in\Z$, and $P\in I^2$ (by choosing a basing of $D$ appropriately). We call the integers $m$ and $n$ the \emph{primary multiplicity} and \emph{secondary multiplicity}, respectively, of $D$. Note that this notion of ``multiplicity'' always refers to intersections with $f_2$.

The primary multiplicity is just the linking number of $\partial D$ and $f_2$, and hence can also be used in the case that $\partial D\cap f_2=\emptyset$, even if $\partial D$ is not contained in $f$.

The secondary multiplicity only depends on a push-off $\overline{\partial D}$ of $\partial D$ into 
the interior of $D$:
After (perhaps) some disjoint finger moves, we may assume that $\pi_1(S^4\setminus (f\cup f_2))$ is isomorphic to the free Milnor group on (positive) meridians $x$ and $x_2$.
Then $\overline{\partial D}$ represents $x_2^m[x,x_2]^n\in\pi_1(S^4\setminus (f\cup f_2))$, with the commutator exponent $n$ equal to the secondary multiplicity of $D$.

\subsection{Outline of the proof of the Proposition~\ref{prop:I-squared}.}\label{sec:proof-outline-preliminary}

In this terminology the conclusion of Proposition~\ref{prop:I-squared} says that \emph{$f$ admits a metabolic collection whose Whitney disks all have vanishing primary and secondary multiplicities}.

It will not be difficult to arrange the vanishing of the Whitney disks' primary multiplicities, but killing the secondary multiplicities will involve some work!

The proof will proceed by an eight-step construction which starts with a standard collection and creates many more new Whitney disks and accessory disks. This will involve the creation and manipulation of Whitney disks whose interiors are temporarily not disjoint from $f$, but disjointness will eventually be recovered using carefully constructed ``secondary'' Whitney disks. Keeping track of the different types of new disks and the progress towards controlling their intersections and multiplicities will involve some tricky book-keeping, so the following outline is provided for guidance:

\renewcommand{\theenumi}{\alph{enumi}}
\begin{enumerate}

\item\label{item:outline-initial-form}
Before starting the construction, $f_2$ will be expressed as a linear combination of standard accessory spheres which intersect an initial collection of Whitney disks $W_i$ on $f$ in a controlled way (Lemma~\ref{lem:delta-w-disks}, Section~\ref{sec:initial-standard-form}). In particular, these $W_i$ will have primary multiplicity zero.

\item\label{item:outline-reduce-to-0-or-1}
The goal of Step~1 through Step~6 of the construction is to reduce to $0$ or $1$ the primary multiplicities of all accessory disks whose corresponding Whitney disks have possibly non-zero secondary multiplicity. This reduction will allow the secondary multiplicities of the corresponding Whitney disks to be killed in the Step~7 and Step~8.

\begin{itemize}

\item Step~1 eliminates pairs of intersections between initial Whitney disks $W_i$ and $f_2$ which represent non-zero secondary multiplicity, at the cost of creating new pairs of self-intersections of $f$.
This step also constructs Whitney disks (the $U$ in Lemma~\ref{lem:p0-q1-w-and-accessory-disks}) in preparation for cleaning up new Whitney disks that will be created during later steps.

\item
Step~2 constructs Whitney disks $V_0,V_1$ for each new pair of self-intersections created in Step~1, such that $V_0$ and $V_1$ have interior intersections with $f$. These $V_0$ and $V_1$ have accessory disks with primary multiplicities $0$ and $1$, respectively, that were created in Step~1 (Lemma~\ref{lem:p0-q1-w-and-accessory-disks}).

\item
Step~3 creates even more intersections between $f$ and $V_0$ by tubing $V_0$ and $f$ into standard accessory and Whitney spheres on $f_2$. This tubing is a key part of the construction as it serves to reduce the primary multiplicities on the new accessory disks that will be created when $f$ is later pushed off of the interior of $V_0$ (in Step~5). This step also constructs temporary bigon disks $B^j$ that will be used to form these new accessory disks.

\item
Step~4 applies the analogous tubing from Step~3 but to $V_1$, and constructs quadrilateral disks $Q^j$ that will be used to form secondary Whitney disks for later making $f$ disjoint from the interior of $V_1$ (in Step~6).

\item
Step~5 pushes $f$ off of the interior $V_0$ at the cost of creating new self-intersections of $f$ paired by controlled Whitney disks $W^j$ whose associated accessory disks $A^j$ (formed from the $B^j$ in Step~3) have primary multiplicity $0$ or $1$. 

{\bf Here the super-script notation serves to differentiate these new $W^j,A^j$ from the initial collection $W_i,A_i$.}

The interior of each $W^j$ intersects $f$, but these intersections are paired by parallels $U^j$ of the previously 
constructed $U$ from Step~1. At this point only those $W^j$ whose $A^j$ has primary multiplicity $1$ are made cleanly embedded by $U^j$-moves. 

\item
Step~6 removes all interior intersections between $f$ and $V_1$ using secondary Whitney disks $R^j$ that were created during Step~5 from the quadrilaterals $Q^j$ constructed in Step~4.

\end{itemize}

A summary of the result of Step~1--Step~6 is given before Section~\ref{sec:step-7-double-boundary-twist}.

\item\label{item:outline-double-boundary-twist}
In Step~7 each Whitney disk whose associated accessory disk has primary multiplicity $1$ will be made to have
secondary multiplicity $0$ by a ``double boundary-twisting'' operation. It temporarily creates intersections between
the Whitney disk and $f$ which are then eliminated via a secondary Whitney move guided by a parallel copy of the accessory disk. More generally, this operation can change the secondary multiplicity of a Whitney disk by any integral multiple of the primary multiplicity of an associated accessory disk (Lemma~\ref{lem:double-boundary-twist}).

\item
Step~8 deals with the remaining $W^j$ from Step~5 whose $A^j$ have primary multiplicity $0$. First, the vanishing of $\lambda(f_2,f_2)$ is used to show that there are an even number of such $W^j$. Then a ``double transfer move'' is used to collect the $U^j$ in pairs that yield canceling contributions to the secondary multiplicities of the $W^j$ after making them cleanly embedded by $U^j$-Whitney moves. 

This step also creates more self-intersections of $f$ which are paired by local Whitney disks which are arranged to have vanishing secondary multiplicity. Constructing accessory disks for these local Whitney disks requires the full generality of the definition of a metabolic collection (Definition~\ref{def:metabolic-collection}).

\end{enumerate}
These eight steps are carried out first in a low-multiplicity base case, which is then checked in Section~\ref{sec:general-case-reduce-primary} to be extendable to the general case. The definition of a metabolic collection stipulates \emph{positive} accessory disks, but this will be easily arranged after Step~8 in Section~\ref{sec:check-metabolic-disks}.

Throughout the constructions $f$ and $f_2$ will only be changed by disjoint finger moves (and isotopy), hence $f$ will always be in standard position.

Details on techniques used in the constructions are given in Section~\ref{sec:appendix}.%

As an illustration of a technique that will be used repeatedly during the proof, we have the following observation on how using Whitney moves to make disks' interiors disjoint from $f$ affects the resulting primary and secondary multiplicities:

\subsubsection{Observation:}\label{sec:observation}
Let $D\imra S^4$ be an oriented immersed disk, with $D \cap f_2=\emptyset$, such that 
$D\pitchfork f$ consists of two oppositely-signed intersections paired by a Whitney disk $U$ which has interior disjoint from $f$. Then the result $D'$ of doing the $U$-Whitney move on $D$ has interior disjoint from $f$, and satisfies
$$
\lambda(D',f_2)=(1-x)\cdot\lambda(U,f_2)\in\Z[x^{\pm 1}]
$$
for an appropriate orientation and basing choice on $U$.
\begin{figure}[ht!]
         \centerline{\includegraphics[scale=.55]{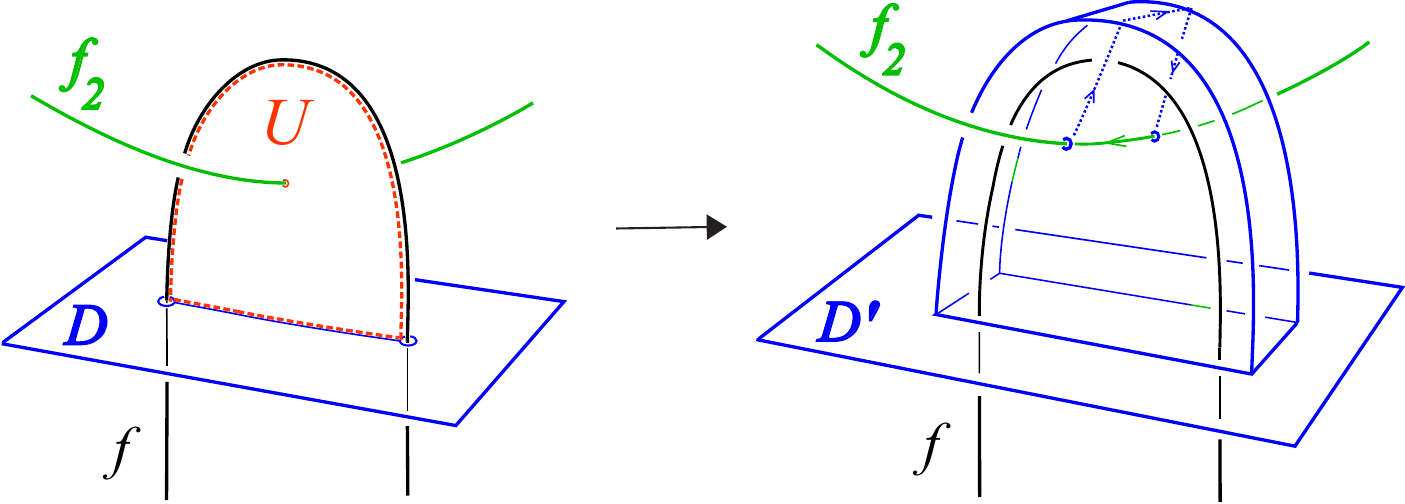}}
         \caption{Left: A Whitney disk $U$ pairing intersections between $D$ (blue) and $f$ (black), such that $U$ has an interior intersection with $f_2$ (green). Right: The result $D'$ of doing the $U$-Whitey move on 
         $D$ has a pair of oppositely-signed intersections with $f_2$ whose group elements differ by a meridian to $f$ (indicated by the dotted blue arc).}
         \label{W-disk-int-and-W-move-D-color}
\end{figure}

To see why this equation holds, observe that the $U$-Whitney move forms $D'$ from $D$ by adding two oppositely oriented copies of $U$, so each each intersection in $U\cap f_2$ gives rise to two oppositely-signed intersections between $D'$ and $f_2$ with group elements that differ by the generator $x$ represented by a meridian to $f$ (Figure~\ref{W-disk-int-and-W-move-D-color}). 
Any self-intersections or non-trivial twisting of $U$ will yield self-intersections in $D'$, but will not affect $\lambda(D',f_2)$. 

The same equation holds if $U$ is a \emph{union} of Whitney disks (connected by basings in $D$) pairing $D\pitchfork f$ with interiors disjoint from $f$, and $D'$ denotes the result of doing all the Whitney moves on these Whitney disks. 
In particular, \emph{the primary multiplicity of such a $D'$ is always zero}, and \emph{the secondary multiplicity of $D'$ is equal to the primary multiplicity of $U$}.


\subsubsection{Orientation conventions}\label{sec:orientation-conventions}

Here we fix conventions relating a choice of orientation on a Whitney disk with orientations on its accompanying accessory disks.
The reader not familiar with the construction of a Whitney disk from two accessory disks may want to at least glance at Figure~\ref{joined-accessory-disks-1} in Section~\ref{sec:W-disk-from-A-disks} before reading the second paragraph below.

A Whitney disk $W$ pairing $\{p^+,p^-\} \in f\pitchfork f$ is oriented by choosing a \emph{positive boundary arc} $\partial_+W\subset \partial W$ which is oriented from the negative self-intersection $p^-$ towards the positive self-intersection $p^+$, and a \emph{negative boundary arc} $\partial_-W\subset\partial W$ running back to $p^-$ in the other sheet of $f$. This choice of orientation on $\partial W$ together with the usual ``outward first'' convention orients $W$. 
Any positive and negative accessory disks $A^+$ and $A^-$ associated to such an oriented Whitney disk $W$ are oriented according to the convention indicated in Figure~\ref{fig:Whitney-and-accessory-disks-ORIENTED}: The boundaries $\partial A^\pm$ are oriented to run from the negative sheet of $f$ at $p^\pm$ to the positive sheet of $f$ at $p^\pm$ (again using the outward first convention).

On the other hand, given a pair of oriented accessory disks $A^{\pm}$ for $p^{\pm}$, Section~\ref{sec:W-disk-from-A-disks} shows how a Whitney disk $W$ for $p^{\pm}$ can be constructed by half-tubing together $A^+$ and $A^-$ along $f$ so that the resulting triple $W,A^+,A^-$ satisfies the orientation convention of the previous paragraph. If the $A^{\pm}$ are standard, or more generally framed and disjointly cleanly embedded, then a $4$--ball neighborhood of the triple $W,A^+,A^-$ will be diffeomorphic to Figure~\ref{fig:Whitney-and-accessory-disks-ORIENTED}. (\emph{Framed} accessory disks are discussed in Section~\ref{sec:framed-acc-disks}.)

\begin{figure}[ht!]
         \centerline{\includegraphics[scale=.48]{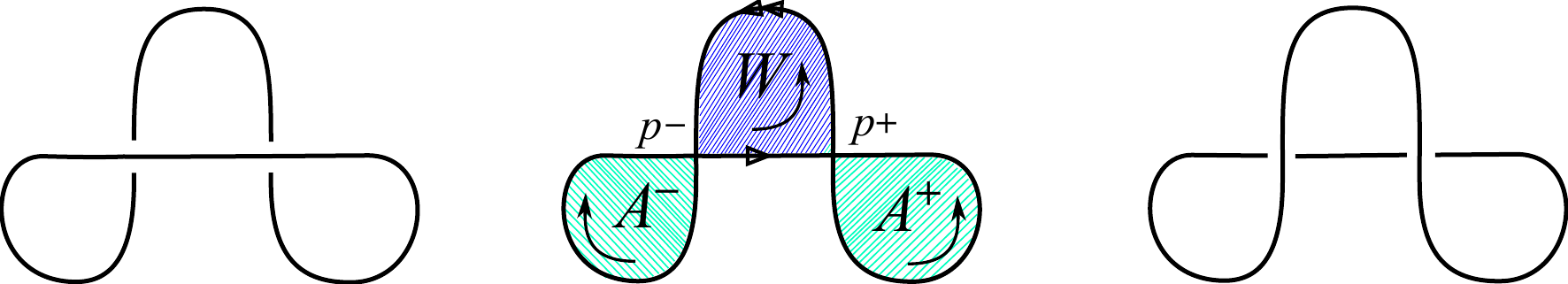}}
         \caption{Orientation conventions for Whitney disks and accessory disks.}
         \label{fig:Whitney-and-accessory-disks-ORIENTED}
\end{figure}


\subsubsection{Initial form for the proof of Proposition~\ref{prop:I-squared}:}\label{sec:initial-standard-form}

We may assume that $f$ is in standard position, admitting standard Whitney disk--accessory disk triples $W_i,A_i^+,A_i^-$ 
as in Section~\ref{sec:w-disk-acc-disk-triples} (framed, cleanly embedded, only intersecting at the self-intersections of $f$).
Since $\lambda(f,f)=0\in\Z[x_2^\pm]$, we may also assume by Lemma~\ref{lem:free}(1) that $A_i^+$ and $A_i^-$ have the same primary multiplicity $m_i$, for each $i$,  
with
orientations on the $A_i^\pm$ such that $m_i\geq 0$, and $W_i$ oriented as in 
Section~\ref{sec:orientation-conventions}, with a neighborhood of $W_i\cup A_i^+\cup A_i^-$ diffeomorphic to Figure~\ref{fig:Whitney-and-accessory-disks-ORIENTED}.

We fix orientations on the basis of standard accessory spheres $S_{A_i^\pm}$ from Lemma~\ref{lem:accessory-sphere-basis} so that $\lambda(W_i, S_{A_i^\pm})=\pm 1$, and write $f_2$ as a $\Z[x^{\pm 1}]$-linear combination of the $S_{A_i^\pm}$:
\begin{equation}\label{eq:f2-linear-comb-1}
f_2=\sum \alpha_i^+\cdot S_{A_i^+}+\alpha_i^-\cdot S_{A_i^-}
\end{equation}
where each $\alpha_i^\pm$ is of the form
\begin{equation}\label{eq:alpha-linear-comb-1}
 \alpha_i^\pm=m_i+n_i^\pm(1-x)+P_i^\pm
\end{equation}
with $n_i^\pm\in\Z$ the secondary multiplicities of $A_i^\pm$, and $P_i^\pm$ Laurent polynomials in $I^2$.

Since $W_i$ is dual to each of $S_{A_i^+}$ and $-S_{A_i^-}$, we have
\begin{equation}\label{eq:Wi-f2-ints}
\lambda(W_i,f_2)=(m_i-m_i)+(n_i^+-n_i^-)(1-x)+P_i^+-P_i^-=0+n_i(1-x)+P_i
\end{equation}
with $P_i=P_i^+-P_i^-\in I^2$, and  $n_i=n_i^+-n_i^-\in\Z$. It follows that each $W_i$ has primary multiplicity $0=m_i-m_i$, and the integer $n_i$ is the secondary multiplicity of $W_i$. The starting point for the proof of Proposition~\ref{prop:I-squared} is the observation that the tubes and parallels of $S_{A_i^\pm}$ realizing $f_2$ as the connected sum in Equation~(\ref{eq:f2-linear-comb-1}) can be chosen so that the intersections in $W_i\cap f_2$ contributing to the secondary multiplicities $n_i$ admit the ``controlled'' Whitney disks illustrated in Figure~\ref {fig:delta-first-example} and described in  
the following lemma:

\begin{figure}[ht!]
         \centerline{\includegraphics[scale=.275]{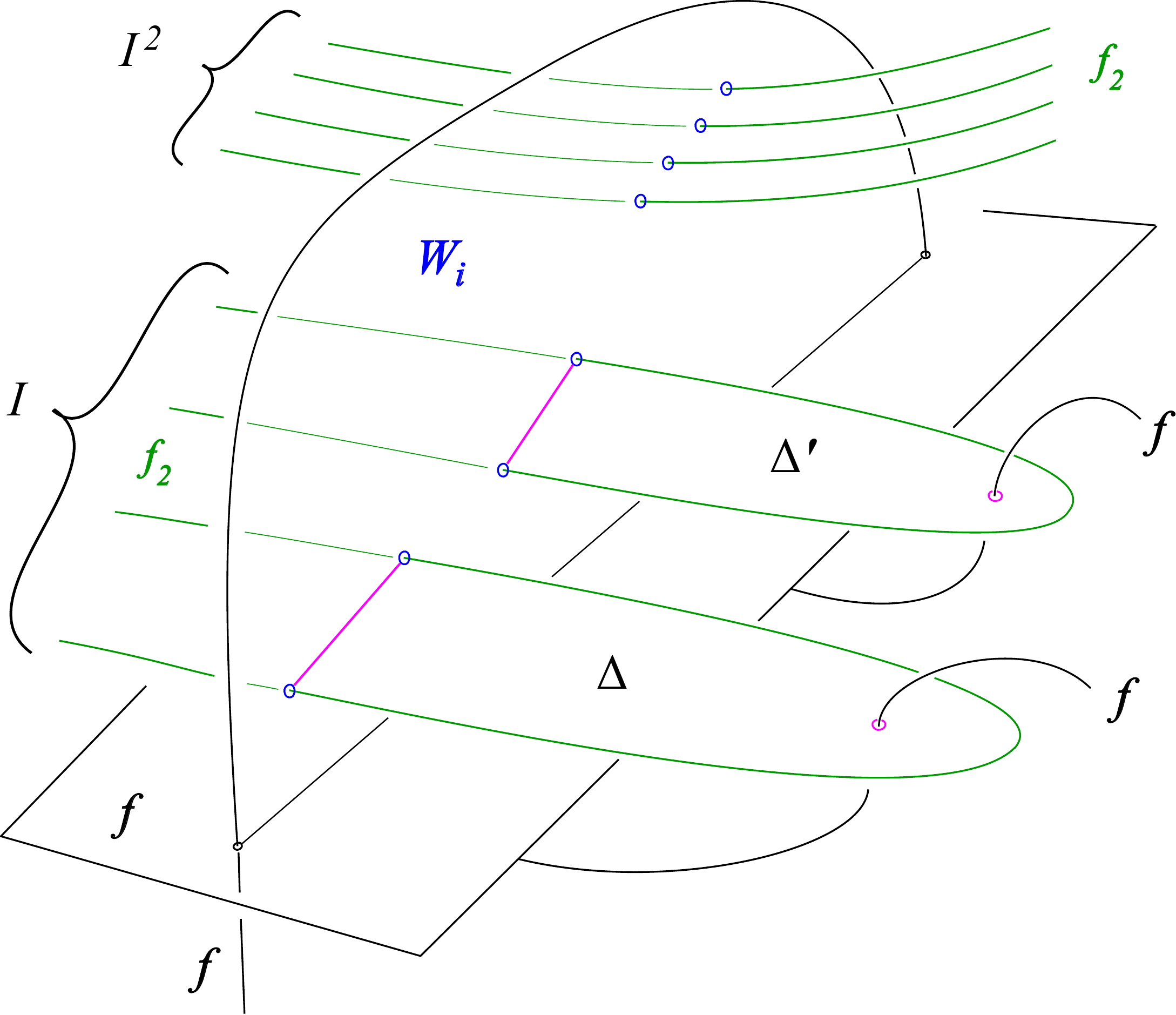}}
         \caption{Illustration of Lemma~\ref{lem:delta-w-disks}: The pairs in $W_i\cap f_2$ which contribute the terms $n_i(1-x)\in I$ to $\lambda(W_i,f_2)$ admit Whitney disks (here $\Delta$ and $\Delta'$) which each contain just a single intersection with $f$, as in Item~(\ref{lem-item:delta}).}
         \label{fig:delta-first-example}
\end{figure}
\begin{lem}\label{lem:delta-w-disks}
The connected sum decomposition of $f_2$ in Equation~\ref{eq:f2-linear-comb-1} can be realized
so that for each $i$ the following three conditions are satisfied:
\begin{enumerate}[(a)]

\item\label{lem-item:whitney-sphere-terms-disjoint-from-Wi}
The $m_i$-many copies of each $S_{A_i^\pm}\subset f_2$ 
which contribute the canceling primary multiplicities in Equation~(\ref{eq:Wi-f2-ints}) are disjoint from $W_i$.

\item\label{lem-item:delta}
The intersections in $W_i\cap f_2$ which contribute the terms $n_i(1-x)\in I$ to $\lambda(W_i,f_2)$
in Equation~(\ref{eq:Wi-f2-ints})
come in $|n_i|$-many pairs, each of which admits a framed embedded 
Whitney disk containing only a single interior intersection with $f$.

\item\label{lem-item:P-in-I-squared}
The rest of the intersections in $W_i\cap f_2$ contribute the terms $P_i\in I^2$ to $\lambda(W_i,f_2)$ in Equation~(\ref{eq:Wi-f2-ints}).

\end{enumerate}
\end{lem}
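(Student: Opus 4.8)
The plan is to start from the connected-sum decomposition in Equation~(\ref{eq:f2-linear-comb-1}) in its crudest form --- $f_2$ is obtained by tubing together $\alpha_i^\pm$-many parallel copies of each standard accessory sphere $S_{A_i^\pm}$ --- and then to choose the tubes, the parallel copies, and the order of tubing carefully so that all intersections with each $W_i$ are organized according to the algebraic splitting $\alpha_i^\pm = m_i + n_i^\pm(1-x) + P_i^\pm$. The key point is that $W_i$ is geometrically dual to both $S_{A_i^+}$ and $-S_{A_i^-}$, meeting each in a single transverse point (cf.~section~\ref{subsubsec:w-accessory-sphere-ints} and Lemma~\ref{lem:algtop}), so that each of the $\alpha_i^\pm$-many parallel copies of $S_{A_i^\pm}$ contributes exactly one point to $W_i\cap f_2$, carrying the group element dictated by the relevant monomial in $\alpha_i^\pm$. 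The proof then amounts to geometrically realizing, one monomial at a time, the bookkeeping that turns these points into the pairs promised in items (\ref{lem-item:whitney-sphere-terms-disjoint-from-Wi})--(\ref{lem-item:P-in-I-squared}).

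First I would handle item~(\ref{lem-item:whitney-sphere-terms-disjoint-from-Wi}): the $m_i$ copies of $S_{A_i^+}$ and the $m_i$ copies of $S_{A_i^-}$ responsible for the constant term $m_i$ contribute the $+m_i$ and $-m_i$ points to $W_i\cap f_2$. I would push each such pair of parallel copies off of $W_i$ by tubing the $+1$-point copy of $S_{A_i^+}$ to the $-1$-point copy of $S_{A_i^-}$ along a short tube running parallel to $W_i$; the tube can be chosen disjoint from $W_i$ and from $f$ (using that the standard $W_i$, $A_i^+$, $A_i^-$ sit in the local model of Figure~\ref{fig:Whitney-and-accessory-disks}), so after this maneuver those $m_i$ canceling pairs literally leave $W_i$. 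For item~(\ref{lem-item:delta}), each factor $(1-x)$ in $n_i^\pm(1-x)$ is realized by tubing together two parallel copies of $S_{A_i^\pm}$ with a tube that runs once through a meridian disk of $f$, producing two points of $W_i\cap f_2$ whose group elements are $1$ and $x$; this pair is then paired by a small Whitney disk $\Delta$ sitting near $W_i$, and the single interior intersection of $\Delta$ with $f$ comes precisely from the meridional tube (this is the content of Figure~\ref{fig:delta-first-example}, and the disk is framed and embedded because it is built from a thin bigon in a local model). Doing this $|n_i|$ times, with signs as needed, produces the $|n_i|$-many promised Whitney disks. Finally, item~(\ref{lem-item:P-in-I-squared}) is automatic by subtraction: every remaining intersection in $W_i\cap f_2$ contributes a monomial of the Laurent polynomial $P_i = P_i^+ - P_i^-$, which lies in $I^2$ by Equation~(\ref{eq:alpha-linear-comb-1}).

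The main obstacle I anticipate is not any single one of the three constructions in isolation but rather their \emph{simultaneous} compatibility across all indices $i$ at once: the tubes used to cancel primary multiplicities for $W_i$, the meridional tubes producing the $\Delta$-pairs for $W_i$, and the analogous tubes for $W_j$ with $j\neq i$ must all be made mutually disjoint, and each $\Delta$ must have interior meeting $f$ in exactly one point and nothing else (in particular it must avoid $f_2$ and the other Whitney disks). I would deal with this by a general-position argument plus a careful local-coordinates setup: since the standard $W_i, A_i^\pm$ occupy disjoint $4$-ball neighborhoods except along $f$, all the tubing can be arranged inside a collar of $f$ and thin tubes following $\partial W_i$, and any excess intersections among tubes can be removed by further finger moves on $f$ (which is harmless, as $f$ stays in standard position and we merely enlarge the index set). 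A secondary subtlety is the framing of the $\Delta$: one must verify that the obvious bigon is $0$-framed, which follows because it is a standard model Whitney disk pairing a cancelling pair produced by a single meridional tube, exactly as in the framed embedded models of Section~\ref{sec:w-homotopy}.
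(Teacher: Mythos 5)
Your treatment of items~(\ref{lem-item:delta}) and~(\ref{lem-item:P-in-I-squared}) matches the paper's argument: both use pairs of oppositely-oriented parallel copies of $S_{A_i^\pm}$ connected by a tube running once around a meridian to $f$ to produce the $(1-x)$-pairs with their Whitney disks $\Delta$, and both note that item~(\ref{lem-item:P-in-I-squared}) needs no further cleanup since the goal is only an $I^2$-condition.

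However, your treatment of item~(\ref{lem-item:whitney-sphere-terms-disjoint-from-Wi}) has a genuine geometric gap. You propose joining a copy of $S_{A_i^+}$ to a copy of $S_{A_i^-}$ ``along a short tube running parallel to $W_i$'' with ``the tube chosen disjoint from $W_i$ and from $f$,'' and claim this pushes the pair off $W_i$. But a tube that is disjoint from $W_i$ does not remove any intersections: after tubing, the resulting sphere still has the two transverse intersection points (one from $S_{A_i^+}$, one from $S_{A_i^-}$) sitting on $W_i$. Tubing is a connected-sum operation along an embedded arc, and it only changes the surface near that arc --- it cannot erase intersections that occur elsewhere. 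The paper instead tubes the two accessory-sphere copies together \emph{at their dual $\pm$-intersection points with $W_i$} along a tube of \emph{normal circles to $W_i$} (i.e.\ around the normal disk bundle of $W_i$), which is precisely the surgery that produces an embedded Whitney sphere $S_{W_i}$; the Whitney sphere is disjoint from $W_i$ because its four parallel copies of $W_i$ are pushed off via the Whitney section. Without routing the tube around $W_i$ in this way, the canceling intersections persist and item~(\ref{lem-item:whitney-sphere-terms-disjoint-from-Wi}) fails. Your concern about simultaneous compatibility across indices $i$ is reasonable, but the paper resolves it more cheaply than you suggest: since $S_{A_j^\pm}\cap W_i=\emptyset$ for $j\neq i$ and all tubes are supported near arcs, the various pieces can be connected into the single sphere $f_2$ without introducing new intersections --- no further finger moves on $f$ are required, and indeed introducing them would needlessly enlarge the index set being tracked.
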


\begin{proof}

Condition~(\ref{lem-item:whitney-sphere-terms-disjoint-from-Wi}): The $m_i$-many pairs of parallel copies of each of $S_{A_i^+}$ and $S_{A_i^-}$ corresponding to the coefficients $m_i$ in Equation~\ref{eq:alpha-linear-comb-1} can be made disjoint from $W_i$ by tubing each pair of parallels of $S_{A_i^+}$ and $S_{A_i^-}$ together at their dual $\pm$-intersections with $W_i$ along a tube of normal circles to $W_i$. (Each connected sum of parallels of $S_{A_i^\pm}$ is an embedded Whitney sphere $S_{W_i}$ which is disjoint from the framed embedded $W_i$.)

\begin{figure}[ht!]
         \centerline{\includegraphics[scale=.45]{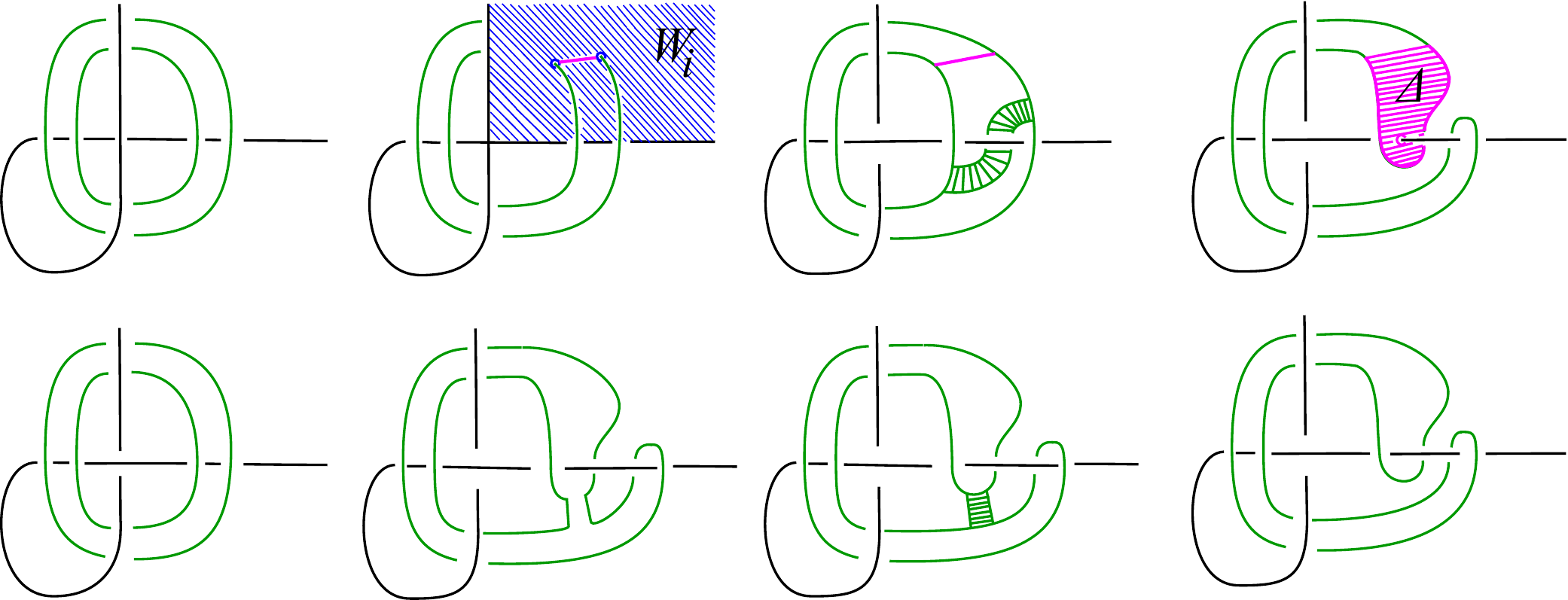}}
         \caption{From Item~(\ref{lem-item:delta}) of Lemma~\ref{lem:delta-w-disks}: Near the $i$th negative self-intersection of $f$ (black) (with time coordinate moving clockwise from left to right along the top row, then right to left along the bottom row), the Whitney disk $W_i$ (blue) intersects $\pm(1-x)\cdot S_{A_i^-}\subset f_2$ (green) in two points paired by an order $2$
Whitney disk $\Delta$ (purple). The intersection point between $\Delta$ and $f$ is visible in the bottom right picture. The negative accessory disk $A_i^-$ is suppressed from view in this figure, but could be shown in the same picture as (the corner of) $W_i$ (bottom row, second-from-left). Any copies of the $S_{A_i^\pm}\subset f_2$ corresponding to the terms of $m_i$ and $P_i^\pm$ are also not shown. In the positive accessory sphere case $\pm(1-x)\cdot S_{A_i^+}\subset f_2$ the analogous figure would look the same as this figure near $\Delta$ (compare the right-hand part of Figure~\ref{fig:x-1-term-A-delta-triple}).}
         \label{fig:x-1-term-A-Delta}
\end{figure}

Condition~(\ref{lem-item:delta}): Each pair of terms $(1-x)\cdot S_{A_i^+}\subset f_2$ contributing $+1$ to $n^+_i$, or pair of terms $-(1-x)\cdot S_{A_i^+}\subset f_2$ contributing $-1$ to $n^+_i$, (resp. $\pm(1-x)\cdot S_{A_i^-}\subset f_2$ contributing $\pm 1$ to $n^-_i$,) can be represented by taking two oppositely-oriented parallel copies of $S_{A_i^+}$ (resp. $S_{A_i^-}$) close to each other, and connecting them by a tube near $W_i$ that runs around a meridian to $f$ (see the $S_{A_i^-}$ case in Figure~\ref{fig:x-1-term-A-Delta}). 
As can be read off from Figure~\ref{fig:x-1-term-A-Delta}, each of these pairs of intersections in $W_i\cap f_2$ contributes $\pm(1-x)$ to 
$\lambda(W_i,f_2)$, and we choose the orientations of the copies of $S_{A_i^\pm}$ to yield the desired signs.
As shown in Figure~\ref{fig:x-1-term-A-Delta}, each of these pairs of intersections in $W_i\cap f_2$ admits a framed embedded Whitney disk whose interior is disjoint from all surfaces except for a single interior intersection with $f$. We only need $|n_i|$-many of these pairs since any cancellation between $n_i^+$ and $n_i^-$ in $n_i=n_i^+-n_i^-$ gets absorbed into $P_i$, as described in next paragraph.

Condition~(\ref{lem-item:P-in-I-squared}): The rest of the copies of $S_{A_i^\pm}$ in $P^\pm_i\cdot S_{A_i^\pm}\subset f_2$ (and their connecting tubes) corresponding to the terms of $P^\pm_i$ in Equation~\ref{eq:alpha-linear-comb-1} can be chosen to have no intersections with the spheres of the previous two items in the neighborhood of the self-intersection of $f$ described by Figure~\ref{fig:x-1-term-A-Delta} (they would appear as constant parallel green circles in Figure~\ref{fig:x-1-term-A-Delta}, tubed together outside the neighborhood).
The intersections of these factors of $f_2$ with $W_i$ correspond to the $P_i\in I^2$ term of $\lambda(W_i,f_2)$ in Equation~\ref{eq:Wi-f2-ints}.
We don't need to do any further clean-up on these intersections since our goal is to arrange for all metabolic Whitney disks to intersect $f_2$ in $I^2$.

Since tubes are supported near arcs, the sums of accessory spheres described so far in this proof can be connected into the single sphere $f_2$ without creating any other intersections. (Since $S_{A_j^\pm}\cap W_i=\emptyset$ for $j\neq i$, all of $W_i\cap f_2$ has been accounted for.)
\end{proof}

\subsection{Creating the assumption for Metabolic Unlinking: Proof of Proposition~\ref{prop:I-squared}}

\begin{proof}
Starting with $(f,f_2)$ as in Section~\ref{sec:initial-standard-form} satisfying Lemma~\ref{lem:delta-w-disks},
we will \textbf{first prove the special case that each accessory disk $A^\pm_i$ has primary multiplicity $0\leq m_i\leq 2$, and each Whitney disk $W_i$ either has secondary multiplicity $-1\leq n_i\leq 1$, or has arbitrary $n_i$ if $m_i=1$}. The proof will involve a construction that eliminates   
the pairs of intersections from Item~(\ref{lem-item:delta}) of Lemma~\ref{lem:delta-w-disks} which contribute $n_i(1-x)\cdot S_{A_i^\pm}\cap W_i$ at the cost of creating new metabolic Whitney disk--accessory disk pairs satisfying the criteria of 
Proposition~\ref{prop:I-squared}. This construction will be presented in eight steps in Sections~\ref{sec:step-1-reduce-primary} through~\ref{sec:step-8-pair-secondary-mults}. Then we complete the proof of 
Proposition~\ref{prop:I-squared} in Section~\ref{sec:general-case-reduce-primary} by describing how the construction can be extended to the general case of arbitrary $n_i$ and arbitrary (non-negative) $m_i$.


Before starting the construction for this special case, note that any initial pair $W_i,A_i^+$ such that $n_i=0$ already satisfies Proposition~\ref{prop:I-squared} (since $W_i$ has vanishing primary multiplicity by Lemma~\ref{lem:delta-w-disks}). Also, any initial $W_i,A_i^+$ with $m_i=1$
will be left ``as is'' for the first six steps of the construction (since Step~7 below will show how such unit primary multiplicity can be used to arrange that $n_i=0$).

So the first six steps of the construction for this special case will be applied simultaneously to all $W_i$ such that $n_i=\pm 1$, with $m_i=2$ or $m_i=0$. 
Since these steps of the construction are supported near each $W_i\cup A_i^-$ inside the $4$--ball neighborhood described by Figure~\ref{fig:Whitney-and-accessory-disks-ORIENTED}, and since these $4$--balls can be taken to be disjoint, it suffices to describe these steps locally. 

We will assume that $m_i=2$ during the first six steps, with modifications for the easier $m_i=0$ case pointed out  just after Step~6.
\subsubsection{Step 1}\label{sec:step-1-reduce-primary}

To start the construction, consider a pair $\pm(1-x)\cdot S_{A_i^\pm}\cap W_i$ of intersections between $f_2$ and a $W_i$ corresponding to the secondary multiplicity $n_i=\pm 1$ of $W_i$.  
By Lemma~\ref{lem:delta-w-disks} this intersection pair admits a framed embedded Whitney disk $\Delta$ having only a single interior intersection with $f$ as in Figure~\ref{fig:x-1-term-A-Delta}. Note that Figure~\ref{fig:x-1-term-A-Delta} shows the negative accessory sphere case $\pm(1-x)\cdot S_{A_i^-}\cap W_i$, with all other parallel copies of $S_{A_i^-}$ suppressed from view (including
the two copies of $S_{A_i^-}$ corresponding to primary multiplicity $m_i=2$ of $A_i^-$,
which would appear as constant arcs parallel to the green circle strands passing through the accessory circle $\partial A_i^-$, c.f.~Figure~\ref{fig:double-push-down-1-solid-picture}). Subsequent figures will illustrate the construction details specifically for this negative accessory sphere case, and it will be pointed out how the same constructions also work for the positive accessory sphere case $\pm(1-x)\cdot S_{A_i^+}\cap W_i$. In both cases the construction will leave the positive accessory disk $A_i^+$ unchanged. 

\begin{figure}[ht!]
         \centerline{\includegraphics[scale=.36]{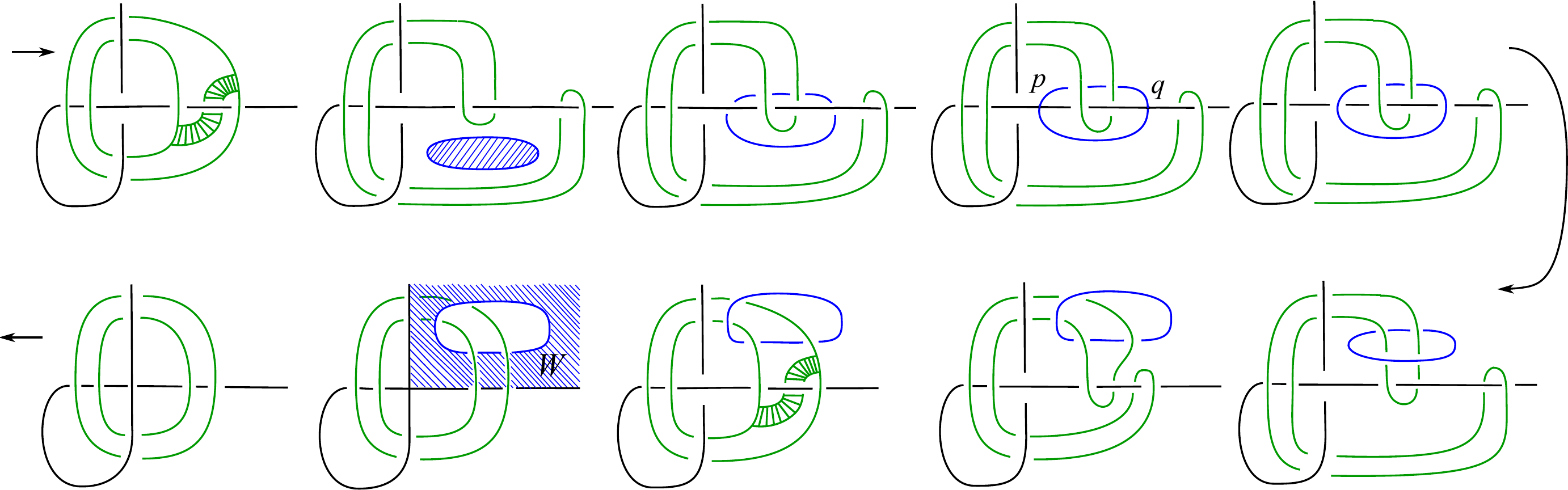}}
         \caption{The Whitney disk $W_i$ (blue) after eliminating the pair of intersections between $W_i$ and $f_2$ (green) by a Whitney move guided by $\Delta$ from Figure~\ref{fig:x-1-term-A-Delta}: Zooming in here all on but the top-left picture in Figure~\ref{fig:x-1-term-A-Delta}, the pair of interior intersections $p,q$ that $W_i$ now has with $f$ (black) corresponds to the blue-black crossing changes shown in the second-from-right picture of the top row. 
         Both the intersections $p$ and $q$ will be eliminated by the finger moves shown in Figure~\ref{fig:x-1-term-B}.}
         \label{fig:x-1-term-A1}
\end{figure}
\begin{figure}[ht!]
         \centerline{\includegraphics[scale=.34]{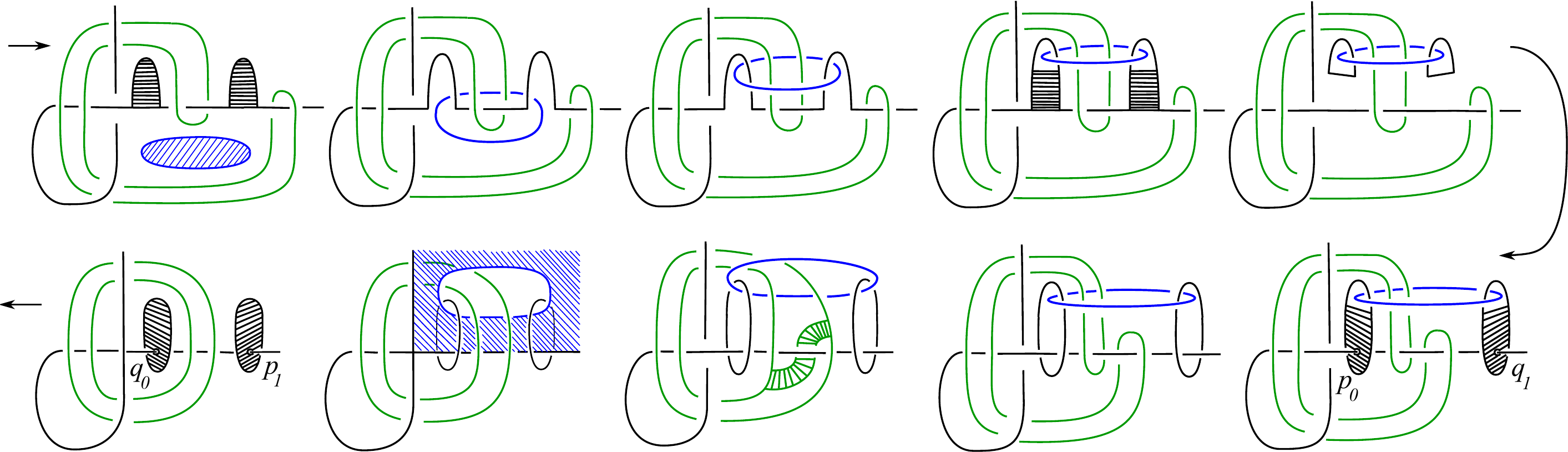}}
         \caption{This figure shows the result of eliminating the interior intersections $p,q\in W_i\pitchfork f$ (from Figure~\ref{fig:x-1-term-A1}) by two finger moves on $f$ along $W_i$ (across $\partial_+ W_i$): Zooming in on all but the top-left picture in Figure~\ref{fig:x-1-term-A1}, two of the new self-intersections $q_0,p_1\in f\pitchfork f$ are visible in the lower left-most picture, while the other two $p_0,q_1\in f\pitchfork f$ are visible in the lower right-most picture. The finger move that eliminated $p$ created the pair $p_0,q_0$, and the finger move that eliminated $q$ created $p_1,q_1$.}
         \label{fig:x-1-term-B}
\end{figure}

Change $W_i$ by doing the Whitney move guided by $\Delta$ (and keep the same notation for $W_i$).
This Whitney move eliminates the pair of intersections between $f_2$ and $W_i$, and creates a new pair of intersections $p,q$ between $f$ and the interior of $W_i$, as shown in Figure~\ref{fig:x-1-term-A1}.

Next, use two finger moves on $f$ to push each of $p,q\in W_i\pitchfork f$ off of $W_i$ (across $\partial_+ W_i$), creating two new pairs $p_0,q_0$ and $p_1,q_1$ of self-intersections of $f$, as illustrated in detail in Figure~\ref{fig:x-1-term-B}.

\begin{figure}[ht!]
         \centerline{\includegraphics[scale=.39]{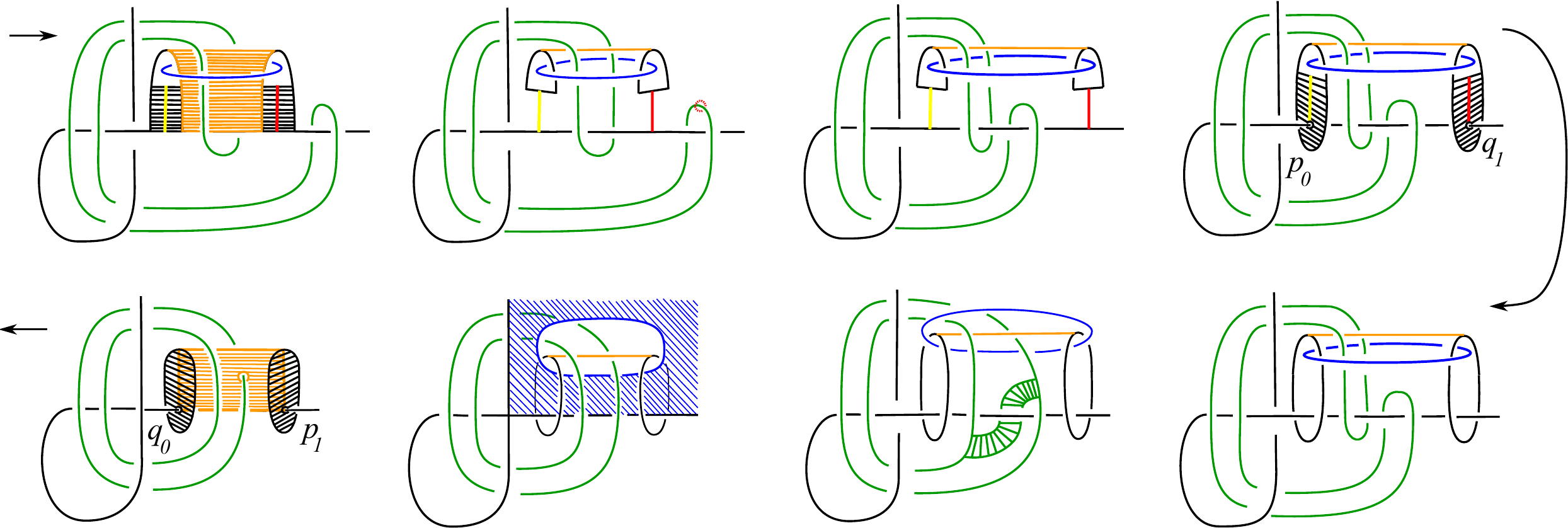}}
         \caption{Proof of Lemma~\ref{lem:p0-q1-w-and-accessory-disks}: These coordinates correspond to the last seven $3$-ball slices pictured in Figure~\ref{fig:x-1-term-B}, here with an additional slice inserted between the 5th and 6th slice. The Whitney disk $U$ for $p_1$ and $q_0$ is shown in orange. The intersection point between $U$ and $f_2$ is visible in the left-most slice in the bottom row. Across the top row, the accessory disk $A^{p_0}$ for $p_0$ is shown in yellow, and the accessory disk $A^{q_1}$ for $q_1$ is shown in red. The intersection point (indicated schematically by the small dotted red circle in the second from left slice in the top row) between $A^{q_1}$ and $f_2$ corresponds to the red-green crossing change between the center two slices in the top row. (Compare Figure~\ref{fig:double-push-down-1-solid-picture}.)}
         \label{fig:x-1-term-A0-A1-U}
\end{figure}

The disks described by the following lemma will be used repeatedly during constructions in subsequent steps:
\begin{lem}\label{lem:p0-q1-w-and-accessory-disks}
The self-intersections $p_0,q_0$ and $p_1,q_1$ of $f$ satisfy the following three conditions:

\begin{enumerate}[(i)]
\item
$p_0$ admits a framed embedded accessory disk $A^{p_0}$ such that the interior of $A^{p_0}$ is disjoint from both $f$ and $f_2$, and from all other Whitney and accessory disks.
\item
$q_1$ admits a framed embedded accessory disk $A^{q_1}$ such that the interior of $A^{q_1}$ has a single intersection with $f_2$, but is disjoint from $f$, and from all other Whitney and accessory disks.
\item
$p_1$ and $q_0$ admit a framed embedded Whitney disk $U$ such that
 the interior of $U$ has a single intersection with $f_2$, but is disjoint from $f$, and from all other Whitney and accessory disks.
\end{enumerate}
\end{lem}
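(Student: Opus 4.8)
The plan is to verify the three conclusions directly in the explicit local model produced by Step~1, i.e.\ the $\Delta$--Whitney move of Figure~\ref{fig:x-1-term-A1} followed by the two finger moves on $f$ of Figure~\ref{fig:x-1-term-B}. Since this construction is supported inside the $4$--ball neighborhood of $W_i\cup A_i^-$ depicted in Figure~\ref{fig:Whitney-and-accessory-disks-ORIENTED}, and these $4$--balls are disjoint for distinct $i$, the disjointness of the new disks from the $W_j, A_j^\pm$ with $j\neq i$ is automatic, and it suffices to work inside this single $4$--ball, where the only surfaces that can interfere are $f$, $f_2$, $W_i$ and $A_i^-$ (with $A_i^+$ sitting disjointly near the $i$th positive self-intersection, and $\Delta$ consumed by the Whitney move). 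I would draw the movie of $3$--ball slices of Figure~\ref{fig:x-1-term-A0-A1-U}, re-slicing Figure~\ref{fig:x-1-term-B} near the four new self-intersections, and exhibit the disks slice by slice: $A^{p_0}$ (yellow) bounded by a sheet-changing loop through $p_0$, $A^{q_1}$ (red) bounded by a sheet-changing loop through $q_1$, and $U$ (orange) bounded by a Whitney circle running from $q_0$ to $p_1$ along the two sheets of $f$. Note it is this ``cross'' pairing $\{p_1,q_0\}$---rather than the pairings $\{p_0,q_0\}$ and $\{p_1,q_1\}$ that the finger moves obviously provide---together with the individual accessory disks $A^{p_0}$ and $A^{q_1}$, that the book-keeping of later steps needs, so the content of the lemma is that these particular disks exist with the stated interior intersections. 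Embeddedness of each disk is visible since its preimage in each slice is an embedded arc, and a small isotopy makes the three disks disjoint from one another and from $W_i$, $A_i^+$, $A_i^-$.

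The two quantitative points to check are the framings and the interior intersections with $f$ and $f_2$. For the intersections with $f$: the two finger moves push $p$ and $q$ off of $W_i$, so after Step~1 all of $A^{p_0}$, $A^{q_1}$, $U$ have interiors disjoint from $f$, and no new $\pitchfork f$ intersections are created. For the intersections with $f_2$: by Lemma~\ref{lem:delta-w-disks}(a),(c) the copies of $S_{A_i^\pm}\subset f_2$ coming from the $m_i$-- and $P_i$--terms can be kept disjoint from the entire local region, so the only part of $f_2$ seen here is the pair of oppositely-oriented parallels of $S_{A_i^-}$ realizing the $\pm(1-x)$ term that was paired by $\Delta$; the single point of $\Delta\pitchfork f$ then reappears as exactly one point of $A^{q_1}\pitchfork f_2$ (the red--green crossing change in the top row of Figure~\ref{fig:x-1-term-A0-A1-U}) and exactly one point of $U\pitchfork f_2$ (the orange--green crossing in the bottom row), with $A^{p_0}$ left clean. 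For the framings: neither the finger moves nor the $\Delta$--Whitney move introduces normal twisting into the accessory or Whitney sections, so a relative Euler number count along the boundary loops, using that $\Delta$ and $W_i$ are framed together with the conventions of sections~\ref{subsec:framed-w-disks} and~\ref{subsec:framed-acc-disks}, shows that each of $A^{p_0}$, $A^{q_1}$, $U$ is framed.

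Finally I would note, as in the surrounding discussion, that the positive accessory sphere case $\pm(1-x)\cdot S_{A_i^+}\cap W_i$ is identical to the negative case drawn in the figures in a neighborhood of the relevant disks, so the same pictures prove the lemma in both cases; $A_i^+$ is untouched throughout. The step I expect to be the main obstacle is the honest book-keeping of the last paragraph: arranging the movie of Figure~\ref{fig:x-1-term-A0-A1-U} so that all three disks and their unique intersections with $f_2$ are simultaneously legible, and making the framing count convincing without getting lost in local-coordinate detail.
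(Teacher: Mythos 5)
Your proposal is correct and follows essentially the same route as the paper: the paper's proof simply exhibits $A^{p_0}$, $A^{q_1}$ and $U$ explicitly in the local coordinates of Figure~\ref{fig:x-1-term-A0-A1-U} (cf.~Figure~\ref{fig:double-push-down-1-solid-picture}), reads off the stated intersections with $f$ and $f_2$ there, and notes that the positive accessory sphere case is covered by essentially the same pictures. Your additional remarks on framings and on the ``cross'' pairing $\{p_1,q_0\}$ are consistent with, and just elaborate on, what the paper leaves to the figures.
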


\begin{proof}
The disks $A^{p_0}$, $A^{q_1}$ and $U$ are explicitly described in local coordinates in Figure~\ref{fig:x-1-term-A0-A1-U}
(see also Figure~\ref{fig:double-push-down-1-solid-picture}).
The positive accessory sphere case $\pm(1-x)\cdot S_{A_i^+}\cap W_i$ is covered by essentially the same figures.
\end{proof}
\begin{figure}[ht!]
         \centerline{\includegraphics[scale=.35]{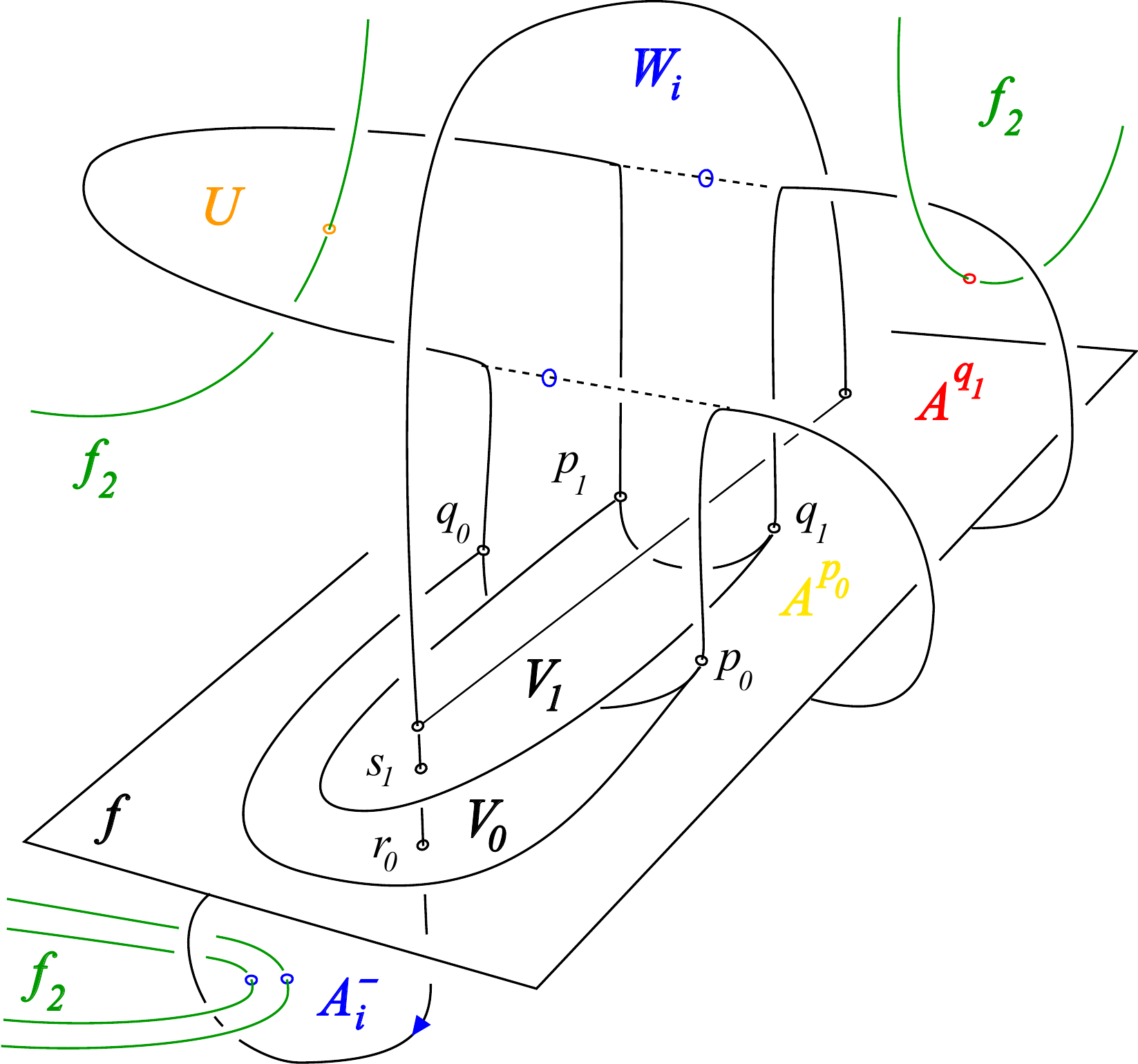}}
         \caption{From \textbf{Step 2}: The Whitney disks $V_0$ and $V_1$ (nested along $\partial_+W_i$) pairing $p_0,q_0$ and $p_1,q_1$.}
         \label{fig:double-push-down-1-solid-picture}
\end{figure}

\subsubsection{Step 2}\label{sec:step-2-reduce-primary}
As shown in Figure~\ref{fig:double-push-down-1-solid-picture}, the two new pairs $p_0,q_0$ and $p_1,q_1$ of self-intersections of $f$ created by the finger moves in Figure~\ref{fig:x-1-term-B} admit framed embedded Whitney disks $V_0$ and $V_1$ whose interiors each have only a single transverse intersection with $f$ (near the negative self-intersection of $f$ paired by $W_i$). By construction these points $r_0=V_0\pitchfork f$ and $s_1=V_1\pitchfork f$ lie on the accessory circle $\partial A_i^-$. Even if we had started with a positive accessory sphere pair $\pm(1-x)\cdot S_{A_i^+}\cap W_i$, we would still use these same nested $V_0$ and $V_1$ near the negative self-intersection of $f$.

Since the Whitney and accessory disks are framed and embedded, the $4$--ball pictured in Figure~\ref{fig:double-push-down-1-solid-picture} is accurate except that any pairs of oppositely-signed intersections that $W_i$ and $A_i^-$ may have with $f_2$ are suppressed from view. The two sheets of $f_2$ shown intersecting $A_i^-$ are like-oriented copies of $S_{A_i^-}$ corresponding to $m_i=2$ (these were suppressed in Figures~\ref{fig:x-1-term-A-Delta}, \ref{fig:x-1-term-A1}, \ref{fig:x-1-term-B} and \ref{fig:x-1-term-A0-A1-U}).

\begin{figure}[ht!]
         \centerline{\includegraphics[scale=.48]{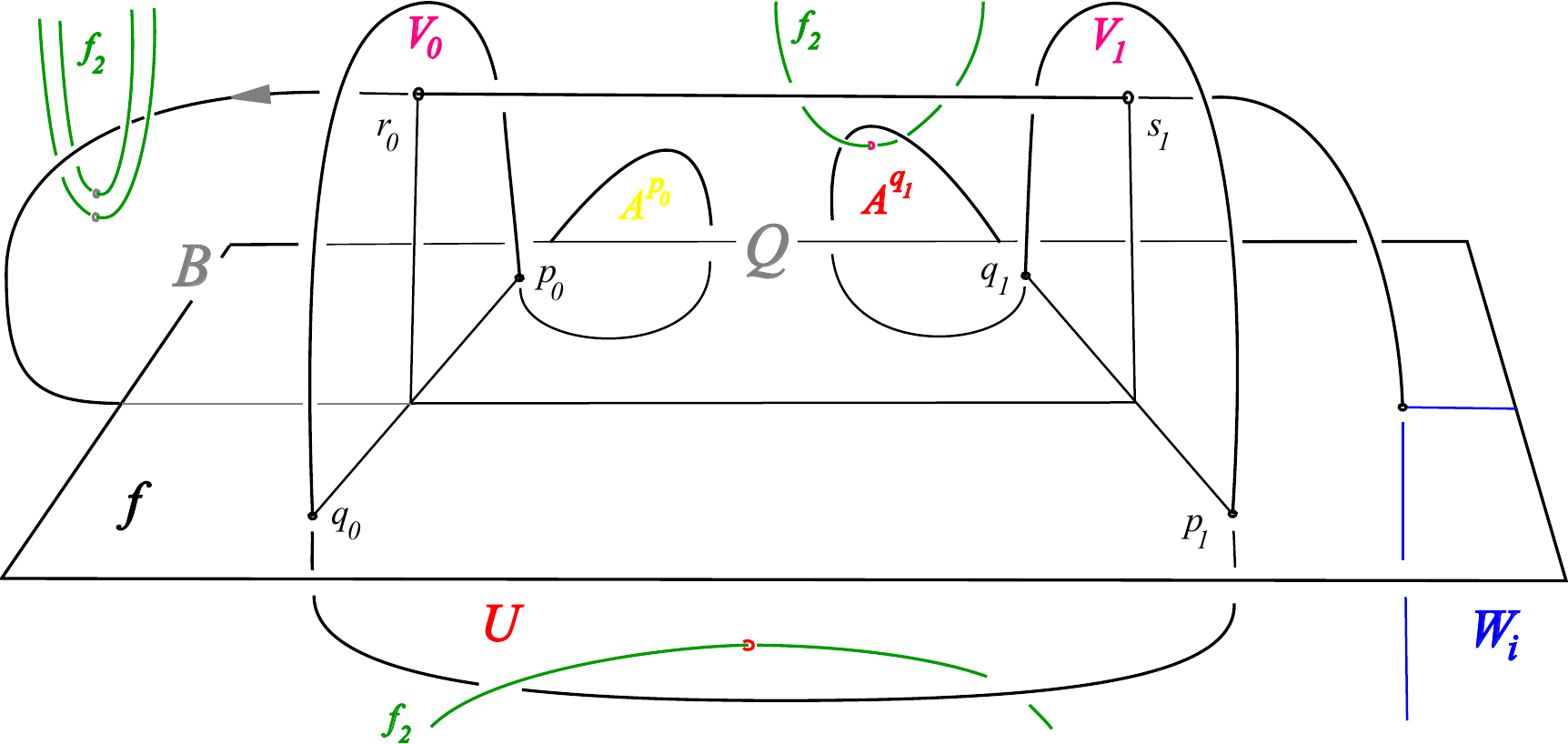}}
         \caption{\textbf{From Step 2:} A slightly different view of $V_0$ and $V_1$, from `underneath' the horizontal sheet of $f$ shown in Figure~\ref{fig:double-push-down-1-solid-picture} and after `straightening'  $\partial V_0$ and $\partial V_1$ by an ambient isotopy (of the $2$-complex).}
         \label{fig:double-push-down-2-picture}
\end{figure}

Figure~\ref{fig:double-push-down-2-picture} shows a different view of $V_0$ and $V_1$. Note that $r_0$ and $s_1$ admit a \emph{generalized Whitney disk} which is the quadrilateral $Q$ bounded by the indicated arcs running from $s_1$ along $f$ to $r_0$ then down $V_0$ and back along $f$ to $V_1$ and back up through $V_1$ to $s_1$ (Section~\ref{sec:transfer-move-appendix}). 
Also, $r_0$ admits the \emph{generalized accessory disk} $B$ which is the bigon bounded by an arc from $r_0$ along $f$ then back up through $V_0$ to $r_0$ (Section~\ref{sec:transfer-move-appendix}). This $B$ consists of most of the standard accessory disk $A_i^-$ (which we do not need, since $W_i$ also has the standard $A_i^+$), and contains all the intersections with $f_2$ that $A_i^-$ had.  

In Figure~\ref{fig:double-push-down-2-picture} the accessory disks $A^{p_0}$ (yellow) and $A^{q_1}$ (red) are visible behind the quadrilateral $Q$. A corner of $W_i$ is visible in the lower right. The (non-canceling) pair of intersections between $f_2$ and the bigon $B$ visible in the upper left are inherited from the intersections $f_2\cap A_i^-$ corresponding to $m_i=2$ shown in the lower left of Figure~\ref{fig:double-push-down-1-solid-picture}. Any canceling pairs of intersections between $B$ and $f_2$ (corresponding to canceling pairs in $A_i^-\cap f_2$) are suppressed, but otherwise the $4$--ball picture in Figure~\ref{fig:double-push-down-2-picture} is accurate since all disks are framed and embedded.

Note that the union $B\cup Q$ equals the standard $A_i^-$ with a small corner removed near the self-intersection of $f$.
The subsequent steps 3--6 will be supported in a $4$-ball neighborhood of $A_i^-$.

At this point the primary multiplicity of $B$ (the linking number of $\partial B$ with $f_2$ in $S^4$) is the same as the primary multiplicity of $A_i^-$, namely $2$,
but the construction will next exchange $r_0$ for new intersections between $f$ and $V_0$, each equipped with a generalized accessory disk whose boundary has linking number $0$ or $1$ with $f_2$. These new generalized accessory disks will be modified parallels of $B$, and will become metabolic accessory disks later in the construction.

\begin{figure}[ht!]
         \centerline{\includegraphics[scale=.5]{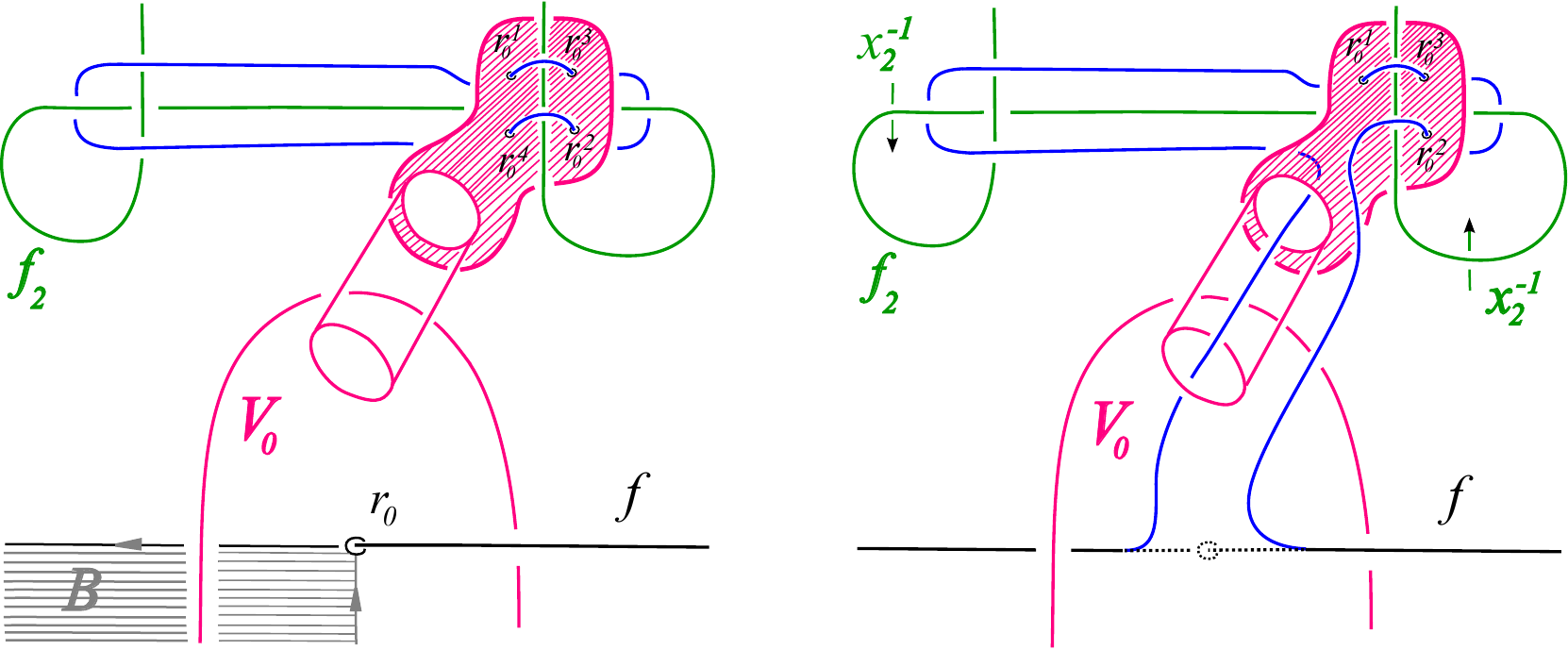}}
         \caption{\textbf{From Step 3:} Left: The Whitney disk $V_0$ (red) is tubed into the local accessory sphere $S^+_{f_2}$ (also red), creating four intersections $r_0^1, r_0^2, r_0^3, r_0^4$ between $V_0$ and the local Whitney sphere $S^W_{f_2}$ (blue).  The disk in $S^+_{f_2}$ near where the tube attaches corresponds to the bottom right-most picture in Figure~\ref{fig:Accessory-AND-Whitney-sphere-movie} except that colors are different. 
Right: $f$ (black) is tubed (blue) along $V_0$ into $S^W_{f_2}$ to eliminate $r_0$ and $r_0^4$.}
         \label{fig:tube-V0-into-accessory-sphere-1AandB}
\end{figure}

\subsubsection{Step 3}\label{sec:step-3-reduce-primary}
\begin{figure}[p]
\centering
\includegraphics[scale=.45]{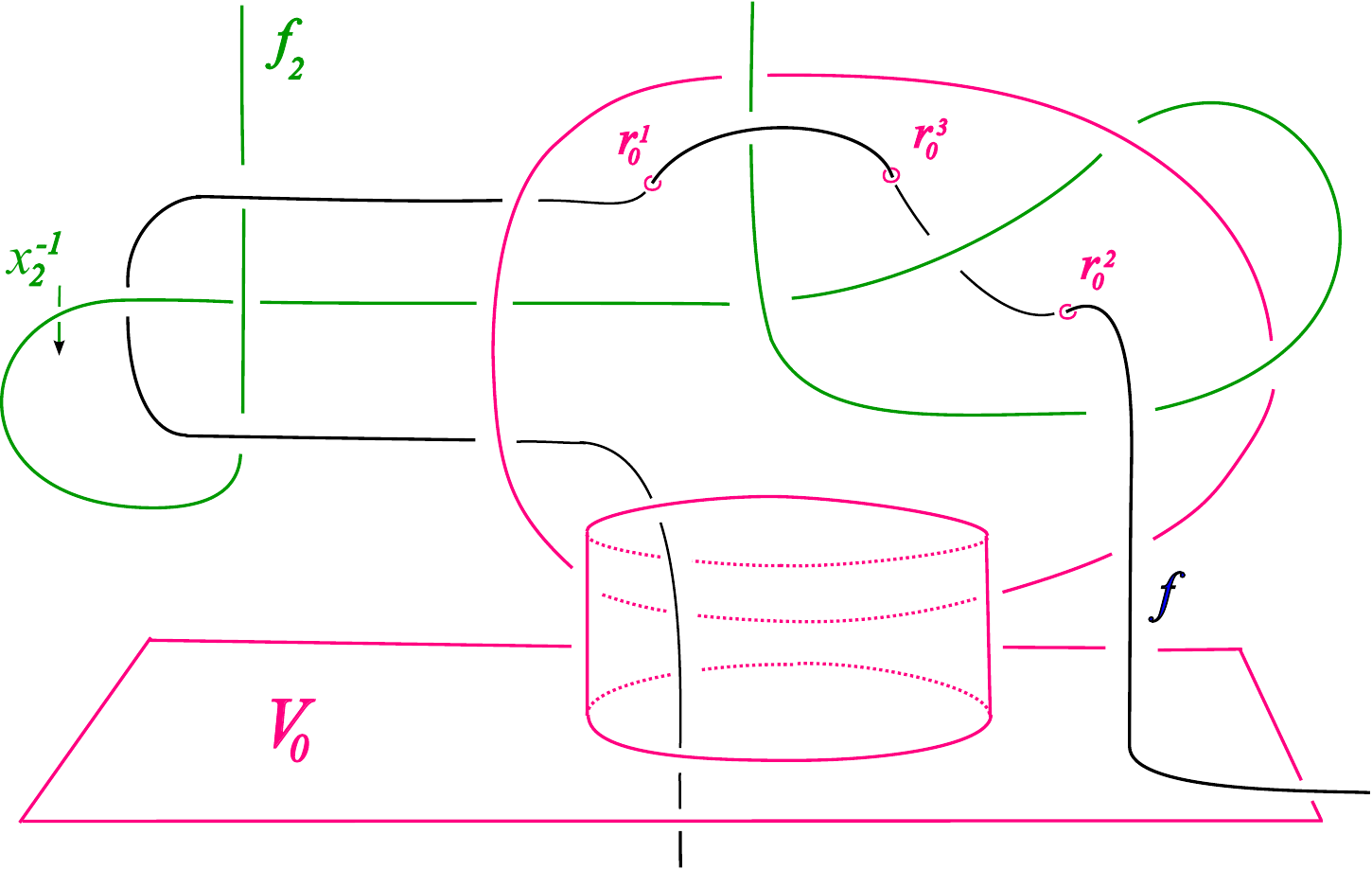}
\caption{\textbf{From Step 3:} In preparation for extending parallels of $B$ to create generalized accessory disks for $r_0^1, r_0^2, r_0^3$, this figure shows a close up of the right hand side of Figure~\ref{fig:tube-V0-into-accessory-sphere-1AandB}, with the disk in $S^+_{f_2}$ now shown as transparent, and the original part of $V_0$ now shown as horizontal, and perpendicular to the tube.}
         \label{fig:tube-V0-into-accessory-sphere-2}
\includegraphics[scale=.45]{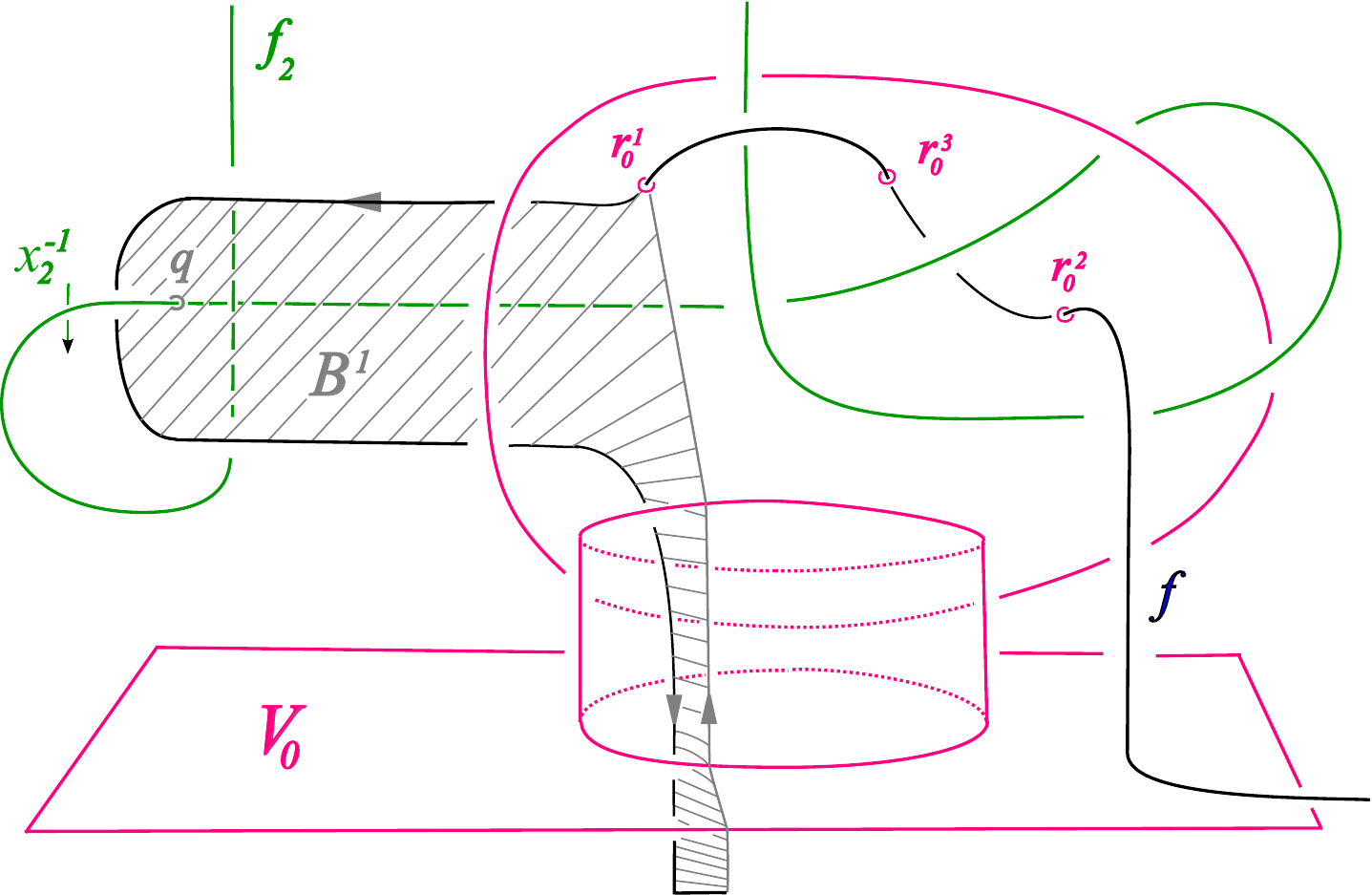}
 \caption{\textbf{From Step 3:} This figure shows the same view as Figure~\ref{fig:tube-V0-into-accessory-sphere-2}, now including part of a generalized accessory disk $B^1$ (grey) for $r_0^1$. The grey arc running up along $V_0$ indicates part of one boundary arc of $B^1$, while the other boundary arc of $B^1$ runs from $r_0^1$ to the left along $f$ and down inside the tube of $V_0$. A single intersection point $q\in B^1\cap f_2$ is visible on the left, and the rest of $B^1$ (not visible) consists of a parallel copy of $B$. The primary multiplicity of $B^1$ is $1$, since the boundary of the grey sub-disk added to $B$ to form $B^1$ represents $x_2^{-1}$, so $B^1$ has primary multiplicity one less than the primary multiplicity of $B$.}
         \label{fig:tube-V0-into-accessory-sphere-2A}
\end{figure}
\begin{figure}[ht!]
         \centerline{\includegraphics[scale=.475]{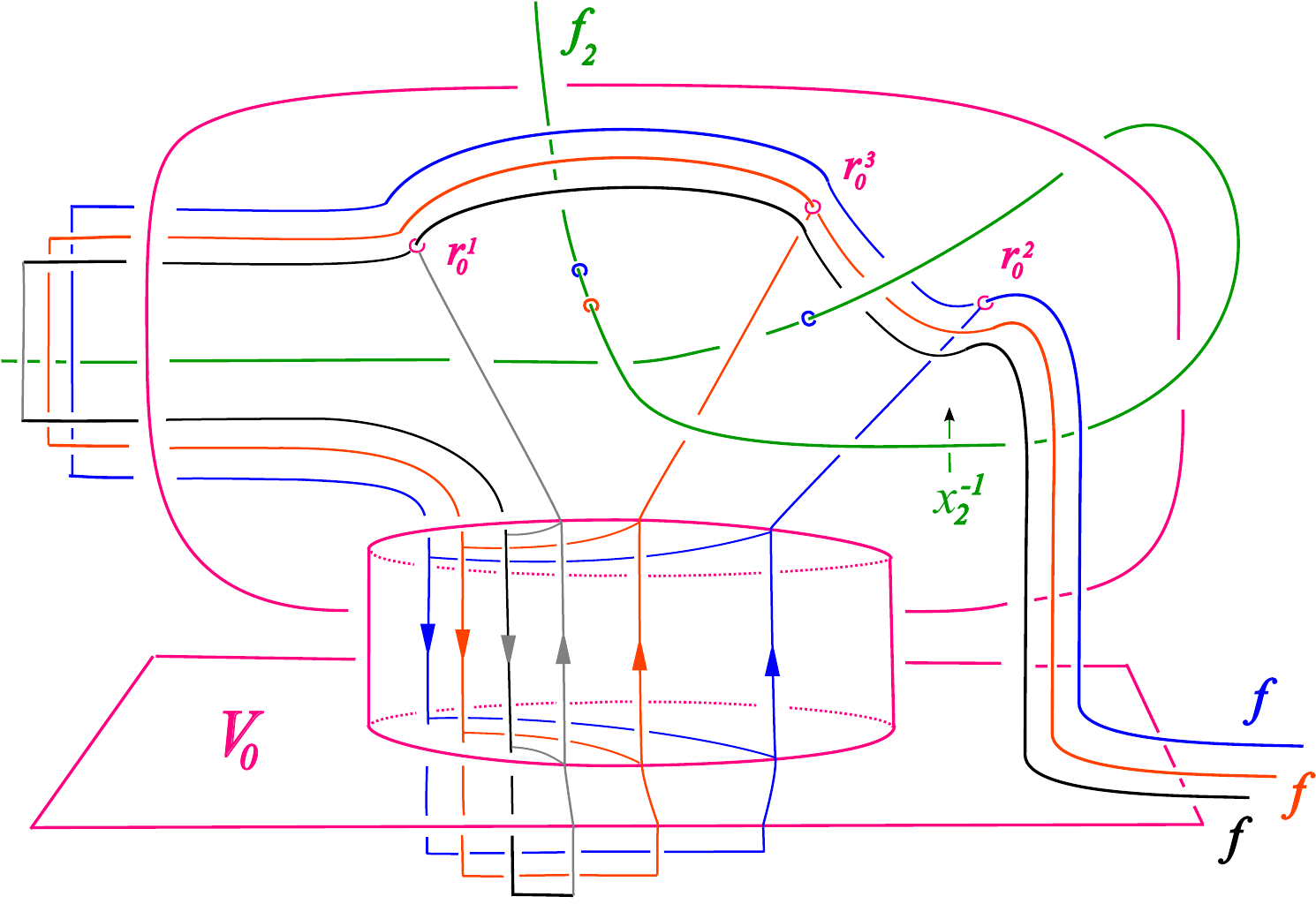}}
         \caption{\textbf{From Step 3:} The same view as Figure~\ref{fig:tube-V0-into-accessory-sphere-2A}, but now indicating parts of all three generalized accessory disks $B^1$ (grey-black), $B^2$ (blue) and $B^3$ (orange) for $r_0^1, r_0^2$ and $r_0^3$.
The picture is schematic in that two of the arcs of $f$ really lie slightly in the past and future, respectively, along with small neighborhoods in $V_0$ and the $B^j$ near the corresponding intersections $V_0\pitchfork f$. The primary multiplicities of $B^2$ and $B^3$ can be computed from this figure, keeping in mind
that each of $B^2$ and $B^3$ also has an intersection with $f_2$ parallel to $q\in B^1\cap f_2$ as on the left of Figure~\ref{fig:tube-V0-into-accessory-sphere-2A} which is not visible in this figure, and these intersections correspond to factors of $x_2^{-1}$ in $\partial B^2$ and $\partial B^3$. The single orange intersection in $B^3\cap f_2$ visible here corresponds to another factor of $x_2^{-1}$ in $\partial B^3$, hence the primary multiplicity of $B^3$ is $0$ (two less than that of $B$).
The blue positive-negative pair in $B^2\cap f_2$ indicates no further change in the linking of $\partial B^2$ with $f_2$, so the primary multiplicity of $B^2$ is $1$ (one less than that of $B$).}
         \label{fig:tube-V0-into-accessory-sphere-2B}
\end{figure}

After isotoping a small disk of $f_2$ into the $4$-ball neighborhood of $A^-_i$, perform a local trivial finger move on $f_2$, creating a local Whitney sphere $S^W_{f_2}$ and accessory spheres $S^\pm_{f_2}$, and tube $V_0$ into the new positive accessory sphere $S^+_{f_2}$ on $f_2$ via a tube that starts near $r_0$.
This creates four new intersection points $r_0^1, r_0^2, r_0^3, r_0^4$ between $V_0$ and $S^W_{f_2}$, as shown in the left side of Figure~\ref{fig:tube-V0-into-accessory-sphere-1AandB}. 

Now $r_0$ can be removed by tubing $f$ into $S^W_{f_2}$ via a tube of normal circles to $V_0$ that also eliminates $r_0^4$,
as shown in the right side of Figure~\ref{fig:tube-V0-into-accessory-sphere-1AandB}.
Note that this is a link homotopy since a Whitney sphere for $f_2$ has been added to $f$. This link homotopy is an isotopy on $f$, since the Whitney sphere is trivial (it comes from the local embedded Whitney disk which is inverse to the finger move on $f_2$).

The next step is to check that parallels of $B$ can be extended to give generalized accessory disks $B^1,B^2,B^3$ for the new intersections $\{r_0^1, r_0^2, r_0^3\}=V_0\pitchfork f$ which are framed, and disjointly embedded, with interiors disjoint from $f$. This is carried out in Figure~\ref{fig:tube-V0-into-accessory-sphere-2},
Figure~\ref{fig:tube-V0-into-accessory-sphere-2A} and Figure~\ref{fig:tube-V0-into-accessory-sphere-2B}.
The primary multiplicities of $B^1$, $B^2$ and $B^3$ are $1$, $1$ and $0$, respectively, as can be computed from the figures, using that $B$ had primary multiplicity $2$. 

It will be important that $B^1$, $B^2$ and $B^3$ do not intersect each other,  
but this is easy to see in (the slightly schematic) Figure~\ref{fig:tube-V0-into-accessory-sphere-2B} as even the projections to the present are disjointly embedded, with $B^1$ appearing in front of $B^2$, which is in turn in front of $B^3$. And the parts of $B^1$, $B^2$ and $B^3$ coming from parallels of $B$ are disjointly embedded since $B$ was framed and embedded.

At this point in the construction (after the implementations of Figure~\ref{fig:tube-V0-into-accessory-sphere-2},
Figure~\ref{fig:tube-V0-into-accessory-sphere-2A} and Figure~\ref{fig:tube-V0-into-accessory-sphere-2B}) we have disjointly embedded framed generalized accessory disks $B^1,B^2,B^3$ for $r_0^1, r_0^2,r_0^3$, with respective primary multiplicities $1,1,0$. These will eventually be converted into metabolic accessory disks on $f$ having these same primary multiplicities.

Note that $V_0$ has a new oppositely-signed pair of self-intersections inherited from $S^+_{f_2}$, but with group elements that are trivial in $\pi_1(S^4\setminus f)$.

\begin{figure}[ht!]
         \centerline{\includegraphics[scale=.5]{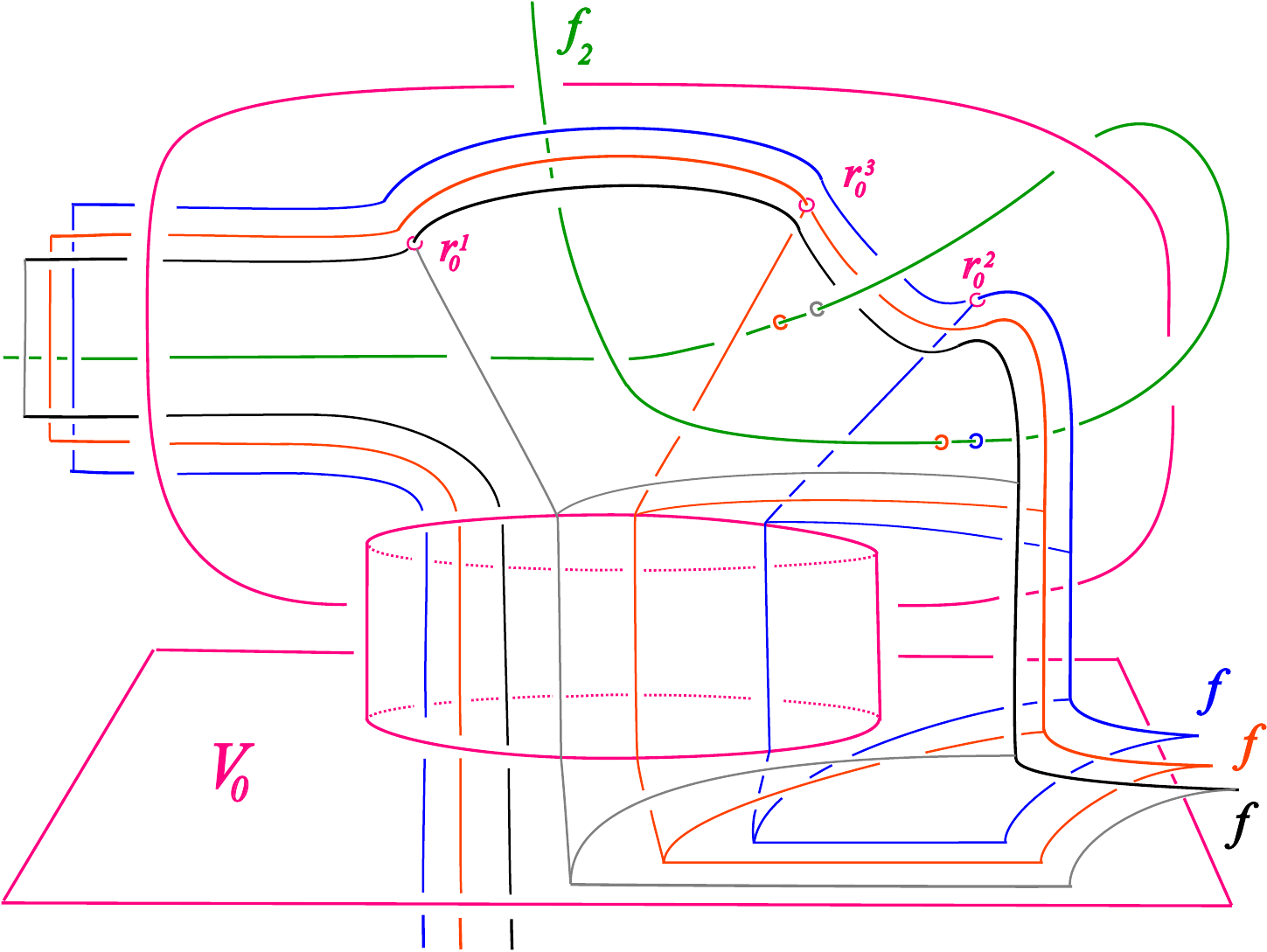}}
         \caption{\textbf{From Step 4:} A partially schematic picture of the construction of $Q^1,Q^2,Q^3$ near $r_0^1, r_0^2, r_0^3$ using the same boundary arcs along $V_0$ as for $B^1,B^2,B^3$ in Figure~\ref{fig:tube-V0-into-accessory-sphere-2B}. (Again, the picture is schematic in that two of the arcs of $f$ really lie slightly in the past and future, respectively, along with small neighborhoods in $V_0$ and the $Q^j$ of the corresponding intersections $V_0\pitchfork f$.) There is a single (grey) intersection between $Q^1$ and $f_2$; and a single (blue) intersection between $Q^2$ and $f_2$; as well as a pair of (orange) intersections between $Q^3$ and $f_2$. 
         Note that the interiors of the $Q^j$ and the $B^k$ are all disjointly embedded (the $Q^j$ are `outside' the tube, while the $B^k$ are `inside'). }
         \label{fig:tube-V0-into-accessory-sphere-3}
\end{figure}

\subsubsection{Step 4}\label{sec:step-4-reduce-primary}
Now we work on $V_1$ in a similar way as the modification of $V_0$ in Step~3: Perform another trivial finger move on $f_2$ to create another local Whitney sphere ${S^W_{f_2}}'$ and accessory sphere ${S^+_{f_2}}'$ on $f_2$, and eliminate $s_1=V_1\pitchfork f$ by tubing $V_1$ into ${S^+_{f_2}}'$, and $f$ into ${S^W_{f_2}}'$, yielding new intersections $\{s_1^1, s_1^2, s_1^3\}=V_1\pitchfork f$ (analogously to Figure~\ref{fig:tube-V0-into-accessory-sphere-1AandB} and Figure~\ref{fig:tube-V0-into-accessory-sphere-2}).

Parallel copies of the embedded quadrilateral $Q$ can now be extended to create generalized Whitney disks $Q^j$ for the pairs $r_0^j,s_1^j$ in the following way:  

Near $\{s_1^1, s_1^2, s_1^3\}=V_1\pitchfork f$ this is accomplished by the same kind of parallel local extensions that were used to create the $B^j$ from $B$ (as in Figure~\ref{fig:tube-V0-into-accessory-sphere-2B} but with $V_1$ and $s_1^j$ replacing $V_0$ and $r_0^j$, and with $Q$ and $Q^j$ replacing $B$ and $B^j$).  

Near $\{r_0^1, r_0^2, r_0^3\}=V_0\pitchfork f$, it is possible to extend the parallels of $Q$ on the ``other side'' of $V_0$ from the $B^j$ (``outside'' instead of ``inside'' the tube in the $3$-ball slice of local coordinates) in a way that does not create any intersections between the $Q^j$ and any previously created $B^k$, as shown in Figure~\ref{fig:tube-V0-into-accessory-sphere-3}.

The resulting $Q^j$ are disjointly embedded, and have interiors disjoint from all surfaces other than $f_2$.
It will not be necessary to keep track of multiplicities of the $Q^j$.

Observe that, similarly to $V_0$, now $V_1$ has a new oppositely-signed pair of self-intersections inherited from ${S^+_{f_2}}'$, with group elements that are trivial in $\pi_1(S^4\setminus f)$.

\subsubsection{Step 5}\label{sec:step-5-reduce-primary}
\begin{figure}[ht!]
         \centerline{\includegraphics[scale=.5]{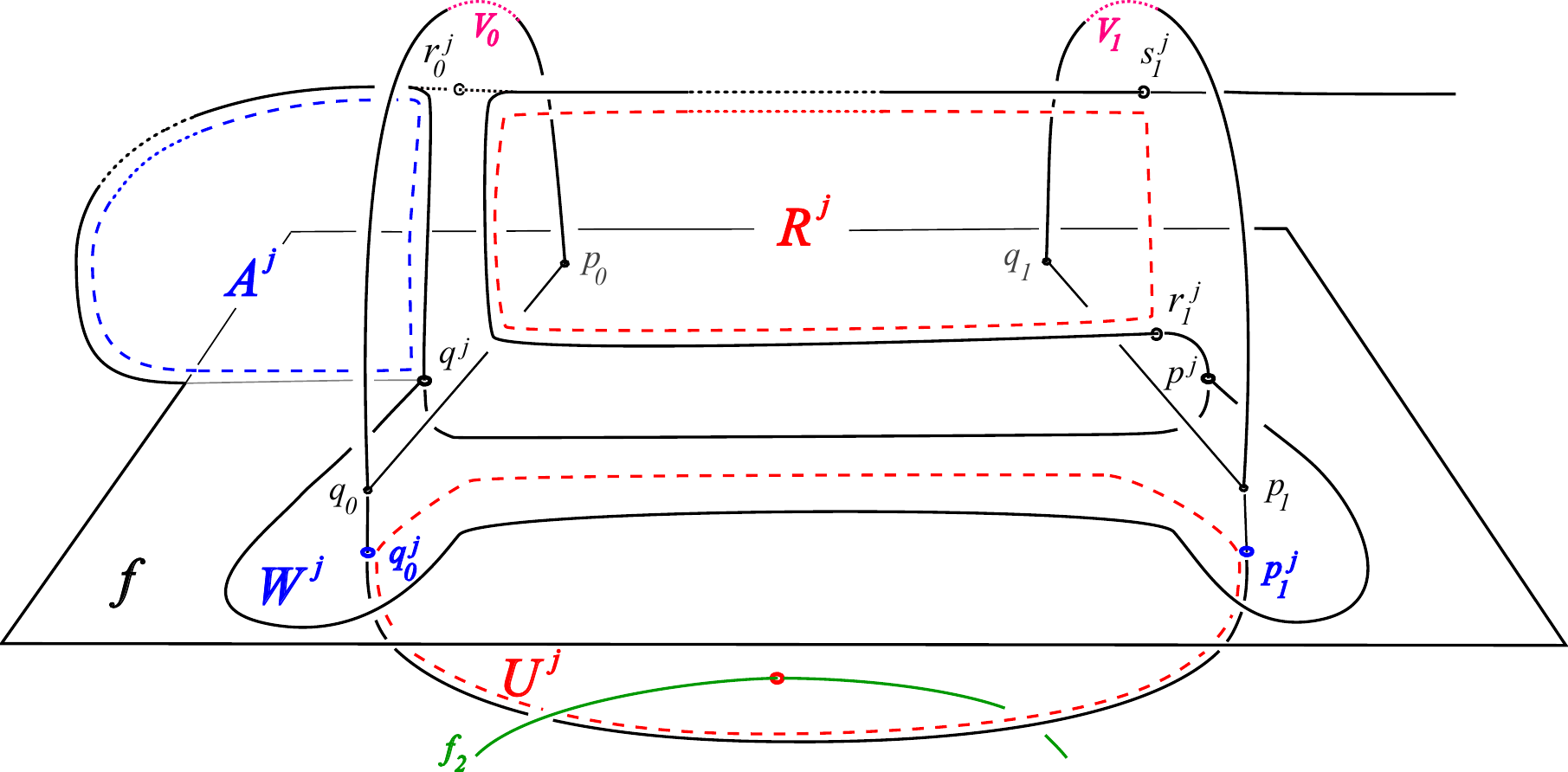}}
         \caption{\textbf{From Step 5:} A partially  condensed  view of the result of `transferring' an intersection $r_0^j\in V_0\pitchfork f$ to $r_1^j\in V_1\pitchfork f$. The new self-intersections $q^j,p^j\in f\pitchfork f$ are paired by the local Whitney disk $W^j$. A parallel copy of the Whitney disk $U^j$ from Lemma~\ref{lem:p0-q1-w-and-accessory-disks} pairs $q_0^j,p_1^j\in W^j\pitchfork f$. Suppressed from view are intersections $A^j$ and $R^j$ have with $f_2$, as well as  the self-intersections of $V_0$ and $V_1$ inherited from $S^+_{f_2}$ and ${S^+_{f_2}}'$. Also not shown are the other intersection pairs $r_0^k, s_1^k$ (or $r_1^k,s_1^k$) that $V_0$ and $V_1$ have with $f$, and their corresponding parallels of the embedded $B^k$ and $Q^k$ (or $A^k$ and $R^k$).}
         \label{fig:double-push-down-transfer-move}
\end{figure}

Now we make the interior of $V_0$ disjoint from $f$: This is accomplished by three \emph{transfer moves}, pushing down each of $r_0^1, r_0^2, r_0^3$ into $f$, then across $f$ and into $V_1$ by finger moves, creating new intersections $r_1^1, r_1^2, r_1^3$ between $V_1$ and $f$ as in Figure~\ref{fig:double-push-down-transfer-move} (see also Section~\ref{sec:transfer-move-appendix}). 

Each transfer move also creates a pair $p^j,q^j$ of self-intersections of $f$ which are paired by a local Whitney disk $W^j$ which comes equipped with a cleanly embedded, framed accessory disk $A^j$ which is formed from $B^j$. Each 
$A^j$ has interior disjoint from all other Whitney and accessory disks, and primary multiplicity equal to $0$ or $1$ since each $B^j$ had primary multiplicity $0$ or $1$ (Step~3). Each $W^j$ has a single pair of interior intersections $q_0^j,p_1^j$ with $f$ that can be paired by a parallel copy $U^j$ of the framed embedded
Whitney disk $U$ from Lemma~\ref{lem:p0-q1-w-and-accessory-disks}. 
Note that these $W^j$ can be nested disjointly, as were $V_0,V_1$ in Figure~\ref{fig:double-push-down-2-picture} (see also Figure~\ref{fig:double-push-down-1B-picture}).
 
Since each $U^j$ has just a single intersection with $f_2$, performing a $U^j$-Whitney move on $W^j$ would make $W^j$  
cleanly embedded with interior disjoint from all other disks, and with primary multiplicity $0$ and secondary multiplicity equal to $\pm 1$ (by Observation~\ref{sec:observation}).

At this point, we go ahead and do the $U^j$-Whitney moves \textbf{\emph{only on those $W^j$ whose $A^j$ has primary multiplicity $1$}}. (For each $i$ such that $m_i=2$ there are two such $W^j$, corresponding to $B^1$ and $B^2$ each having primary multiplicity $1$ in Step 3.)

In Step~8 below, the remaining $W^j$ having $A^j$ with primary multiplicity $0$ will be made cleanly imbedded in a way that ``collects'' the non-zero secondary multiplicities into canceling pairs so that the resulting metabolic Whitney disks each have secondary multiplicity $0$. 
(For each $i$ such that $m_i=2$ there is one such $W^j$, corresponding to $B^3$ having primary multiplicity $0$ in Step 3.)

At this point $V_0$ is cleanly immersed, framed, and with interior disjoint from $f_2$ and all other disks. The only self-intersections of $V_0$ are the oppositely-signed pair inherited from tubing into the accessory sphere $S^+_{f_2}$ on $f_2$ in (Step~3), and the group elements of these self-intersections are both trivial in $\pi_1(S^4\setminus f)$, so $\lambda(V_0,V_0)=0\in\Z[x^{\pm1}]$.

By construction, the accessory disk $A^{p_0}$ for $V_0$ from Lemma~\ref{lem:p0-q1-w-and-accessory-disks} is framed and cleanly embedded with interior disjoint from all other Whitney and accessory disks.

Note that $V_0$ and $A^{p_0}$ both have vanishing primary and secondary multiplicities (and in fact are both disjoint from $f_2$).

\subsubsection{Step 6}\label{sec:step-6-reduce-primary}
The transfer moves in the previous Step~5 (Figure~\ref{fig:double-push-down-transfer-move}) convert the quadrilaterals $Q^j$ into framed disjointly embedded 
Whitney disks $R^j$ pairing $s_1^j$ and $r_1^j$ (Section~\ref{sec:transfer-move-appendix}).
The interior of each $R^j$ intersects $f_2$ (near $S^+_{f_2}$ and ${S^+_{f_2}}'$ from Step~4), but is disjoint from $f$ and all other disks.
So the interior of $V_1$ can be made to be disjoint from $f$ and all other disks via Whitney moves on $V_1$ guided by the $R^j$.

By construction, $V_1$ is now cleanly immersed, framed and disjoint from all other Whitney disks and from the interiors of all accessory disks. Also, $V_1$ satisfies
$\lambda(V_1,V_1)=0\in\Z[x^{\pm 1}]$, since the only self-intersections of $V_1$ come from tubing into accessory spheres on $f_2$
which contribute oppositely-signed self-intersections with trivial group elements in $\pi_1(S^4\setminus f)$.

The accessory disk $A^{q_1}$ for $V_1$ (from Lemma~\ref{lem:p0-q1-w-and-accessory-disks}) is framed and embedded with interior disjoint from all other Whitney and accessory disks.
From Lemma~\ref{lem:p0-q1-w-and-accessory-disks}, $A^{q_1}$ has primary multiplicity $1$.

Since the only intersections that $V_1$ has with $f_2$ come from the $R^j$-Whitney moves, it follows from Observation~\ref{sec:observation} that $V_1$ has vanishing primary multiplicity. (At this point we do not need to control the secondary multiplicity of $V_1$, since Step~7 will deal with each Whitney disk whose accessory disk has primary multiplicity $1$.)

\noindent{\bf The case $m_i=0$:}
The six steps so far have been described for the case $m_i=2$. If we had started with $W_i$ with $m_i=0$, then the construction is simplified as follows:
Step~1 and Step~2 proceed exactly the same as above.  Observe that in this $m_i=0$ case, at the end of Step~2 the bigon $B$ has primary multiplicity $0$ (the same as $A^-_i$), 
so Step~3 and Step~4 can be skipped entirely, and in Step~5 the transfer move is applied to the single intersection $r_0=B\pitchfork f$. So Step~5 yields just a single $W^j$, with accessory disk $A^j$ having primary multiplicity $0$. (This $A^j$ is formed from $B$, which is a sub-disk of the original $A_i^-$, since we didn't have to do the tubing of Step~3 and Step~4 to reduce the primary multiplicity.) As in Figure~\ref{fig:double-push-down-transfer-move}, the intersection pair $W^j\pitchfork f$ admits $U^j$ intersecting $f_2$ in a single point, and will be made cleanly embedded in Step~8 along with the other $W^j$ having $A^j$ with primary multiplicity $0$.

\noindent{\bf Summary of Steps 1--6:}
Recall that we are assuming the base case that each initial Whitney disk $W_i$ has secondary multiplicity $-1\leq n_i\leq 1$, and each initial accessory disk $A^\pm_i$ has primary multiplicity $0\leq m_i\leq 2$. Simultaneously carrying out the above constructions of Steps~1 through~6 on all the initial $W_i$ with $n_i=\pm 1$ and $m_i=0,2$ has yielded the following:

\begin{enumerate}[(a)]
\item \label{item:summary-initial}
All the initial pairs $W_i,A^+_i$ either had $m_i=1$ to start with, or now have $n_i=0$ from Step~1.  Each of these initial $W_i,A^+_i$ are still cleanly embedded and disjoint from all other Whitney and accessory disks.

\item \label{item:summary-V0}
Each new Whitney disk $V_0$ created in Step~2 and modified in Step~5 is cleanly immersed, with $\lambda(V_0,V_0)=0\in\Z[x^{\pm 1}]$, and has interior disjoint from all other disks. The accessory disk $A^{p_0}$ for $V_0$ is framed and embedded with interior disjoint from all other Whitney and accessory disks.   
By construction both $V_0$ and $A^{p_0}$ have vanishing primary and secondary multiplicities, and are in fact disjoint from $f_2$. 

\item \label{item:summary-V1}
Each new Whitney disk $V_1$ created in Step~2  and and modified in Step~6 is cleanly immersed, with $\lambda(V_1,V_1)=0\in\Z[x^{\pm 1}]$. The accessory disk $A^{q_1}$ for $V_1$ is framed and embedded with interior disjoint from all other Whitney and accessory disks.
The primary multiplicity of $V_1$ is $0$ by Observation~\ref{sec:observation}, and the primary multiplicity of $A^{q_1}$ is $1$ by Lemma~\ref{lem:p0-q1-w-and-accessory-disks}.

\item \label{item:summary-new-pairs-primary-1}
The new disk pairs $W^j,A^j$ created in Step~5 \textbf{such that $A^j$ has primary multiplicity 1} are (after the $U^j$-Whitney moves) all cleanly embedded and disjoint from all other disks. These $W^j$ each have primary multiplicity $0$,
and each $W^j$ also has secondary multiplicity $\pm 1$, coming from the unit primary multiplicities of the $U^j$ which were used to get $W^j\pitchfork f=\emptyset$ (by Observation~\ref{sec:observation}).

\item \label{item:summary-new-pairs-primary-0}
Those new disk pairs $W^j,A^j$ created in Step~5 \textbf{such that $A^j$ has primary multiplicity 0} are embedded and disjoint from all other disks but still have $W^j\pitchfork f$ paired by $U^j$.
\end{enumerate}

So the \textbf{remaining ``problems''} to be dealt with are 
\begin{itemize}
\item 
the Whitney disks which have possibly non-zero secondary multiplicity and have accessory disks with primary multiplicity $1$, as in items~(\ref{item:summary-initial}),~(\ref{item:summary-V1}) and (\ref{item:summary-new-pairs-primary-1}), and

\item
the $W^j$ which still intersect $f$ and have accessory disks with primary multiplicity $0$, as in 
item~(\ref{item:summary-new-pairs-primary-0}).
\end{itemize}

These problems will be taken care of in Step~7 and Step~8 respectively.

\subsubsection{Step~7}\label{sec:step-7-double-boundary-twist}
In this step we use a local construction to kill the secondary multiplicity of each Whitney disk whose accessory disk has unit primary multiplicity. At this point in the proof such Whitney disks are either from the initial collection with arbitrary secondary multiplicity (as in \ref{item:summary-initial}), or are a $V_1$ created in Step~2 (as in \ref{item:summary-V1}), or are from the $W^j$ created in Step~5 with unit secondary multiplicity (as in \ref{item:summary-new-pairs-primary-1}). In each of these cases the corresponding accessory disk is framed and cleanly embedded, with interior disjoint from all other disks. 

Let $W,A$ be one of these Whitney disk--accessory disk pairs with 
$A$ having primary multiplicity $1$. This step will describe how to replace $W$ by a new Whitney disk $V$ with secondary multiplicity $0$ such that 
$\partial V=\partial W$, with $V$ supported near $W\cup A$ and disjoint from the interior of $A$. Since all the accessory disks at this point are disjointly embedded and with interiors disjoint from all Whitney disks, this construction can be carried out simultaneously to replace all such $W,A$ by $V,A$ without creating any new intersections among disks.

First we construct $V$ in the case that $A$ is a positive accessory disk, and has just a single positive intersection with $f_2$.
The construction described in Figure~\ref{fig:double-boundary-twist-Whitney-move-on-accessory-disk-1}
and Figure~\ref{fig:double-boundary-twist-Whitney-move-on-accessory-disk-2} shows how to add \mbox{$+(1-x)$} to $\lambda(W,f_2)$ mod $I^2$ by changing the interior of $W$ by two boundary-twists and a Whitney move guided by a Whitney disk formed by a parallel copy of $A$. Since the boundary-twists are oppositely-handed, $W$ is still framed (for details on the boundary-twisting construction see e.g.~\cite[sec.1.3]{FQ}).

\begin{figure}[ht!]
         \centerline{\includegraphics[scale=.475]{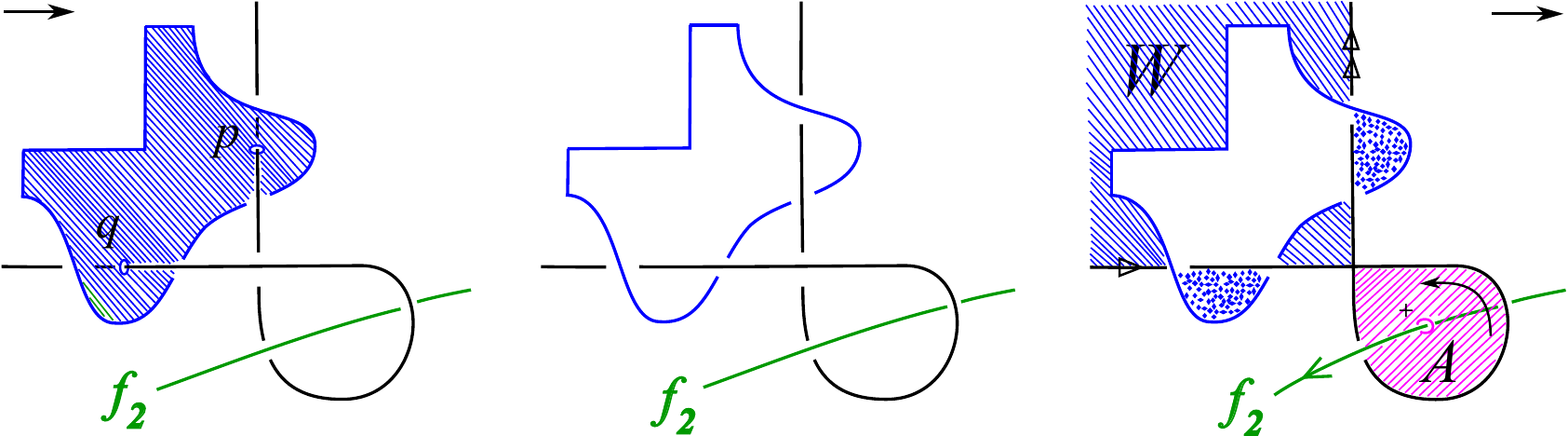}}
         \caption{Near the framed embedded positive accessory disk $A$ for $W$: A left-handed boundary-twist along $\partial_-W$ and a right-handed boundary-twist along $\partial_+W$ creates a pair of oppositely-signed intersections $p$ and $q$ in $W\pitchfork f$.}
         \label{fig:double-boundary-twist-Whitney-move-on-accessory-disk-1}
\end{figure}
\begin{figure}[ht!]
         \centerline{\includegraphics[scale=.55]{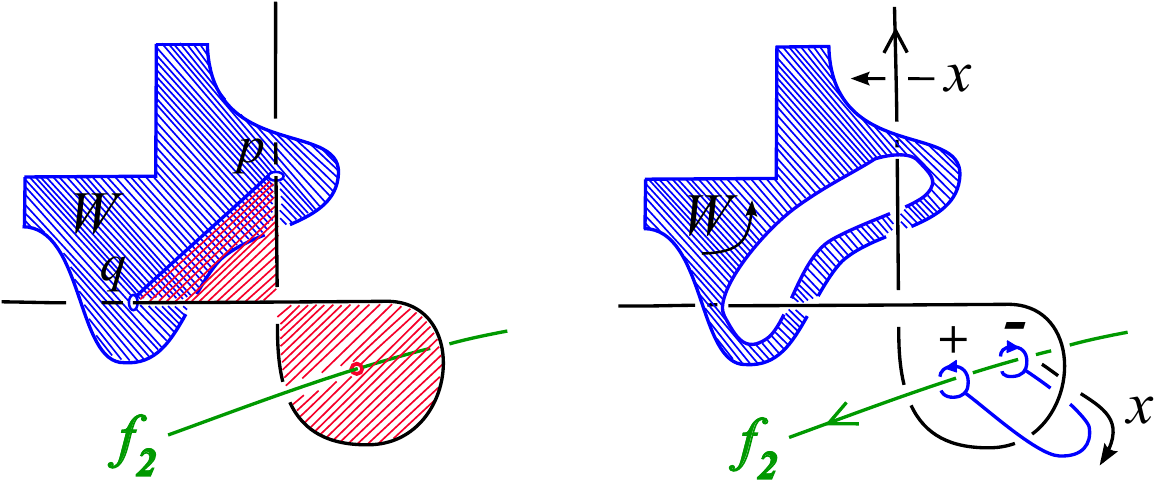}}
         \caption{Both of these pictures correspond to the right-most $3$-dimensional slice of $4$--space shown in Figure~\ref{fig:double-boundary-twist-Whitney-move-on-accessory-disk-1}.  Left: The intersections $p$ and $q$ created by the boundary-twists can be paired by the red Whitney disk formed from a parallel of $A$ (as in Section~\ref{sec:w-move-on-acc-disks}). 
         Right: The result of performing the red Whitney move on $W$. The Whitney bubble added to $W$ by the Whitney move is suppressed from view, except for an arc (blue) connecting the new intersection points between $W$ and $f_2$. The intersections $p,q\in W\pitchfork f$ have been eliminated, and (as per Observation~\ref{sec:observation}) 
the change in $\lambda(W,f_2)$ mod $I^2$ is equal to $+(1-x)$.}
         \label{fig:double-boundary-twist-Whitney-move-on-accessory-disk-2}
\end{figure}

The construction is supported in an arbitrarily small neighborhood of $A$, and also leaves a smaller neighborhood of $A$ unchanged; so this construction can be iterated to increase $\lambda(W,f_2)$ mod $I^2$ by $n(1-x)$ for any $n\geq 1$. If the left-handed and right-handed boundary-twists are switched in the construction, or if the construction is carried out using a negative accessory disk, then $\lambda(W,f_2)$ mod $I^2$ is changed by $-(1-x)$, so the secondary multiplicity of $W$ can be changed by any $n\in \Z$ and the desired $V$ can be created in this case. 
Note that the construction of $V$ from $W$ is supported near $W\cup A$, so no new intersections are created between any Whitney or accessory disks. 

Now consider the general case where $A$ has possibly more than one intersection with $f_2$ (corresponding to $P\cdot S_A$ terms in $f_2$, for $P\in I^2$ as in Lemma~\ref{lem:delta-w-disks}, and/or oppositely-signed intersection pairs created Step~3):
Each intersection between $A$ and $f_2$ will contribute
a term $\pm (1-x)$ mod $I^2$ via the construction shown in Figures~\ref{fig:double-boundary-twist-Whitney-move-on-accessory-disk-1}
and~\ref{fig:double-boundary-twist-Whitney-move-on-accessory-disk-2}. Since $\lambda(A,f_2)=1$ modulo $I$, the net affect of each iteration of the construction is still to change $\lambda(W,f_2)$ by $\pm (1-x)$ modulo $I^2$, again by Observation~\ref{sec:observation}, so the secondary multiplicity of $W$ can be reduced to zero, yielding $V$ with secondary multiplicity zero and with the same vanishing primary multiplicity as $W$.

\begin{rem}\label{rem:primary-mult-1-lemma-not-contradiction}
The reader might wonder why this construction of $V$ does not contradict the fact (from Lemma~\ref{lem:algtop}) that
$\lambda(g,f_2)\in I^2$ for any $g:S^2\hookrightarrow S^4\setminus f$? After all, it seems that such a $g$ could be formed from the two framed and cleanly immersed Whitney disks $W$ and $V$ with $\partial W=\partial V$ in the above construction, yielding $\lambda(g,f_2)=\pm(1-x)\notin I^2$? But the subtlety here is that although $W$ and $V$ are each framed, they have different (relative) framings along each of the positive and negative boundary arcs, so perturbing $W\cup V$ to be disjoint from $f$ would create intersections near $\partial W=\partial V$, and eliminating these intersections would effectively ``undo'' the construction.

 \end{rem}
 
The general effect of the double boundary-twisting construction in this step is summarized by the following lemma:
\begin{lem}\label{lem:double-boundary-twist}
Suppose $W,A$ is a cleanly immersed Whitney disk--accessory disk pair on $f$ in standard position such that $A$ has primary multiplicity $m$. Then for any integer $k$, there exists a cleanly immersed Whitney disk $V$ supported in a neighborhood of $W\cup A$, with $\partial V=\partial W$, such that
$V$ has the same primary multiplicity as $W$, and the secondary multiplicity of $V$ differs from the secondary multiplicity of $W$ by $km$.$\hfill\square$
\end{lem}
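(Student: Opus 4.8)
The plan is to carry out the double boundary-twisting construction of Step~7 (Figures~\ref{fig:double-boundary-twist-Whitney-move-on-accessory-disk-1} and~\ref{fig:double-boundary-twist-Whitney-move-on-accessory-disk-2}) essentially verbatim, but now tracking the primary multiplicity $m$ of $A$ through Observation~\ref{subsec:observation} rather than assuming $m=1$. First I would perform one left-handed boundary-twist on $W$ along $\partial_-W$ and one right-handed boundary-twist along $\partial_+W$, supported in an arbitrarily small neighborhood of $W\cup A$; this produces an oppositely-signed pair $p,q\in W\pitchfork f$, and because the two twists are oppositely handed it leaves the relative framing of $W$ unchanged. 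The pair $p,q$ is then paired by a Whitney disk $A'$ obtained from a parallel copy of $A$ (as in section~\ref{subsec:w-move-on-acc-disks}), and I would complete the step by doing the $A'$-Whitney move on $W$, calling the result $V$. Since the interior of $A'$ is disjoint from $f$, the interior of $V$ is again disjoint from $f$, so $V$ is cleanly immersed; any self-intersections or twisting of $A'$ merely contribute self-intersections to $V$ and, by Observation~\ref{subsec:observation}, do not affect $\lambda(V,f_2)$. As the construction is local, $V$ is supported in a neighborhood of $W\cup A$ and can be arranged to be disjoint from the interior of $A$.

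Next I would read off the effect on the two multiplicities. Because $\partial V=\partial W$ and the primary multiplicity is just the linking number of the boundary with $f_2$, the primary multiplicity of $V$ equals that of $W$. For the secondary multiplicity, the only intersections of $V$ with $f_2$ not already present for $W$ are those created by the $A'$-Whitney move, so Observation~\ref{subsec:observation} (applied with $D=W$, $U=A'$, using $\partial W\cap f_2=\emptyset$) gives
\[
\lambda(V,f_2)-\lambda(W,f_2)=\pm(1-x)\cdot\lambda(A',f_2),
\]
where the sign is determined by the handedness of the boundary-twists, equivalently by whether a positive or negative parallel of $A$ is used. Since $A'$ is a parallel copy of $A$ we have $\lambda(A',f_2)\equiv m\pmod{I}$, whence the right-hand side is $\equiv\pm m(1-x)\pmod{I^2}$. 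Thus one application of the construction changes the secondary multiplicity by exactly $\pm m$ and nothing else, regardless of how many intersections $A$ has with $f_2$.

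Finally, since the construction occupies an arbitrarily thin neighborhood of $A$ and leaves a still thinner neighborhood of $A$ untouched, it can be iterated. Running it $|k|$ times, with the handedness of the boundary-twists chosen according to the sign of $k$, produces the desired cleanly immersed $V$ with $\partial V=\partial W$, supported near $W\cup A$, disjoint from the interior of $A$, having the same primary multiplicity as $W$ and secondary multiplicity differing from that of $W$ by $km$. I expect the only genuinely delicate points to be the verification that oppositely-handed boundary-twists preserve the framing of $W$ (so that $V$ inherits whatever framing status $W$ had) and the $\bmod\, I^2$ bookkeeping in the displayed identity — both of which are already present, in the special case $m=1$, in the treatment in Step~7.
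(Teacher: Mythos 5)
Your proposal is correct and follows essentially the same route as the paper: the lemma is proved there by the Step~7 double boundary-twist construction (oppositely-handed twists along $\partial_\pm W$, a Whitney move guided by a parallel copy of $A$, and the bookkeeping of Observation~\ref{subsec:observation}), iterated and with handedness chosen according to the sign of $k$, exactly as you describe. Your tracking of $\lambda(A',f_2)\equiv m \pmod I$ is precisely how the paper passes from the unit-multiplicity case to the general statement.
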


\subsubsection{Step~8}\label{sec:step-8-pair-secondary-mults}
After Step~7, all Whitney disks have vanishing primary and secondary multiplicities except for those $W^j$ created in Step~5 whose accessory disks $A^j$ have primary multiplicity $0$ (cf. \ref{item:summary-new-pairs-primary-0}). Each of these $W^j$ still has a single pair of interior intersections with 
$f$ which are paired by a Whitney disk $U^j$.  Since each $U^j$ has just a single interior intersection with $f_2$, doing a $U^j$-move on $W^j$ would make $W^j$ cleanly embedded with secondary multiplicity $\pm 1$ (by Observation~\ref{sec:observation}), so this final step of the construction will first collect these $U^j$ in ``canceling'' pairs so that doing the $U^j$-moves will yield cleanly embedded $W^j$ with secondary multiplicity $0$.

To start this construction we need to prove the following\\
\textbf{Claim:} There are an even number of these $U^j$.

\begin{proof}[Proof of Claim:]
Doing the $U^j$-moves on these $W^j$ would yield cleanly embedded pairs $W^j,A^j$.  Just for this proof of the claim denote by  
$\{W_i,A_i\}$, the collection consisting of these resulting cleanly embedded pairs $W^j,A^j$ together with the initial  Whitney disk--accessory disk pairs and
all the other Whitney disk--accessory disk pairs constructed so far in the previous steps. 

Note that the number of $U^j$ in the claim equals the number of pairs in $\{W_i,A_i\}$ such that $W_i$ has non-zero secondary multiplicity, and all of these non-zero secondary multiplicities are $\pm 1$. 
Also, the corresponding $A_i$ all have primary multiplicity $0$.

Convert all the pairs in $\{W_i,A_i\}$ into accessory disk pairs $A_i^\pm$ (see Section~\ref{sec:A-disks-from-W-disks}), and then use these $A_i^\pm$ to construct a basis of accessory spheres $S_{A_i^\pm}$ for $\pi_2(S^4\setminus f)$. Since the $A_i$ are all framed and pairwise disjointly embedded, and the $W_i$ all have vanishing $\Z[x^{\pm 1}]$-intersections, this basis of accessory spheres has the same $\Z[x^{\pm 1}]$-intersections 
as the standard accessory spheres in Lemma~\ref{lem:accessory-sphere-basis}.

Write $f_2$ in terms of the $S_{A_i^\pm}$:
$$
f_2=\sum \alpha_i^+\cdot S_{A_i^+}+\alpha_i^-\cdot S_{A_i^-},
$$
with
$$
\alpha_i^\pm=m_i^\pm+n_i^\pm(1-x)+q_i^\pm z \quad\mbox{ mod } I^3
$$
for integers $m_i^\pm,n_i^\pm,q_i^\pm$, where $m_i^\pm$ and $n_i^\pm$ are the primary and secondary multiplicities of $A_i^\pm$, respectively,
and $z:=(2-x-x^{-1})$.

Since the $W_i$ all had primary multiplicity $0$, the construction of Section~\ref{sec:A-disks-from-W-disks} has yielded $A_i^\pm$ such that $m_i^+=m_i^-$ for each $i$. Denoting these primary multiplicities by $m_i$, we have the following formula for the coefficient of $z^2$ in the expansion of $\lambda(f_2,f_2)$ in powers of $z$: 

$$
\sum[((n_i^+)^2-(n_i^-)^2)+(n_i^+ - n_i^-)m_i  +2m_i(q_i^+-q_i^-)]
$$
This sum must vanish since $\lambda(f_2,f_2)=0$ from the Kirk invariant hypothesis of Proposition~\ref{prop:I-squared}. Note that $n^+_i-n^-_i$  is the secondary multiplicity of $W_i$,
because $\lambda(W_i,f_2)=\lambda(A_i^+,f_2)-\lambda(A_i^-,f_2)$ by construction. And recall that for $m_i= 1$, we have $n^+_i-n^-_i=0$; while for $m_i=0$, we have $n^+_i-n^-_i=\pm 1$ (and $0, 1$ are the only values of $m_i$). So setting the sum equal to zero modulo $2$ shows that $\sum (n_i^+-n_i^-)$ is even, which proves the claim.
\end{proof}

\begin{rem}\label{rem:tau-zero-claim}
An alternate proof of this claim can be made using Lightfoot's main result in \cite{Lightfoot}. 
\end{rem}

\begin{figure}[ht!]
         \centerline{\includegraphics[scale=.3]{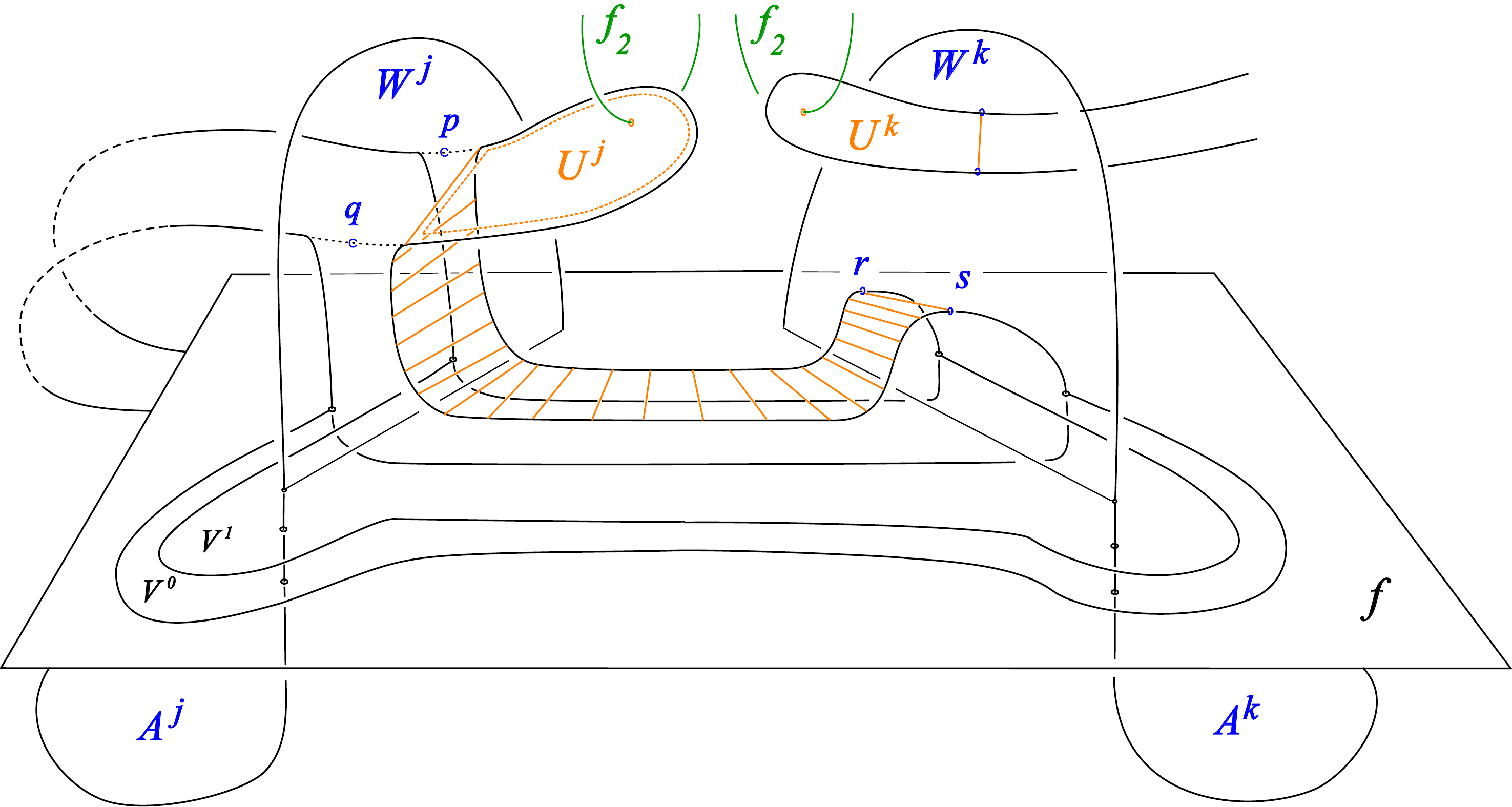}}
         \caption{The \emph{double transfer move}: Two parallel finger moves exchange $\{p,q\}= W^j\pitchfork f$ paired by the Whitney disk $U^j$ for $\{r,s\}\subset W^k\pitchfork f$ paired by an extension of $U^j$ by a band.
        Two new local Whitney disks $V^0$ and $V^1$ pair the self-intersections of $f$ created by the finger moves.}
         \label{fig:U-transfer-move-1}
\end{figure}

\subsection{The double transfer move}
Having established the claim that there are an even number of the $W^j$ created in Step~5 whose accessory disks $A^j$ have primary multiplicity $0$, each with a single $U^j$ pairing $f\pitchfork W^j$, Step~8 will proceed using a construction which ``transfers'' any $U^j$ off of its $W^j$ and onto a different $W^k$. As a result of this \emph{double transfer move}, $W^j$ will be cleanly embedded and disjoint from $f_2$, and after making $W^k$ cleanly embedded via  both the ``new'' $U^j$-move and the original $U^k$-move, $W^k$ will have vanishing secondary multiplicity (and primary multiplicity). 
This
double transfer move of a Whitney disk from one Whitney disk to another is an enhancement of the move transferring a single point from one Whitney disk to another as above in Figure~\ref{fig:double-push-down-transfer-move} of Step~5 (see also Section~\ref{sec:transfer-move-appendix}). 
At certain points it will be necessary to keep careful track of orientations, and discussion of some of these details will be deferred to Section~\ref{sec:double-transfer-orientations} so as not to break the flow of the geometric descriptions.

The construction starts as described in Figure~\ref{fig:U-transfer-move-1}, and is carried out simultaneously to all such pairs $W^j,W^k$: The intersections $\{p,q\}=W^j\pitchfork f$ paired by $U^j$ are exchanged for $\{r,s\}\subset W^k\pitchfork f$ via a parallel pair of finger moves guided by an embedded arc from $\partial W^j$ to $\partial W^k$ in $f$. This new pair $r,s$ admits an embedded Whitney disk formed by extending $U^j$ (minus a collar in $U^j$ of $\partial U^j\subset W^j$) by an embedded band near the arc (shown in the figure as the orange band just above the horizontal sheet of $f$).
This Whitney disk, which we continue to denote $U^j$, still has only a single interior intersection with $f_2$.

The parallel pair of finger moves also creates two new pairs of self-intersections of $f$ which can be paired by embedded Whitney disks $V^0$ and $V^1$, which each have a pair of interior intersections with $f$ as shown in Figure~\ref{fig:U-transfer-move-1} (where both $V^0$ and $V^1$ hang down below the horizontal sheet of $f$). As is evident in Figure~\ref{fig:U-transfer-move-1}, the boundaries $\partial V^0$ and $\partial V^1$ intersect the accessory disk boundaries $\partial A^j$ and $\partial A^k$, but these intersections can be eliminated by adding half-tubes to $A^j$ and $A^k$ near $f$ as in Figure~\ref{fig:U-transfer-move-2}.
 Adding these half-tubes creates two pairs of intersections between $f$ and each of $A^j$ and $A^k$ near $r,s$ that admit Whitney disks parallel to $U^j$.
\begin{figure}[ht!]
         \centerline{\includegraphics[scale=.3]{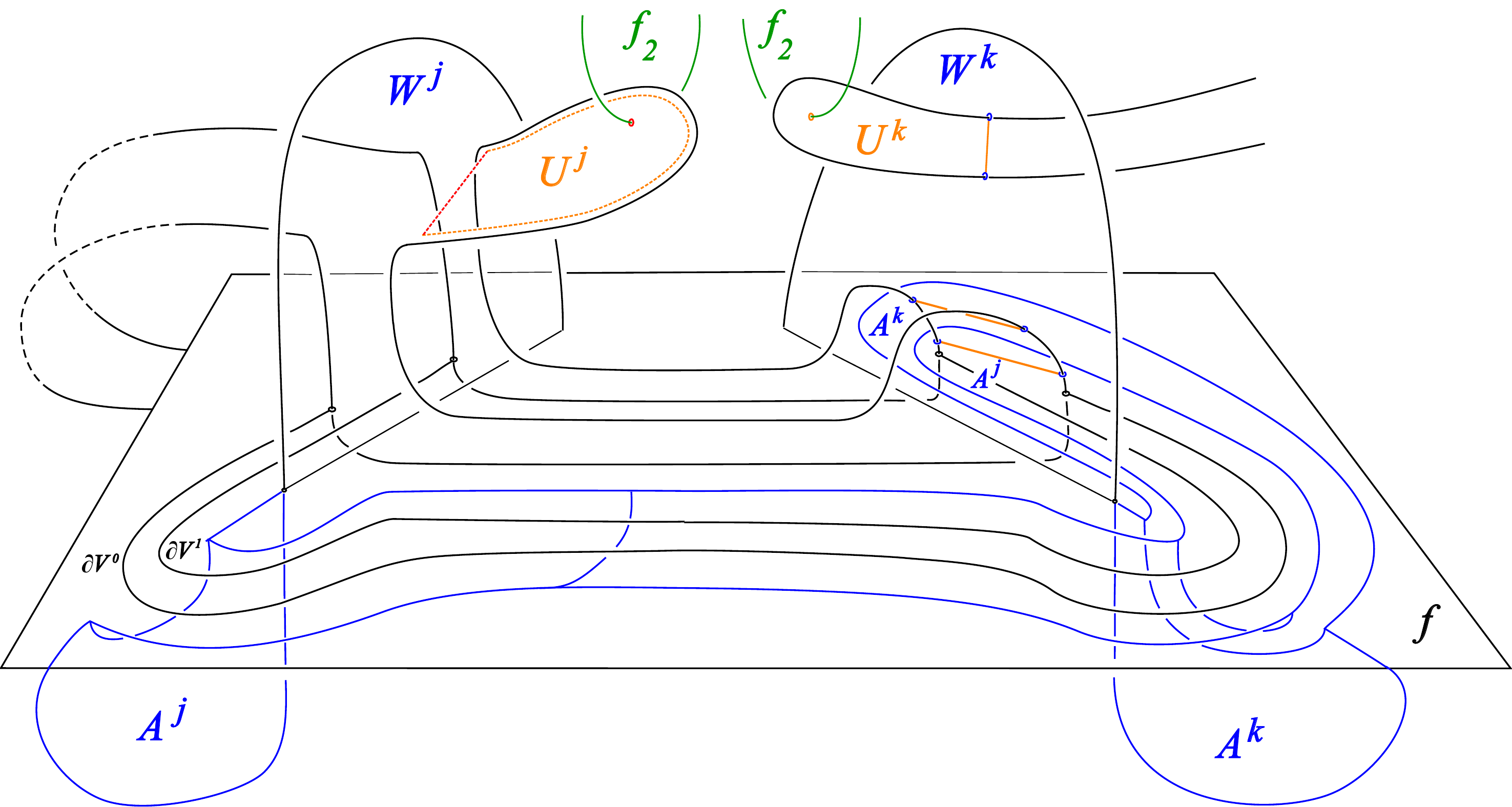}}
         \caption{After adding half-tubes to make all of $\partial V^0$, $\partial V^1$, $\partial A^j$ and $\partial A^k$ disjointly embedded: These half-tubes appear `below' the present sheet of $f$ near $A^j$ and $A^k$, but then rotate (through the future) `above' the horizontal present sheet of $f$ near $W^k$, where they each have a pair of transverse intersections with $f$ which can be paired by parallels of the Whitney disk $U^j$ pairing $r,s$ in Figure~\ref{fig:U-transfer-move-1} (here $r$ and $s$ are suppressed from view). These parallels of $U^j$ are suppressed from view except for the boundary arcs lying in $A^j$ and $A^k$, (and only part of $U^j$ is shown). Pushing the interiors of $V^0$ and $V^1$ into the past keeps them disjoint from the half-tubes.}
         \label{fig:U-transfer-move-2}
\end{figure}

Now do the Whitney moves on these two parallel copies of $U^j$ to get $A^j$ and $A^k$ cleanly embedded, and do both the $U^k$-move and $U^j$-move to get $W^k$ cleanly embedded (but keep the same notation for $A^j$, $A^k$ and $W^k$).

At this point $W^j$ has vanishing primary and secondary multiplicities (in fact $W^j\cap f_2=\emptyset$), and is equipped with the cleanly embedded accessory disk $A^j$. Also $W^j\cup A^j$ is disjoint from all other disks by construction.

As explained below in Section~\ref{sec:double-transfer-orientations} the freedom in choosing the guiding arc between $\partial W^j$ and $\partial W^k$ allows for control of whether or not the orientation (relative to $W^k$) of the resulting $U^j$ is preserved or flipped by the double transfer move, and as a result it can be arranged that the effect of then doing the $U^j$-move on $W^k$ creates a canceling contribution to that of the $U^k$-move on the secondary multiplicity of the resulting cleanly embedded $W^k$.
So, assuming this, we also now have that $W^k$ has vanishing primary and secondary multiplicities, and is equipped with the cleanly embedded accessory disk $A^k$. Also $W^k\cup A^k$ is disjoint from all other disks by construction.

Before converting $V^0$ and $V^1$ into metabolic Whitney disks with vanishing primary and secondary multiplicities we first need to construct appropriate accessory disks $A^0$ and $A^1$ for $V^0$ and $V^1$. Recall that the $\Z[x^{\pm1}]$-intersections among metabolic accessory disks can be arbitrary (Definition~\ref{def:metabolic-collection}). We describe the construction of $A^0$; the same steps yield $A^1$:

Choose an embedded accessory circle $a^0\subset f$ for the self-intersection of $f$ at the corner of $V^0$ near $\partial W$ such that $a^0$ is disjoint from all other boundaries of Whitney and accessory disks. Since $\pi_1(S^4\setminus f)\cong\Z$, there exists an immersed disk $A^0$ bounded by $a^0$ such that the interior of $A^0$ is disjoint from $f$. 
Then make the interior of $A^0$ disjoint from each Whitney disk (including $W^j$ and $W^k$) by tubing $A^0$ into parallel copies of the dual accessory sphere as needed.  Observe that the interior of $A^0$ can be assumed to be disjoint from $V^0$ and $V^1$ (even though they don't yet have accessory spheres), since $V^0$ and $V^1$ are supported near arcs in $f$.
(As usual, we do not rename $A^0$ after changing its interior by adding these spheres.) 

Now make $A^0$ disjoint from $A^j$ and $A^k$ by adding parallel copies of the Whitney spheres $S_{W^j}$ and $S_{W^k}$ to $A^0$ as needed. This disjointness will allow us to form secondary Whitney disks from  $A^j$ and $A^k$ that will be used to get $f$ disjoint from the interiors of $V^0$ and $V^1$.

Simultaneously carry out the same steps to construct an accessory disk $A^1$ for $V^1$ such that $A^1$ is disjoint from $A^j$ and $A^k$ as well as all Whitney disks (except the corner-point $\partial A^1\cap\partial V^1$).

It remains to clean up  $V^0$ and $V^1$. To get the interiors of $V^0$ and $V^1$ disjoint from $f$, form Whitney disks $U^0$ for $V^0\pitchfork f$ and $U^1$ for $V^1\pitchfork f$ by connecting parallels of (the framed cleanly embedded) $A^j$ and $A^k$ as in Figure~\ref{fig:transfer-move-3-double-A-disks}. (Note that the interior of $V^0$ can be perturbed to be disjoint from $U^1$.) 
Here we need that the interior intersections $V^0$ and $V^1$ have with $f$ are of opposite signs,
as can be assumed by the discussion in Section~\ref{sec:double-transfer-orientations}.

\begin{figure}[ht!]
         \centerline{\includegraphics[scale=.25]{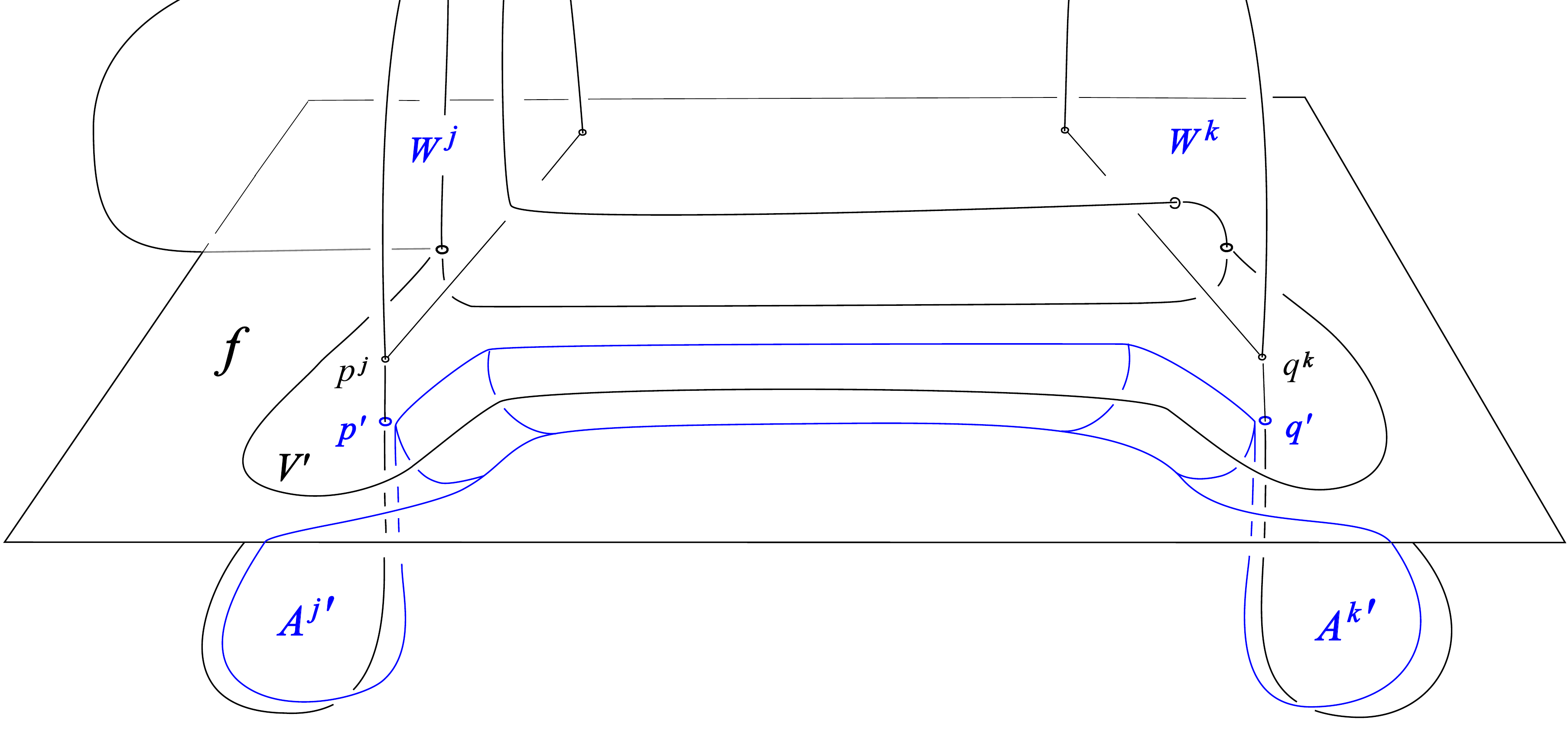}}
         \caption{In the setting of Figures~\ref{fig:U-transfer-move-1} and~\ref{fig:U-transfer-move-2}, here with $V',U'$ denoting $V^0,U^0$ and/or $V^1,U^1$: The Whitney disk $V'$ intersects $f$ in $p'$ and $q'$, which are paired by a secondary Whitney disk $U'$ (blue) formed by combining parallels ${A^j}',{A^k}'$ of the accessory disks $A^j,A^k$ for $p^j,q^k\in f \pitchfork f$.}
         \label{fig:transfer-move-3-double-A-disks}
\end{figure}

Note that $U^0$ and $U^1$ each have primary multiplicity $0$, since $A^j$ and $A^k$ both have primary multiplicity $0$ 
(the Whitney moves done on $A^j$ and $A^k$ did not affect their primary multiplicities by Observation~\ref{sec:observation}).

Now doing the $U^0$- and $U^1$-Whitney moves on $V^0$ and $V^1$ makes $V^0$ and $V^1$ both cleanly embedded with vanishing primary and secondary multiplicities (by Observation~\ref{sec:observation}).

Since $A^j$ and $A^k$ are disjoint from the new $A^0$ and $A^1$ by construction, and were already disjoint from all previous Whitney disks and accessory disks,
 it follows that $V^0$ and $V^1$ have vanishing $\Z[x^{\pm 1}]$-intersections with all Whitney and accessory disks, as required for metabolic Whitney disks.

To complete Step~8 of the construction it remains to check that this double transfer move can be simultaneously carried out for all pairs $W^j,A^j$ and $W^k,A^k$ created in Step~5 with $A^j$ and $A^k$ having primary multiplicity $0$. To see this, observe that the parallel finger moves in Figure~\ref{fig:U-transfer-move-1} (and the bands added to $U^j$ and parallels of $U^j$)
are supported near an arc $a^{jk}$ in $f$ from $W^j$ to $W^k$ (together with short arcs in $W^j$ and $W^k$). Such an $a^{jk}$ always exists because the complement in $f$ of the disjointly embedded boundaries of all the Whitney disk-accessory disk pairs and all $U^j$ is \emph{connected} (the inverse image of each union of a Whitney disk boundary with the boundary of one of its accessory disks is an embedded arc in the domain 2-sphere of $f$, as is the inverse image of each $U^j$, and these arcs are all pairwise disjoint).

\subsubsection{Controlling orientations and signs in the double transfer move}\label{sec:double-transfer-orientations}

In the setting of Step~8, we have Whitney disks $W^j$ such that each $W^j\pitchfork f$ is paired by an embedded Whitney disk $U^j$ which contains only a single interior intersection with $f_2$. The link map $(f,f_2)\imra S^4$ is oriented, and by convention the positive generator $x$ of $\pi_1(S^4\setminus f)\cong\Z$ is represented by any positive meridian to $f$.
Fix orientations on the $W^j$, and then orient the $U^j$ by taking the boundary arc $\partial U^j\subset f$ to run from the negative to the positive intersection in $W^j$. With this convention, if $\lambda(U^j,f_2)=\pm x^N$, then performing the $U^j$-move on $W^j$ makes $W^j$ cleanly embedded with $\lambda(W^j,f_2)=\pm(1-x)x^N$, with the signs preserved. That is, the sign $\pm$ of the intersection between $U^j$ and $f_2$ equals the sign of the secondary multiplicity $\pm 1$ of $W^j$ after the $U^j$-move.

So given such $W^j$ and $W^k$ (with orientations as in the previous paragraph) we want to control the double transfer move so that after $U^j$ has been moved onto $W^k$ the sign of $U^j\cap f_2$ is \emph{opposite} to the sign of $U^k\cap f_2$. Then doing the $U^j$- and $U^k$-moves on $W^k$ will result in $W^k$ having secondary multiplicity $0$. We also need to arrange that the interior intersections $V^0$ and $V^1$ have with $f$ are of opposite signs. 
We describe next how this can be accomplished with the configuration shown in Figure~\ref{fig:U-transfer-move-1}.

The choice of guiding arc $a^{jk}\subset f$ for the double transfer move determines whether the sign of $U^j\cap f_2$ is preserved or switched after $U^j$ transferred onto $W^k$, depending on
which boundary arcs $\partial_\pm W^j$ or $\partial_\pm W^k$ are joined by $a^{jk}$, and also on which sides in $f$ of $\partial_\pm W^j$ or $\partial_\pm W^k$ are connected by $a^{jk}$.
Also, observe from Figure~\ref{fig:U-transfer-move-1} that if the signs of the self-intersections of $f$ at $A^j$ and $A^k$ are of opposite sign, then so are the interior intersections that $V^0$ and $V^1$ have with $f$. 
By the construction in Section~\ref{sec:A-disks-from-W-disks}, we may assume that the signs of the self-intersections of $f$ at $A^j$ and $A^k$ are of opposite sign, so by choosing $a^{jk}$ appropriately it can be arranged that the sign of $U^j\cap f_2$ is opposite to the sign of $U^k\cap f_2$ after the double transfer move, and that the interior intersections that $V^0$ and $V^1$ have with $f$ are of opposite sign, with the configuration as in Figure~\ref{fig:U-transfer-move-1}.


\subsection{Checking metabolic properties}\label{sec:check-metabolic-disks}

The constructions in Step~7 and Step~8 resolved the remaining problems listed in items~(\ref{item:summary-initial}),~(\ref{item:summary-V1}) and (\ref{item:summary-new-pairs-primary-1}), and in 
item~(\ref{item:summary-new-pairs-primary-0}) of our summary of steps 1-6 before Section~\ref{sec:step-7-double-boundary-twist} without affecting any other disks.
So the only remaining property of the $W_i,A_i$ constructed so far that needs to be adjusted is that the definition of a metabolic collection requires \emph{positive} accessory disks. This adjustment can be carried out using the construction in Section~\ref{sec:A-disks-from-W-disks} which forms a positive accessory disk out of a negative accessory disk together with a parallel of the Whitney disk.

This completes the proof of Proposition~\ref{prop:I-squared} in the special case that each accessory disk $A^\pm_i$ has primary multiplicity $0\leq m_i\leq 2$, and each Whitney disk $W_i$ either has secondary multiplicity $-1\leq n_i\leq 1$, or has arbitrary $n_i$ if $m_i=1$.
To complete the proof we explain next how the eight steps of the construction can be extended to handle accessory disks with arbitrary (positive) primary multiplicities and Whitney disks with arbitrary secondary multiplicities.


\subsection{Extending the construction to the general case}\label{sec:general-case-reduce-primary}
To handle the case that the $A_i$ from the initial collection $\{W_i,A_i\}$ may have primary multiplicity $m_i>2$, still assuming that the $W_i$ have secondary multiplicity $-1\leq n_i\leq 1$, observe first that the above construction can proceed ``as is'' through Step~1 and Step~2 with $2$ replaced by $m_i$. (So instead of two, there would be $m_i$ strands of $f_2$ visible in the lower left of Figure~\ref{fig:double-push-down-1-solid-picture} and the upper left of Figure~\ref{fig:double-push-down-2-picture}.)
In Step~3, the construction (Figure~\ref{fig:tube-V0-into-accessory-sphere-1AandB}) replaces $r_0\in V_0\pitchfork f$ admitting framed embedded $B$ with the three $r_0^j\in V_0\pitchfork f$ admitting parallel framed embedded $B^j$ having primary multiplicities decreased by $1$ or $2$ from that of $B$. So iterating 
this step as needed on the $r_0^j$ will eventually lead to many intersections in $V_0\pitchfork f$ all admitting framed embedded generalized
accessory disks with primary multiplicities $1$ or $0$.  After applying the same iterations to $s_1^j\in V_1\pitchfork f$ in Step~4, the rest of the construction proceeds as in the $m_i=2$ case above:
We have as many disjoint framed embedded parallels of $B$ and $Q$ as we want, since they all come from the original standard accessory disk $A_i^-$. And in this $m_i>2$ case the construction at Step~5 will yield more of the new $W^j,A^j$ pairs, and there will be more $R^j$-Whitney moves on $V_1$ in Step~6, but these last two steps can proceed as before. The local construction of Step~7 proceeds as before on each Whitney disk whose accessory disk has primary multiplicity $1$, there will just be more of them now. And similarly, both the claim and the double transfer moves in Step~8 proceed as before, now applied to more Whitney disks.

\begin{figure}[ht!]
         \centerline{\includegraphics[scale=.325]{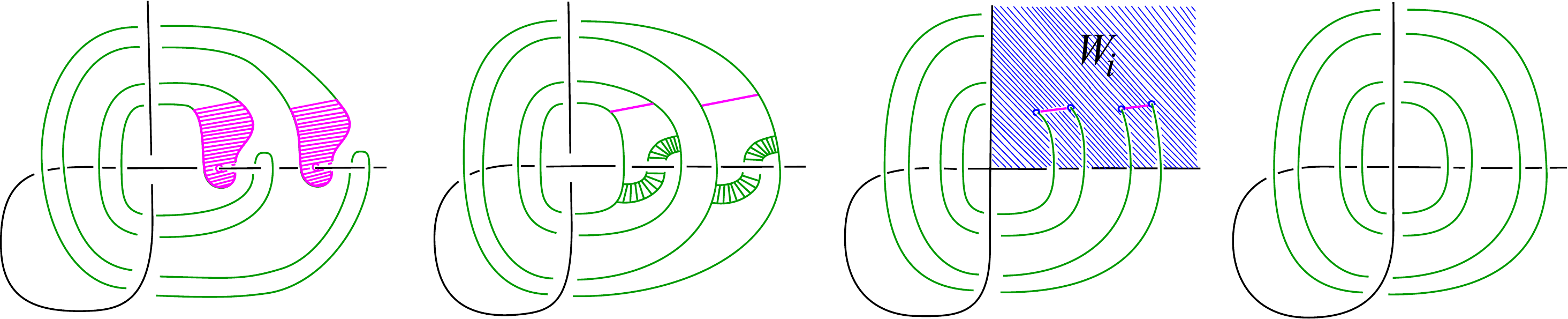}}
         \caption{This figure corresponds to the bottom row in Figure~\ref{fig:x-1-term-A-Delta}, but here with two intersection pairs $\pm(1-x)\cdot S_{A_i^-}\cap W_i$ (the case $n_i=\pm 2$), each admitting a framed embedded
          Whitney disk (purple) having only a single interior intersection with $f$.}
         \label{fig:x-1-term-A-delta-double}
\end{figure}

\begin{figure}[p]
\centering
\includegraphics[scale=.425]{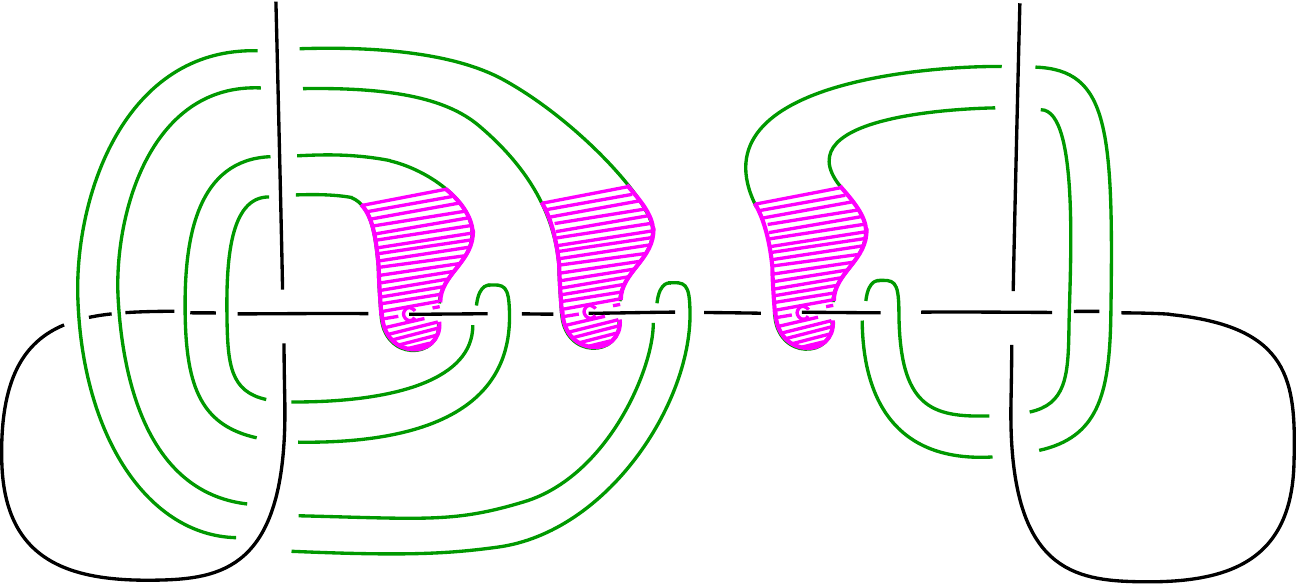}
\caption{This figure corresponds to the left-most picture in Figure~\ref{fig:x-1-term-A-delta-double}, but showing here the case of an additional positive accessory sphere pair along with the two negative accessory sphere intersection pairs.}
         \label{fig:x-1-term-A-delta-triple}
\includegraphics[scale=.4]{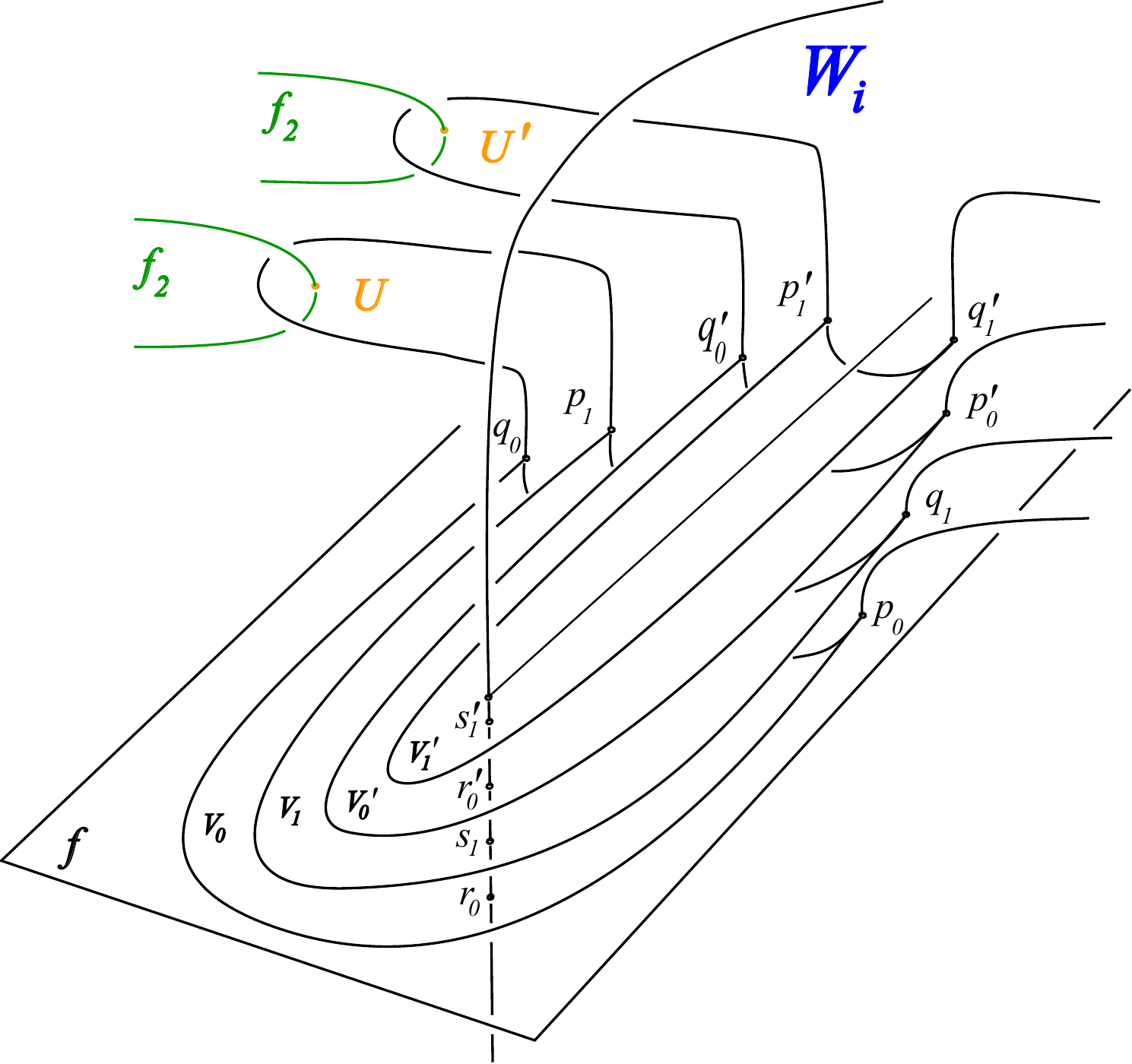}
\caption{After simultaneously applying the constructions of Figures~\ref{fig:x-1-term-A1},~\ref{fig:x-1-term-B} and~\ref{fig:x-1-term-A0-A1-U} to Figure~\ref{fig:x-1-term-A-delta-double}, the resulting four pairs of self-intersection pairs of $f$ admit two pairs of nested Whitney disks as in Figure~\ref{fig:double-push-down-1-solid-picture}. Shown here is the case $n_i=2$, but nested Whitney disks exist for any number of pairs of pairs; e.g.~starting with Figure~\ref{fig:x-1-term-A-delta-triple} would yield three nested pairs of Whitney disks. Recall that by construction we take these nested Whitney disks around the negative self-intersection of $f$, even when starting with intersections including pairs coming from positive accessory spheres $\pm(1-x)\cdot S_{A_i^+}\cap W_i$ (as in Figure~\ref{fig:x-1-term-A-delta-triple}).}
         \label{fig:double-push-down-1B-picture}
\end{figure}
Now considering the case that the secondary multiplicity of an initial $W_i$ has absolute value $|n_i|>1$, there will be $|n_i|$ intersection pairs $\pm(1-x)\cdot S_{A_i^\pm}\cap W_i$, each admitting a framed embedded Whitney disk (like $\Delta$ in Figure~\ref{fig:x-1-term-A1}) which has only a single interior intersection with $f$. See Figure~\ref{fig:x-1-term-A-delta-double} for the case of two negative accessory sphere pairs, and Figure~\ref{fig:x-1-term-A-delta-triple} for an additional positive accessory sphere pair. It is not difficult  to see that Step~1 (including Lemma~\ref{lem:p0-q1-w-and-accessory-disks}) can be applied simultaneously to each intersection pair disjointly: Through Figure~\ref{fig:x-1-term-B}, the construction of Step~1 is supported in a neighborhood of the Whitney disk $\Delta$ together with an arc along $W_i$ running from $\partial \Delta$ to $\partial_+ W_i$ ($\Delta$ itself is contained in the neighborhood of an arc). And the accessory and Whitney disks of Lemma~\ref{lem:p0-q1-w-and-accessory-disks} in Figure~\ref{fig:x-1-term-A0-A1-U} can be constructed ``side by side'' along the boundary of $W_i$ when starting with more pairs $\pm(1-x)\cdot S_{A_i^\pm}\cap W_i$ as in Figure~\ref{fig:x-1-term-A-delta-double}. To visualize this it may be helpful to consider the $n_i=2$ case and compare the input to Step~1 shown in Figure~\ref{fig:delta-first-example} and Figure~\ref{fig:x-1-term-A-delta-double} with the result that would be input to Step~2 shown in Figure~\ref{fig:double-push-down-1B-picture}.



Considering the situation at the start of Step~2, it is clear that an additional pair of self-intersection pairs $p_0',q_0'$ and $p_1',q_1'$, to the pair $p_0,q_0$ and $p_1,q_1$ in
Figure~\ref{fig:double-push-down-1-solid-picture}, admits nested pairs of Whitney disks $V_0'$ and $V_1'$ parallel to $V_0$ and $V_1$, as shown in Figure~\ref{fig:double-push-down-1B-picture}. It is also clear that such nested Whitney disks exist for any number of such additional self-intersection pairs.

\begin{figure}[ht!]
         \centerline{\includegraphics[scale=.375]{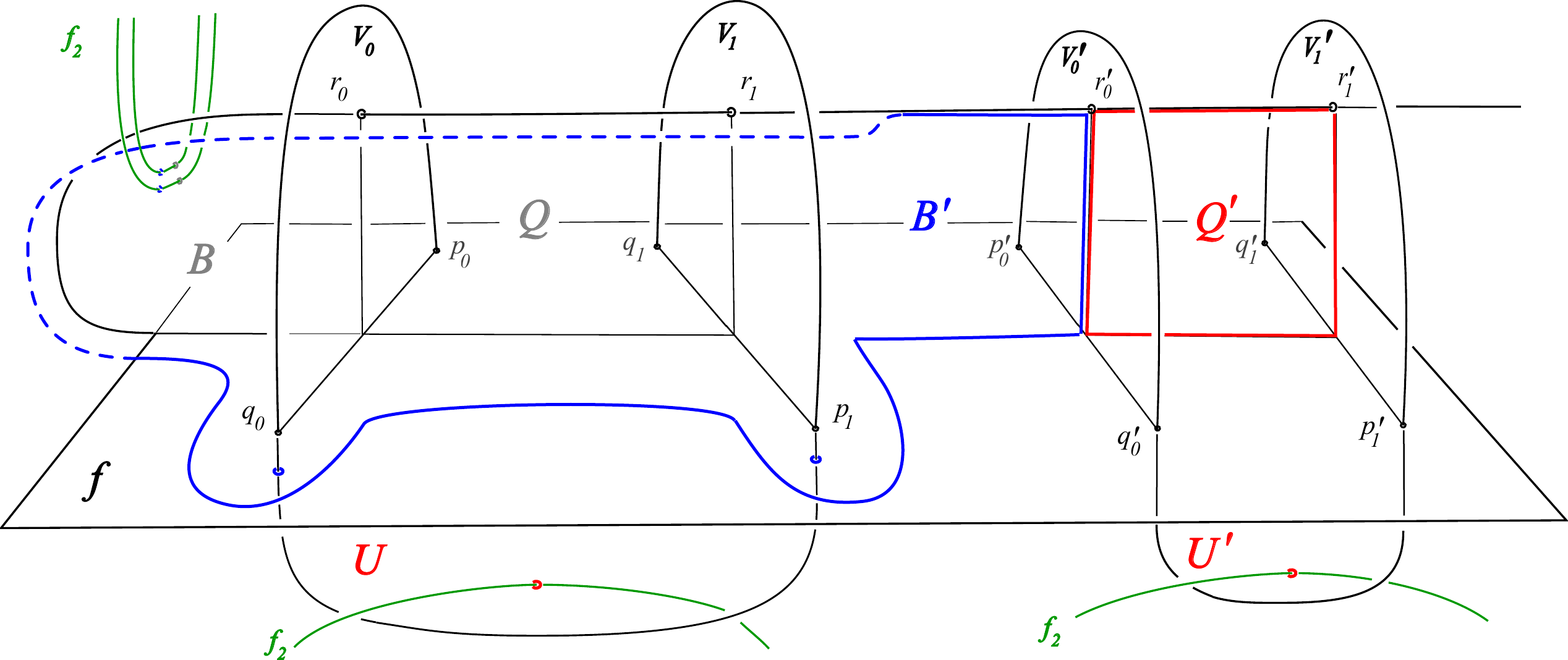}}
         \caption{The boundary of $B'$ is shown in blue, with the dotted sub-arcs indicating where $\partial B'$ has been pushed out of the present time coordinate along with most of the interior of $B'$ (to avoid intersecting $V_0$ and $V_1$). Near $q_0$ and $p_1$ the interior of $B'$ `hangs down underneath' $f$ (in the present), and the pair of transverse intersections between $f$ and $B'$ (near $q_0$ and $p_1$) can be removed by a Whitney move on $B'$ guided by a parallel of the Whitney disk $U$ from Lemma~\ref{lem:p0-q1-w-and-accessory-disks}.}
         \label{fig:B-prime}
\end{figure}
In order to continue with Step~2 we need to find an appropriate $Q'$ and $B'$ for $r_0'= V_0'\pitchfork f$ and $s_1'=V_1\pitchfork f$.
The quadrilateral Whitney disk $Q'$ sits between $V_0'$ and $V_1'$, the same as $Q$ sits between $V_0$ and $V_1$, but
a little work is required to find an appropriate generalized accessory disk $B'$ for $r_0'$:  The problem is that forming $B'$ from a parallel copy of $A_i^-$ will yield intersections between $\partial B'$ and both $\partial V_0$ and $\partial V_1$, which are transverse (in $f$) to $\partial A_i^-$. This problem is solved by first pushing $\partial B'$ off of $\partial V_0$ and $\partial V_1$ as shown in Figure~\ref{fig:B-prime}, which creates interior intersections between $B'$ and $f$, and then using a parallel copy of the Whitney disk $U$ to get the interior of $B'$ disjoint from $f$, (recall from Lemma~\ref{lem:p0-q1-w-and-accessory-disks} that $U$ is framed and embedded, with interior disjoint from $f$). This Whitney move does not change the primary multiplicity of $B'$.
Now $r_0'$ and $s_1'$ admit framed embedded $B'$ and $Q'$ disjoint from $B$ and $Q$, and the subsequent steps 3 through 6 can be carried out as before.

Although Figure~\ref{fig:B-prime} only illustrates the case $n_i=2$, the $n_i>2$ cases are similar: If there was another pair of Whitney disks $V''_0, V''_1$ to the right of $V'_0, V'_1$ in Figure~\ref{fig:B-prime}, then another generalized accessory disk $B''$ could be constructed by extending a parallel of $B'$, pushing $\partial B''$ off of
$\partial V'_0$ and $\partial V'_1$, and then using a parallel of $U'$ to get the interior of $B''$ disjoint from $f$. (The existence
of $Q''$ between $V''_0$ and $V''_1$ is clear.)
Repeating this process as needed yields disjointly embedded framed generalized accessory and Whitney disks for any $n_i$, and the rest of the steps can be carried out as before.

This completes the proof of Proposition~\ref{prop:I-squared}.
\end{proof}


\section{Appendix: Whitney disks and accessory disks}\label{sec:appendix}
This section contains some details on techniques of immersed surfaces in $4$--manifolds that are used throughout the paper, especially in Section~\ref{sec:thm-main-proof}.
More information can be found in \cite{FQ}. For further details and generality on Whitney disks and Whitney moves see e.g.~\cite[Thm.6.6]{M3}.

Unless otherwise specified, submanifolds are assumed to intersect generically.
Orientations are also assumed, but usually not specified explicitly. 
The discussion here holds in the flat topological category via the notions of 4-dimensional topological tranversality from \cite[chap.9]{FQ}.


In the case that the boundary $\partial D$ of an immersed disk $D$ is contained in the interior of an immersed surface $A$, we require and assume that $\partial D$ is embedded, and also that the interior of $D$ is disjoint from $A$ near $\partial D$, i.e.~that there exists a collar in $D$ of $\partial D$ such that the intersection of this collar with $A$ is equal to $\partial D$. In the case where the boundary of a disk $W$ passes through an interior intersection $p$ between surfaces $A$ and $B$, the three sheets are required to meet near $p$ as illustrated for the model Whitney disk $W$ in Figure~\ref{Whitney-disk-pic-and-movie}. (A \emph{sheet} of a surface is a subdisk, open or closed.)

\begin{figure}[ht!]
         \centerline{\includegraphics[scale=.4]{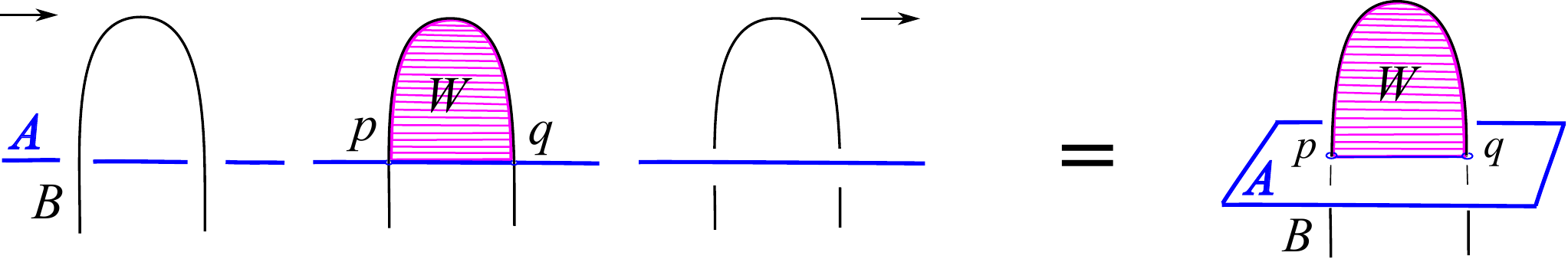}}
         \caption{An embedded model Whitney disk $W$ pairing intersections between surface sheets $A$ and $B$: In the right-most picture the $A$-sheet is contained in the `present' 3-dimensional slice of $4$--space (along with $W$), while the black arc extends into `past' and `future' to describe the $B$-sheet, as in the 3-picture `movie' on the left (with `time' moving from left to right).}
         \label{Whitney-disk-pic-and-movie}
\end{figure}

\subsection{Whitney disks}\label{sec:whitney-disks}
Let $p$ and $q$ be oppositely-signed transverse intersections between connected immersed surfaces $A$ and $B$ in a $4$--manifold $X$ (in this paper such $p$ and $q$ are called a \emph{canceling pair} of intersections), with $p$ and $q$ joined by embedded interior arcs $a\subset A$ and $b\subset B$ which are disjoint from all other singularities in $A$ and $B$. Here we allow the possibility that $A=B$, in which case the circle $a\cup b$ must be embedded and change sheets at $p$ and $q$. Any generic immersed disk $W\imra X$ bounded by such a \emph{Whitney circle} $a\cup b$ is a \emph{Whitney disk} pairing $p$ and $q$. Figure~\ref{Whitney-disk-pic-and-movie} shows a model Whitney disk in $4$--space.

From the assumption of genericity above, all Whitney disks in this paper are required to have embedded boundary. The adjective ``immersed'' will occasionally be attached to ``Whitney disk'' to remind the reader that a Whitney disk interior may not be embedded.

Throughout the following discussion of Whitney disks and Whitney moves the immersed surfaces $A$ and $B$ are allowed to have boundary in the interior of $X$, and either may itself be a Whitney disk,
as in the constructions of Section~\ref{sec:thm-main-proof}.

\begin{figure}[ht!]
         \centerline{\includegraphics[scale=.24]{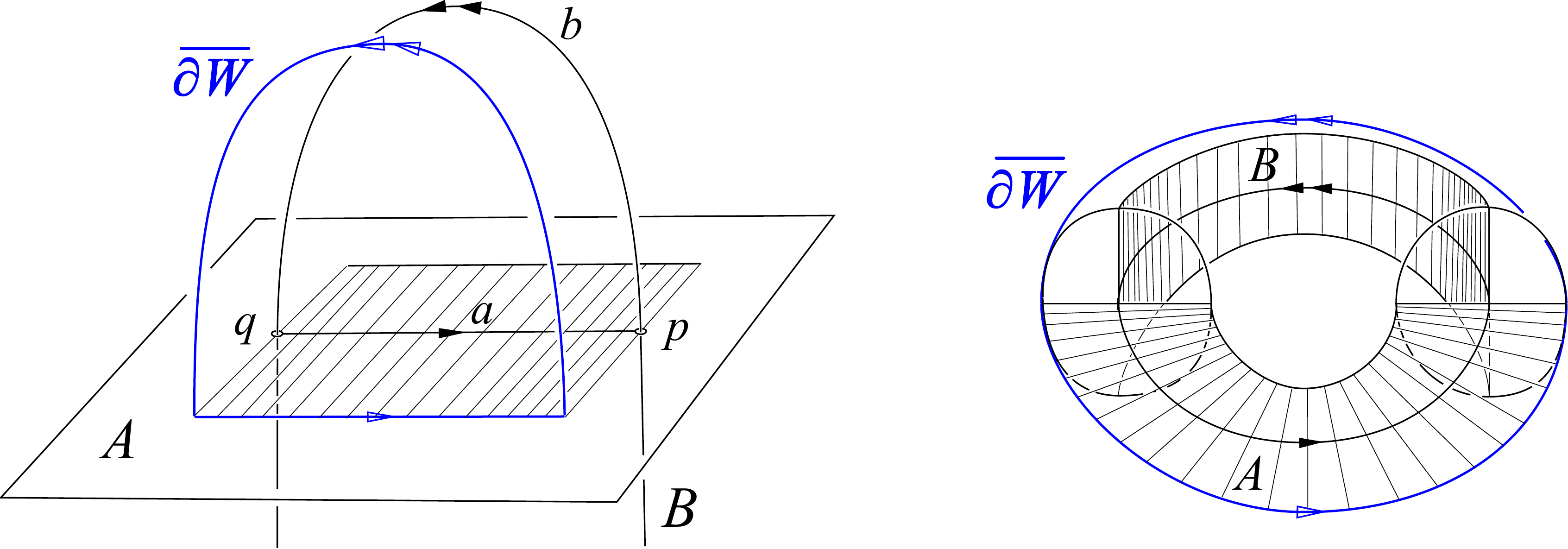}}
         \caption{Left: Near a Whitney disk $W$ pairing $p,q\in A\pitchfork B$, with $\partial W=a\cup b$, a Whitney section $\overline{\partial W}$ is shown in blue. This picture is accurate near $\partial W$; in general, the evident (but not explicitly indicated) embedded $W$ bounded by $a\cup b$ may have self-intersections as well as intersections with other surfaces. The line segments transverse to $a$ in $A$ indicate the correspondence with the right-hand picture of the normal disk-bundle $\nu_{\partial W}$ over $\partial W$.
         Right: The blue Whitney section $\overline{\partial W}$ is shown inside an embedding into $3$--space of $\nu_{\partial W}\cong S^1\times D^2$, with the sheets of $A$ and $B$ indicated by line segments transverse to $\partial W$. The $A$-sheet cuts the front solid torus horizontally, while the $B$-sheet cuts the back of the solid torus vertically.}
         \label{W-subspaces}
\end{figure}

\subsubsection{Framed Whitney disks.}\label{sec:framed-w-disks}
For oriented immersed surfaces $A,B\imra X$ in a $4$--manifold $X$, let $W$ be an immersed Whitney disk pairing intersections $p,q\in A\pitchfork B$, 
with boundary 
$\partial W= a\cup b$, for embedded arcs $a\subset A$ and $b\subset B$. 
Denote by $\nu_{\partial W}$ the restriction to $\partial W$ of the normal disk-bundle $\nu_W$ of $W$ in $X$. Since $p$ and $q$ have opposite signs, $\nu_{\partial W}$ admits a nowhere-vanishing \emph{Whitney section} 
$\overline{\partial W}$ defined by taking vectors tangent to $A$ over $a$, and extending over $b$ by vectors which are normal to $B$, as shown in the left of Figure~\ref{W-subspaces}. 

 The right side of Figure~\ref{W-subspaces} shows $\overline{\partial W}$ inside an embedding into $3$--space of $\nu_{\partial W}\cong S^1\times D^2$.
 Although this choice of embedding has $\overline{\partial W}$ corresponding to the $0$-framing of $D^2\times S^1\subset \R^3$ (as can always be arranged), the section of $\nu_{\partial W}$ determined (up to homotopy) by the canonical framing of $\nu_W$ will in general differ by ($\omega(W)$-many) full twists relative to $\overline{\partial W}$. (If $p$ and $q$ had the same sign then there would have to be a half-twist in the sheets of $A$ and $B$, so the (continuous) Whitney section $\overline{\partial W}$ could not exist.)

If $\overline{\partial W}$ extends to a nowhere-vanishing section $\overline{W}$ of $\nu_W$, then $W$ is said to be \emph{framed} (since the disk-bundle over a disk $\nu_W$ has a canonical framing, and a nowhere-vanishing normal section over an oriented surface in an oriented $4$--manifold determines a framing up to homotopy). In general, the obstruction to extending $\overline{\partial W}$ to a nowhere vanishing section of $\nu_W$ is the relative Euler number $\chi(\nu_W,\overline{\partial W})\in\Z$, called the \emph{twisting} of $W$ and denoted $\omega(W)$, so $W$ is framed if and only if $\omega(W)=0$. The twisting $\omega(W)$ can be computed by taking the intersection number of the zero section $W$ with any extension $\overline{W}$ of $\overline{\partial W}$ over $W$, so it does not depend on an orientation choice for $W$ (since switching the orientation on $W$ also switches the orientation of $\overline{W}$). And $\omega(W)$ is also unchanged by switching the roles of $a$ and $b$ in the construction of $\overline{\partial W}$, since interchanging the ``tangent to...'' and ``normal to...'' parts in the construction yields an isotopic section in $\nu_{\partial W}$ (isotopic through non-vanishing sections).

\begin{rem}\label{rem:general-whitney-section}
The twisting $\omega(W)$ of $W$, which is the element of $\pi_1(\mathrm{SO}(2))\cong\Z$ determined by a Whitney section as described above, 
can be computed using \emph{any section $\overline{\partial W}$ of $\nu_{\partial W}$ such that $\overline{\partial W}$ is in the complement of the tangent spaces of both $A$ and $B$}, since such $\overline{\partial W}$ will have the same number of rotations as a Whitney section (relative to the longitude determined by the canonical framing of $\nu_W$);
see Figure~\ref{W-subspaces}.
Parallel copies of $W$ extending such nowhere-tangent sections are used, for instance, in the constructions of Whitney spheres and accessory spheres in Section~\ref{sec:accessory-sphere-revisit}.
\end{rem}

The adjective ``parallel'' applied to a Whitney circle or Whitney disk in this paper always refers to one of the constructions described above.

\subsection{Accessory disks}\label{sec:acc-disks}

Let $p$ be a transverse self-intersection in a connected immersed surface $f:\Sigma\imra X$, where as usual we blur the distinction between $f$ and its image in the $4$--manifold $X$.
An embedded circle in $f$ which changes sheets at $p$ and is disjoint from the boundary and all other singularities of $f$ is called an \emph{accessory circle} for $p$. (Accessory circles are also sometimes called \emph{double point loops}.)

Any generically immersed disk bounded by an accessory circle for $p$ is called an \emph{accessory disk} for $p$.
In this paper accessory disks will only be used in the case that $\Sigma= S^2$ and $X=S^4$,
so every $p\in f\pitchfork f$ admits an accessory disk, for any choice of accessory circle.

\subsubsection{Framed accessory disks}\label{sec:framed-acc-disks}
\begin{figure}[ht!]
         \centerline{\includegraphics[scale=.25]{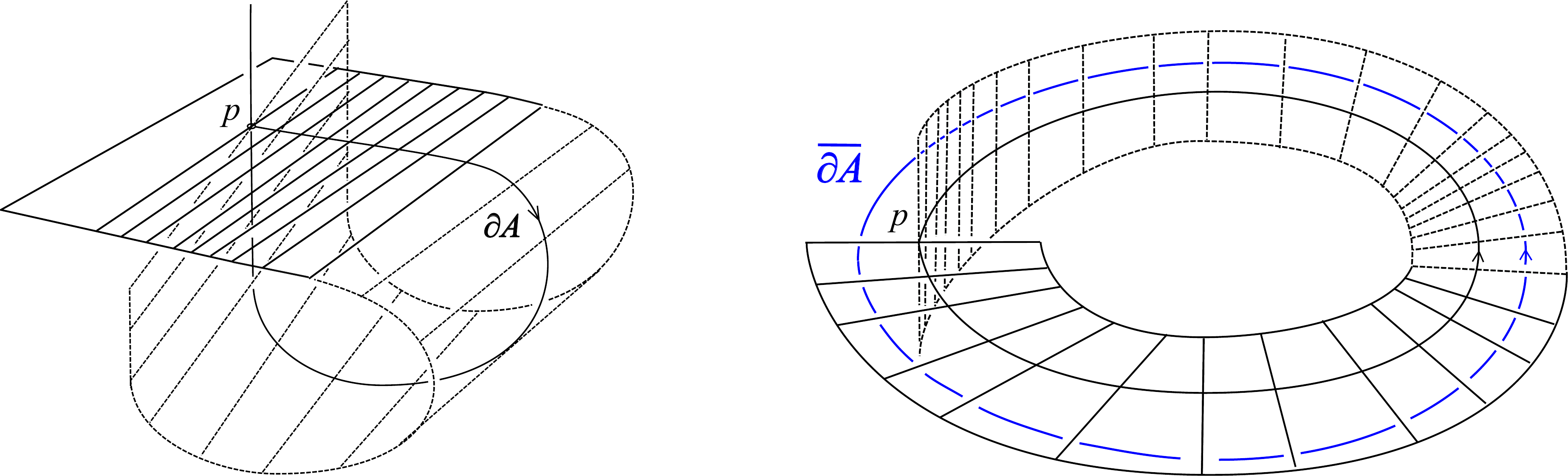}}
         \caption{Left: Near the sheets of $f$ along $\partial A$ in $X$. Right: Inside an embedding of $\nu_{\partial A}\cong D^2\times S^1$ into $3$--space, an accessory section $\overline{\partial A}$ (in blue) in the complement of the tangent space of $f$. This embedding has $\overline{\partial A}$ corresponding to the $0$-framing of $D^2\times S^1\subset \R^3$ (as can always be arranged), but the section determined by the canonical framing of $\nu_A$ may in general differ by ($\omega(A)$-many) full twists relative to $\overline{\partial A}$.}
         \label{accessory-sheets}
\end{figure}

Let $p$ be a self-intersection of $f$ admitting an accessory disk $A$ with (embedded) boundary $\partial A$. Denote by $\nu_{\partial A}$ the restriction to $\partial A$ of the normal disk-bundle $\nu_A$ of $A$ in $X$. The tangent space to $f$ along $\partial A$ determines a $1$-dimensional subspace of $\nu_{\partial A}$ at each point other than $p$, and at $p$ the two sheets 
of $f$ determine a pair of transverse $1$-dimensional subspaces of $\nu_{\partial A}$ (Figure~\ref{accessory-sheets}). Let $\overline{\partial A}$ be any nowhere-vanishing section of $\nu_{\partial A}$ such that $\overline{\partial A}$ is in the complement of the tangent space of $f$, and define the twisting $\omega(A)\in\Z$ to be the obstruction to extending $\overline{\partial A}$ over $A$, i.e.~$\omega(A)$ is the relative Euler number $\chi(\nu_A,\overline{\partial A})$ (cf.~Remark~\ref{rem:general-whitney-section} just above). Then $A$ is said to be \emph{framed} if $\omega(A)=0$.
Note that $\omega(A)$ is well-defined since the rotations (relative to $A$) of $\overline{\partial A}$ in $\nu_{\partial A}\cong S^1\times D^2$ are determined by the $1$-dimensional subspaces cut out by $f$, and $\omega(A)$ does not depend on a choice of orientation of $A$ (as observed above for Whitney disks). Any such section $\overline{\partial A}$ will be referred to as an \emph{accessory section} for $A$, and in this paper the adjective ``parallel'' applied to an accessory disk always means an extension over $A$ of an accessory section.


\subsection{Whitney disks from accessory disks}\label{sec:W-disk-from-A-disks}
\begin{figure}[ht!]
         \centerline{\includegraphics[scale=.275]{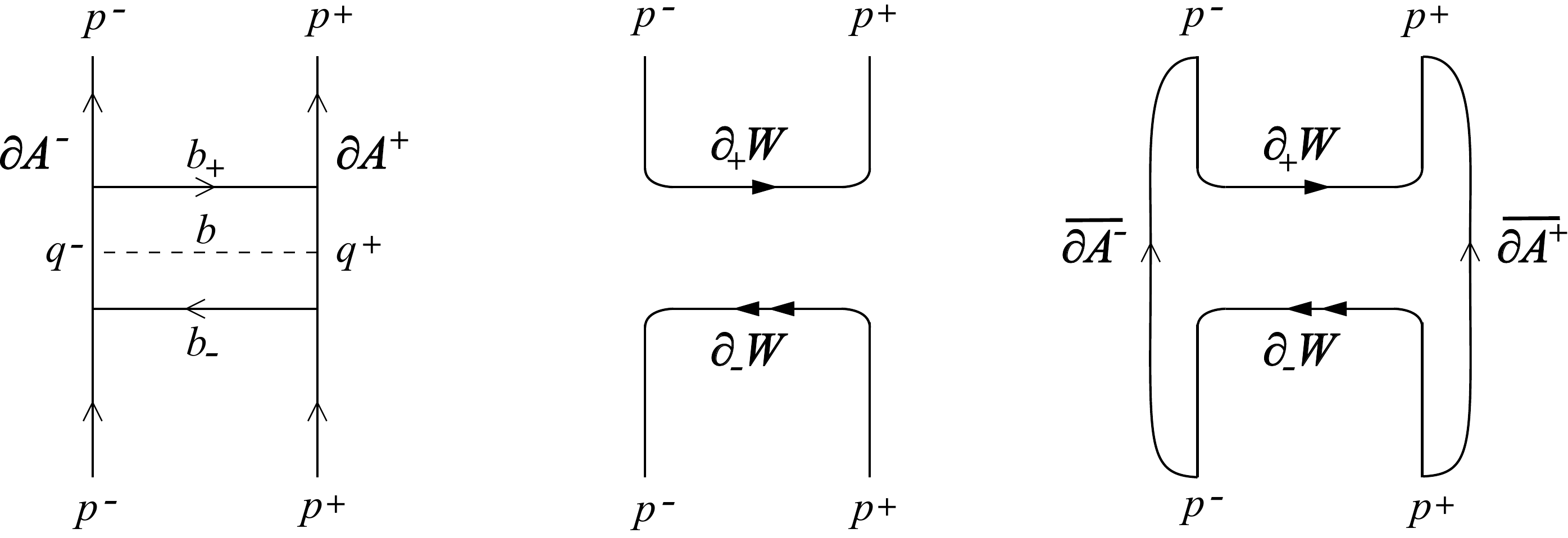}}
         \caption{The view in the domain of $f$: Banding together $\partial A^\pm$ along the dotted arc $b$ (left) yields the boundary $\partial W=\partial_+W\cup\partial_-W$ (center) of a Whitney disk $W$ constructed by `half-tubing' $A^\pm$ together (Figure~\ref{joined-accessory-disks-1}). Copies $\overline{A^\pm}$ of $A^\pm$ (perturbed rel $p^\pm$) are still accessory disks for $p^\pm$.}
         \label{banded-accessory-arcs}
\end{figure}

Here are some details on forming Whitney disks from pairs of accessory disks, as used in Section~\ref{sec:orientation-conventions} and during the double transfer move of Step~8:

Let $A^+$ and $A^-$ be accessory disks for a pair $\{p^+,p^-\}$ of oppositely-signed self-intersections of $f$ such that $\partial A^+\cap\partial A^-=\emptyset$. Choose an embedded arc $b$ in $f$
connecting a point $q^+\in\partial A^+$ to a point $q^-\in\partial A^-$, with the interior of $b$ disjoint from both 
$\partial A^\pm$. Banding together $\partial A^\pm$ in $f$ along $b$ yields a Whitney circle for $p^\pm$ which is the union of $\partial A^\pm$ minus small arcs containing $q^\pm$ together with arcs $b_+$ and $b_-$ parallel to $b$ (the preimage is shown in the left and center pictures of Figure~\ref{banded-accessory-arcs}). Denote this Whitney circle by $\partial W$ since a Whitney disk $W$ will be described shortly. Let $z^\pm$ denote inward pointing tangent vectors to $A^\pm$ at $q^\pm$. Choose a nowhere vanishing vector field $z_t$ field over $b$ such that for each $t\in[0,1]$ $z_t$ is normal to $f$, with $z_0=z^-$ and $z_1=z^+$. The arc of vectors $z_t$ determines a half-tube $H$ connecting $A^-$ and $A^+$, where arcs of $H$ from $b_-$ to $b_+$ are traced out by rotating a vector based at $b$ from $b_-$ to $b_+$ through the corresponding $z_t$ (Figure~\ref{joined-accessory-disks-1}). The Whitney disk $W$ is formed by deleting small half-disks from $A^\pm$ at $q^\pm$ and connecting the resulting disks with $H$.
\begin{figure}[ht!]
         \centerline{\includegraphics[scale=.275]{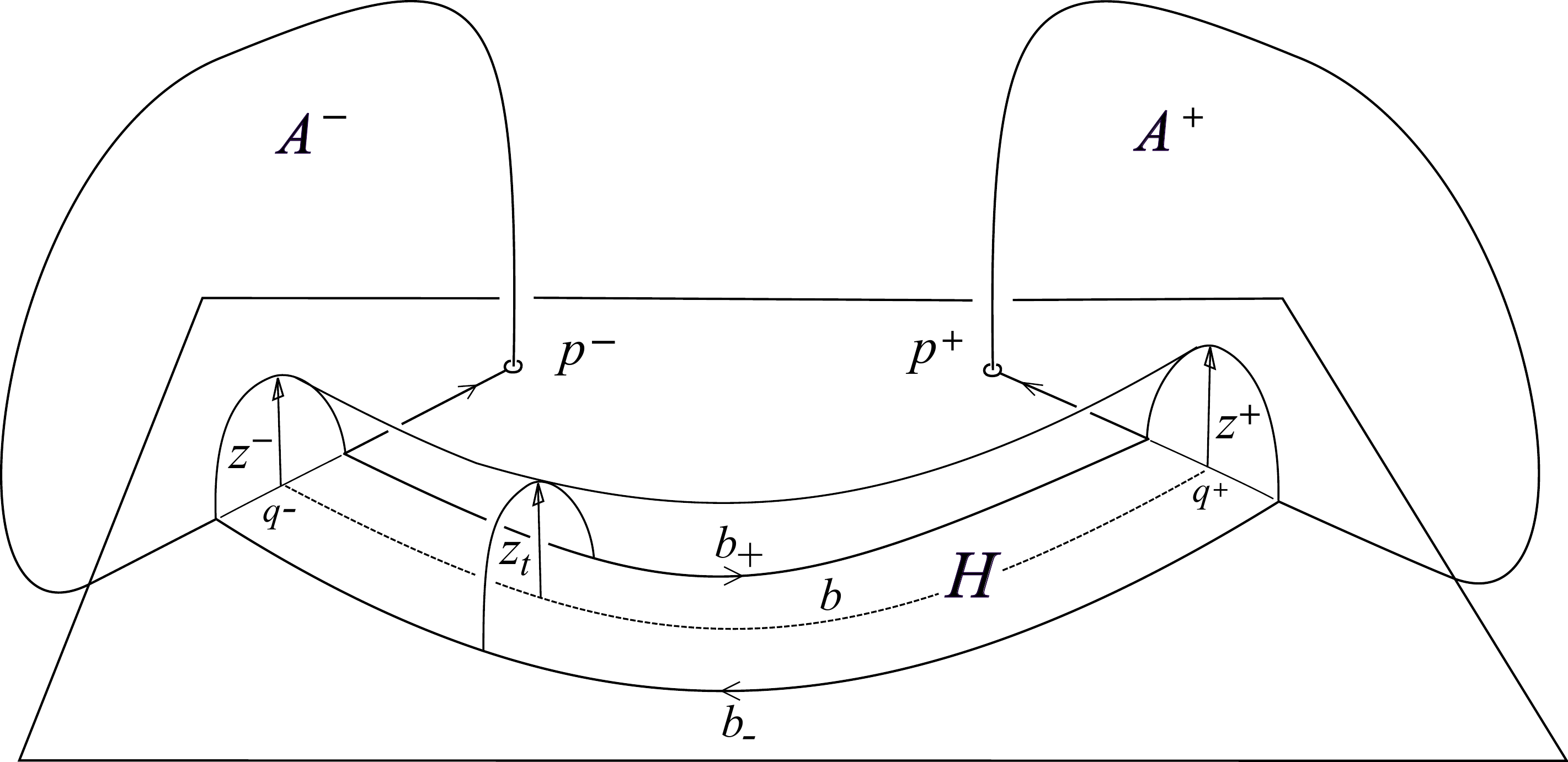}}
         \caption{Joining $A^\pm$ along the half-tube $H$ around $b$ to form a Whitney disk $W$ pairing $p^\pm$ (see Figure~\ref{banded-accessory-arcs} for boundary inverse images). If $A^\pm$ are framed, disjointly embedded, with interiors disjoint from $f$, then this picture accurately describes a $3$-dimensional slice of the local coordinates determined by a choice of $z_t$ along $b$. }
         \label{joined-accessory-disks-1}
\end{figure}

Observe that the twisting $\omega(W)$ is the sum of the twistings of $A^+$ and $A^-$: First note that since $p^\pm$ have opposite signs, the quarter turns in the right side of Figure~\ref{accessory-sheets} fit together as in the right side of Figure~\ref{W-subspaces}.
The vectors $z_t$ extend by translation to nowhere-vanishing normal vectors over all of $H\subset W$ which are also in the complement of $f$. Choosing a Whitney section by extending the restriction to $b_\pm\subset\partial W$ over the rest of $\partial W$ is the same as choosing accessory 
sections over $\partial A^+$ and $\partial A^-$. Since these sections already extend over $H$, the twisting $\omega(W)$ of $W$ is equal to the sum $\omega(A^+)+\omega(A^-)$.
(The choice of $z_t$ (which can rotate around $f$) does not affect $\omega(W)$ because a rotation of $z_t$ which creates a right-handed twist around $b_+$ also creates a left-handed twist around $b_-$, and vice versa.) 

Copies $\overline{A^\pm}$ of $A^\pm$ (perturbed rel $p^\pm$) are still accessory disks for $p^\pm$, and can be chosen so that $\overline{A^\pm}$ and $\partial W$ only intersect at $p^\pm$ (right). In particular, if the original $A^\pm$ were framed and disjointly cleanly embedded, then a regular neighborhood of the resulting triple $W,\overline{A^+},\overline{A^-}$ is diffeomorphic to  
Figure~\ref{fig:Whitney-and-accessory-disks-ORIENTED}.

This construction can be arranged to preserve or switch any given orientation on either or both of $A^\pm$ in $W$ by choosing the guiding arc $b$ to emanate from an appropriate side of $\partial A^\pm$ in $f$.

\subsection{Accessory disks from Whitney disks}\label{sec:A-disks-from-W-disks}
\begin{figure}[ht!]
         \centerline{\includegraphics[scale=.275]{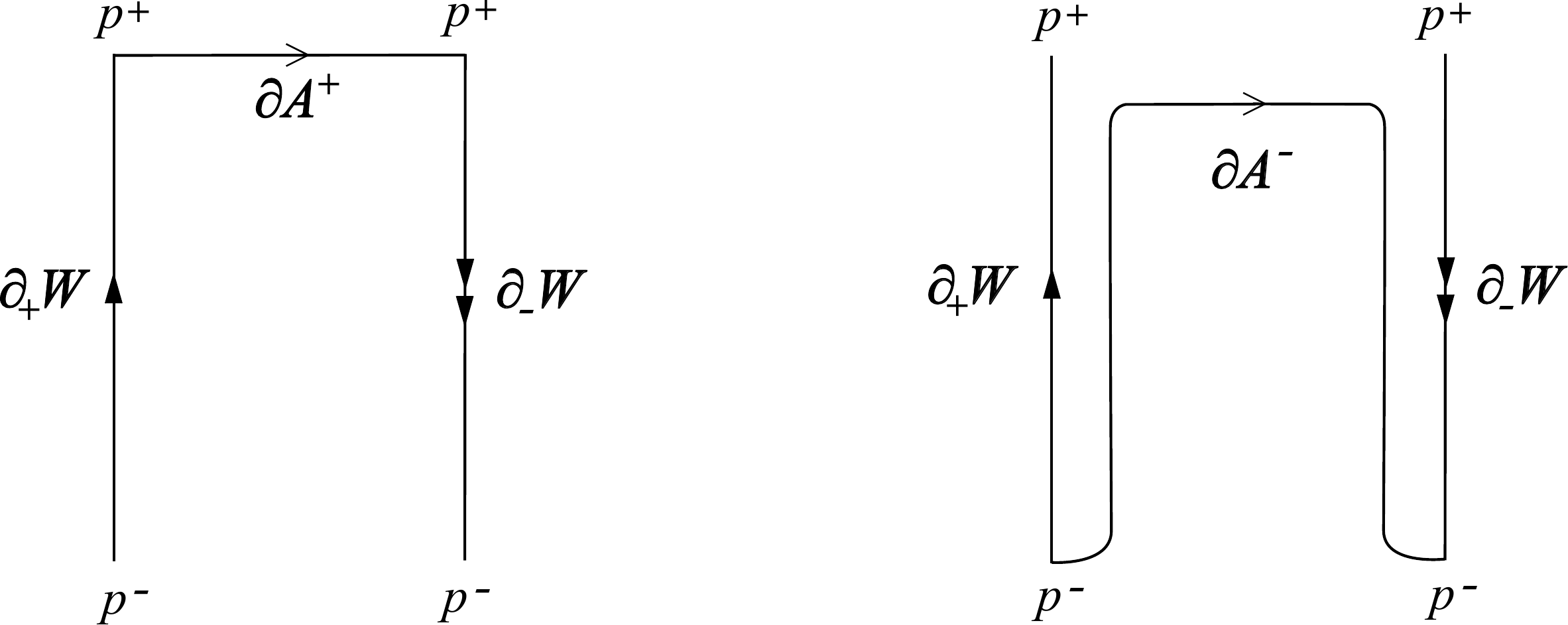}}
         \caption{The view in the domain of $f$: Forming a negative accessory disk $A^-$ (right) from parallels of the Whitney disk $W$ and positive accessory disk $A^+$ (left).}
         \label{banded-whitney-and-accessory-arc}
\end{figure}

The following observation is used in Step~8 to convert a Whitney disk-accessory disk pair into a pair of accessory disks, in Section~\ref{sec:double-transfer-orientations} to switch between positive and negative accessory disks, and in Section~\ref{sec:check-metabolic-disks} to exchange negative accessory disks for a positive accessory disks: Given a Whitney disk $W$ pairing self-intersections $p^\pm$ of $f$, and a positive accessory disk $A^+$ for $p^+$, then a negative accessory disk $A^-$ for $p^-$ can be constructed from a parallel of $W$ together with a parallel of $A^+$, as indicated in Figure~\ref{banded-whitney-and-accessory-arc} (which shows the inverse images of the boundaries in the domain of $f$). Any intersections that $W$ and $A^+$ have with any surfaces will be inherited by $A^-$, as will any self-intersections of $W$ and $A^+$. The twisting $\omega(A^-)$ of $A^-$ will be the sum $\omega(W)+\omega(A^+)$. The roles of $A^+$ and $A^-$ can be switched in this construction. 
So in particular, if $W,A^\pm$ is a metabolic pair, then this constructions yields $W,A^\mp$ which is also a metabolic pair.

\subsection{Whitney moves and transfer moves}\label{sec:W-move}
Given a Whitney disk $W$ pairing $p,q\in A\pitchfork B$, a \emph{Whitney move on $A$ guided by $W$} (also called a \emph{Whitney move on $A$ along $W$}) eliminates $p$ and $q$ by the construction illustrated in Figure~\ref{Whitney-move-blue-black-movie}: A small neighborhood of $\partial W\cap A$ in $A$ is deleted, and a \emph{Whitney bubble} consisting of two oppositely-oriented parallel copies of $W$ and a parallel of a small neighborhood of $\partial W\cap B$ is added to $A$.

\begin{figure}[ht!]
         \centerline{\includegraphics[scale=.4]{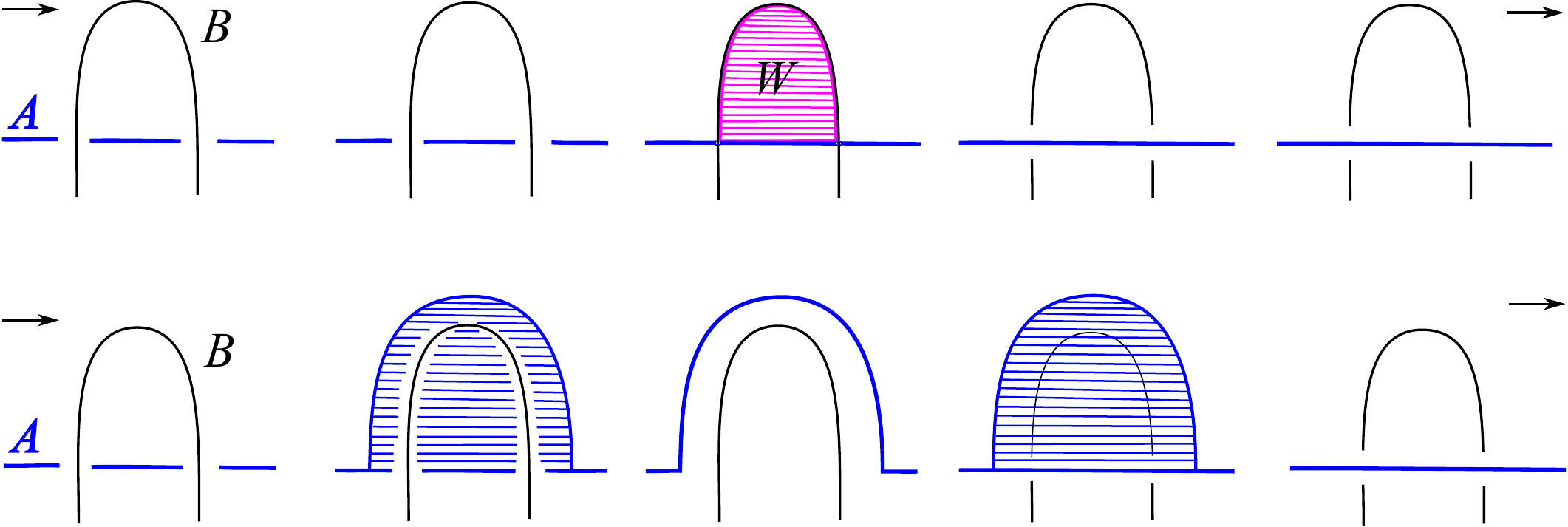}}
         \caption{Before and after a model Whitney move: The top and bottom rows of pictures show the same slices of a $4$-ball neighborhood containing surface sheets $A$ (blue) and $B$ (black). Before the Whitney move (top row), the intersections between $A$ and $B$ are paired by the framed embedded Whitney disk $W$ (purple). After applying the Whitney move to $A$ along $W$ (bottom row), $A$ and $B$ are disjoint, and the Whitney bubble added to $A$ is visible in the three center pictures.}
         \label{Whitney-move-blue-black-movie}
\end{figure}

The result of a Whitney move on $A$ along $W$ is the same as changing $A$ by a regular homotopy supported near $W$, and we frequently keep the same name/notation for a surface that has been changed by a Whitney move.

\subsubsection{Whitney moves guided by accessory disks}\label{sec:w-move-on-acc-disks}
We discuss next how parallel copies of framed accessory disks can be used to guide Whitney moves on surfaces intersecting the two sheets of $f$ near $p$, as in the constructions of accessory spheres in Section~\ref{sec:accessory-sphere-revisit} (Figure~\ref{fig:Accessory-sphere-movie}) and during Step~7 of the proof of Proposition~\ref{prop:I-squared}, see Figures~\ref{fig:double-boundary-twist-Whitney-move-on-accessory-disk-2} and~\ref{parallel-accessory-disk-movie}.

Before giving details, here is the main idea: Observe that any accessory section can be isotoped through nowhere-vanishing sections in $\nu_{\partial A}$ to be tangent to $f$ along most of $\partial A$ except along a small arc of $\partial A$ near $p$ where the section makes a ``quarter-turn'' through vectors normal to both sheets of $f$. This small arc can be pushed off of $f$ slightly into $A$ while preserving the twisting of the section,
and ``trimming'' $A$ along this pushed-in arc yields one of a family of parallel Whitney disks such as $V$ in Figure~\ref{parallel-accessory-disk-movie}.

.
\begin{figure}[ht!]
         \centerline{\includegraphics[scale=.45]{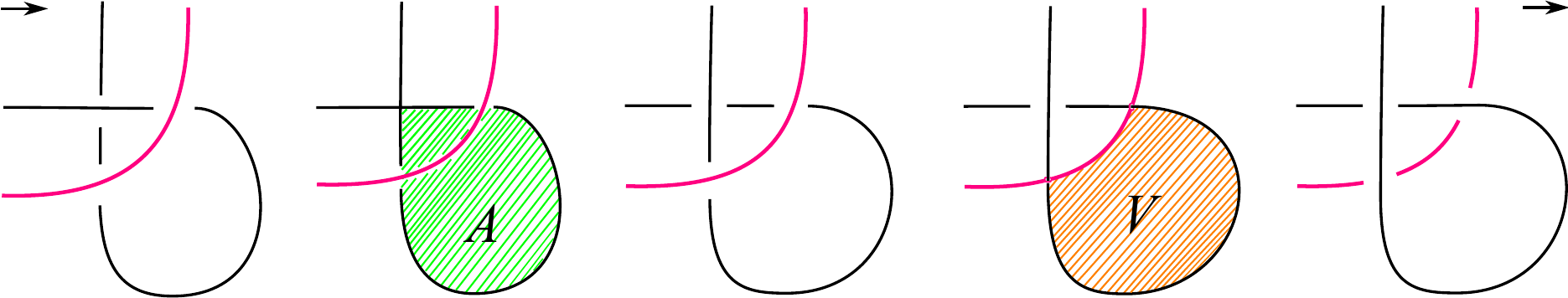}}
         \caption{Near a framed embedded accessory disk $A$ (green) on $f$ (black), intersections between a surface sheet (red) and $f$ are paired by a Whitney disk $V$ formed from a (trimmed) parallel copy of $A$.}
         \label{parallel-accessory-disk-movie}
\end{figure}

So let $p$ be a self-intersection of $f$ admitting a framed oriented immersed accessory disk $A$ with embedded boundary $\partial A$.
Thinking of $\partial A$ as the image of an immersion of an interval, $a:[0,1]\to f(\Sigma)\subset X$ with $a(0)=p=a(1)$, 
let $v_+$ denote the unit tangent vector to $f$ at $p$ pointing positively along $\partial A$, and 
let $v_-$ denote the unit tangent vector to the other sheet of $f$ at $p$ pointing back negatively along $\partial A$ (See Figure~\ref{accessory-arc-1}).
Denote the local sheets of $f$ near $p$ by $f_\pm$, where $f_\pm$ contains $v_\pm$.
Choose a tangent vector $\overline{v}_+$ to $f_+$ at $p$, such that $\overline{v}_+$ is orthogonal to $v_+$, and
extend 
$\overline{v}_+$ to a nowhere-vanishing section of the normal bundle of $a$ in $f$.
This section
extending $\overline{v}_+$ defines an embedded arc $\overline{a}:[0,1]\to f(\Sigma)\subset X$ with $\overline{a}(0)=\overline{v}_+$.
We may assume that the terminal endpoint $\overline{a}(1)$ defines a tangent vector $\overline{v}_-$ to $f_-$ at $p$ which is orthogonal to $v_-$ (See Figure~\ref{accessory-arc-1} and left side of Figure~\ref{accessory-disk-and-cut-accessory-disk-1}). 
\begin{figure}[ht!]
         \centerline{\includegraphics[scale=.35]{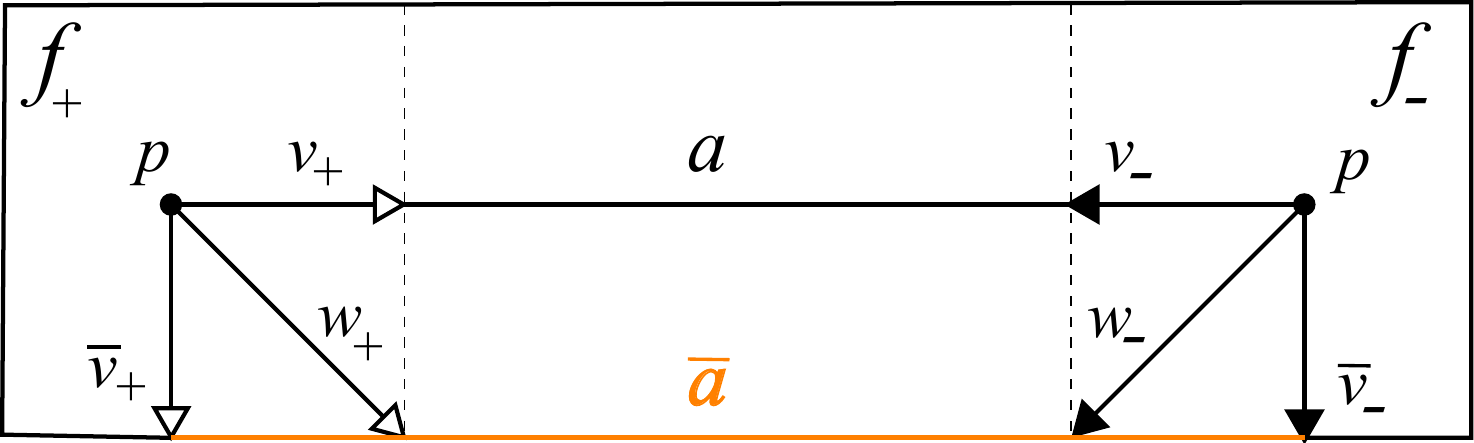}}
         \caption{The preimage of a neighborhood of the accessory circle $\partial A$.}
         \label{accessory-arc-1}
\end{figure}

\begin{figure}[ht!]
         \centerline{\includegraphics[scale=.3]{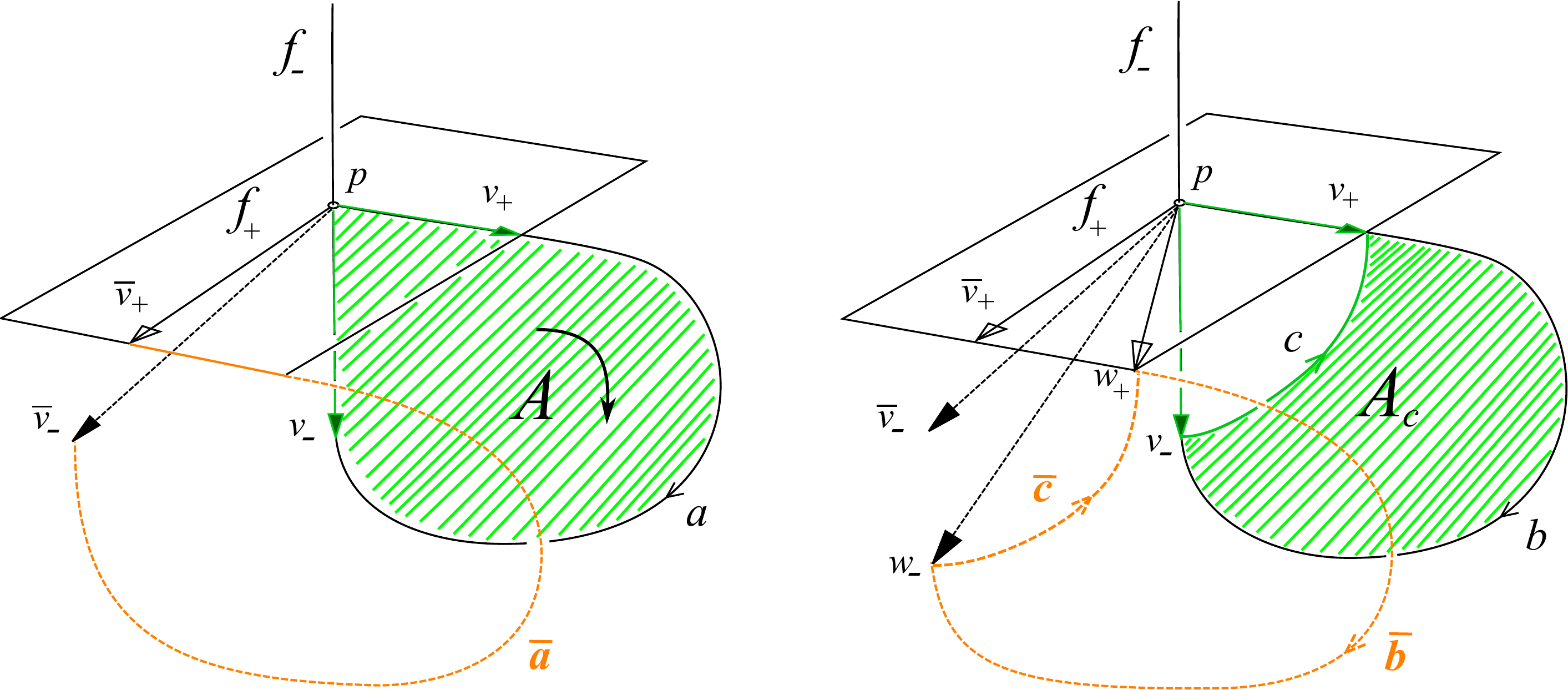}}
         \caption{Left: The accessory disk $A$ and the arc $\overline{a}\subset f$ parallel to $a=\partial A$ in $f$.
         Right: The trimmed accessory disk $A_c\subset A$ with boundary $b\cup c$, and the section $\overline{\partial A_c}=\overline{b}\cup \overline{c}$ of $\nu_{\partial A_c}$. Note that the dotted vectors $\overline{v}_-$ and $w_-$ point in the direction of an orthogonal `time' coordinate.}
         \label{accessory-disk-and-cut-accessory-disk-1}
\end{figure}

Around $p$ the vectors $v_+$ and $\overline{v}_+$ span an embedded square of $f_+$, and similarly the vectors $v_-$ and $\overline{v}_-$ span an embedded square of $f_-$. 
By transversality, the vectors $v_+,\overline{v}_+,v_-,\overline{v}_-$ span a $4$-dimensional cube of $X$ around $p$, and it may be assumed that $A$ intersects this cube in a square quadrant spanned by $v_+$ and $v_-$. Rotating $v_-$ to $v_+$ traces out an embedded corner of $A$ at $p$ bounded by the union of (the segments) $v_-$ and $v_+$ together with the arc $c$ traced out by the tips of the rotating vectors. Deleting from $A$ all of this corner except for $c$ yields a slightly smaller disk $A_c$ whose boundary $\partial A_c=b\cup c$ is the union of $c$ together with the arc $b\subset\partial A$ outside the corner (from the tip of $v_+$ to the tip of $v_-$). (See Figure~\ref{accessory-disk-and-cut-accessory-disk-1}).

Let $\nu_{\partial A_c}$ denote the restriction of the normal bundle $\nu_{A_c}$ of $A_c$ in $X$ to the boundary $\partial A_c$.  Define a nowhere-vanishing section $\overline{\partial A_c}$ of $\nu_{\partial A_c}$
as follows: Over the arc $b\subset a$ take the vectors $\overline{b}\subset\overline{a}$. Specifically, $\overline{b}$ starts at the tip of $w_+:=v_++\overline{v}_+$ (over the tip of $v_+$) and ends at the tip of $w_-:=v_-+\overline{v}_-$ (over the tip of $v_-$). To define $\overline{\partial A_c}$ over the arc $c$ of $\partial A_c$, note that rotating $w_-$ to $w_+$ in the plane spanned by $w_-$ and $w_+$ defines an arc $\overline{c}$ (traced out by the tip of the rotating vector), which determines
an arc of normal vectors to $A_c$ over $c$. (To see that $\overline{c}$ is normal to $A_c$, recall that $A$ is contained in the quadrant spanned by $v_+$ and $v_-$ in the cube around $p$, and this quadrant is disjoint from any non-trivial linear combination of $w_-$ and $w_+$.) 

We claim that $\chi(\nu_{A_c},\overline{\partial A_c})=\omega(A)$, so in particular, $\overline{\partial A_c}$ extends to a nowhere-vanishing section over $A_c$ if and only if $A$ is framed. To see the claim, fix the endpoints of $c$, and let $c_t$ denote a family of arcs determined by an isotopy of the interior of $c$ across the corner of $A$ and into the arc of $\partial A$ passing through $p$, so $c_0=c$ and $c_1=v_-\cup v_+\subset \partial A$. 
The union of the $c_t$ with $b$ forms a family of circles $c^\circ_t=c_t\cup b$ interpolating between $\partial A_c=c^\circ_0$ and $\partial A=c^\circ_1$, and these circles bound disks $A_{c_t}\subset A$ interpolating between $A_{c_0}=A_c$ and $A_{c_1}=A$.
Observe that the (fixed) arc $\overline{c}$ determines
an arc of nowhere-vanishing normal vectors to $A$ over each $c_t$ for $0\leq t\leq 1$.
Over each $c^\circ_t$ define a section $\overline{\partial A_{c_t}}$ of the restriction of $\nu_A$ to $c^\circ_t=\partial A_{c_t}$ as follows: Over $b$ (which is a common sub-arc of all the $c^\circ_t$) take $\overline{\partial A_{c_t}}=\overline{b}$, and over $c_t$ define $\overline{\partial A_{c_t}}$ to be the vectors determined by $\overline{c}$.  
Note first that although the section $\overline{\partial A_{c_1}}$ over $c^\circ_1=\partial A$ is tangent to $f$ along the arc $b\subset \partial A$, this part of $\overline{\partial A_{c_1}}$ can be perturbed to be normal to $f$ yielding an isotopic \emph{accessory section} $\overline{\partial A}\subset\nu_{\partial A}$. So $\omega(A)=\chi(\nu_A,\overline{\partial A})=\chi(\nu_A,\overline{\partial A_{c_1}})$.

On the other hand, from the isotopy $\overline{\partial A_{c_t}}$ of bundles with sections induced by the isotopy of arcs $c_t$ we have:  
$$
\chi(\nu_{A_c},\overline{\partial A_c})=\chi(\nu_{A_{c_0}},\overline{\partial A_{c_0}})=\chi(\nu_{A_{c_1}},\overline{\partial A_{c_1}})=\chi(\nu_A,\overline{\partial A_{c_1}}),
$$
so $\chi(\nu_{A_c},\overline{\partial A_c})=\omega(A)$.

Note that the interior of the arc $\overline{c}$ is also disjoint from $f$, since $f$ only intersects the span of $w_+$ and $w_-$ when at least one coefficient of $w_+$ or $w_-$ is zero. So if $A$ is framed and has interior disjoint from $f$ then parallel copies of $A_c$ will also have interiors disjoint from $f$.

It follows from this discussion that if a surface sheet $S$ intersects $f$ near $p$ in a pair of oppositely-signed points 
at the tips of $w_+$ and $w_-$, with $S$ intersecting the normal disk bundle $\nu_A$ of $A$ in the vectors corresponding to a section $\overline{c}$ as in this model, then 
then this intersection pair admits a Whitney disk which is a parallel copy of $A_c$ and has the same twisting.
This is exactly the case illustrated in Figure~\ref{parallel-accessory-disk-movie}, and used in the constructions of accessory spheres in Section~\ref{sec:accessory-sphere-revisit} (Figure~\ref{fig:Accessory-sphere-movie}) and during Step~7 of the proof of Proposition~\ref{prop:I-squared}.



\begin{figure}[ht!]
         \centerline{\includegraphics[scale=.25]{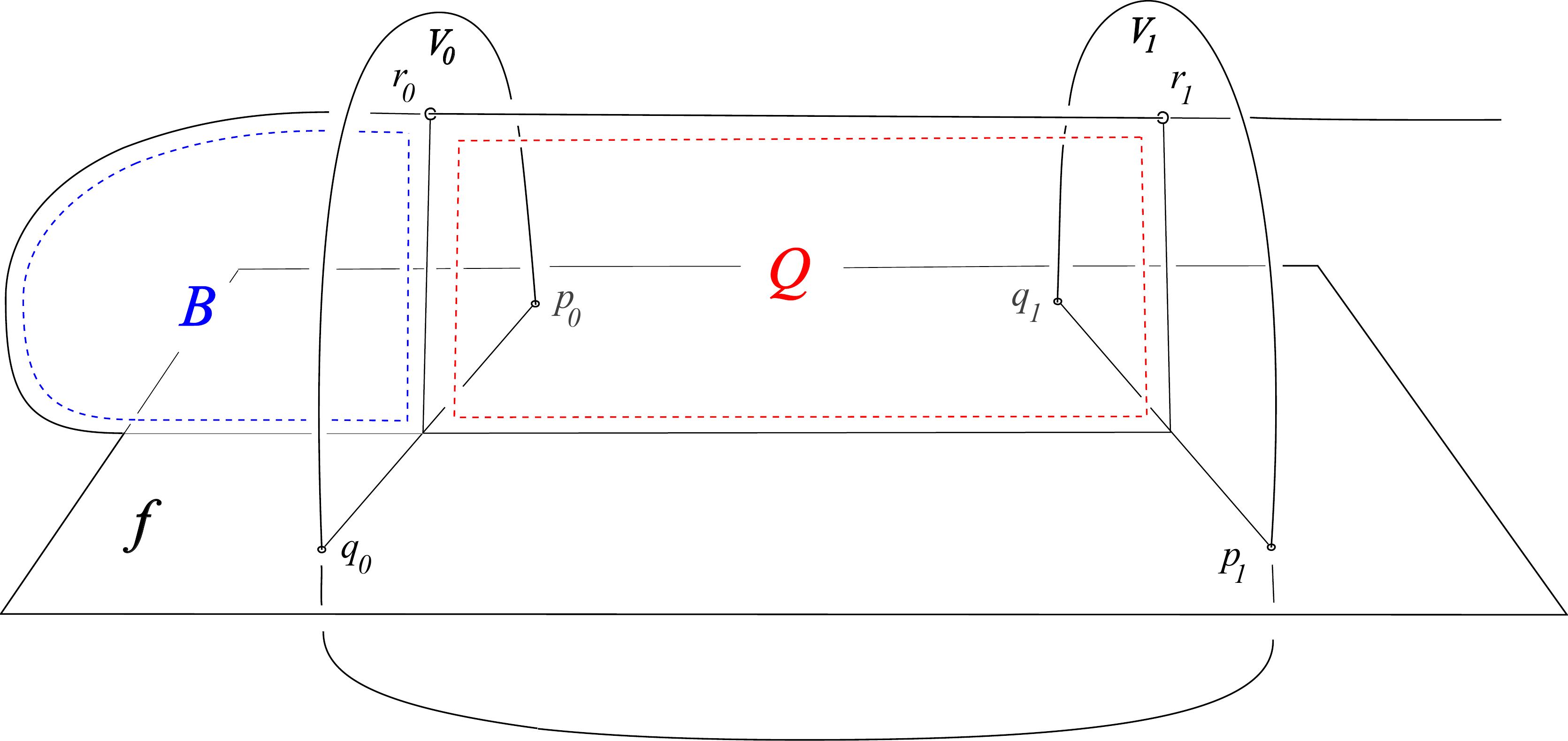}}
         \caption{Before the transfer move.}
         \label{fig:transfer-move-1}
\end{figure}

\subsubsection{Transfer moves}\label{sec:transfer-move-appendix}
\begin{figure}[ht!]
         \centerline{\includegraphics[scale=.25]{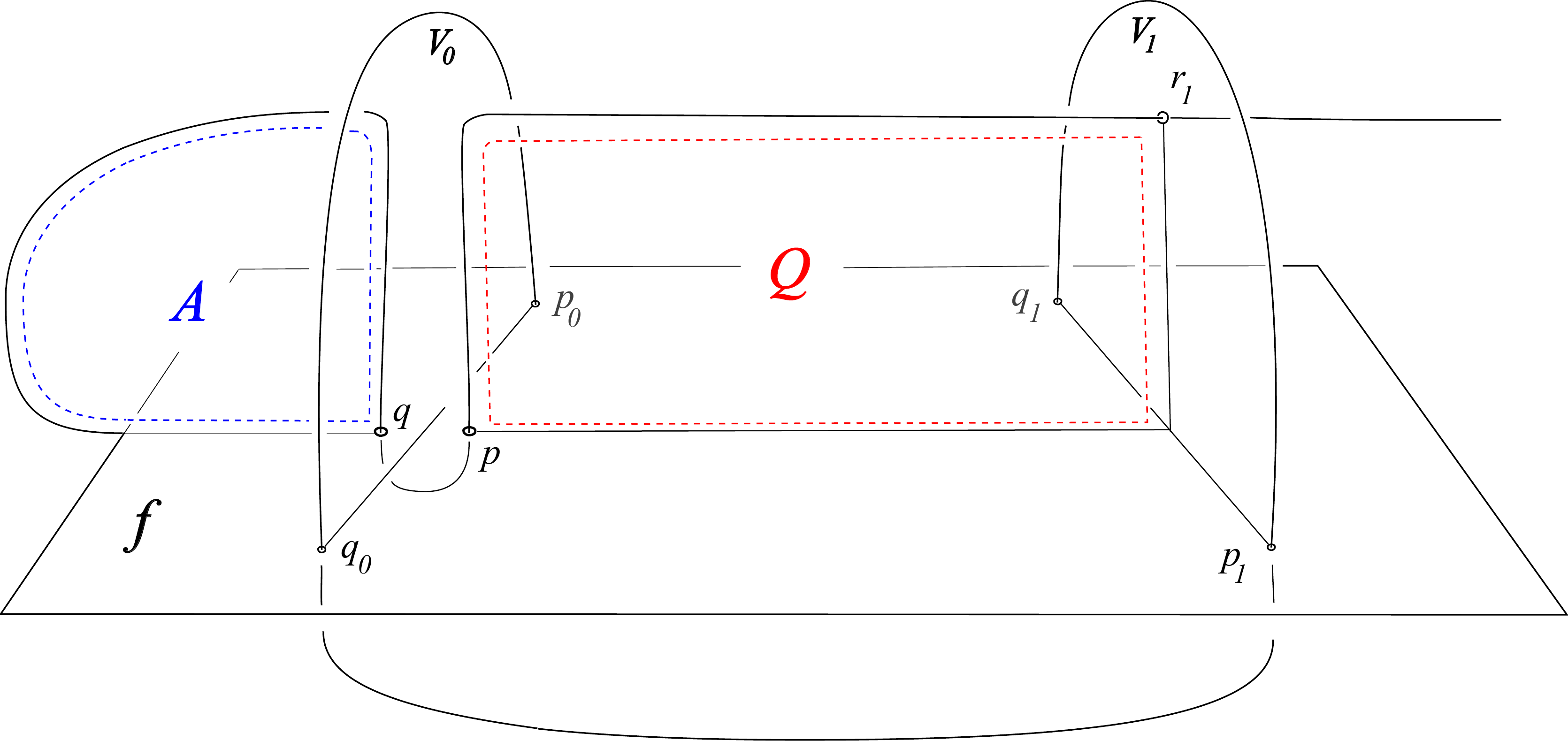}}
         \caption{After the first finger move on $f$ along $V_0$ and across $\partial V_0$.}
         \label{fig:transfer-move-1B}
\end{figure}
\begin{figure}[ht!]
         \centerline{\includegraphics[scale=.25]{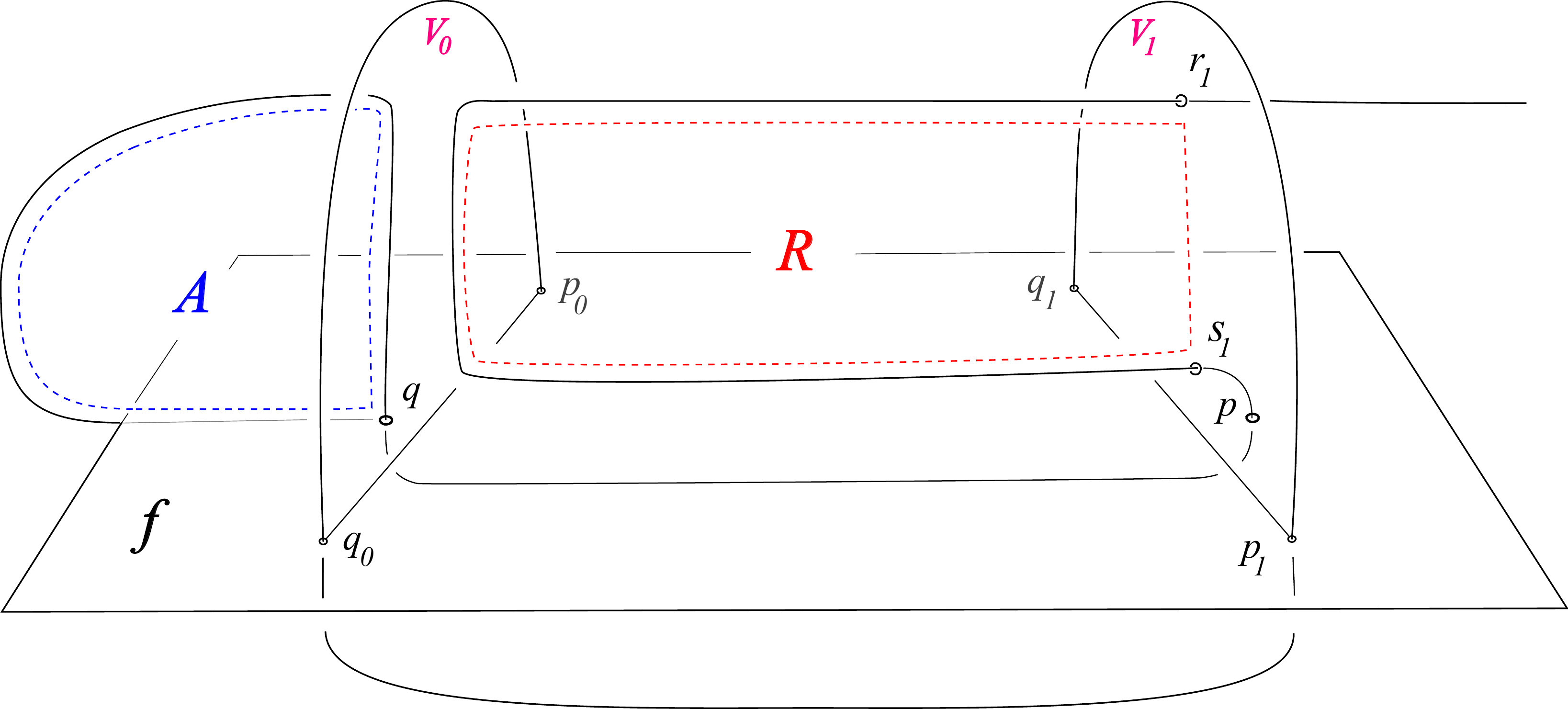}}
         \caption{After the transfer move is completed by a pushing $p$ along $f$ across $\partial V_1$, creating $s_1\in V_1\pitchfork f$, with $R$ pairing $r_1,s_1$.}
         \label{fig:transfer-move-2}
\end{figure}
\begin{figure}[ht!]
         \centerline{\includegraphics[scale=.25]{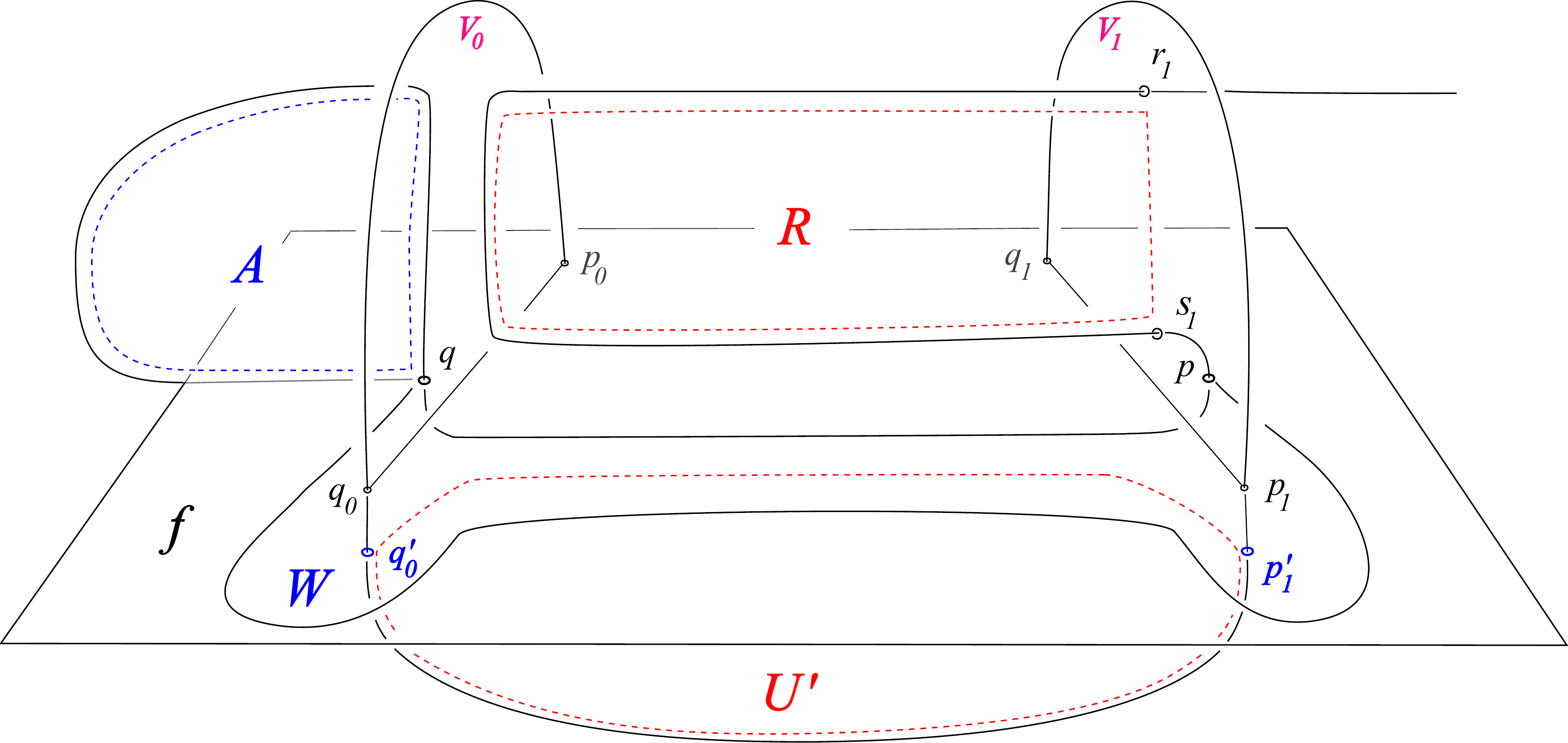}}
         \caption{Whitney disks $W$ and $U'$ pair the newly created intersection pairs $p,q\in f\pitchfork f$ and $p_1',q_0'\in W\pitchfork f$, respectively.}
         \label{fig:transfer-move-3}
\end{figure}
This subsection provides some more detail and background on the \emph{transfer move} construction which ``transfers'' an intersection between
$f$ and the interior of one Whitney disk on $f$ onto another Whitney disk, at the cost of creating new controlled intersections between Whitney disks and $f$.
The origin of this construction appears to go back to \cite{Ya}, and it was generalized and used extensively in the development of an obstruction theory for Whitney towers in \cite{CST1,ST1,ST2}. Here we focus on the setting for the transfer move in Section~\ref{sec:step-5-reduce-primary} of the current paper.

Figure~\ref{fig:transfer-move-1} shows the configuration at Step~2 of the proof of Proposition~\ref{prop:I-squared}. The transfer move exchanges $r_0\in V_0\pitchfork f$ for $s_1\in V_1\pitchfork f$ by first performing a finger move along $V_0$ across $\partial V_0$ {fig:transfer-move-1B}, and then pushing one of the new self-intersections $p\in f\pitchfork f$ across $\partial V_1$ (Figure~\ref{fig:transfer-move-2}). The finger move converts the bigon $B$ from Figure~\ref{fig:transfer-move-1}
into an accessory disk $A$ for the new intersection $q\in f\pitchfork f$ in Figure~\ref{fig:transfer-move-2}, hence we refer to $B$ as a \emph{generalized accessory disk}. 
Similarly, the quadrilateral $Q$ from Figure~\ref{fig:transfer-move-1}
gives rise to the Whitney disk $R$ pairing $r_1,s_1\in V_1\pitchfork f$ in Figure~\ref{fig:transfer-move-2}, hence we refer to $Q$ as a \emph{generalized Whitney disk}.
From the local coordinates it is clear that $A$ and $R$ are framed, and this corresponds to $B$ and $Q$ being framed in the sense that normal sections over $\partial B$ and $\partial Q$ in the complement of the sheets containing $\partial B$ and $\partial Q$ extend over $B$ and $Q$, as in the definition of framed Whitney disks (Remark~\ref{rem:general-whitney-section}) and accessory disks (Section~\ref{sec:framed-acc-disks}).

The constructions of Steps~3 and~4 of the proof of Proposition~\ref{prop:I-squared}
create multiple disjoint parallel copies of $B$ and $Q$ extended by disjointly embedded disks, and the corresponding transfer moves can be carried out simultaneously yielding multiple disjoint parallel copies of $A$ and $R$ extended by embedded disks.
During these Steps~3 and~4 the Whitney disks $V_0$ and $V_1$ are also tubed into framed immersed accessory spheres on $f_2$, but this does not obstruct the transfer moves which are supported near arcs.

Figure~\ref{fig:transfer-move-3} shows how after the transfer move the new intersections $p,q\in f\pitchfork f$ can be paired by a Whitney disk $W$ which is supported near the the union of the arc from $\partial V_0$ which guided the second finger move along $f$ together with sub-arcs of $\partial V_0$ and $\partial V_1$. (In the figure, $W$ appears to hang down underneath the horizontal plane of $f$.) And the pair of new intersections $p_1',q_0'\in W\pitchfork f$ can be paired by a Whitney disk $U'$ formed from a parallel of the Whitney disk $U$ from Lemma~\ref{lem:p0-q1-w-and-accessory-disks} of Section~\ref{sec:step-1-reduce-primary}.
Multiple disjoint parallels of such framed embedded $W$ can be constructed for multiple simultaneous transfer moves by nesting along  $\partial V_0$ and $\partial V_1$ (compare Figures~\ref{fig:double-push-down-1-solid-picture} and~\ref{fig:double-push-down-1B-picture}, but with $V$-parallels replaced by $W$-parallels). And the multiple pairs of intersections between $f$ and these parallels of $W$ can be paired by disjoint parallels of $U'$.


\end{document}